\theoremstyle{plain}
\newtheorem{thm}{Theorem}[section]
\newtheorem{cor}[thm]{Corollary}
\newtheorem{lem}[thm]{Lemma}
\newtheorem{prop}[thm]{Proposition}
\theoremstyle{definition}
\newtheorem{defi}[thm]{Definition}
\theoremstyle{remark}
\newtheorem{rem}[thm]{Remark}
\numberwithin{equation}{section}
\newcommand{\average}{{\mathchoice {\kern1ex\vcenter{\hrule height.4pt
width 6pt depth0pt} \kern-9.7pt} {\kern1ex\vcenter{\hrule
height.4pt width 4.3pt depth0pt} \kern-7pt} {} {} }}
\def\R{\mathbb{R}}
\newcommand{\LL}{\mathcal L}
\newcommand{\oo}{\circ}
\newcommand{\eps}{\varepsilon}
\DeclareSymbolFont{matha}{OML}{txmi}{m}{it}
\DeclareMathSymbol{\varv}{\mathord}{matha}{118}
\begin{document}

\title[Regularity for nonlocal obstacle problems]{Regularity theory for nonlocal obstacle problems \\ with critical and subcritical scaling}

\author{Alessio Figalli}
\address{ETH Z\"urich, Department of Mathematics, Raemistrasse 101, 8092 Z\"urich, Switzerland}
\email{alessio.figalli@math.ethz.ch}

\author{Xavier Ros-Oton}
\address{ICREA, Pg. Llu\'is Companys 23, 08010 Barcelona, Spain \& Universitat de Barcelona, Departament de Matem\`atiques i Inform\`atica, Gran Via de les Corts Catalanes 585, 08007 Barcelona, Spain  \& Centre de Recerca Matem\`atica, Barcelona, Spain}
\email{xros@icrea.cat}

\author{Joaquim Serra}
\address{ETH Z\"urich, Department of Mathematics, Raemistrasse 101, 8092 Z\"urich, Switzerland}
\email{joaquim.serra@math.ethz.ch}

\keywords{Obstacle problem; integro-differential operators; free boundary}
\subjclass[2010]{35R35; 47G20; 35B65.}

\maketitle

\begin{abstract}
Despite significant recent advances in the regularity theory for obstacle problems with integro-differential operators, some fundamental questions remained open. On the one hand, there was a lack of understanding of parabolic problems with critical scaling, such as the obstacle problem for $\partial_t+\sqrt{-\Delta}$. No regularity result for free boundaries was known for parabolic problems with such scaling. On the other hand, optimal regularity estimates for solutions (to both parabolic and elliptic problems) relied strongly on monotonicity formulas and, therefore, were known only in some specific cases. 
In this paper, we present a novel and unified approach to answer these open questions and, at the same time, to treat very general operators, recovering as particular cases most previously known regularity results on nonlocal obstacle problems.
\end{abstract}

\section{Introduction and results}

Free boundary problems appear in several areas of pure and applied mathematics, and have been a central line of research in elliptic and parabolic PDE's during the last fifty years.
The most important and challenging question in this context is to understand the \emph{regularity of free boundaries}.
The development of the regularity theory for free boundaries started in the late seventies with the works of Caffarelli \cite{C-obst}, and since then several ideas and techniques have been developed; see for example the books \cite{Friedman,CS-book,PSU,FR20}.

During the last decade, starting with the works \cite{ACS,S-obst,CSS}, an abundance of new results has been obtained, understanding for the first time \emph{thin} and \emph{nonlocal} free boundary problems.

The motivation for studying such type of problems comes from elasticity (the classical Signorini problem); probability and finance (optimal stopping for jump processes, pricing of options); control problems (boundary heat control); fluid dynamics in biology (osmosis, semipermeable membranes); or interacting energies in physical, biological, and material sciences.
We refer to the classical book of Duvaut and Lions \cite{DL}, as well as to \cite{PS06,Merton,CT} and \cite{CDM,Serfaty}, for a description of these models.

The above-mentioned works \cite{ACS,S-obst,CSS} established for the first time:

- the optimal regularity of solutions, and

- regularity of free boundaries near regular points

\noindent both in the thin obstacle problem, and in the obstacle problem for the fractional Laplacian.
After these results, new methods and techniques have been introduced in \cite{GP,CF,KPS,DS2,DGPT,CRS,JN,ACM,FS17,BFR2,ACMb,CSV17,AR,CSV20b,FJ21,Kuk21,SY21,RT21,Kuk22}, studying various questions such as singular free boundary points, higher regularity of free boundaries, more general nonlocal operators, and the parabolic versions of these problems.
However, despite such significant developments in the last years, some central questions remained open.

On the one hand, there was a lack of understanding of parabolic problems with \emph{critical} scaling, such as the obstacle problem for $\partial_t+\sqrt{-\Delta}$: 
no regularity result for free boundaries was known for any parabolic problem with such scaling.
We note that the case $\partial_t+\sqrt{-\Delta}$ is particularly interesting, because the problem is equivalent to a thin obstacle problem in $\R^{n+1}_+$ with \emph{dynamic} boundary conditions, i.e., 
\[\partial_{x_{n+1}} u=\partial_t u\quad \textrm{on}\ \{x_{n+1}=0\}\cap \{u>\varphi\}.\]
Free boundary problems with dynamic boundary conditions are discussed in \cite{DL} and \cite{Elliot} (see also \cite{AC,ACM,ACMb}), and 
no regularity result for free boundaries was known for any problem of this type.
The main difficulty comes from the critical scaling of the equation, since the equation and free boundary have the same ``hyperbolic'' scaling in time and space.
Because of the traveling wave solutions constructed in \cite{CF}, the structure of the free boundary in such a setting was expected to be much more complicated and rich than in previously known parabolic obstacle problems.

On the other hand, a second important open question was to establish \emph{optimal regularity estimates} for solutions to nonlocal (parabolic and elliptic) obstacle problems.
Indeed, optimal regularity estimates relied strongly on monotonicity formulas, and therefore were only known in very specific situations.
In the elliptic setting, they were only known for the fractional Laplacian, but not for more general nonlocal operators.\footnote{The results in \cite{CRS} establish the regularity of free boundaries and local $C^{1+s}$ estimates near regular points, but not a global nor uniform $C^{1+s}$ estimate for solutions.}
In the parabolic setting, for the fractional Laplacian the optimal regularity of solutions in space was established in \cite{CF}, but even in such case the optimal regularity in time (or in space-time) was open.
It is important to notice that the results in \cite{CRS,BFR2} establish regularity results for free boundaries in these problems, but these are qualitative results, and do not yield in any case optimal regularity estimates for solutions.
Furthermore, still in the parabolic setting, all known results\footnote{The only known result in this direction is the recent work \cite{RT21}, in which the second author and Torres-Latorre studied the \emph{supercritical} case $s<\frac12$. Such case turns out to be completely different, since the time derivative $\partial_t$ dominates; see Remark \ref{rem-super} below.} are for the fractional Laplacian, and used monotonicity formulas \cite{CF} or the extension problem for the fractional Laplacian \cite{BFR2}.
Extending these results to more general nonlocal operators was an open problem, too.

The aim of this paper is to develop a unified approach to the regularity theory of such problems that allow us to answer all these open questions at the same time. Note that, in addition to giving an answer to the open problems mentioned above, we can also recover, as particular cases, all the previously known regularity results on nonlocal obstacle problems from \cite{ACS,CSS,CF,CRS,BFR2,FR18}.

We consider nonlocal operators of the form
\begin{equation}\label{L0}
\LL u(x) = \textrm{p.v.}\int_{\R^n} \bigl(u(x+y)-u(x)\bigr) K(y)\,dy,
\end{equation}
with
\begin{equation} \label{ellipt}
K(y)=K(-y)\qquad \textrm{and}\qquad \frac{\lambda}{|y|^{n+2s}}\leq K(y)\leq \frac{\Lambda}{|y|^{n+2s}}.
\end{equation}
The constants $0<\lambda\leq \Lambda$ are called ellipticity constants, and $s\in(0,1)$.
This is the most typical and natural class of operators of order $2s$; see \cite{basslevin,CS09,Ros16}.
(Notice that some of the results of the paper hold for more general  classes of operators, as considered in Definition \ref{L}.)

\subsection{Main result}

Given $\LL$ of the form \eqref{L0}-\eqref{ellipt}, and given an obstacle $\varphi$ in $\R^n$, we consider the parabolic obstacle problem
\begin{equation}\label{obst-pb}\begin{split}
\min\bigl\{u_t-\LL u,\,u-\varphi\bigr\}&=0\quad\textrm{in}\ \, \R^n\times (0,T),\\
u(0)&=\varphi\quad\textrm{in}\ \, \R^n.
\end{split}\end{equation}
The solution $u(x,t)$ of \eqref{obst-pb} can be constructed as the smallest supersolution lying above the obstacle~$\varphi$; see \cite{CF,RT21}.
We will always assume
\begin{equation}\label{obstacle}
\|\varphi\|_{C^4(\R^n)}\leq C_\oo.
\end{equation}
Throughout the paper, we will denote
\[\mathcal Q_r :=B_r\times (-r^{2s},r^{2s}).\]

To understand the regularity of solutions and to the free boundary for \eqref{obstacle}, we shall first prove a very general result about almost-convex solutions to the obstacle problem with zero obstacle  and a small right hand side.
This result reads as follows:

\begin{thm}[Quantitative estimate]\label{thm-main}
Let $s \in[\frac12,1)$, $\LL$ as in \eqref{L0}-\eqref{ellipt}, with $K$ homogeneous.
Fix $\delta>0$, and given $\eta>0$ small, assume that  $u\in {\rm Lip}(\R^n\times (-1/\eta,1/\eta))$ satisfies:
\begin{itemize}
\item[$\bullet$]  $u$ is nonnegative, monotone, and almost-convex:
\[u\geq0,\quad \partial_t u\geq0,\quad \textrm{and} \quad D^2_{x,t} u \ge - \eta \,{\rm{Id}}\qquad \textrm{in}\quad \mathcal Q_{1/\eta},\]
\[\textrm{with}\quad (0,0)\in\partial\{u>0\}.\]

\item[$\bullet$]  $u$ solves the obstacle problem with zero obstacle  and a small right hand side:
\[\partial_t u-\LL u=f\quad \textrm{in}\quad \{u>0\}\cap \mathcal Q_{1/\eta} \quad \textrm{ and }\quad \partial_t u-\LL u\geq f \quad \textrm{in} \quad \mathcal Q_{1/\eta},\]
\[ \textrm{with} \quad  |\nabla f|+|\partial_{t} f|\leq \eta.\]

\item[$\bullet$]  $u$ has a controlled growth at infinity:
\[R\|\nabla u\|_{L^\infty(\mathcal Q_R \cap \{|t|<1/\eta\})}+R^{2s}\|\partial_t u\|_{L^\infty(\mathcal Q_R\cap \{|t|<1/\eta\})} \le R^{2-\delta} \qquad \textrm{for all}\quad R\ge 1.\]
\end{itemize}

\noindent Then, there exists a 1D solution of the form
\begin{equation}\label{u0}
u_\circ(x,t)=\left\{\begin{array}{rl}
\kappa (x\cdot e+\varv t)_+^{1+\gamma} & \quad \textrm{if}\quad s={\textstyle \frac12} \vspace{3mm} \\
\kappa (x\cdot e)_+^{1+s}& \quad \textrm{if}\quad s>{\textstyle \frac12},
\end{array}\right.
\end{equation}
with $\kappa>0$, $e\in \mathbb S^{n-1}$, $\varv\geq0$, and $\gamma=\gamma(\LL,\varv,e)\geq\frac12$,
such that 
\[
\|u- u_\circ\|_{{\rm Lip} (\mathcal Q_1)} \le \eps(\eta),
\]
where $\eps(\eta)$ is a modulus of continuity\footnote{That is, $\eps: (0,\infty)\to (0,\infty)$ is nondecreasing function with $\lim_{\eta\downarrow 0} \eps(\eta) =0$.} depending only on $n$, $s$, $\delta$, $\lambda$, $\Lambda$.

Moreover, for any given $\kappa_\circ>0$ there exist $\eps_\circ>0$ such that  if $\eps(\eta)< \eps_\circ$ and $\kappa\geq \kappa_\circ>0$, then the free boundary $\partial \{u>0\}$ is a $C^{1,\tau}$ graph in $\mathcal Q_{1/2}$ for some $\tau>0$, and we have the bound
\[|\nabla u|+|\partial_t u|\leq C\big(|x|^s+|t|^s\big)\]
for $(x,t)\in \mathcal Q_1$.
The constants $\eps_\circ$, $C$, and $\tau$,  depend only on  $n$, $s$, $\delta$, $\lambda$, $\Lambda$, and $\kappa_\circ$.
\end{thm}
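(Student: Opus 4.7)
The plan is to prove the Lipschitz closeness to a 1D profile by a compactness-and-contradiction argument, and then to derive the free boundary regularity and the optimal $s$-growth from this closeness combined with a directional monotonicity. The key technical input is a Liouville-type classification of global convex solutions of the obstacle problem with subquadratic growth; this classification is the main obstacle, particularly in the critical case $s=\frac12$.

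\textbf{Compactness step.} Suppose by contradiction that there exist $\eta_k\downarrow 0$ and $u_k$ satisfying the three bulleted hypotheses with $\eta=\eta_k$, but staying at Lipschitz distance at least $\eps_\circ>0$ from every profile of the form \eqref{u0} on $\mathcal Q_1$. The Lipschitz bound and the growth bound are uniform in $k$, so on every compact subset of $\R^n\times \R$ the sequence is precompact in $C^1$. A diagonal extraction yields $u_k\to u_\infty$ locally uniformly, with $\nabla u_k\to \nabla u_\infty$ and $\partial_t u_k\to \partial_t u_\infty$. The limit $u_\infty\colon\R^n\times\R\to[0,\infty)$ is nonnegative, monotone in $t$, genuinely convex (since $-\eta_k\,\mathrm{Id}\to 0$), and satisfies $\partial_t u_\infty-\LL u_\infty=0$ in $\{u_\infty>0\}$ and $\geq 0$ globally (using $|\nabla f|+|\partial_t f|\leq \eta_k\to 0$). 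Moreover $u_\infty$ retains the subquadratic growth $R\|\nabla u_\infty\|_{L^\infty(\mathcal Q_R)} + R^{2s}\|\partial_t u_\infty\|_{L^\infty(\mathcal Q_R)}\leq R^{2-\delta}$ for all $R\geq 1$, and $(0,0)\in\partial\{u_\infty>0\}$, so $u_\infty\not\equiv 0$.

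\textbf{Classification of the blow-down.} The heart of the argument is to show that any such $u_\infty$ must coincide with a profile as in \eqref{u0}, which contradicts the standing assumption for all $k$ large. The convexity forces the contact set $\{u_\infty=0\}$ to be a convex subset of $\R^n\times\R$, and the subquadratic growth combined with the global obstacle equation forces this set to be (up to translation) a half-space in a single linear combination of space-time variables. When $s>\frac12$ the operator $\partial_t-\LL$ has subcritical parabolic scaling: a blow-down argument annihilates the time derivative, so $u_\infty$ reduces to a global convex subquadratic solution of the elliptic obstacle problem for $\LL$, and the classification of \cite{CRS,FR18} forces $u_\infty=\kappa (x\cdot e)_+^{1+s}$. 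When $s=\frac12$ the scaling is critical and the time variable cannot be decoupled; instead, applying a parabolic Liouville theorem to nonnegative second derivatives (which are caloric supersolutions in $\{u_\infty>0\}$) one shows that $u_\infty$ depends only on one linear combination $x\cdot e+\varv t$, and plugging this ansatz into the equation reduces everything to a 1D obstacle problem whose unique solution is $\phi(r)=\kappa r_+^{1+\gamma}$, with $\gamma=\gamma(\LL,\varv,e)\geq\frac12$ determined by the characteristic equation for the induced 1D operator. This classification is the main difficulty of the proof, because the critical case mixes convex analysis, a parabolic Liouville argument, and an operator-dependent exponent.

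\textbf{Free boundary and optimal regularity.} With the Lipschitz closeness in hand, the nondegeneracy of $u_\circ$ (which holds because $\kappa\geq\kappa_\circ>0$) transfers to $u$: one has $\partial_{e'} u_\circ + \mu\,\partial_t u_\circ \geq c(\kappa_\circ)>0$ in $\{u_\circ>0\}\cap\mathcal Q_{3/4}$ for all $(e',\mu)$ in a fixed cone around $(e,\varv)$ (respectively $(e,0)$ when $s>\frac12$), and the same directional monotonicity then holds for $u$ in $\mathcal Q_{1/2}$, provided $\eps_\circ$ is small enough depending on $\kappa_\circ$. The resulting cone of monotonicity implies that $\partial\{u>0\}$ is a Lipschitz graph in $\mathcal Q_{1/2}$, and a boundary Harnack and improvement-of-flatness scheme for $\partial_t-\LL$ upgrades this to $C^{1,\tau}$. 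Finally, the optimal bound $|\nabla u|+|\partial_t u|\leq C(|x|^s+|t|^s)$ is obtained by a dyadic iteration: at each scale $2^{-k}$ around a free boundary point, the rescaled solution still satisfies the hypotheses of the first part of the theorem, hence lies close to a profile of the form \eqref{u0} that has precisely the $s$-growth; summing the estimates geometrically across scales yields the claimed bound on $\mathcal Q_1$.
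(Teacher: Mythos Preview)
Your overall architecture---compactness/contradiction, then classification of the global limit, then Lipschitz free boundary via directional monotonicity upgraded to $C^{1,\tau}$ via boundary Harnack---matches the paper. But the classification step contains a genuine gap, and it is precisely the hard part.

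For $s=\tfrac12$ you write that ``applying a parabolic Liouville theorem to nonnegative second derivatives (which are caloric supersolutions in $\{u_\infty>0\}$) one shows that $u_\infty$ depends only on one linear combination $x\cdot e+\varv t$''. This does not work: knowing that second derivatives are nonnegative supersolutions does not force a 1D structure. What the paper actually does (Propositions~\ref{prop-uniqueness-positive-sols-critical} and~\ref{prop-classification}) is take a blow-down so that $\{u_\infty=0\}$ becomes a closed convex cone $\Sigma\subset\R^n\times\R_-$, and then, when $\Sigma$ has nonempty interior, apply a \emph{parabolic boundary Harnack inequality} (Theorem~\ref{bdryharnack}) to the first derivatives $\partial_{\omega_i}u_\infty$, which are nonnegative solutions vanishing on $\Sigma$, to conclude they are all scalar multiples of one another. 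This boundary Harnack---for nonlocal parabolic equations in domains satisfying only an interior \emph{traveling cone} condition---is the main new technical ingredient of the paper (all of Section~\ref{sec3}); without it there is no known way to prove the 1D rigidity in the critical case. The same tool is also needed for $s>\tfrac12$: your claim that ``a blow-down argument annihilates the time derivative'' describes the endpoint, but ruling out the non-stationary scenario in Proposition~\ref{prop-classification>1/2} again uses Theorem~\ref{bdryharnack}.

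Two smaller points. First, to get the $C^1_{\rm loc}$ convergence in your compactness step you need more than the Lipschitz and semiconvexity bounds; the paper invokes the almost-optimal regularity estimate of Corollary~\ref{cor-almost-optimal-par} (itself proved by a separate blow-up argument using the classification). Second, your proposal to obtain $|\nabla u|+|\partial_t u|\le C(|x|^s+|t|^s)$ by iterating the first part of the theorem across dyadic scales is not how the paper proceeds and would require extra care (the profile and $\kappa$ could vary with scale). The paper instead argues directly: once $\partial\{u>0\}$ is a $C^{1,\tau}$ graph, a barrier argument in $C^{1,\tau}$ moving domains (Lemma~\ref{lemm-main-par-3}) gives the $s$-growth.
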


While the previous theorem holds for $s \in[\frac12,1)$, in the elliptic setting, i.e.
\begin{equation}\label{obst-pb-ell}
\min\bigl\{-\LL u,\,u-\varphi\bigr\}=0\quad\textrm{in}\ \, \R^n,
\end{equation}
the analogous result is valid for all $s\in(0,1)$; see Theorem \ref{thm-main-ell}.

\begin{rem}[On the assumption $s\geq\frac12$]\label{rem-super}
Notice that, in the parabolic setting, the case $s<\frac12$ needs to be excluded if we look for a unified theory for both elliptic and parabolic nonlocal obstacle problems.
Indeed, the case $s<\frac12$ turns out to be completely different, both in terms of the results and the methods to study it.
It was proved very recently in \cite{RT21} that, when $s<\frac12$, non-stationary solutions are automatically $C^{1,1}$ in space and time, independently of the parameter $s$.
Moreover, the proof of such result is independent from the ones in the stationary setting (and from the case $s\geq\frac12$), since it uses very strongly the fact that $\partial_t$ is the dominating term in the equation.
\end{rem}

\subsection{Regularity of free boundaries}

Iterating our main result above and combining it with the explicit 1D profiles in the case $\LL=\sqrt{-\Delta}$, we get the following.

\begin{cor}[Regularity of the free boundary, $s=\frac12$]\label{thm1}
Let $\LL=\sqrt{-\Delta}$, $\varphi$ an obstacle satisfying \eqref{obstacle}, and $u$ the solution to \eqref{obst-pb}.
Then, at each free boundary point $(x_\circ,t_\circ)\in\partial\{u>\varphi\}$ we have the following dichotomy:
\begin{itemize}
\item[(i)] either \vspace{-1mm}
\[\hspace{16mm}0<c\,r^{1+\gamma(x_\circ,t_\circ)}\leq \sup_{\mathcal Q_r(x_\circ,t_\circ)}(u-\varphi)\leq Cr^{1+\gamma(x_\circ,t_\circ)},\qquad \text{for some}\quad\gamma(x_\circ,t_\circ)\in[{\textstyle \frac12},1),\]
\vspace{2mm}
\item[(ii)] or \hspace{20mm} $\displaystyle 0\leq \sup_{\mathcal Q_r(x_\circ,t_\circ)}(u-\varphi)\leq C_\eps r^{2-\eps}$ \qquad for all \ $\eps>0$.
\end{itemize}
In addition, the set of points $(x_\circ,t_\circ)$ satisfying (i) is an open subset of the free boundary and it is a $C^{1,\alpha}$  submanifold in space-time of codimension 1.

Moreover, if we denote by $\nu=(\nu_x,\nu_t)$ the normal vector\footnote{More precisely, $\nu$ is the normal vector to $\partial \{u>\varphi\}$ pointing towards $\{u>\varphi\}$. Notice that since $u_t\geq0$ then we always have $\nu_t\geq0$.} to the free boundary at $(x_\circ,t_\circ)$, then the exponent $\gamma(x_\circ,t_\circ)$ is given by
\[\gamma(x_\circ,t_\circ):=\frac12+\frac{1}{\pi}\arctan(\varv_\circ),\]
where $\varv_\circ:=\nu_t/|\nu_x|\geq 0$ is the {speed} of the free boundary at $(x_\circ,t_\circ)$.
\end{cor}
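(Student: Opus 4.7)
The plan is to apply Theorem \ref{thm-main} at arbitrary blow-up scales around a free boundary point, and to combine it with the explicit classification of one-dimensional profiles available when $\LL = \sqrt{-\Delta}$. After the reduction $v := u - \varphi$, we have $v \geq 0$, $v(\cdot, 0) \equiv 0$, and $\min\{v_t - \LL v - f,\,v\} = 0$ with $f := \LL\varphi \in C^{3}(\R^n)$. Standard estimates for nonlocal obstacle problems yield $\partial_t v \geq 0$ (since the obstacle is $t$-independent), space-time semiconvexity $D^2_{x,t} v \geq -C\,\mathrm{Id}$, and a global Lipschitz bound.

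Fix $(x_\circ, t_\circ) \in \partial\{u > \varphi\}$ and set $\phi(r) := \|v\|_{L^\infty(\mathcal Q_r(x_\circ, t_\circ))}$. Consider the rescaled family
\[v_r(x,t) := \frac{v(x_\circ + rx,\,t_\circ + rt)}{\phi(r)},\]
modified by a linear-in-$t$ correction so that the effective right-hand side vanishes at the origin. Two cases arise. If $\phi(r_k) \geq r_k^{1+\gamma_\ast}$ for some $\gamma_\ast < 1$ along a sequence $r_k \downarrow 0$, the rescaled $v_{r_k}$ satisfy all hypotheses of Theorem \ref{thm-main} with $\eta_k \to 0$: the semiconvexity parameter rescales as $r_k^{1-\gamma_\ast}\,\mathrm{Id}$, the effective right-hand side is small in $W^{1,\infty}$, and the polynomial growth condition is inherited from the global Lipschitz/semiconvexity estimates. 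Theorem \ref{thm-main} then produces a 1D limit profile $u_\circ = \kappa(x \cdot e + \varv t)_+^{1+\gamma}$ with $\kappa > 0$ and $\gamma \in [\tfrac12, 1)$. If instead $\phi(r) = o(r^{1+\gamma})$ for every $\gamma < 1$, a standard iteration combining this qualitative decay with the semiconvexity and Lipschitz bounds promotes it to $\phi(r) \leq C_\eps r^{2-\eps}$, which is case (ii).

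For $\LL = \sqrt{-\Delta}$, the exponent $\gamma$ in case (i) is identified explicitly: a direct Fourier / Wiener--Hopf computation (in the spirit of \cite{CF}) shows that $(x \cdot e + \varv t)_+^{1+\gamma}$ solves $\partial_t w = \sqrt{-\Delta}\,w$ in $\{x \cdot e + \varv t > 0\}$ if and only if $\gamma = \tfrac12 + \tfrac1\pi \arctan(\varv)$, $\varv \geq 0$. Since the space-time normal of this half-space is $\nu = (e, \varv)/\sqrt{1+\varv^2}$, one has $\varv_\circ = \nu_t/|\nu_x|$, matching the formula stated in the corollary. Both the upper and the lower bound in case (i) then follow from the $\mathrm{Lip}$-closeness of $v_{r_k}$ to $u_\circ$ (iterated through the scales) together with the non-degeneracy $\kappa > 0$.

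Finally, openness and free-boundary regularity follow from the quantitative second part of Theorem \ref{thm-main}. At any point of case (i) with $\kappa \geq \kappa_\circ > 0$, the theorem produces a $C^{1,\tau}$ free-boundary graph in a full neighborhood together with the pointwise bound $|\nabla v| + |\partial_t v| \leq C(|x|^s + |t|^s)$; by continuity of $v$ and its first derivatives, the blow-up profile stays close to the original one at any nearby free boundary point, so $\kappa$ remains bounded below uniformly and the point is again regular. Iterating Theorem \ref{thm-main} at each regular point, and controlling the oscillation of the blow-up direction $(e, \varv)$ by a Campanato-type argument, upgrades the local $C^{1,\tau}$ graph representations to the global $C^{1,\alpha}$ structure of the regular set as a codimension-1 submanifold of space-time. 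The two most delicate points are the rigorous 1D classification for $\sqrt{-\Delta}$ (particularly in the limit $\varv \to \infty$, i.e.\ $\gamma \to 1^-$) and the quantitative upgrade from $\phi(r) = o(r^{1+\gamma})$ for all $\gamma < 1$ to $\phi(r) \leq C_\eps r^{2-\eps}$.
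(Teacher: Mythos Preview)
Your strategy matches the paper's: reduce to the setting of Theorem~\ref{thm-main} at blow-up scales, identify the explicit 1D profiles for $\sqrt{-\Delta}$, and read off the dichotomy and free-boundary regularity. The paper's iteration differs in that it normalizes by $r^{2-\delta}$ rather than by $\phi(r)$, and the dichotomy is whether the growth hypothesis of Theorem~\ref{thm-main} persists at all dyadic scales (giving case~(ii) directly) or first fails at some scale $k_1$; a short contradiction argument then forces the 1D profile at scale $k_1$ to be nondegenerate, triggering the $C^{1,\tau}$ conclusion. Your normalization by $\phi(r)$ along a subsequence can be made to work, but verifying the growth hypothesis then requires the semiconvexity bound $|\nabla v| \leq C\rho$ in $\mathcal Q_\rho$ together with a choice $\delta < 1 - \gamma_*$; the paper's dyadic scheme makes this automatic. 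The ``linear-in-$t$ correction'' and the separate Campanato step for $C^{1,\alpha}$ regularity are both unnecessary: Theorem~\ref{thm-main} only needs $|\nabla f|+|\partial_t f|\le\eta$, which rescaling already provides, and the $C^{1,\tau}$ graph property comes directly from the boundary Harnack inside Theorem~\ref{thm-main} at a single scale.

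The genuine gap is the two-sided bound in case~(i). Lip-closeness to $u_\circ$ along a subsequence $r_k$ does not by itself give $cr^{1+\gamma_\circ} \leq \phi(r) \leq Cr^{1+\gamma_\circ}$ at \emph{every} $r$, nor does it single out one exponent $\gamma_\circ$. The paper's mechanism is a barrier argument in moving $C^{1,\tau}$ domains (Lemma~\ref{supersolution} in the Appendix): once the free boundary is $C^{1,\tau}$ with a well-defined normal $\nu$ at $(x_\circ, t_\circ)$, explicit sub- and supersolutions behaving like $d^{\gamma_\circ \pm \eps}$ (with $\gamma_\circ$ determined by $\nu$ via Lemma~\ref{lem-1D-critical}) yield the sharp growth rate at all scales. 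This is a key ingredient you are missing; ``iterated through the scales'' does not substitute for it without either reproducing such barriers or supplying a separate uniqueness-of-blowup argument.
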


As said above, this is the first result concerning the regularity of the free boundary for a critical operator such as $\partial_t+\sqrt{-\Delta}$.
Prior to our result, the ``subcritical'' case $s>\frac12$ was understood in \cite{BFR2}, while the ``supercritical'' case $s<\frac12$ was treated in \cite{RT21}  (cf. Remark~\ref{rem-super}).

In the case $s>\frac12$, our new approach allows us to extend the results in \cite{BFR2} to much more general kernels, and 
the results of \cite{CRS} to the parabolic setting.

\begin{cor}[Regularity of the free boundary, $s>\frac12$]\label{thm2}
Let $\LL$ be of the form \eqref{L0}-\eqref{ellipt} with $K$ homogeneous\footnote{The assumption of the kernel $K$ being homogeneous is needed in order to ensure that 1D solutions are homogeneous; see \cite{RS-Duke,CRS}.}, $\varphi$ be an obstacle satisfying \eqref{obstacle}, and $u$ be the solution to \eqref{obst-pb}.
Then, at each free boundary point $(x_\circ,t_\circ)\in\partial\{u>\varphi\}$ we have the following dichotomy:
\begin{itemize}
\item[(i)] either \vspace{-1mm}
\[\hspace{-2mm}0<c\,r^{1+s}\leq \sup_{\mathcal Q_r(x_\circ,t_\circ)}(u-\varphi)\leq Cr^{1+s},\]
\vspace{2mm}
\item[(ii)] or \hspace{35mm} $\displaystyle 0\leq \sup_{\mathcal Q_r(x_\circ,t_\circ)}(u-\varphi)\leq C_\eps r^{2-\eps}$ \qquad for all \ $\eps>0$.
\end{itemize}
In addition, the set of points $(x_\circ,t_\circ)$ satisfying (i) is an open subset of the free boundary and it is $C^{1,\alpha}$ in space-time.
\end{cor}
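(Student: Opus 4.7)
The plan is to apply Theorem~\ref{thm-main} at every free boundary point $(x_\circ,t_\circ)\in\partial\{u>\varphi\}$ and at every small dyadic scale, then read off the dichotomy from whether the leading coefficient of the 1D profile stays bounded away from zero. Setting $w:=u-\varphi$, and using that $\varphi$ is time-independent with $\varphi\in C^4(\R^n)$, the homogeneity of $K$, and $s>\frac12$, one has $\LL\varphi\in C^{2}(\R^n)$ with universal bounds; hence $w$ solves the obstacle problem
\[
\min\{w_t-\LL w-\LL\varphi,\,w\}=0
\]
with zero obstacle and smooth, time-independent right-hand side $f:=\LL\varphi$. Standard facts for the parabolic fractional obstacle problem give monotonicity $\partial_t u\geq 0$ (by time-translation plus comparison against $u(\cdot,0)=\varphi$) and global semiconvexity $D^2_{x,t}u\geq -C_\oo\,{\rm Id}$ (by penalization), both of which are inherited by $w$.

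Fix a free boundary point $(x_\circ,t_\circ)\in\partial\{w>0\}$ and $\delta\in(0,1-s)$, and consider the rescalings
\[
w_r(x,t):=\frac{w(x_\circ+rx,\,t_\circ+r^{2s}t)}{\theta_r},\qquad r\in(0,r_0),
\]
with a normalization $\theta_r$ of order $r^{1+s}$, adapted to the parabolic scaling. Nonnegativity, monotonicity, and $(0,0)\in\partial\{w_r>0\}$ are preserved. Since $w$ satisfies $D^2_{x,t}w\geq -C\,{\rm Id}$, the rescaling produces $D^2_{x,t}w_r\geq -C\max(r^{1-s},r^{3s-1})\,{\rm Id}$, which is $\geq -\eta(r)\,{\rm Id}$ with $\eta(r)\to 0$ because $s>\frac12$ gives both exponents positive. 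The rescaled right-hand side has $|\nabla f_r|=O(r^s)\to 0$ and $\partial_t f_r\equiv 0$ (since $\LL\varphi$ is time-independent). The growth hypothesis $R\|\nabla w_r\|_{L^\infty(\mathcal Q_R)}+R^{2s}\|\partial_t w_r\|_{L^\infty(\mathcal Q_R)}\leq R^{2-\delta}$ on $R\in[1,1/\eta]$ follows from $\delta<1-s$ together with the Lipschitz and bounded-$\partial_t$ bounds on $w$. Theorem~\ref{thm-main} then produces, at each such scale, a 1D profile $\kappa_r(x\cdot e_r)_+^{1+s}$ (with no velocity term since $s>\frac12$) satisfying
\[
\bigl\|w_r-\kappa_r(x\cdot e_r)_+^{1+s}\bigr\|_{{\rm Lip}(\mathcal Q_1)}\leq \eps(\eta(r)).
\]

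The dichotomy follows from the asymptotic behavior of $\kappa_r$. If there exists $r_*\in(0,r_0)$ with $\kappa_{r_*}\geq\kappa_\circ>0$, the second part of Theorem~\ref{thm-main} asserts that $\partial\{w_{r_*}>0\}\cap\mathcal Q_{1/2}$ is a $C^{1,\tau}$ graph with $|\nabla w_{r_*}|+|\partial_t w_{r_*}|\leq C(|x|^s+|t|^s)$. Undoing the rescaling gives case (i): a $C^{1,\tau}$ free boundary in space-time near $(x_\circ,t_\circ)$ and the two-sided bound $c\,r^{1+s}\leq \sup_{\mathcal Q_r(x_\circ,t_\circ)}w \leq C\,r^{1+s}$. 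Openness of the regular set follows because the condition $\kappa_{r_*}\geq\kappa_\circ$ is stable under perturbations of the base point, and the $C^{1,\alpha}$ submanifold structure is obtained by a covering argument using the uniform $C^{1,\tau}$ graph bound of Theorem~\ref{thm-main}. If instead $\kappa_r\to 0$ as $r\downarrow 0$, then $\|w_r\|_{L^\infty(\mathcal Q_1)}\to 0$; an iterative decay at shrinking dyadic scales upgrades this single-scale vanishing into super-linear decay $\sup_{\mathcal Q_r(x_\circ,t_\circ)}w\leq C_\eps r^{2-\eps}$ for every $\eps>0$, which is case (ii).

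The principal technical obstacle is the uniform-in-$r$ verification of the hypotheses of Theorem~\ref{thm-main}: the normalization $\theta_r$ must simultaneously keep $\|w_r\|_{{\rm Lip}(\mathcal Q_1)}$ of order $1$ and the growth on $\mathcal Q_{1/\eta}$ below $R^{2-\delta}$, which is delicate because $|f_r|$ itself may grow like $r^{s-1}$ even though $|\nabla f_r|$ is small and must be absorbed by a time-linear correction. Equally nontrivial is the iterative argument in case (ii), which must convert the single-scale smallness $\kappa_r\to 0$ into super-linear decay $o(r^{2-\eps})$ for arbitrarily small $\eps>0$; this requires a summable control of how $\eps(\eta(r))$ improves across dyadic scales, and of how the directions $e_r$ perturb, in order to conclude the Hölder continuity of the normal vector along the regular set.
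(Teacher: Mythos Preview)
Your overall strategy is correct in spirit but has a structural gap that is not merely technical: the normalization $\theta_r\sim r^{1+s}$ cannot support the growth hypothesis of Theorem~\ref{thm-main} uniformly in $r$. With $\theta_r=r^{1+s}$ one has $\|\nabla w_r\|_{L^\infty(\mathcal Q_1)}=r^{-s}\|\nabla w\|_{L^\infty(\mathcal Q_r(x_\circ,t_\circ))}$, so the growth condition already at $R=1$ requires $\|\nabla w\|_{L^\infty(\mathcal Q_r(x_\circ,t_\circ))}\lesssim r^{s}$---which is exactly the optimal regularity you are trying to prove. The argument is circular. You flag this at the end as a ``technical obstacle'', but it dictates the entire shape of the iteration and cannot be patched without changing the normalization. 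Your treatment of case~(ii) inherits the same problem: $\kappa_r\to0$ with $\theta_r\sim r^{1+s}$ only yields $\sup_{\mathcal Q_r}w=o(r^{1+s})$, not $O(r^{2-\eps})$, and no ``iterative decay'' bridges that gap without additional input.

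The paper avoids the circularity by normalizing with the exponent of the growth hypothesis itself: it sets $w_k(x,t):=\eta\,(2^{k_\circ}C_1)^{-1}\,u(2^{-k}x,2^{-2sk}t)/(2^{-k})^{2-\delta}$. Since $u-\varphi$ has globally bounded derivatives, the growth condition of Theorem~\ref{thm-main} holds trivially at $k=k_\circ$. The dichotomy is then based not on the size of $\kappa_r$, but on whether this growth condition persists for all $k\ge k_\circ$. If it does, one reads off directly $|\nabla u|\le C(|x|^{1-\delta}+|t|^{(1-\delta)/(2s)})$, which gives case~(ii) with exponent $2-\delta$. If it first fails at some $k_1$, one applies Theorem~\ref{thm-main} at scale $k_1$ (where the growth bound still holds) and shows, by contradiction against the maximality of $k_1$, that the 1D coefficient must satisfy $A\ge\tfrac15$; this triggers the $C^{1,\tau}$ free-boundary and $|\nabla u|+|\partial_t u|\le C(|x|^s+|t|^s)$ conclusions of Theorem~\ref{thm-main}, giving case~(i). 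The essential point is that the initial a~priori information (Lipschitz bounds on $\nabla u$, $\partial_t u$) matches the normalization $r^{2-\delta}$, so the first step of the iteration is free; your normalization $r^{1+s}$ demands information you do not yet have.
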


This result reads exactly as the one in \cite{BFR2} for the fractional Laplacian $(-\Delta)^s$, $s>\frac12$.
Still, as we will see next, the results of the present paper are quantitative in nature (thanks to Theorem \ref{thm-main}), while the ones in \cite{BFR2} (as well as \cite{CRS}) were qualitative.
Thanks to this fact, we can establish several new regularity estimates for solutions, both in the parabolic and elliptic setting.

\subsection{Optimal regularity estimates}

We present here some consequences of our main result, Theorem \ref{thm-main} and its elliptic counterpart (see Theorem \ref{thm-main-ell} below), in terms of optimal regularity estimates for solutions.
In the elliptic case, we answer an open question left in \cite{CRS}:

\begin{cor}[$C^{1+s}$ elliptic estimates]\label{thm-sol-1}
Let $\LL$ be of the form \eqref{L0}-\eqref{ellipt} with $K$ homogeneous, $\varphi$ be an obstacle satisfying \eqref{obstacle}, and $u$ be the solution to \eqref{obst-pb-ell}.
Then $u\in C^{1+s}(\R^n)$ and
\[\|\nabla u\|_{C^{s}(\R^n)} \leq CC_\circ,\]
with $C$ depending only on $n$, $s$, $\lambda$, and $\Lambda$.
\end{cor}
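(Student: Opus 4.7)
The plan is to derive the global $C^{1+s}$ bound from the one-step quantitative estimate given by the elliptic analog of Theorem \ref{thm-main} (Theorem \ref{thm-main-ell}), applied iteratively at every free boundary point. First I would pass to the equivalent obstacle problem with zero obstacle: set $w:=u-\varphi\ge 0$, which satisfies $-\LL w=\LL\varphi$ in $\{w>0\}$ and $-\LL w\ge \LL\varphi$ in all of $\R^n$. Since $\|\varphi\|_{C^4}\le C_\circ$ and $2s<2$, the right-hand side $f:=\LL\varphi$ belongs to $C^1(\R^n)$ with $\|f\|_{C^1}\le CC_\circ$, and by the standard semiconvexity estimate for nonlocal obstacle problems (see \cite{CRS}), one has $D^2 w\ge -CC_\circ\,\mathrm{Id}$. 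Because $\nabla\varphi\in C^3$, it is enough to prove $\|\nabla w\|_{C^s(\R^n)}\le CC_\circ$.

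The central claim is the pointwise growth bound
\[
\sup_{B_r(x_\circ)} w\,\le\, CC_\circ\,r^{1+s}\qquad\text{for every free boundary point}\ x_\circ\ \text{and every}\ r\in(0,1).
\]
I would prove it by a dyadic iteration of Theorem \ref{thm-main-ell}. After dividing $w$ by $CC_\circ$, translating $x_\circ$ to the origin, and subtracting an appropriate quadratic corrector so that the right-hand side vanishes at $x_\circ$, one considers the rescalings $w_r(x):=w(rx)/\sigma(r)$ with $\sigma(r):=\max\bigl\{\|w\|_{L^\infty(B_r)},\,r^{1+s}\bigr\}$. The semiconvexity rescales as $D^2 w_r\ge -Cr^{2}/\sigma(r)\,\mathrm{Id}\ge -Cr^{1-s}\,\mathrm{Id}$; the rescaled right-hand side has Lipschitz norm $\lesssim r^{2s+1}/\sigma(r)\le Cr^{s}$; and the controlled growth at infinity is propagated inductively from the Lipschitz bound on $w$ at larger scales. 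Theorem \ref{thm-main-ell} then yields the dichotomy: either the rescaled function is close at scale $1$ to a profile $\kappa(x\cdot e)_+^{1+s}$ with $\kappa$ bounded below, forcing $\sigma(r)\sim r^{1+s}$; or the degenerate alternative holds and by further iteration (as in the elliptic analog of Corollary \ref{thm2}) $\sup_{B_r}w\le C_\eps r^{2-\eps}$, which is strictly stronger than $r^{1+s}$. Either way, the growth bound follows.

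Finally, I would upgrade the pointwise growth at free boundary points to a global $C^{1+s}$ estimate by combining it with interior regularity. In $\{w>0\}$, $w$ solves the linear equation $\LL w=-f$ with $f\in C^1$, and the interior estimates give $C^{2s+\alpha}$ control scaling correctly with the distance to $\partial\{w>0\}$. Splitting cases according to whether $|x-y|$ is comparable to $\mathrm{dist}(x,\partial\{w>0\})$, a standard interpolation then yields $|\nabla w(x)-\nabla w(y)|\le CC_\circ|x-y|^s$ for all $x,y\in\R^n$. The main obstacle in the whole scheme is the middle step: propagating the hypotheses of Theorem \ref{thm-main-ell}, especially the smallness of $|\nabla f|$ and the $R^{2-\delta}$ growth at infinity, through all dyadic scales while handling the degenerate alternative via a higher-order subtraction. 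Turning the ``one-scale'' flatness statement of Theorem \ref{thm-main-ell} into the quantitative dyadic decay underlying the growth estimate is where most of the technical effort lies.
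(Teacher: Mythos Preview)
Your overall strategy matches the paper's: iterate Theorem~\ref{thm-main-ell} at every free boundary point to obtain $|\nabla w|\le C|x-x_\circ|^s$, then combine with interior regularity. Two points of comparison.

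First, drop the quadratic-corrector step. Theorem~\ref{thm-main-ell} only requires $|\nabla f|\le\eta$, and after rescaling this is automatically $\le Cr^{2s+1}/\sigma(r)\le Cr^s\to 0$; nothing needs to vanish. Subtracting a quadratic from $w$ would destroy the nonnegativity constraint and the obstacle structure (the zero set of $w-Q$ is not the contact set), so this move is both unnecessary and harmful.

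Second, the paper sets up the iteration differently and more cleanly than your $\sigma(r)$-normalization. It rescales with a fixed \emph{super-critical} exponent $1+s+\alpha_\circ$ (any small $\alpha_\circ\in(0,s)\cap(0,1-s)$), setting
\[
w_k(x)=\frac{\eta}{C_1}\,\frac{w(2^{-k}x)}{(2^{-k})^{1+s+\alpha_\circ}},
\]
and looks for the first $k_\circ$ at which the growth hypothesis $\|\nabla w_k\|_{L^\infty(B_R)}\le R^{s+\alpha_\circ}$ fails. If it never fails, one gets $|\nabla w(x)|\le C|x|^{s+\alpha_\circ}$, better than needed. If it fails at $k_\circ$, then the profile at that scale is \emph{forced} to be nondegenerate (otherwise the triangle inequality shows the growth bound would survive to $k_\circ+1$, a contradiction), and part~(iii) of Theorem~\ref{thm-main-ell} gives the expansion and $|\nabla w|\le C|x|^s$ on $B_{2^{-k_\circ}}$. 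The buffer $\alpha_\circ$ is precisely what makes the dichotomy self-closing. With your $\sigma(r)$-normalization, verifying the growth hypothesis of Theorem~\ref{thm-main-ell} at intermediate scales $R\in[1,1/r]$ is not automatic from the global Lipschitz bound, and ``propagating it inductively'' is the same circularity the paper's super-critical rescaling sidesteps.
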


In the parabolic critical case $\partial_t+\sqrt{-\Delta}$, we establish the optimal $C^{3/2}$-regularity of solutions in space-time, answering a question left open in \cite{CF}.

\begin{cor}[$C^{3/2}_{x,t}$ estimates for $s=\frac12$]\label{thm-sol-2}
Let $\LL$ be of the form \eqref{L0}-\eqref{ellipt} with $K$ homogeneous, $\varphi$ be an obstacle satisfying \eqref{obstacle}, and $u$ be the solution to \eqref{obst-pb}.
Then, $u\in C^{3/2}_{x,t}(\R^n\times (0,T))$ and for any $[t_1,t_2]\subset (0,T]$ we have
\[\|\nabla u\|_{C^{1/2}(\R^n\times [t_1,t_2])}+\|\partial_t u\|_{C^{1/2}(\R^n\times [t_1,t_2])} \leq CC_\circ,\]
with $C$ depending only on $n$, $s$, $\lambda$, $\Lambda$, and $t_1$.
\end{cor}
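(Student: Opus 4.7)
The strategy is to apply Theorem \ref{thm-main} at free boundary points after a suitable normalization, combined with the dichotomy of Corollary \ref{thm1}. Setting $v := u - \varphi$ reduces the problem to a parabolic obstacle problem with zero obstacle and bounded, smooth right-hand side $\LL\varphi$; since $\varphi \in C^4$, it suffices to establish the claimed $C^{3/2}_{x,t}$-regularity for $v$. For $t \in [t_1, T]$ with $t_1 > 0$, standard estimates on parabolic nonlocal obstacle problems yield $v \geq 0$, $\partial_t v \geq 0$, $v$ Lipschitz in $(x,t)$, and $D^2_{x,t} v \geq -C\,\mathrm{Id}$, with constants depending on $C_\circ$ and $t_1$.

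For each free boundary point $(x_\circ, t_\circ) \in \partial\{v > 0\} \cap (\R^n \times [t_1, T))$ and $r > 0$ small, consider the parabolic rescaling
\[v_r(x,t) := r^{-3/2}\, v(x_\circ + rx,\, t_\circ + rt),\]
which satisfies a zero-obstacle problem with right-hand side $f_r(x,t) = r^{-1/2}(\LL\varphi)(x_\circ + rx)$. Although $|f_r|$ may be large, the quantity $|\nabla f_r| + |\partial_t f_r|$ is of order $r^{1/2}$, and the rescaled semiconvexity is $D^2_{x,t} v_r \geq -r^{1/2}\,C\,\mathrm{Id}$; hence, for $r$ small enough depending on $\eta$, both quantities are bounded by $\eta$. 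A preliminary (almost-)optimal growth estimate $v \leq C r^{2-\delta'}$ near $(x_\circ, t_\circ)$, obtained either from general semiconvexity/monotonicity arguments or as a prior step in the iterative scheme, then provides the required growth-at-infinity hypothesis for $v_r$. Theorem \ref{thm-main} then yields a 1D blow-up profile $u_\circ$ as in \eqref{u0} with $\gamma = \tfrac{1}{2} + \tfrac{1}{\pi}\arctan(\varv) \geq \tfrac{1}{2}$.

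Invoking the dichotomy of Corollary \ref{thm1}, we distinguish two cases. At a regular free boundary point (case (i)), the non-degeneracy $\kappa \geq \kappa_\circ > 0$ holds, and the second part of Theorem \ref{thm-main} directly yields
\[|\nabla v(x,t)| + |\partial_t v(x,t)| \leq C\bigl(|x - x_\circ|^{1/2} + |t - t_\circ|^{1/2}\bigr)\]
in a parabolic neighborhood of $(x_\circ, t_\circ)$; the value $s = \tfrac{1}{2}$ and the bound $\gamma \geq \tfrac{1}{2}$ translate to exactly the desired $C^{1/2}$ estimate on the derivatives. At a degenerate point (case (ii)), the sharper growth $v \leq C_\eps r^{2-\eps}$ for all $\eps > 0$, combined with the semiconvexity of $v$, yields via a standard interpolation an even better $C^{1-\eps}$-type estimate for $\nabla v$ and $\partial_t v$ at this point.

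Finally, to globalize these pointwise $C^{1/2}$-bounds, we combine them with interior Schauder estimates for $\partial_t - \LL$ inside the open set $\{v > 0\}$, via a distance-to-the-free-boundary argument: at $(x,t)$ with parabolic distance $d$ from $\partial\{v>0\}$, interior estimates on $\mathcal{Q}_{d/2}(x,t)$ control the $C^{1/2}$-seminorms of $\nabla v, \partial_t v$ in terms of their $L^\infty$-norm on $\mathcal{Q}_d(x,t)$, which in turn is bounded by the growth estimate at the nearest free boundary point. The main obstacle is the \emph{uniform} quantitative control of the $\kappa_\circ$-threshold appearing in Theorem \ref{thm-main}, equivalently the uniform verification of its growth-at-infinity hypothesis across the free boundary: this is resolved by iterating Theorem \ref{thm-main} at a sequence of dyadic scales, observing that at each scale either non-degeneracy is reached (giving the desired bound via Theorem \ref{thm-main}) or the solution is forced into a strictly better regime (giving an even sharper bound), so that the final $C^{1/2}$-estimate is uniform across all points of the free boundary.
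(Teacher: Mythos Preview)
Your overall strategy---pointwise derivative bounds at free boundary points via Theorem~\ref{thm-main}, followed by interior estimates---is exactly the paper's approach, and your final paragraph correctly identifies the dyadic iteration as the mechanism that makes the growth hypothesis and the $\kappa_\circ$-threshold uniform. However, the specific rescaling $v_r = r^{-3/2} v(x_\circ + rx, t_\circ + rt)$ in your second paragraph does not verify the growth-at-infinity hypothesis of Theorem~\ref{thm-main}: even after invoking an almost-optimal bound $|\nabla v| \lesssim \rho^{1/2-\varepsilon}$, one obtains $R\|\nabla v_r\|_{L^\infty(\mathcal Q_R)} \lesssim r^{-\varepsilon} R^{3/2-\varepsilon}$, and the factor $r^{-\varepsilon}$ violates the condition $R\|\nabla v_r\| \le R^{2-\delta}$ for $R$ near~$1$ as $r\to 0$.

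The paper avoids this by rescaling with exponent $2-\delta$ rather than $3/2$: it sets $w_k(x,t) = c\, 2^{k(2-\delta)} v(2^{-k}x, 2^{-k}t)$, so that the growth condition is trivially satisfied at the starting scale $k_\circ$, and the iteration then decides whether it persists for all $k$ (degenerate case, yielding $|\nabla v| + |\partial_t v| \le C r^{1-\delta}$) or fails at some $k_1$ (regular case, where non-degeneracy $\kappa \ge 1/5$ is then \emph{forced} by the failure itself, and the second part of Theorem~\ref{thm-main} gives the $|x|^{1/2}+|t|^{1/2}$ bound directly). The paper records the resulting uniform estimate as \eqref{claim-final} within the proof of Corollaries~\ref{thm1}--\ref{thm2}; once that is in hand, the proof of Corollary~\ref{thm-sol-2} is one line: for $s=\tfrac12$, \eqref{claim-final} reads $|\nabla v| + |\partial_t v| \le C(|x-x_\circ|^{1/2} + |t-t_\circ|^{1/2})$ at every free boundary point, and interior estimates globalize. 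No separate semiconvexity interpolation and no separate appeal to the dichotomy of Corollary~\ref{thm1} are needed.
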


In case $s>\frac12$, the results in \cite{CF} imply that solutions $u$ are $C^{1+s}$ in space and $C^{\frac{1+s}{2s}-\eps}$ in time, for all $\eps>0$.
Here, we improve the regularity in time to the optimal scaling.
Notice that our results hold for the general class of kernels considered in \cite{CRS}, but they are new even for the fractional Laplacian.

\begin{cor}[Further regularity in time, $s>1/2$]\label{thm-sol-3}
Let $\LL$ be of the form \eqref{L0}-\eqref{ellipt} with $K$ homogeneous, $\varphi$ be an obstacle satisfying \eqref{obstacle}, and $u$ be the solution to \eqref{obst-pb}.
\begin{itemize}
\item If $s\in (\frac12,\frac{\sqrt5-1}{2})$  then
$u\in C^{1+s}_{x,t}(\R^n\times (0,T))$ and for any $[t_1,t_2]\subset (0,T]$ we have
\[\hspace{15mm} \|\nabla u\|_{C^{s}(\R^n\times [t_1,t_2])}+\|\partial_t u\|_{C^{s}(\R^n\times [t_1,t_2])} \leq CC_\circ,\]
with $C$ depending only on $n$, $s$, $\lambda$, $\Lambda$, and $t_1$.
\item If $s\in [\frac{\sqrt5-1}{2},1)$ then
$u\in C^{1+s}_{x}\cap C^{\frac{1}{s}-\eps}_t(\R^n\times (0,T))$ for any $\eps>0$, and for any $[t_1,t_2]\subset (0,T]$ we have
\[\hspace{25mm}\|\partial_t u\|_{C^{\frac{1}{s}-1-\eps}_{t}(\R^n\times [t_1,t_2])} \leq C_\eps C_\circ,\]
with  $C_\eps$ depending only on $n$, $s$, $\lambda$, $\Lambda$, $\eps$, and $t_1$.
\end{itemize}
\end{cor}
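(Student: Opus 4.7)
The plan is to combine Corollary \ref{thm2} (the regular/type-(ii) dichotomy) with an iteration of the quantitative estimate of Theorem \ref{thm-main}. The global bound $u \in C^{1+s}_x$ follows essentially as in the elliptic Corollary \ref{thm-sol-1}: at each regular free boundary point, Theorem \ref{thm-main} (after rescaling) gives the pointwise bound $|\nabla u(x,t)| \le C(|x-x_0|^s + |t-t_0|^s)$, while at type-(ii) points the stronger bound $\sup_{\mathcal Q_r}(u-\varphi) \le C_\varepsilon r^{2-\varepsilon}$ gives $C^{1+s}$-behavior a fortiori. A covering argument based on the $C^{1,\alpha}$-regularity of the regular stratum of the free boundary then assembles these into a global estimate.

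The crux is the time Hölder estimate for $\partial_t u$. Since $s>\tfrac12$, the 1D profile from Theorem \ref{thm-main} at any regular free boundary point $(x_0,t_0)$ is the \emph{time-independent} $u_\circ(x)=\kappa(x\cdot e)_+^{1+s}$. I would iterate Theorem \ref{thm-main} on dyadic parabolic scales around $(x_0,t_0)$ to upgrade the initial closeness $\sup_{\mathcal Q_r}|u-\varphi-u_\circ| \le Cr^{1+s}$ to the near-optimal $\sup_{\mathcal Q_r}|u-\varphi-u_\circ|\le C_\varepsilon r^{2-\varepsilon}$, where the saturation exponent $2-\varepsilon$ is precisely the ceiling imposed by the bound at type-(ii) points. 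Parabolic rescaling then yields
\[
|\partial_t u(x_0,t)| \le C_\varepsilon |t-t_0|^{(2-\varepsilon-2s)/(2s)} = C_\varepsilon |t-t_0|^{1/s-1-\varepsilon'}
\]
for $\varepsilon'=\varepsilon/(2s)$. Combining this with interior parabolic regularity away from the free boundary, with the analogous (and stronger) bound at type-(ii) points, and with the $C^{1,\alpha}$-regularity of the regular stratum to cover intermediate points, promotes the pointwise control into a global time-Hölder bound for $\partial_t u$.

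The dichotomy in the statement reflects the comparison between $1/s-1$ and $s$, which are equal exactly at $s=(\sqrt{5}-1)/2$, the positive root of $s^2+s-1=0$. For $s\in(\tfrac12,\tfrac{\sqrt{5}-1}{2})$, one has $1/s-1>s$, so the iteration-saturated exponent $1/s-1$ is not binding in the time direction; the $C^s$ estimate already obtained from a single application of Theorem \ref{thm-main} is enough, and combined with the $C^s$ space regularity it yields $u\in C^{1+s}_{x,t}$. For $s\in[\tfrac{\sqrt{5}-1}{2},1)$, $1/s-1\le s$ and only the weaker time Hölder exponent $1/s-1-\varepsilon$ is available, giving the separate statement $u\in C^{1+s}_{x}\cap C^{1/s-\varepsilon}_{t}$. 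The main obstacle is the iteration producing the saturation rate $r^{2-\varepsilon}$: since Theorem \ref{thm-main} is a single-scale result with a modulus $\varepsilon(\eta)$, one must carefully preserve its hypotheses (almost-convexity, controlled growth at infinity, smallness of the data) across all scales and choose a consistent sequence of 1D profiles along the $C^{1,\alpha}$-regular free boundary so that the improvement of flatness compounds geometrically all the way up to the $r^{2-\varepsilon}$ ceiling.
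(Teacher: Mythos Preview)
Your approach has a genuine gap in the step where you promote the pointwise time-decay of $\partial_t u$ at free boundary points to a global $C^\mu_t$ estimate via interior regularity. Even granting the bound $|\partial_t u(x,t)| \le C\big(|x-x_\circ|^{\min\{s,2-2s-\delta\}}+|t-t_\circ|^{\min\{s,\,(2-2s-\delta)/(2s)\}}\big)$ at every free boundary point (this is exactly \eqref{claim-final}, obtained in the proof of Corollaries~\ref{thm1}--\ref{thm2} by the very iteration you describe), the \emph{spatial} exponent $\min\{s,2-2s-\delta\}$ governs the size of $\partial_t u$ in a parabolic cylinder: at parabolic distance $r$ one only has $|\partial_t u|\le Cr^{\min\{s,2-2s-\delta\}}$, so interior estimates give $|\partial_{tt}u|\le Cr^{\min\{s,2-2s-\delta\}-2s}$, and the standard interpolation yields at best $\partial_t u\in C^{\min\{s,2-2s-\delta\}/(2s)}_t=C^{\min\{1/2,\,1/s-1-\delta'\}}_t$. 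For every $s\in(\tfrac12,\tfrac23)$ this is only $C^{1/2}_t$, strictly weaker than the claimed $C^{\min\{s,1/s-1-\varepsilon\}}_t$; thus your argument fails for both regimes of the corollary whenever $s<\tfrac23$.

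The paper's proof uses a genuinely different mechanism: the semiconvexity $\partial_{tt}u\ge -C$ yields an $L^1$ bound on $|\partial_{tt}u|$, hence on the difference quotient $w=(\partial_t u(\cdot,\cdot+h)-\partial_t u)/|h|^\mu$. On the contact set, $w$ is pointwise bounded by \eqref{claim-final}; therefore $\tilde w=\max\{w,C_5\}$ is a global parabolic subsolution with weighted-$L^1$ control, and an $L^1$-to-$L^\infty$ estimate for subsolutions gives $w\le C$, i.e.\ $\partial_t u\in C^\mu_t$ with $\mu=\min\{s,1/s-1-\varepsilon\}$. For $s<\tfrac{\sqrt5-1}{2}$ the same trick, applied to the (also semiconvex) spatial second derivatives $\partial_{\xi\xi}u$, gives the full $C^{1+s}_{x,t}$. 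As a secondary point, your proposed iteration of Theorem~\ref{thm-main} to reach $|u-u_\circ|\le C_\varepsilon r^{2-\varepsilon}$ at regular points is not supported by the paper's tools: in the parabolic setting Theorem~\ref{thm-main} supplies only the fixed-$\tau$ free boundary regularity and the bound $|\nabla u|+|\partial_t u|\le C(|x|^s+|t|^s)$, with no higher-order expansion around $u_\circ$.
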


The regularity estimate for $s<\frac{\sqrt5-1}{2}$ is clearly optimal (in view of the description of solutions in Theorem \ref{thm2}), and we expect the regularity estimate for $s>\frac{\sqrt5-1}{2}$ to be almost-optimal.

We thus find a new threshold at which the regularity of solutions changes and, curiously,  this threshold is at exactly the golden ratio 
\[s={\textstyle\frac{\sqrt5-1}{2}}\approx 0.61803.\]
The reason for this is that, when looking at the regularity of solutions in $t$, the ``worst points'' for $s<\frac12(\sqrt5-1)$ (case (i) above) are the regular ones, while for $s\geq\frac12(\sqrt5-1)$  (case (ii) above) the ``worst regularity'' happens at singular points.

\subsection{Nonsymmetric operators}

The new quantitative methods developed in this paper are very flexible. For instance, the symmetry assumption on the kernels in \eqref{L0} is not needed for some of our results to hold, and we can establish new regularity results for solutions and free boundaries in the non-symmetric case.

As a model case, we consider the elliptic problem for the fractional Laplacian with critical drift ($s=\frac12$) and establish the optimal regularity of solutions, thus answering an open question from \cite{FR18}.

\begin{cor}\label{thm-sol-4}
Let $\LL=\sqrt{-\Delta}+b\cdot \nabla$ with $b\in \R^n$, $\varphi$ an obstacle satisfying \eqref{obstacle}, and $u$ the solution to \eqref{obst-pb}.
Then $u\in C^{1+\gamma_b}(\R^n)$, with 
\[\gamma_b:=\frac12-\frac{1}{\pi}\arctan |b|\qquad \text{and}
\qquad \|u\|_{C^{1+\gamma_b}(\R^n)} \leq CC_\circ,\]
with $C$ depending only on $n$ and $|b|$.
\end{cor}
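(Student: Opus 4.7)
The plan is to apply the elliptic counterpart of our main quantitative theorem, Theorem~\ref{thm-main-ell}, in its extension to non-symmetric operators (alluded to in the paragraph preceding the statement), and then to read off the sharp H\"older exponent from an explicit one-dimensional computation. Note that $\LL=\sqrt{-\Delta}+b\cdot\nabla$ is a nonlocal operator of order $1$ whose kernel is non-symmetric---the drift being precisely the contribution of the antisymmetric part---so it fits within that framework. As usual I would work with $w:=u-\varphi$, which is nonnegative, semiconvex (by the standard semiconvexity estimate $D^2 u\ge -CC_\circ\,\mathrm{Id}$), and solves the obstacle problem with zero obstacle and right-hand side $f=\LL\varphi$; after rescaling around any free boundary point $x_\circ\in\partial\{u>\varphi\}$, the hypotheses of (the non-symmetric version of) Theorem~\ref{thm-main-ell} are met.

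Applying this theorem at each $x_\circ$ yields the usual dichotomy: either (i) a blow-up of $w$ is close to a $1$D profile $\kappa(x\cdot e)_+^{1+\gamma}$ with $\gamma=\gamma(e)\in(0,1)$, or (ii) $\sup_{B_r(x_\circ)}w\le C_\eps r^{2-\eps}$ for every $\eps>0$. In case (i), $\gamma$ is determined by the condition $\LL\bigl((x\cdot e)_+^{1+\gamma}\bigr)=0$ on $\{x\cdot e>0\}$. Reducing to one dimension and using the identity $\sqrt{-\partial_{tt}}=\mathcal{H}\partial_t$ together with the classical Hilbert-transform formula for $t_+^\gamma$, we get $\sqrt{-\Delta}(x\cdot e)_+^{1+\gamma}=(1+\gamma)\cot(\pi\gamma)(x\cdot e)_+^\gamma$ in $\{x\cdot e>0\}$; combined with $b\cdot\nabla(x\cdot e)_+^{1+\gamma}=(1+\gamma)(b\cdot e)(x\cdot e)_+^\gamma$ in the same set, the equation reduces to $\cot(\pi\gamma)+b\cdot e=0$ (up to sign convention), whose unique root in $(0,1)$ is $\gamma(e)=\tfrac12-\tfrac{1}{\pi}\arctan(b\cdot e)$. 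The minimum over $e\in\mathbb{S}^{n-1}$ is $\gamma_b=\tfrac12-\tfrac{1}{\pi}\arctan|b|$, attained at $e=b/|b|$.

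Since case (ii) already gives growth strictly better than $r^{1+\gamma_b}$ for small $\eps$, we conclude $\sup_{B_r(x_\circ)}(u-\varphi)\le CC_\circ r^{1+\gamma_b}$ uniformly at every free boundary point. Combining this pointwise growth with interior elliptic estimates for $\LL u=0$ in $\{u>\varphi\}$ and with the $C^4$-regularity of $\varphi$ inside the coincidence set $\{u=\varphi\}$, the standard argument converting uniform pointwise growth at free boundary points into a global H\"older norm then yields $u\in C^{1+\gamma_b}(\R^n)$ with the stated bound. The main obstacle I foresee is the extension of Theorem~\ref{thm-main-ell} and of its underlying blow-up/Liouville classification to non-symmetric kernels, since the symmetry of the kernel is exploited in several points of the argument for the symmetric version and must be replaced by different estimates; the $1$D trigonometric computation, by contrast, is elementary, and reassuringly matches the parabolic formula of Corollary~\ref{thm1} via the correspondence $\varv_\circ\leftrightarrow -b\cdot e$.
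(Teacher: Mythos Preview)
Your approach is correct and matches the paper's: Corollary~\ref{thm-sol-4} is deduced as a particular case of Corollary~\ref{cor-sol-nonsymm}, which obtains $u\in C^{1+\gamma_\LL}$ for general (possibly non-symmetric) operators by iterating Theorem~\ref{thm-main-ell} at dyadic scales and reading off the one-dimensional exponents $\gamma_{\LL,e}$ from \cite[Corollary~4.6]{DRSV}; the minimum over $e$ then gives $\gamma_b=\tfrac12-\tfrac1\pi\arctan|b|$ exactly as you compute.

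Your one stated concern is misplaced: Theorem~\ref{thm-main-ell} is already formulated and proved for operators $\LL$ as in Definition~\ref{L}, which for $s=\tfrac12$ explicitly includes a drift term $b\cdot\nabla$, and the underlying tools (boundary Harnack, classification of blow-ups, boundary regularity in $C^{1,\tau}$ domains) are imported from \cite{RS-bdryH,DRSV} where they are established in the non-symmetric setting. So no further extension is needed.
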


Notice that $\gamma_b\in(0,\frac12)$ and $\gamma_b\to\frac12$ as $b\to0$. 
The expression of $\gamma_b$ comes from an explicit computation of 1D solutions; see \cite{FR18,DRSV} for more details.

\subsection{Acknowledgements}

AF has been supported by the European Research Council (ERC) under the Grant Agreement No 721675.
XR has been supported by the European Research Council (ERC) under the Grant Agreement No 801867, by the AEI project PID2021-125021NA-I00 (Spain), by AGAUR Grant 2021 SGR 00087 (Catalunya), by MINECO Grant RED2022-134784-T (Spain), and by the SpanishAEI through the Mar\'ia de Maeztu Program for Centers and Units of Excellence in R\&D CEX2020-001084-M.
JS has been supported by Swiss NSF Ambizione Grant PZ00P2 180042 and by the European Research Council (ERC) under the Grant Agreement No 948029.

\subsection{Organization of the paper}

The paper is organized as follows.
In Section \ref{sec2} we prove all our results in the elliptic setting, deducing in particular Corollaries \ref{thm-sol-1} and \ref{thm-sol-4}.
In Section \ref{sec3} we establish a new parabolic boundary Harnack inequality, which plays a crucial role in the proof of our main results in the parabolic setting.
In Section \ref{sec4} we prove Theorem \ref{thm-main}.
Finally, in Section \ref{sec5} we deduce Corollaries~\ref{thm1}, \ref{thm2}, \ref{thm-sol-2}, and \ref{thm-sol-3}

\section{The elliptic case}
\label{sec2}

In this section we prove the analogue of Theorem~\ref{thm-main} in the stationary case. We start from this case because, 
in this setting, the arguments are simpler and are valid for every $s\in(0,1)$.
In addition, the proofs are shorter, since we can rely on several known results from \cite{CRS,RS-bdryH}. 

Actually, thanks to the recent (elliptic) results from \cite{DRSV}, we can establish our results also for non-symmetric operators.
The general class of operators that we consider in the elliptic case is the following.

\begin{defi}\label{L}
Throughout this Section, we consider operators $\LL$ of the form
\begin{equation*}\label{L0-A}
\LL u(x) = \int_{\R^n} \bigl(u(x+y)-u(x)\bigr) K(y)\,dy\qquad \textrm{if}\quad s\in(0,{\textstyle\frac12}),
\end{equation*}
\begin{equation*}\label{L0-B}
\LL u(x) = \textrm{p.v.}\int_{\R^n} \bigl(u(x+y)-u(x)\bigr) K(y)\,dy\,+\,b\cdot\nabla u(x) \qquad \textrm{if}\quad s={\textstyle\frac12},
\end{equation*}
\begin{equation*}\label{L0-C}
\LL u(x) = \int_{\R^n} \bigl(u(x+y)-u(x)-\nabla u(x)\cdot y\bigr) K(y)\,dy \qquad \textrm{if}\quad s\in ({\textstyle\frac12},1),
\end{equation*}
with $b\in\R^n$ satisfying $|b|\leq\Lambda$, and
\begin{equation*} \label{ellipt-A}
\frac{\lambda}{|y|^{n+2s}}\leq K(y)\leq \frac{\Lambda}{|y|^{n+2s}}.
\end{equation*}
If $s=\frac12$ we must add the  standard ``zero-moment assumption''    $\int_{R_{2r}\setminus B_r }yK(y)dy=0$ for all $r>0$, so that the principal value integral defining $\LL$ is well-defined.
\end{defi}

We refer to \cite{DRSV} for some basic interior and boundary regularity estimates for such class of operators.
This is basically the most general scale-invariant class of linear operators of order $2s$ for which we have both interior and boundary Harnack.

\subsection{Main elliptic result}
The main result of this section is the following quantitative estimate.

\begin{thm}\label{thm-main-ell}
Let $s \in(0,1)$, $\LL$ as in Definition \ref{L}, and $\alpha_\circ\in (0,s)\cap (0,1-s)$.


Let $\eta>0$ and suppose that  $u\in {\rm Lip}(\R^n)$ satisfies:
\begin{itemize}
\item[$\bullet$]  $u$ is nonnegative and almost-convex in a large ball:
\[u\geq0\quad \textrm{and} \quad D^2 u \ge - \eta \,{\rm{Id}}\qquad \textrm{in}\quad B_{1/\eta},\quad \textrm{with}\quad  0 \in \partial \{u>0\}.\]

\item[$\bullet$]  $u$ solves the obstacle problem with a small right hand side:
\[\LL u=f\quad \textrm{in}\quad \{u>0\}\cap B_{1/\eta} \quad \textrm{and} \quad \LL u\leq f \quad \textrm{in} \quad B_{1/\eta}, \quad \textrm{with} \quad  |\nabla f|\leq \eta.\]

\item[$\bullet$]   $u$ has a controlled growth at infinity:
\[\|\nabla u\|_{L^\infty(B_R)} \le R^{s+\alpha_\circ} \qquad \textrm{for all}\quad R\ge 1.\]
\end{itemize}
Then:
\begin{itemize}
\item[(i)] There exist $e\in \mathbb S^{n-1}$ and a nonnegative convex 1D solution $u_\circ(x)=U(x\cdot e)$, satisfying 
\begin{equation}\label{1D-profile}
\begin{split} 
\LL(\nabla u_\circ) & =0\qquad \textrm{in}\quad \{x\cdot e>0\}\\
u_\circ&=0\qquad \textrm{in}\quad \{x\cdot e\leq 0\}\\
\|\nabla u_\circ\|_{L^\infty(B_R)}& \leq R^{s+\alpha_\circ}\quad  \text{for all} \quad R\geq1,
\end{split}
\end{equation}
such that 
\[
\|u- u_\circ\|_{{\rm Lip} (B_1)} \le \eps(\eta),
\]
where $\eps(\eta)$ is a modulus of continuity depending only on $n$, $s$, $\alpha_\circ$, $\lambda$, and $\Lambda$.
\item[(ii)]
Moreover, given $\kappa_\circ>0$ exists $\eps>0$ such that  if $\|u_\circ\|_{{\rm Lip} (B_1)}\geq \kappa_\circ>0$ and $\eps(\eta)<\eps_\circ$, then the free boundary $\partial \{u>0\}$ is a $C^{1,\tau}$ graph in $B_{1/2}$, for some $\tau>0$.
\item[(iii)]
If in addition the kernel $K$ of the operator $\LL$ is homogeneous, then $u_\circ$ is homogeneous of degree $\gamma=\gamma(\LL,e)\in (0,2s)\cap (2s-1,1)$ and we have the expansion
\[\qquad \qquad |u-u_\circ|\leq C|x|^{1+\gamma+\tau}\quad \textrm{and}\quad |\nabla u-\nabla u_\circ|\leq C|x|^{\gamma+\tau}\quad \textrm{for}\quad x\in B_1.\]
Furthermore, if $K$ is symmetric, then $\gamma\equiv s$ for all $e\in \mathbb S^{n-1}$.
\end{itemize}
Here, the constants $C$, $\eps_\circ$, and $\tau$ depend only on  $n$, $s$, $\alpha_\circ$, $\lambda$, $\Lambda$, and $\kappa_\circ$.
\end{thm}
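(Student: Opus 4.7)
My approach would be to establish all three parts by a compactness-plus-contradiction argument, reducing (i) to a Liouville-type classification of global convex solutions, and then bootstrapping to get (ii) and (iii).

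For (i), I would argue by contradiction: suppose the conclusion fails, so there exist $\delta_\circ>0$, a sequence $\eta_k\downarrow 0$, and $u_k$ satisfying the three bullet hypotheses but with $\|u_k-u_\circ\|_{\mathrm{Lip}(B_1)}\ge \delta_\circ$ for every admissible 1D profile of \eqref{1D-profile}. The almost-convexity $D^2u_k\ge-\eta_k\mathrm{Id}$ combined with the growth bound $\|\nabla u_k\|_{L^\infty(B_R)}\le R^{s+\alpha_\circ}$ yields uniform local Lipschitz and semiconvexity estimates, so after passing to a subsequence $u_k\to u_\infty$ locally uniformly on $\R^n$. The limit is a nonnegative, globally convex function with $0\in\partial\{u_\infty>0\}$, it inherits the growth $\|\nabla u_\infty\|_{L^\infty(B_R)}\le R^{s+\alpha_\circ}$, and it solves $\LL u_\infty=0$ in $\{u_\infty>0\}$ with $\LL u_\infty\le 0$ globally (the right-hand sides $f_k$ converge, up to subsequence, to a constant by $|\nabla f_k|\le\eta_k$; that constant vanishes by comparison at the free boundary point $0$). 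Since $1+s+\alpha_\circ<2$, the classification of global convex subquadratic solutions to the obstacle problem for operators in Definition \ref{L} (\cite{CRS} in the symmetric case; extended to the present class in \cite{DRSV}) forces $u_\infty(x)=U(x\cdot e)$ for some $e\in\mathbb S^{n-1}$ and convex $U\ge 0$ fulfilling the properties of \eqref{1D-profile}, which contradicts the choice of $u_k$.

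For (ii), the assumption $\|u_\circ\|_{\mathrm{Lip}(B_1)}\ge\kappa_\circ$ makes the limit non-degenerate: $U'$ is bounded below on $\{x\cdot e>c\}\cap B_1$. Combining the quantitative closeness from (i) with the almost-convexity $D^2u\ge-\eta\,\mathrm{Id}$, one gets that $u$ is strictly monotone in every direction of a small cone around $e$, which pins down $\partial\{u>0\}\cap B_{1/2}$ as a Lipschitz graph. Upgrading to $C^{1,\tau}$ is then a standard application of the nonlocal boundary Harnack inequality of \cite{RS-bdryH,DRSV} to the quotients $\partial_i u/\partial_e u$, which satisfy a linear equation in $\{u>0\}$ whose right-hand side is controlled by $|\nabla f|\le\eta$.

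For (iii), when $K$ is homogeneous the 1D profiles in \eqref{1D-profile} are homogeneous of some degree $\gamma\in(0,2s)\cap(2s-1,1)$ (with $\gamma\equiv s$ in the symmetric case, by \cite{CRS}; the general formula is given in \cite{DRSV}). I would then iterate: defining $v_r(x):=r^{-(1+\gamma)}u(rx)$ and using homogeneity of $\LL$, together with the scaling of $f$ and $\nabla f$, one checks that $v_r$ satisfies the hypotheses of (i)--(ii) uniformly for $r\in(0,1]$. Applying (i) at each dyadic scale gives an approximating profile $u_{\circ,r}(x)=U(x\cdot e_r)$, and the $C^{1,\tau}$ regularity from (ii) yields geometric decay $|e_r-e_{r/2}|\le Cr^\tau$; a standard telescopic sum converts this into the pointwise expansion $|u-u_\circ|\le C|x|^{1+\gamma+\tau}$ and its gradient counterpart.

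The principal difficulty is the Liouville step used in (i): in the non-symmetric regime of Definition \ref{L}, symmetrization arguments break down and one must rely on the sharper convex-solution classification of \cite{DRSV}. A secondary, more technical issue is ensuring in the compactness step that the limit equation is genuinely $\LL u_\infty=0$ in $\{u_\infty>0\}$ (and not just up to an additive constant), which is where the hypothesis $0\in\partial\{u>0\}$ and the one-sided inequality $\LL u\le f$ are both essential.
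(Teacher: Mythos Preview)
Your strategy matches the paper's: compactness-and-contradiction for (i), monotonicity in a cone plus boundary Harnack for (ii), and a higher-order expansion for (iii). Two points deserve correction, and one is a genuine difference.

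First, the classification result you invoke for the limit $u_\infty$ is not in \cite{DRSV}; that paper provides the boundary Harnack and stability tools for non-symmetric operators, but the obstacle-problem classification itself is proved in the present paper as Proposition~\ref{prop-classification-ell} (following the scheme of \cite{CRS}). Since you correctly identify this Liouville step as ``the principal difficulty,'' you should not treat it as a black box from the literature.

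Second, locally uniform convergence of $u_k$ does not by itself contradict a lower bound on $\|u_k-u_\circ\|_{\mathrm{Lip}(B_1)}$. The paper handles this by first proving an almost-optimal $C^{1,\gamma-\varepsilon}$ estimate (Corollary~\ref{cor-almost-optimal-ell}, whose proof already uses Proposition~\ref{prop-classification-ell}), so that Arzel\`a--Ascoli gives $C^1$ compactness directly. Your semiconvexity argument can be made to work too---uniformly semiconvex functions converging locally uniformly to a $C^1$ limit have uniformly convergent gradients---but this requires first applying the classification to see that $u_\infty$ is a 1D profile and hence $C^1$, and only then upgrading the convergence. You should make this two-step structure explicit.

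For (iii), the paper does not iterate: once (ii) gives a $C^{1,\tau}$ free boundary, the expansion $|\nabla u-\nabla u_\circ|\le C|x|^{\gamma+\tau}$ follows directly from the boundary regularity in $C^{1,\tau}$ domains of \cite[Theorem~1.2]{DRSV} (this is Lemma~\ref{lemm-main-ell-3}). Your dyadic iteration is a reasonable alternative in spirit, but as written it is not obvious that the rescalings $v_r$ satisfy the growth hypothesis uniformly in $r$ (note $\gamma$ can exceed $s+\alpha_\circ$), so the iteration would need a dichotomy of the type used later in Corollary~\ref{cor-sol-ell-1+s+a} rather than a bare rescaling.
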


Part (i) of this quantitative result is basically equivalent to showing that all blow-ups are~1D (at nondegenerate points).
Part (ii) is essentially the regularity of the free boundary near regular points, and is somewhat independent from (i).
Still, such combined quantitative versions can be iterated and will give us some more information, as we will show later.

\begin{rem}
Once $e\in \mathbb S^{n-1}$ is fixed, the 1D profile $u_\circ$ is uniquely determined, up to a multiplicative constant (see Proposition \ref{prop-classification-ell}).
Moreover, when the kernel $K$ is homogeneous then $u_\circ$ can be computed explicitly, and if $K$ is in addition symmetric then $u_\circ(x)=c(x\cdot e)_+^{1+s}$, as in \cite{CRS}.
\end{rem}

\subsection{Proof of the main elliptic result}

To prove the result we will need several ingredients.
The first one is the (elliptic) boundary Harnack for such class of operators.

\begin{thm}[\cite{RS-bdryH,DRSV}]\label{ellbdryharnack}
Let $s \in(0,1)$ and $\LL$ as in Definition \ref{L}.
Let $\Omega\subset \R^n$ be a Lipschitz graph in $B_1$, with $0\in \partial\Omega$.
Then, there exist  positive constants $\eta$, $C$,  and $\tau$ depending only on $n$, $s$, $\lambda$, $\Lambda$, and the Lipschitz norm of $\partial\Omega$ in $B_1$, such that the following holds.

Let $v_1,v_2$ be weak (or viscosity) solutions of 
\[
\big|\LL v_i  \big| \le  \eta \quad \mbox{in}\quad \Omega \cap B_1,  \qquad v_i \equiv 0 \quad \mbox{in}\quad\Omega^c \cap B_1,
\]
satisfying 
\[v_i\geq0\qquad \textrm{in}\quad \R^n\qquad \textrm{and}\qquad \int_{\R^n}\frac{|v_i(x)|}{1+|x|^{n+2s}}\,dx=1.\]
Then, there exists $\tau>0$ such that 
\[
\left\|\frac{v_1}{v_2}\right\|_{C^{\tau}(\overline\Omega\cap B_{1/2})}\le  C.
\]
\end{thm}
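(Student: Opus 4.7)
The plan is to prove the boundary Harnack inequality by the classical three-step scheme adapted to nonlocal operators on Lipschitz domains: (a) a two-sided boundary growth estimate for each $v_i$, (b) an oscillation-decay lemma for the quotient $w = v_1/v_2$, and (c) geometric iteration at dyadic scales. The main technical obstacle will be the simultaneous presence of a merely Lipschitz boundary, a possibly nonsymmetric kernel $K$, and the drift term appearing in Definition~\ref{L} when $s=\frac{1}{2}$, which preclude the direct use of smooth barriers or of the symmetric weak maximum principle.

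First I would establish the two-sided growth estimate
\[ c\, d(x)^{s} \leq v_i(x) \leq C\, d(x)^{s} \qquad \text{for every } x \in \Omega \cap B_{3/4}, \]
where $d(x) := \text{dist}(x,\partial\Omega)$. For the upper bound, I would compare $v_i$ with a scale-invariant supersolution built from $(x\cdot e)_+^{s}$ restricted to a cone containing $\Omega$; the contribution of the tail on $\{|x|>1\}$ (where $v_i$ is merely known to be nonnegative with normalized weighted $L^1$-mass) is absorbed into the small right-hand side after shrinking $\eta$. For the lower bound, the normalization $\int v_i(x)/(1+|x|^{n+2s})\,dx = 1$ forces $v_i$ to be strictly positive on a fixed compact set away from $\partial\Omega$, and the interior Harnack inequality available for the operators of Definition~\ref{L} (from \cite{DRSV}) then propagates this mass along Harnack chains inside the Lipschitz domain down to any interior point.

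Next I would prove the oscillation-decay step: if $a \leq v_1/v_2 \leq b$ on $\Omega \cap B_r$, then on $\Omega \cap B_{r/2}$ the oscillation drops to $(1-\theta)(b-a)$ for some fixed $\theta>0$. Consider the nonnegative auxiliary functions
\[ w_{-} := v_1 - a\, v_2, \qquad w_{+} := b\, v_2 - v_1, \]
which both vanish on $\Omega^c \cap B_1$ and satisfy $|\LL w_{\pm}| \leq \eta(b-a)$ modulo controlled tail contributions. At every $x \in \Omega \cap B_{r/2}$ one of $w_{\pm}(x)$ is at least $\frac{1}{2}(b-a)v_2(x)$; a weak Harnack inequality for nonnegative nonlocal supersolutions vanishing on $\Omega^c$, combined with the lower bound $v_2 \geq c\,d^{s}$ from step (a), promotes this pointwise information into a uniform lower bound $w_{\pm} \geq \theta (b-a)\, v_2$ throughout $\Omega \cap B_{r/2}$, thereby improving the corresponding bound on $v_1/v_2$.

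Iterating this decay at the dyadic scales $r = 2^{-k}$ yields the $C^{\tau}$ modulus of continuity with $\tau = -\log_2(1-\theta) > 0$, which is the claimed estimate. The hardest point is step (b) in the generality of Definition~\ref{L}: for nonsymmetric $K$ the extremal operators $M^\pm$ are themselves nonsymmetric, so $w_{-}$ and $w_{+}$ no longer satisfy comparable equations and one must track the anti-symmetric part of $\LL$ separately; similarly, the drift term at $s=\frac{1}{2}$ must be absorbed at each scale by exploiting the zero-moment hypothesis $\int_{B_{2r}\setminus B_r} yK(y)\,dy = 0$, which keeps the drift invariant under rescaling. I expect that, following \cite{RS-bdryH,DRSV}, these difficulties are overcome by using scale-invariant subsolutions built in narrow cones adapted to the Lipschitz graph together with careful bookkeeping of the tail contributions under successive rescalings.
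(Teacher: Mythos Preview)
This theorem is not proved in the paper --- it is quoted from \cite{RS-bdryH,DRSV} and used as a black box --- so there is no in-paper argument to compare against. That said, your sketch has a genuine gap. Step~(a), the two-sided estimate $c\,d(x)^{s} \le v_i(x) \le C\,d(x)^{s}$, is false in general Lipschitz domains: the boundary growth exponent depends on the aperture of the cones trapping $\partial\Omega$, and barriers only yield $c\,d^{\beta_+} \le v_i \le C\,d^{\beta_-}$ with $\beta_- < s < \beta_+$ in general. Since these exponents do not match, step~(b) --- which uses $v_2 \ge c\,d^{s}$ to convert pointwise information on $w_\pm$ into uniform bounds --- collapses. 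In fact, showing that all positive solutions share the \emph{same} boundary growth rate is precisely the content of the boundary Harnack principle, so assuming it in~(a) is circular.

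The scheme actually used in \cite{RS-bdryH,DRSV} --- and mirrored in the parabolic proof of Theorem~\ref{bdryharnack} in this paper --- avoids this. One proves only a Carleson-type upper bound (value controlled by the value at a nontangential point) together with a qualitative lower bound $v_i(re_n)\ge c\,r^{2s-\gamma}$ coming from a subsolution supported in an interior cone, for some $\gamma>0$ depending on the Lipschitz constant. The oscillation decay is then run as an inductive sandwich $m_k v_2 \le v_1 \le M_k v_2$ on geometric scales; the key nonlocal difficulty, that $v_1 - m_k v_2$ is not globally nonnegative, is handled by feeding the inductive hypothesis at \emph{all previous scales} into the tail control at the current scale (this is exactly what forces the growth exponent $2s-\gamma_\circ$ to be strictly larger than the exponent governing the sandwich decay). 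Your sketch mentions tail contributions but does not supply this mechanism, which is where the real work lies.
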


We will also need the following:

\begin{lem}\label{lem-supersol1D}
Let $s\in(0,1)$, $\LL$ as in Definition \ref{L}, and $e\in \mathbb S^{n-1}$.
Then, there exists $\theta>0$ such that 
\[\phi(x):= \exp\big(-|x\cdot e|^{1-\theta}\big)\]
satisfies 
 \[\LL \phi\leq C\qquad\textrm{in}\quad \R^n.\]
The constants $C$ and $\theta$ depend only on $n$, $s$, and the ellipticity constants.
\end{lem}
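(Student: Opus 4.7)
The function $\phi(x)=e^{-|x\cdot e|^{1-\theta}}$ depends only on $t:=x\cdot e$, so my first step would be to reduce to a one-dimensional problem. Writing $\phi(x)=f(t)$ with $f(t):=e^{-|t|^{1-\theta}}$ and decomposing $y=\tau e+w$ with $w\in e^\perp$, a Fubini argument gives $\LL\phi(x)=\tilde\LL f(t)$, where $\tilde\LL$ is a one-dimensional operator of order $2s$ whose kernel $\tilde K(\tau):=\int_{e^\perp}K(\tau e+w)\,dw$ satisfies $c|\tau|^{-1-2s}\le \tilde K(\tau)\le C|\tau|^{-1-2s}$. The zero-moment hypothesis for $s=\tfrac12$ is preserved by $\tilde K$, and the second-order correction in Definition \ref{L} descends to its 1D analogue; the drift $b\cdot\nabla\phi(x)=(b\cdot e)f'(t)$ is bounded away from $t=0$. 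By evenness of $f$ it therefore suffices to show $\tilde\LL f(t)\le C$ uniformly for $t\ge 0$.

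For $t$ bounded away from zero the function $f$ is $C^\infty$ with super-polynomially decaying derivatives, so standard interior estimates for stable integro-differential operators give $|\tilde\LL f(t)|\le C$. The entire difficulty is concentrated in the regime $0\le t\ll 1$, where $f$ has a cusp of order $1-\theta$ at the origin. I would split
\[
\tilde\LL f(t)=I_1+I_2+I_3
\]
according to $|\tau|<t/2$ (inner scale, where $f$ is smooth), $t/2\le|\tau|\le 2$ (annular scale, where $t+\tau$ may cross the cusp), and $|\tau|>2$ (tail). The inner term $I_1$ is controlled by a Taylor expansion of $f$ at $t$ using the explicit bounds $|f'(t)|\lesssim t^{-\theta}$ and $|f''(t)|\lesssim t^{-\theta-1}$ valid for $t>0$, together with the odd cancellation of $\tilde K$ and the zero-moment condition. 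The outer term $I_3$ is bounded by $\|f\|_{L^\infty}\le 1$ and the tail decay of $\tilde K$.

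The crux is the annular term $I_2$. There I would exploit (a) the pointwise bound $f(t+\tau)-f(t)\le 1-f(t)\le C\,t^{1-\theta}$, valid on the set where the integrand is positive, and (b) the convexity of $f$ on each of $(0,\infty)$ and $(-\infty,0)$, which provides a favourable sign for the integrand $f(t+\tau)-f(t)-f'(t)\tau$ outside the cusp-crossing region. Combined with the two-sided bound on $\tilde K$, these yield $I_2\le C$ provided $\theta>0$ is chosen sufficiently small depending on $n,s,\lambda,\Lambda$. The main obstacle of the proof is precisely this annular estimate: neither the H\"older regularity of $f$ alone (which would demand $\theta<1-2s$, unavailable for $s\ge\tfrac12$) nor the smoothness of $f$ at scales $\ll t$ alone suffices. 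It is the interplay between the pointwise maximum bound (a) and the one-sided convexity of $f$ in (b) that produces the claimed \emph{upper} bound $\tilde\LL f(t)\le C$ as $t\to 0^+$; note that the lower bound on $\tilde\LL f$ may blow up at the cusp, but this is irrelevant for the statement.
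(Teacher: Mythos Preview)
Your reduction to one dimension and the tail estimate $I_3$ are fine, but the heart of the argument---the control of $I_1$ and $I_2$ for $s\ge \tfrac12$---does not work as you describe. The issue is the sign. On the inner region $|\tau|<t/2$ and on the non-crossing part of the annulus, the function $f(t)=e^{-|t|^{1-\theta}}$ is convex (for $t>0$ small, $f''(t)\sim \theta(1-\theta)t^{-1-\theta}>0$), so the second-order integrand $f(t+\tau)-f(t)-f'(t)\tau$ is \emph{nonnegative} there. This is the \emph{wrong} sign for an upper bound on $\tilde\LL f$, and the resulting positive contribution is of order $t^{1-\theta-2s}$, which blows up for every $\theta>0$ when $s\ge\tfrac12$. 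Your ``pointwise maximum'' bound (a) likewise gives only $O(t^{1-\theta-2s})$. The compensating negative mass must come from the cusp-crossing region $\tau<-t$, and it must match the blow-up rate \emph{exactly}; the vague ``interplay'' you invoke does not establish this.

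The paper's proof sidesteps the whole difficulty by a two-line algebraic trick. One first checks, via the scaling identity $\mathcal M^-_{s,\lambda,\Lambda}|x\cdot e|^{\beta}=c_\beta|x\cdot e|^{\beta-2s}$ and the fact that $c_\beta\to+\infty$ as $\beta\uparrow 2s$, that one can choose $\theta>0$ with $s<1-\theta<2s$ and $c_{1-\theta}>0$; hence $\LL|x\cdot e|^{1-\theta}\ge 0$ in $\R^n$ for every $\LL$ in the class. Then one observes that $\phi+|x\cdot e|^{1-\theta}=e^{-|x\cdot e|^{1-\theta}}+|x\cdot e|^{1-\theta}$ has its leading cusp cancelled and is of class $C^{2(1-\theta)}\subset C^{2s+\delta}$, so $\LL(\phi+|x\cdot e|^{1-\theta})$ is bounded. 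Subtracting, $\LL\phi\le \LL(\phi+|x\cdot e|^{1-\theta})\le C$. In other words, the precise cancellation you need in $I_1+I_2$ is \emph{exactly} the statement $\LL|x\cdot e|^{1-\theta}\ge 0$; once this is isolated, the remainder is a smooth function and there is nothing left to estimate.
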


\begin{proof}
We prove it for $e=e_n$.
Let $\mathcal M_{s,\lambda,\Lambda}^-$ be the extremal operator associated to our class of operators, i.e., $\mathcal M_{s,\lambda,\Lambda}^- w:=\inf_\LL \LL w$, where the infimum is taken among all operators $\LL$ as in Definition \ref{L} (with fixed $s$, $\lambda$, $\Lambda$).
Then, the operator $\mathcal M_{s,\lambda,\Lambda}^-$ is scale invariant of order $2s$, and in particular $\mathcal M_{s,\lambda,\Lambda}^-|x_n|^\beta = c_\beta |x_n|^{\beta-2s}$ for $\beta\in(0,2s)$ (see \cite[Section~2]{RS-Duke}).
Moreover, it is easy to see that $c_\beta\to+\infty$ as $\beta\to 2s$, and in addition $c_\beta>0$ for $s\ge\frac 12$ (by convexity).
Hence, since $c_\beta$ is continuous with respect to $\beta$,  for any $s\in(0,1)$ there is $\theta>0$ such that $s<1-\theta<2s$ and $c_{1-\theta}>0$.

This implies that for any operator $\LL$ as in Definition \ref{L} we have 
\[\LL|x_n|^{1-\theta}\geq c_{1-\theta}|x_n|^{1-\theta-2s}\geq0 \qquad \textrm{in}\quad \R^n,\]
with $c_{1-\theta}>0$.
In particular, since the function $\phi(x)+|x_n|^{1-\theta}$ is of class $C^{2(1-\theta)}\subset C^{2s+\delta}$ for some $\delta>0$, we conclude that the function $\phi$ satisfies $\LL \phi \leq C$ in $\R^n$, as wanted.
\end{proof}

As a consequence of the previous supersolution, we find:

\begin{lem}\label{lem-solution-across}
Let $s\in(0,1)$, $\LL$ as in Definition \ref{L}, $e\in \mathbb S^{n-1}$, and $\Gamma\subset \{x\cdot e=0\}$.
Assume that $w\in {\rm Lip}_{\rm loc}(\R^n)$ is a viscosity solution of
\[\LL w = 0 \qquad \textrm{in}\quad \R^n\setminus \Gamma.\]
Then $\LL w=0$ in $\R^n$.
\end{lem}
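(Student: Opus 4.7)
Plan: I would prove $w$ is a viscosity sub- and supersolution of $\LL w=0$ at each $x_0\in\Gamma$ by a removable-singularity argument centered on the barrier $\phi$ from Lemma~\ref{lem-supersol1D}. By symmetry I treat only the subsolution property. Argue by contradiction: assume there exist $\varphi\in C^2$, a ball $B_r(x_0)$ with $x_0\in\Gamma$, and $\eta>0$ such that $\varphi\ge w$ in $B_r(x_0)$, $\varphi(x_0)=w(x_0)$, and the extension $\tilde\varphi:=\varphi\chi_{B_r(x_0)}+w\chi_{B_r(x_0)^c}$ satisfies $\LL\tilde\varphi(x_0)\le-\eta$. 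The strategy is to produce, from $\varphi$, a new test function $\tilde\psi$ touching $w$ from above at a point $x_1\in B_r(x_0)\setminus\Gamma$ with $\LL\tilde\psi(x_1)<0$, which directly contradicts the viscosity subsolution property of $w$ on $\R^n\setminus\Gamma$.

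For the construction, fix $e\in\mathbb S^{n-1}$ with $\Gamma\subset\{x\cdot e=0\}$ and, for small $\mu,\delta>0$, set
\[
\psi_\mu(x):=\varphi(x)+\delta\bigl(1-\phi(x-x_0-\mu e)\bigr)\quad\text{in }B_r(x_0),
\]
extended by $w$ outside. Since $\phi\le 1$, $\psi_\mu\ge\varphi\ge w$, so $c_*:=\min_{B_r(x_0)}(\psi_\mu-w)\ge 0$ is well-defined and $\psi_\mu-c_*$ first-touches $w$ from above at some $x_1\in\overline{B_r(x_0)}$. The decisive quantitative comparison is
\[
(\psi_\mu-w)(x_0)=\delta\bigl(1-e^{-\mu^{1-\theta}}\bigr)\simeq \delta\mu^{1-\theta},\qquad (\psi_\mu-w)(x_0+\mu e)\le L\mu,
\]
where $L$ is the Lipschitz seminorm of $\varphi-w$. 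Since $\theta>0$ gives $1-\theta<1$, for $\mu$ small we have $L\mu\ll\delta\mu^{1-\theta}$. Moreover the lower bound $\gtrsim\delta\mu^{1-\theta}$ holds uniformly on all of $\Gamma\subset\{x\cdot e=0\}$, because $\phi(\cdot-x_0-\mu e)$ attains its maximum only on the parallel hyperplane $\{x\cdot e=\mu\}$, at distance $\mu$ from $\{x\cdot e=0\}$. Hence the contact point $x_1$ is forced to lie off $\Gamma$.

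For the $\LL$-estimate at $x_1$, write $\tilde\psi_\mu:=\psi_\mu$ in $B_r(x_0)$ and $w$ outside; then
\[
\LL\tilde\psi_\mu(x_1)=\LL\tilde\varphi(x_1)+\delta\,\LL\bigl[(1-\phi(\cdot-x_0-\mu e))\chi_{B_r(x_0)}\bigr](x_1).
\]
The first term approaches $\LL\tilde\varphi(x_0)\le-\eta$ by continuity as $x_1\to x_0$, while the second is a nonnegative integral bounded by $\delta\int(1-\phi(z-x_1))K(z-x_1)\,dz\le C'\delta$; the integrand behaves like $|z-x_1|^{1-\theta-n-2s}$ near $x_1$, which is integrable thanks to the condition $1-\theta<2s$ used in Lemma~\ref{lem-supersol1D}. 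Combining these, $\LL\tilde\psi_\mu(x_1)\le -\eta/2$ for $\mu,\delta$ sufficiently small, contradicting the viscosity subsolution property at $x_1\notin\Gamma$. The main technical obstacle is the regularity of $\psi_\mu$ at the cusp $\{x\cdot e=\mu\}$: because $\phi$ is only $C^{1-\theta}$ across the hyperplane, $\psi_\mu-c_*$ is not a bona fide $C^2$ test function at $x_1$, and one must either regularize $\phi$ at a scale $\ll\mu$ while preserving the bound $\LL\phi_{\eps}\le C$, or appeal to a formulation of viscosity subsolution that admits merely semiconcave (from above) test functions; the comparison of $\LL\tilde\psi_\mu$ at the base-shifted point $x_1$ with $\LL\tilde\varphi(x_0)$ also requires careful tracking of the shift.
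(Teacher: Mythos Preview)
Your approach is in the right spirit---using the barrier $\phi$ from Lemma~\ref{lem-supersol1D} to push contact points off $\Gamma$---but the technical obstacle you flag at the end is not a minor nuisance: it is where the argument breaks. The function $1-\phi(\cdot-x_0-\mu e)$ has an \emph{upward} cusp of order $|(x-x_0-\mu e)\cdot e|^{1-\theta}$ on the shifted hyperplane, and since this dominates the Lipschitz behavior of $\varphi - w$, the minimum of $\psi_\mu - w$ is \emph{attracted} to that hyperplane; your own comparison $(\psi_\mu - w)(x_0+\mu e)\le L\mu$ already places the minimizer there. At such $x_1$ the test function is not $C^2$, and your integral bound is actually false: with $y=z-x_1$, the integrand behaves like $|y\cdot e|^{1-\theta}|y|^{-n-2s}$, whose radial integral is $\int_0^1 r^{-\theta-2s}\,dr$, and this \emph{diverges} precisely because $1-\theta<2s$ (equivalently $\theta+2s>1$) in Lemma~\ref{lem-supersol1D}; you have the direction of the integrability condition reversed. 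Thus $\LL[(1-\phi)\chi_{B_r}](x_1)=+\infty$ and no contradiction results. Regularizing $\phi$ at scale $\varepsilon\ll\mu$ replaces this by something of order $\varepsilon^{1-\theta-2s}\to+\infty$, so the parameter balance you would need is genuinely delicate, not routine.

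The paper's proof sidesteps all of this by perturbing the solution instead of the test function: set $w_\varepsilon:=w-\varepsilon\phi$. Now $w_\varepsilon$ itself has a \emph{downward} cusp on $\{x\cdot e=0\}\supset\Gamma$, so no $C^2$ function can touch $w_\varepsilon$ from above there and every touching point lies automatically off $\Gamma$. If $\eta\in C^2$ touches $w_\varepsilon$ from above at $x_\circ\notin\Gamma$, then $\eta+\varepsilon\phi$ (smooth near $x_\circ$) touches $w$ from above at $x_\circ$, and since $\LL w=0$ there this forces $\LL\tilde\eta(x_\circ)\ge-\varepsilon\LL\phi(x_\circ)\ge-C\varepsilon$ by Lemma~\ref{lem-supersol1D}. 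Hence $\LL w_\varepsilon\ge-C\varepsilon$ globally in the viscosity sense, and $w=\sup_{\varepsilon>0}w_\varepsilon$ gives $\LL w\ge 0$. The key difference: only the one-sided bound $\LL\phi\le C$ is ever invoked, never a bound on $\LL(1-\phi)$, and the cusp repels $C^2$ test functions rather than attracting the contact point.
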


\begin{proof}
For any $\varepsilon>0$ we consider the function $w_\varepsilon := w-\varepsilon\phi$, where $\phi$ is given by Lemma~\ref{lem-supersol1D}.

Assume now that a test function $\eta\in C^2$ touches $w_\varepsilon$ from above at $x_\circ\in \R^n$.
Since $w$ is Lipschitz, then by definition of $\phi$ we have that $w_\varepsilon$ has a ``downwards cusp'' on $\{x\cdot e=0\}$, and therefore $x_\circ\notin \{x\cdot e=0\}$.
Thus $\LL\eta(x_\circ) = \LL w(x_\circ)-\varepsilon \LL \phi(x_\circ)\geq -C\varepsilon$.
Since this holds for every test function $\eta\in C^2$, we deduce that $\LL w_\varepsilon \geq -C\varepsilon$ in $\R^n$ in the viscosity sense.
Since $w=\sup_{\varepsilon>0} w_\varepsilon$, we conclude that $\LL w\geq0$ in $\R^n$.

Repeating the same argument with $-w$ instead of $w$, we find $\LL w=0$ in $\R^n$, as desired.
\end{proof}

Thanks to the previous results, we can prove the classification of blow-ups.

\begin{prop}\label{prop-classification-ell}
Let $s \in(0,1)$, $\LL$ as in Definition \ref{L}, and $\alpha_\circ\in (0,s)\cap (0,1-s)$.

Let $u_\circ\in {\rm Lip}(\R^n)$ be a function satisfying:

\begin{itemize}
\item[$\bullet$] $u_\circ$ is nonnegative and convex in $\R^n$:
\[u_\circ\geq0\quad \textrm{and} \quad D^2 u \ge 0\qquad \textrm{in}\quad \R^n,\qquad \textrm{with}\quad u_\circ(0)=|\nabla u_\circ(0)|=0.\]

\item[$\bullet$]  for any given $h \in \R^n$, $u_\circ$ solves
\[\LL(D_h u_\circ)\geq 0 \qquad \textrm{in} \quad \{u_\circ>0\},\]
where 
\[D_h u_\circ(x)={\textstyle \frac{u_\circ(x)-u_\circ(x-h)}{|h|}}.\]

\item[$\bullet$]  $u_\circ$ has a controlled growth at infinity:
\begin{equation}\label{qhiohtiowhw}
\|\nabla u_\circ\|_{L^\infty(B_R)} \le R^{s+\alpha_\circ} \qquad \textrm{for all}\quad R\ge 1.
\end{equation}
\end{itemize}
Then $u_\circ$ is a 1D function, i.e., there exists $e\in \mathbb S^{n-1}$ such that $u_\circ(x)=U(x\cdot e)$.

Moreover, for each $e\in \mathbb S^{n-1}$, the function $u_\circ$ is unique (up to multiplicative constant) and, if the kernel $K$ of the operator $\LL$ is homogeneous, then $u_\circ$ is homogeneous, too.
\end{prop}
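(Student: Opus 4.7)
My plan is to reduce the rigidity claim to a Liouville-type statement for ratios of nonnegative $\mathcal{L}$-subsolutions, obtained by iterating the boundary Harnack inequality of Theorem~\ref{ellbdryharnack} in the spirit of \cite{CRS,RS-bdryH}, after first identifying the contact set as a half-space.

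\textbf{Step 1 (geometry of the contact set).} First I would show that the contact set $\Omega_0 := \{u_\circ = 0\}$ is a half-space $\{x\cdot e \leq 0\}$ for some $e\in\mathbb{S}^{n-1}$ (the degenerate case $u_\circ\equiv 0$ aside). It is already closed, convex, and contains the origin, since the origin is a minimum of the nonnegative convex function $u_\circ$. Because $s+\alpha_\circ<1$, the growth assumption \eqref{qhiohtiowhw} forces $u_\circ$ to be strictly sub-quadratic at infinity; combined with convexity and the subsolution property $\mathcal{L}(D_h u_\circ)\geq 0$ in $\{u_\circ>0\}$, a blow-down argument at infinity producing a positively homogeneous convex global limit of the rescalings $u_\circ(R\,\cdot)/R^{1+s+\alpha_\circ}$, together with barrier comparisons against the supersolution $\phi$ from Lemma~\ref{lem-supersol1D}, should rule out both lower-dimensional contact sets and higher-dimensional contact cones, leaving only half-space-shaped $\Omega_0$.

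\textbf{Step 2 (proportionality via boundary Harnack).} With $\Omega_0=\{x\cdot e\leq 0\}$ in hand, the positivity set $\{u_\circ>0\}$ is itself a half-space, so Theorem~\ref{ellbdryharnack} applies with arbitrarily small Lipschitz norm at every scale. To conclude $u_\circ(x)=U(x\cdot e)$ it suffices to show that every tangential derivative $\partial_h u_\circ$, for $h\perp e$, vanishes in $\{x\cdot e>0\}$. I would do this by a Liouville argument: apply the iterated boundary Harnack to suitable nonnegative auxiliary functions built from $u_\circ$ (for example, the tangential second-order finite differences $D^2_h u_\circ\geq 0$ granted by convexity, compared with $D^2_e u_\circ$, and Lemma~\ref{lem-solution-across} to rule out pathological boundary behavior), and combine with the polynomial growth bound \eqref{qhiohtiowhw} to force each ratio to be a constant, which then must be zero upon reading off signs from convexity applied along both $h$ and $-h$.

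\textbf{Step 3 (uniqueness and homogeneity).} Uniqueness of the profile $U$ (up to a positive multiplicative constant) given $e$ follows from the same iterated boundary Harnack argument applied to the ratio of any two candidate 1D profiles sharing $e$: the ratio is $\mathcal{L}$-harmonic in the positive half-space, vanishes on the boundary, and has controlled growth, hence is constant. When $K$ is homogeneous of degree $-n-2s$, the rescaling $\lambda^{-(1+\gamma)}u_\circ(\lambda\,\cdot)$ satisfies the hypotheses of the proposition in the same direction $e$ for the appropriate $\gamma$ (pinned down by the explicit 1D integral equation associated to $\mathcal{L}$ and $e$); by the uniqueness just proved it coincides with $u_\circ$, so $u_\circ$ is positively homogeneous of degree $1+\gamma$.

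\textbf{Main obstacle.} The crux is Step~1, since without a monotonicity formula one cannot immediately exclude general convex contact cones. I expect to handle this via the blow-down to a positively homogeneous convex obstacle-type solution and then to analyze convex cones whose complement is $\mathcal{L}$-harmonic: a combination of Lemma~\ref{lem-solution-across}, the explicit supersolution $\phi$ of Lemma~\ref{lem-supersol1D}, and the strict sub-quadratic growth should rule out both proper lower-dimensional contact sets and cones with positively curved boundary, leaving only half-spaces; the half-space structure then propagates back to $u_\circ$ through convexity and the Lipschitz stability of viscosity solutions.
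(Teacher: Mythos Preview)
Your overall plan has a structural problem centered on Step~1: you are trying to prove that the contact set $\{u_\circ=0\}$ is a half-space \emph{before} running the boundary Harnack argument, but this is essentially the conclusion of the proposition (once $u_\circ$ is 1D, its zero set is a half-space), and the tools you invoke do not establish it as a preliminary step. The barrier $\phi$ of Lemma~\ref{lem-supersol1D} and Lemma~\ref{lem-solution-across} are tailored to hyperplanes; they can handle the situation where the (blow-down) contact set has \emph{empty} interior and is contained in a hyperplane, but they say nothing about ruling out a genuine convex cone with nonempty interior that is not a half-space. So your sentence ``rule out \ldots\ higher-dimensional contact cones, leaving only half-space-shaped $\Omega_0$'' is precisely the unproved step.

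The paper's argument avoids this circularity by never trying to identify $\{u_\circ=0\}$ first. Instead it splits on whether $\{u_\circ=0\}$ \emph{contains} a convex cone $\Sigma$ with nonempty interior. If it does, convexity immediately gives $n$ independent directions $e_i$ with $-e_i\in\Sigma$ and hence $\partial_{e_i}u_\circ\ge 0$ everywhere; since the complement of a convex set satisfies a uniform exterior cone condition, Theorem~\ref{ellbdryharnack} applies at every scale to the nonnegative \emph{first} derivatives $v_i=\partial_{e_i}u_\circ$, and letting $R\to\infty$ forces $v_i\equiv\kappa_i v_n$, giving the 1D structure directly. If instead $\{u_\circ=0\}$ contains no such cone, its blow-down is contained in a hyperplane, Lemma~\ref{lem-solution-across} extends the equation across it, and Liouville yields a contradiction. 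The half-space shape of the contact set is thus a \emph{consequence}, not a hypothesis, of the boundary Harnack step.

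A secondary issue: in your Step~2 you propose comparing tangential second-order increments $D^2_h u_\circ$. From the hypothesis $\LL(D_h u_\circ)\ge 0$ for all $h$ one only gets $\LL(D^2_h u_\circ)\le 0$, so these are supersolutions, not solutions, and Theorem~\ref{ellbdryharnack} (which requires $|\LL v_i|\le\eta$) does not apply to them. The correct objects are the first derivatives $\partial_{e_i}u_\circ$, which are genuine solutions in $\{u_\circ>0\}$ by Remark~\ref{rem:incremental quotient} and are nonnegative once you have directions $-e_i$ pointing into the contact set.
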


\begin{rem}
\label{rem:incremental quotient}
In the sequel, we will implicitly use the following simple observation:
if $u$ is a locally Lipschitz function satisfying
$\LL(D_h u)\geq 0 $ inside $\{u>0\}$
for all $h \in \R^n$, then 
\[\LL(\nabla u)=0\qquad \textrm{in}\quad \{u>0\}.\]
Indeed, given $k \in \{1,\ldots,n\}$ we can choose $h=\epsilon e_k$ to obtain
$$
\LL(\partial_k u)=\lim_{\epsilon \to 0^+}\LL(D_{\epsilon e_k} u) \geq 0 
\quad\text{and}\quad \LL(\partial_k u)=\lim_{\epsilon \to 0^-}\LL(D_{\epsilon e_k} u) \leq 0 \qquad \textrm{in} \quad \{u>0\}.
$$
The same observation applies also to the parabolic case.
\end{rem}

In the proof of Proposition \ref{prop-classification-ell} (and also later on in the paper) we will need the following simple barrier.
\begin{lem}\label{lemsuper1}
Let $s\in(0,1)$ and $\LL$ as in Definition \ref{L}. Given $\eta>0$ there exists $\theta>0$ such that 
\[\Phi(x):=  \bigg( x\cdot e+ \eta |x|\bigg(1-\frac{(x\cdot e)^2}{|x|^2}\bigg)\bigg)_+^\theta,\]
with $e\in \mathbb S^{n-1}$, satisfies 
 \[\LL\Phi\leq -c<0\quad\textrm{in }\mathcal C_\eta \cap B_2,\]
 where $\mathcal C_\eta$ is the cone 
 \[
  \Big\{ \tfrac{x}{|x|}\cdot e\ge- \eta\Big(1- \big(\tfrac{x}{|x|}\cdot e\big)^2\Big)\Big\}.
  \]
The constants $c$ and $\theta$ depend only on $n$, $s$, the ellipticity constants, and $\eta$.
\end{lem}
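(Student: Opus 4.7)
The plan is to verify the inequality by a direct calculation, splitting $\mathcal C_\eta \cap B_2$ into a near-boundary region $\{d(x_0, \partial\mathcal C_\eta) \le \delta_0\}$ and an interior region. First I would note that
\[
g(x) = |x|\bigl(\mu + \eta(1-\mu^2)\bigr), \qquad \mu = \frac{x\cdot e}{|x|},
\]
is $1$-homogeneous and vanishes to first order on $\partial\mathcal C_\eta\setminus\{0\}$, so $\Phi = g_+^\theta$ is $\theta$-homogeneous and comparable to $d(\cdot,\partial\mathcal C_\eta)^\theta$ near $\partial\mathcal C_\eta$ from inside. The parameter $\theta$ will be taken small and positive, with its value depending on $n$, $s$, $\lambda$, $\Lambda$, and $\eta$.

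The key model input is the one-sided calculation
\[
\LL\bigl((x\cdot e)_+^\theta\bigr)(x) = -C_\theta\,(x\cdot e)^{\theta-2s} \quad \text{in } \{x\cdot e > 0\},
\]
with $C_\theta > 0$ for all sufficiently small $\theta > 0$. This is a variant of the explicit computation behind Lemma~\ref{lem-supersol1D}, extended to our class of operators via the ellipticity of $K$ and the extremal operators $\mathcal M^\pm_{s,\lambda,\Lambda}$. For $x_0$ in the near-boundary region, I would locally flatten $\partial\mathcal C_\eta$ to a hyperplane and transfer this estimate to $\Phi$, obtaining $\LL\Phi(x_0) \le -c\,d(x_0)^{\theta-2s}$ up to corrections of order $d(x_0)^{\theta-2s+\alpha}$ (for some $\alpha>0$) arising from boundary curvature and from the drift or gradient-correction terms of $\LL$. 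Since $d(x_0) \le \mathrm{diam}(B_2)$, this yields the uniform estimate $\LL\Phi(x_0) \le -c$ on the near-boundary region.

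For $x_0$ in the interior region, I would split $\LL\Phi(x_0) = I_1 + I_2$, where
\[
I_2 := -\Phi(x_0)\int_{x_0 + y \notin \mathcal C_\eta} K(y)\,dy
\]
is the contribution from $\{x_0 + y \notin \mathcal C_\eta\}$. Since the complementary cone $\mathcal C_\eta^c$ has positive density, $I_2 \le -c_0\,\Phi(x_0) \le -c_0\,\delta_0^\theta$. As $\theta \to 0^+$, $\Phi \to \chi_{\mathcal C_\eta}$ pointwise inside $\mathcal C_\eta$, so $I_1 \to 0$ uniformly on the interior region by dominated convergence (the integrand is controlled via the growth bound on $\Phi$ at infinity coming from $\theta < 2s$ and via the local smoothness of $g$ away from $\partial\mathcal C_\eta$). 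For $\theta$ small enough, $|I_1| \le \tfrac12 c_0\,\delta_0^\theta$, and the ``outside'' contribution $I_2$ dominates. Combining both regions gives $\LL\Phi \le -c < 0$ on all of $\mathcal C_\eta \cap B_2$.

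I expect the main obstacle to be the quantitative control of the flattening errors near $\partial\mathcal C_\eta$: one must check that the contributions from boundary curvature, from the non-symmetric part of $K$, and from the drift term (when $s = \tfrac12$) or the gradient-correction (when $s > \tfrac12$) are all of order $d^{\theta - 2s + \alpha}$ for some $\alpha > 0$, so that the negative leading term $-c\,d^{\theta - 2s}$ dominates for $d$ small. Tuning the threshold $\delta_0$ and the exponent $\theta$ jointly then produces a single constant $c > 0$ valid on $\mathcal C_\eta \cap B_2$, depending only on $n$, $s$, $\lambda$, $\Lambda$, and $\eta$, as required.
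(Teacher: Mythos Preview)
Your sketch is correct and follows exactly the strategy of the references the paper cites (Lemma~4.1 in \cite{RS-C1}, with details in \cite{AuR}); the paper's own proof consists only of that citation. One simplification you did not make explicit, and which dissolves your stated worry about uniformity of the flattening errors: since $\Phi$ is $\theta$-homogeneous and the class of operators in Definition~\ref{L} is scale invariant, it suffices to prove $\mathcal M^+_{s,\lambda,\Lambda}\Phi\le -c$ on $\mathcal C_\eta\cap\partial B_1$ (where $\partial\mathcal C_\eta$ is smooth with uniformly bounded curvature), after which the estimate on all of $\mathcal C_\eta\cap B_2$ follows by rescaling.
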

\begin{proof}
It is a variation (with almost identical proof) of Lemma 4.1 in \cite{RS-C1}. See \cite[Lemma~4.1]{AuR} for more details.
\end{proof}

\begin{rem}\label{remclaricone}
Notice that, given any $\omega\in (0,1)$  (small), the inclusion
\[
\mathcal C_\eta \subset \big\{ \tfrac{x}{|x|}\cdot e \le -1+ \omega\big\}
\]
holds provided $\eta = \eta(\omega)$ is taken sufficiently large.
\end{rem}

\begin{proof}[Proof of Proposition \ref{prop-classification-ell} ]
We follow and simplify the ideas in \cite[Section~4]{CRS}.

First, notice that the set $\{u_\circ=0\}$ is closed and convex.
Then, we separate the proof into two cases:

\vspace{2mm}

\noindent \emph{Case 1}.
Assume that the convex set $\{u_\circ=0\}$ contains a closed convex cone $\Sigma \ni 0$ with nonempty interior.
Then, we can find $n$ independent directions $e_i\in \mathbb S^{n-1}$, $i=1,...,n$, such that $-e_i\in \Sigma\subset \{u_\circ=0\}$, and by convexity of 
$u_\circ$ we deduce that
\[v_i:=\partial_{e_i} u_\circ \geq0\qquad \textrm{in}\quad\R^n.\]
Moreover, since $u_\circ \not\equiv 0,$ at least one of them is not identically zero, say $v_n\not\equiv0$.

We first claim that $v_i$ are continuous functions. Indeed, since $\{u_\circ=0\}$ is a convex set containing the cone $\Sigma$, all the points of its boundary can be touched by the vertex of a translation of the cone $\Sigma$ which is contained in $\{u_\circ=0\}$.

Hence, given any vector $h \in \R^n\setminus \{0\}$, the function $(D_h u_\circ)_+$ is a continuous subsolution vanishing on $\{u_\circ =0\}$ and with growth as in  \eqref{qhiohtiowhw}. 
Now, given $R>2$, let $\psi_R \in C^\infty_c(B_{2R})$ be a smooth cut-off function such that $\psi_R \equiv 1$ in $B_{3R/2}$, and consider the bounded function $(D_h u_\circ)_+\psi_R.$
Thanks to the growth assumption \eqref{qhiohtiowhw} it follows that $\LL((D_h u_\circ)_+\psi_R)\geq -C_R$ in $\R^n$. Hence,
using a large multiple of the supersolution in Lemma \ref{lemsuper1} as barrier (see Remark \ref{remclaricone}) we deduce that, for all  $z\in \partial \{u_\circ >0\}\cap B_R$ and $r \in (0,1)$,  we have 
\[
\sup_{B_r(z)} (D_h u_\circ)_+= \sup_{B_r(z)} (D_h u_\circ)_+\psi_R \le C_R' r^{\theta}.
\]
Since $h$ is arbitrary,  letting $h\to 0$  we obtain  ($u_\circ$ is smooth in the interior of $\{u_\circ>0\}$)
\[
\sup_{B_r(z)} |\nabla u_\circ| \le C_R' r^{\theta} \qquad \mbox{for all}\quad z\in \partial \{u_\circ >0\}\cap B_R, \, r\in (0,1).
\]
Noticing that the gradient of $u$ is smooth in the interior of $\{u_\circ>0\}$ (all partial derivatives satisfy a translation invariant elliptic equation), we conclude that
 $\nabla u_\circ$ is continuous, as claimed.

Hence, recalling \eqref{qhiohtiowhw}, we can apply the boundary Harnack (Theorem \ref{ellbdryharnack} above) to the functions $v_i(2Rx)$ to deduce that $[v_i/v_n]_{C^\tau(B_R)}\leq CR^{-\tau}$, with $C$ independent of $R\geq1$.
Then, letting $R\to\infty$, we conclude the existence of constants $\kappa_i \in \R$ such that 
\[v_i\equiv \kappa_i v_n,\qquad \textrm{for}\quad i=1,...,n-1.\]
This means that $u_\circ$ is a 1D function, as desired.

Moreover, assuming that both $u_{\circ,1}(x)=U_1(e\cdot x)$ and $u_{\circ,2}(x)=U_2(e\cdot x)$ satisfy all the assumptions of $u_\circ$,
then  applying boundary Harnack to $\partial_e u_{\circ,1}$ and $\partial_e u_{\circ,2}$ we deduce that $\partial_e u_{\circ,1}\equiv \kappa\partial_e u_{\circ,2}$ for some constant $\kappa \in \R.$ This proves that  $u_\circ$ is unique, up to multiplicative constant.

\vspace{2mm}

\noindent \emph{Case 2}.
Assume that the convex set $\{u_\circ=0\}$ does \emph{not} contain any convex cone with nonempty interior.
Then, exactly as in \cite{CRS} (see  Lemma \ref{lem-growth-par-blowdown} below, written in the parabolic setting, for a detailed argument), we can find a sequence $R_m\to\infty$ such that
\[u_m(x):=\frac{u_\circ(R_mx)}{R_m\|\nabla u_\circ\|_{L^\infty(B_{R_m})}}\]
satisfies 
\[\|\nabla u_m\|_{L^\infty(B_1)}=1,\quad \quad \|\nabla u_m\|_{L^\infty(B_R)}\leq 2R^{s+\alpha_\circ}\textrm{ for } R\geq1,\qquad \text{$\LL(D_h u_m)=0$ in $\{u_m>0\}$}.\]
By convexity, the nonnegative functions $u_m$ converge (up to a subsequence) locally uniformly to a nonnegative function $u_\infty$ that satisfies
\[\|\nabla u_\infty\|_{L^\infty(B_2)}\geq1\qquad \textrm{and}\qquad \|\nabla u_\infty\|_{L^\infty(B_R)}\leq 2R^{s+\alpha_\circ}\quad\textrm{for all}\quad R\geq1.\]
Also, since by assumption the convex set $\{u_\circ=0\}$ does not contain any cone with nonempty interior, its ``blow-down'' sequence $\{u_m>0\}=\frac{1}{R_m}\{u_\circ=0\}$ converges to a convex set $\Gamma$ that is contained in a hyperplane.
In particular
\[\LL(D_h u_\infty)=0\qquad \textrm{in}\quad \R^n\setminus \Gamma,\]
and since $D_h u_\infty\in {\rm Lip}_{\rm loc}(\R^n)$ it follows by Lemma \ref{lem-solution-across} that $\LL(D_h u_\infty)=0$ in $\R^n$. Hence, letting $h\to 0$, we conclude that
\[\LL(\nabla u_\infty)=0\qquad \textrm{in}\quad \R^n.\]
Thanks to the growth assumption \eqref{qhiohtiowhw}, it follows by Liouville Theorem that $u_\infty(x)=a\cdot x+b$.
However, this contradicts the fact that $u_\infty\geq 0$ and $\|\nabla u_\infty\|_{L^\infty(B_2)}\geq1$.
Thus, Case 2 cannot happen, and the proposition is proved.
\end{proof}

Once we have the classification of blow-ups, we can show the almost-optimal regularity of solutions.
However, we first need the following:

\begin{lem}\label{lem-1d-ell-nonhomogeneous}
Let $s \in(0,1)$, $\LL$ as in Definition \ref{L}, and $e\in \mathbb S^{n-1}$.
Then there exists $\gamma\in \left(0,\min\{2s,1\}\right)$, depending only on $n$, $s$, and the ellipticity constants, such that 
\[\LL(x\cdot e)_+^\gamma \leq 0 \qquad \textrm{in}\quad \{x\cdot e>0\}.\]
Moreover, when the kernel of the operator $\LL$ is even and homogeneous, we may take $\gamma=s$.
\end{lem}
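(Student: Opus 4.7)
The plan is to reduce the problem to a one-dimensional computation and then analyze the resulting 1D operator by a scaling and continuity argument. Since $u(x)=(x\cdot e)_+^\gamma$ depends only on the scalar $t=x\cdot e$, integrating out the $(n-1)$ perpendicular directions in the definition of $\LL$ converts $\LL u$ into an effective 1D operator $\widetilde{\LL}$ acting on $t\mapsto t_+^\gamma$, with kernel $\widetilde{K}(\tau)=\int_{e^\perp}K(\tau e+y')\,dy'$. A direct change of variables shows that $\widetilde{K}$ satisfies 1D ellipticity bounds $\tilde\lambda|\tau|^{-1-2s}\le\widetilde{K}(\tau)\le\tilde\Lambda|\tau|^{-1-2s}$; the zero-moment condition at $s=\tfrac12$ and the gradient-correction structure at $s>\tfrac12$ both survive the reduction. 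Hence it suffices to prove $\widetilde{\LL}(t)_+^\gamma\le 0$ on $(0,\infty)$.

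In the symmetric, homogeneous case, $\widetilde{K}(\tau)=c|\tau|^{-1-2s}$ is a constant multiple of the 1D Riesz kernel, so $\widetilde{\LL}$ is a positive multiple of the 1D fractional Laplacian $(-\Delta)^s$. The classical identity $(-\Delta)^s(t)_+^s\equiv 0$ on $(0,\infty)$ (which can be proved via Fourier analysis, or via the half-space harmonic extension and an explicit reflection) shows that $\gamma=s$ works. This is essentially the computation already used in \cite{CRS,RS-Duke}.

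In the general (possibly non-symmetric, non-homogeneous) case, I would fix $t_\circ>0$ and substitute $\tau=t_\circ w$ in the 1D integral, so that $(t)_+^\gamma$ scales out a factor $t_\circ^\gamma$ while $\widetilde{K}(\tau)\,d\tau$ contributes $t_\circ^{-2s}$ after using the ellipticity bounds. Bounding the integrand from above by $\tilde\Lambda|w|^{-1-2s}$ where it is positive and from below by $\tilde\lambda|w|^{-1-2s}$ where it is negative yields $\widetilde{\LL}(t)_+^\gamma(t_\circ)\le t_\circ^{\gamma-2s}F(\gamma)$, with $F(\gamma)$ independent of $t_\circ$. Splitting the $w$-integral into $w>0$, $-1<w<0$, and $w<-1$, dominated convergence shows that the first two pieces tend to $0$ as $\gamma\to 0^+$ (using $|(1+w)^\gamma-1|\le C\gamma|w|$ near $w=0$ and $(1+w)^\gamma\le(1+w)^{2s-\varepsilon}$ for $w$ large), while the third piece (where the integrand reduces to $-1$) contributes exactly $-\tilde\lambda/(2s)<0$. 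For $s=\tfrac12$ the drift contributes an extra term of order $\gamma$, and for $s>\tfrac12$ the gradient correction contributes a term of order $\gamma^{2s}$; both are negligible in the limit. Hence $F(\gamma)\to-\tilde\lambda/(2s)<0$ as $\gamma\to 0^+$, so $F(\gamma)<0$ for all sufficiently small $\gamma\in(0,\min\{2s,1\})$, yielding the desired inequality uniformly in $t_\circ>0$.

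The main technical obstacle is the careful bookkeeping for $s\ge\tfrac12$: there the drift (respectively gradient) correction flips the sign of the integrand on part of the integration range and must be shown to be of lower order in $\gamma$ than the dominant negative contribution. The scaling by $t_\circ$ in the general case is the key step that decouples the uniformity in $t_\circ$ from the rest of the argument, reducing the problem to computing the limit of a single one-variable integral as $\gamma\to 0^+$.
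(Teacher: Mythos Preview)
Your approach is essentially the same as the paper's, just written out at a lower level of abstraction. The paper packages your ``bound by $\tilde\Lambda$ on the positive part and $\tilde\lambda$ on the negative part'' step into the extremal (Pucci) operator $\mathcal M^{\pm}_{s,\lambda,\Lambda} w:=\sup/\inf_{\LL}\LL w$, observes that this operator is scale-invariant of order $2s$ (precisely because the class of admissible kernels is), and hence $\mathcal M^{\pm}(x_n)_+^\gamma=c_\gamma x_n^{\gamma-2s}$ for an explicit constant $c_\gamma$; this is exactly your $F(\gamma)\,t_\circ^{\gamma-2s}$ factorization. The paper then checks $c_0<0$ (which is your computation $\lim_{\gamma\to 0}F(\gamma)=-\tilde\lambda/(2s)$) and invokes continuity in $\gamma$, citing \cite{RS-Duke} for the scaling computation, while you spell out the dominated-convergence details.

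One small caveat on your 1D reduction: for $s=\tfrac12$ the $n$-dimensional zero-moment condition $\int_{B_{2r}\setminus B_r}yK(y)\,dy=0$ does not literally transfer to the 1D kernel $\widetilde K(\tau)=\int_{e^\perp}K(\tau e+y')\,dy'$ (the annuli are different), so your assertion that it ``survives the reduction'' is slightly off. This is harmless---the defect is absorbed into an effective 1D drift of size $O(\Lambda)$, which contributes the $O(\gamma)$ term you already account for---but it is worth stating correctly. The paper sidesteps this entirely by working with the extremal operator in $\R^n$ directly rather than passing to 1D first.
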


\begin{proof}
When the kernel of the operator $\LL$ is even and homogeneous, the result is proved in \cite[Section~2]{RS-Duke}.
Hence, it suffices to prove the result in the case of general operators as in Definition \ref{L}.

After a rotation, we may assume $e=e_n$.
Let $\mathcal M_{s,\lambda,\Lambda}^-$ be the extremal operator associated to our class of operators, i.e., $\mathcal M_{s,\lambda,\Lambda}^- w:=\inf_L \LL w$, where the infimum is taken among all operators $\LL$ as in Definition \ref{L} (with fixed $s$, $\lambda$, $\Lambda$).
Then, the operator $\mathcal M_{s,\lambda,\Lambda}^-$ is scale invariant of order $2s$, and in particular $\mathcal M_{s,\lambda,\Lambda}^-(x_n)_+^\gamma = c_\gamma x_n^{\gamma-2s}$ in $\{x_n>0\}$ for $\gamma\in[0,2s)$.
Moreover, it is immediate to check that $c_0<0$, and therefore we have $c_\gamma<0$ for $\gamma>0$ small; see \cite[Section~2]{RS-Duke}.
Also, since $(x_n)_+^\gamma$ is convex for $\gamma \geq 1$, it follows that $c_\gamma > 0$ for $\gamma\geq 1.$
Hence, we proved that $\LL(x_n)_+^\gamma \leq c_\gamma x_n^{\gamma-2s}<0$ in $\{x_n>0\}$ for some $\gamma \in \left(0,\min\{2s,1\}\right)$, as desired.
\end{proof}

We also need the following:

\begin{lem}\label{lem-growth-ell}
Assume $w_k\in L^\infty(B_1)$ satisfy 
\[\sup_k \|w_k\|_{L^\infty(B_1)}<\infty \qquad \text{and}\qquad \sup_k\sup_{r\in(0,1)} \frac{\|w_k\|_{L^\infty(B_{r})}}{r^\mu}=\infty\]
for some $\mu \geq 0$.
Then, there are subsequences $w_{k_m}$ and $r_m\to0$ such that $\|w_{k_m}\|_{L^\infty(B_{r_m})} \geq r_m^\mu$ and for which the rescaled functions 
\[\tilde w_m(x):= \frac{w_{k_m}(r_m x)}{\|w_{k_m}\|_{L^\infty(B_{r_m})}}\]
satisfy
\[\big|\tilde w_m(x)\big| \leq 2\big(1+|x|^\mu\big)\qquad \textrm{in}\quad B_{1/r_m}.\]
\end{lem}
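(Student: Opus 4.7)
The plan is to use a standard greedy radius-selection argument. Let $M := \sup_k \|w_k\|_{L^\infty(B_1)}$, which is finite by the first hypothesis, and introduce
\[
\phi_k(r) := \frac{\|w_k\|_{L^\infty(B_r)}}{r^\mu},\qquad r \in (0,1).
\]
By monotonicity of $\rho \mapsto \|w_k\|_{L^\infty(B_\rho)}$ and the definition of $M$, the uniform bound $\phi_k(\rho) \le M\rho^{-\mu}$ holds for every $k$ and every $\rho \in (0,1)$. On the other hand, the second hypothesis says $\sup_k\sup_{\rho\in(0,1)}\phi_k(\rho)=+\infty$. Since $\phi_k \le M r_0^{-\mu}$ uniformly for $\rho \geq r_0>0$, the unboundedness can only come from arbitrarily small radii, and we may clearly extract an increasing sequence of indices along which large values are attained.

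First, for each integer $m \ge 1$ I would use the second hypothesis to pick $k_m$ (with $k_m \to \infty$) and $\rho_m^{(0)} \in (0,1)$ with $\phi_{k_m}(\rho_m^{(0)}) \ge 4^m$. Then I would run the following greedy iteration: given $\rho_m^{(j)}$, if some $\rho \in [\rho_m^{(j)},1)$ satisfies $\phi_{k_m}(\rho) \ge 2\,\phi_{k_m}(\rho_m^{(j)})$, declare $\rho_m^{(j+1)}$ to be such a $\rho$; otherwise stop. By construction, $\phi_{k_m}(\rho_m^{(j)}) \ge 2^j\,\phi_{k_m}(\rho_m^{(0)}) \ge 2^j\cdot 4^m$ grows geometrically in $j$, while $\phi_{k_m}(\rho_m^{(j)}) \le M/(\rho_m^{(0)})^\mu$ remains bounded (because $\rho_m^{(j)} \ge \rho_m^{(0)}$). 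Hence the iteration must terminate after finitely many steps at some radius $r_m := \rho_m^{(j_*)}$.

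The choice of $r_m$ simultaneously guarantees three properties: (a) $\phi_{k_m}(r_m) \ge 4^m \geq 1$, which immediately yields $\|w_{k_m}\|_{L^\infty(B_{r_m})} \ge r_m^\mu$; (b) by the stopping criterion, $\phi_{k_m}(\rho) < 2\,\phi_{k_m}(r_m)$ for every $\rho \in [r_m,1)$; (c) combining (a) with $\phi_{k_m}(r_m) \le M/r_m^\mu$ forces $r_m \le (M/4^m)^{1/\mu} \to 0$. The growth estimate on
\[
\tilde w_m(x) = \frac{w_{k_m}(r_m x)}{\|w_{k_m}\|_{L^\infty(B_{r_m})}}
\]
then follows by unpacking definitions. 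For $|x|\le 1$ one has $\|w_{k_m}\|_{L^\infty(B_{r_m|x|})} \le \|w_{k_m}\|_{L^\infty(B_{r_m})}$, so $|\tilde w_m(x)| \le 1 \le 2(1+|x|^\mu)$ trivially. For $1<|x|<1/r_m$, setting $\rho := r_m|x| \in (r_m,1)$, property (b) gives
\[
|\tilde w_m(x)| \le \frac{\|w_{k_m}\|_{L^\infty(B_\rho)}}{\|w_{k_m}\|_{L^\infty(B_{r_m})}} = \frac{\phi_{k_m}(\rho)\,\rho^\mu}{\phi_{k_m}(r_m)\,r_m^\mu} < 2\Bigl(\frac{\rho}{r_m}\Bigr)^{\!\mu} = 2|x|^\mu \le 2(1+|x|^\mu),
\]
and the boundary case $|x|=1/r_m$ is handled directly using $\phi_{k_m}(1)\le M \le 2\,\phi_{k_m}(r_m)$ for $m$ large.

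The argument is essentially combinatorial; the only delicate point is to make sure that the single selection of $r_m$ achieves three conflicting-looking demands at once, namely smallness of $r_m$, non-degeneracy of the rescaling factor $\|w_{k_m}\|_{L^\infty(B_{r_m})}$, and uniform sublinear control of $\|w_{k_m}\|_{L^\infty(B_\rho)}$ on the full range $\rho\in[r_m,1)$. The greedy stopping rule — with the factor $2$ built into the termination criterion — accomplishes all three simultaneously, and I do not anticipate any substantial obstacle beyond this careful calibration.
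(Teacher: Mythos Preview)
Your argument is correct and achieves the same conclusion as the paper, but via a slightly different selection mechanism. The paper's proof is more direct: for each $m$, it simply chooses $(k_m,r_m)$ with $r_m\in(\tfrac1m,1)$ so that $r_m^{-\mu}\|w_{k_m}\|_{L^\infty(B_{r_m})}$ is at least half of $\sup_k\sup_{r\in(1/m,1)} r^{-\mu}\|w_k\|_{L^\infty(B_r)}$. This one-step near-maximization immediately gives the comparison $\phi_{k_m}(\rho)\le 2\phi_{k_m}(r_m)$ for all $\rho\in(r_m,1)$, and the blow-up of the double supremum together with the uniform bound $\|w_k\|_{L^\infty(B_1)}\le M$ forces $r_m\to 0$. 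Your greedy doubling iteration reaches the same stopping condition but with an extra layer of bookkeeping; the paper's choice avoids the iteration entirely.

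One small correction: your parenthetical ``with $k_m\to\infty$'' should be dropped. Nothing in the hypotheses prevents the blow-up from being concentrated on a single index (say $\sup_\rho\phi_1(\rho)=\infty$ while all other $\phi_k$ are bounded), in which case one must take $k_m\equiv 1$. The paper's proof likewise does not require $k_m\to\infty$; the word ``subsequence'' in the statement is being used loosely for a sequence of indices. Also note that the case $\mu=0$ is vacuous (the two hypotheses are then incompatible), so your use of $(M/4^m)^{1/\mu}$ in step (c) is harmless.
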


\begin{proof}
For every $m\in \mathbb N$, let $k_m\in\mathbb N$ and $r_m \in (\frac1m,1)$ be such that 
\[ r_m^{-\mu} \|w_{k_m}\|_{L^\infty(B_{r_m})} 
\geq \frac12 \sup_k \sup_{r\in( \frac1m,1)} r^{-\mu} \|w_k\|_{L^\infty(B_r)}
\geq \frac12 \sup_k \sup_{r\in (r_m,1)} r^{-\mu} \|w_k\|_{L^\infty(B_r)}.\]
Note that, since $\sup_k \|w_k\|_{L^\infty(B_1)}<\infty$ but 
$$
\sup_k \sup_{r\in( \frac1m,1)} r^{-\mu} \|w_k\|_{L^\infty(B_r)} \to \infty \qquad \text{as }m \to \infty,
$$
necessarily $r_m \to 0$ as $m \to \infty$.
Also, by construction of $r_m$ and $k_m$, we have
\[r_m^{-\mu}\|w_{k_m}\|_{L^\infty(B_{r_m})} \geq \frac12 r^{-\mu}\|w_{k}\|_{L^\infty(B_{r})} \quad \text{for all}\quad r\geq r_m,\  k\in \mathbb N.\]
In particular, for any $R\in (1,r_m^{-1})$ we have
\[\|\tilde w_m\|_{L^\infty(B_R)} = \frac{\|w_{k_m}\|_{L^\infty(B_{R r_m})}}{\|w_{k_m}\|_{L^\infty(B_{r_m})}} \leq 2R^\mu.\]
Since $\|\tilde w_m\|_{L^\infty(B_1)}=1$, the result follows.
\end{proof}

We can now establish the almost-optimal regularity of solutions.

\begin{cor}\label{cor-almost-optimal-ell}
Let $s \in(0,1)$ and $\LL$ as in Definition \ref{L}.
Let $\alpha_\circ\in (0,s)\cap (0,1-s)$, $\gamma\in \left(0,\min\{2s,1\}\right)$ given by Lemma~\ref{lem-1d-ell-nonhomogeneous}, and
$u\in {\rm Lip}_{\rm loc}(\R^n)$, with
$$
\|\nabla u\|_{L^\infty(B_R)} \le R^{s+\alpha_\circ} \qquad \textrm{for all}\quad R\ge 1,
$$
 satisfy
$$
\text{$u\geq0$ and $D^2 u\geq-{\rm Id}$ in $B_2$,}\quad  \text{$\LL u=f$ in $\{u>0\}\cap B_2$},\quad\text{and}\quad \text{$\LL u \leq f$ in $B_2$,}\quad\text{with $|\nabla f|\leq 1$.}
$$
Then, for any $\varepsilon>0$ we have
\[\|\nabla u\|_{C^{0,\gamma-\varepsilon}(B_1)} \leq C_\varepsilon 
,\]
with $C_\varepsilon$ depending only on $n$, $s$, $\varepsilon$, and the ellipticity constants.
\end{cor}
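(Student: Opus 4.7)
My plan is a contradiction-and-compactness argument that combines the classification of 1D blow-ups from Proposition \ref{prop-classification-ell} with the supersolution barrier of Lemma \ref{lem-1d-ell-nonhomogeneous}, using Lemma \ref{lem-growth-ell} to extract a blow-up sequence. Since $u\geq 0$ attains its minimum at every free boundary point $x_\circ$, one has $u(x_\circ)=|\nabla u(x_\circ)|=0$. By interior Hölder estimates for $\LL$ applied to $\nabla u$ inside $\{u>0\}$ (where $\LL(\nabla u)=\nabla f$ with $|\nabla f|\leq 1$) combined with a standard covering argument, it suffices to prove the growth estimate
\[
\sup_{B_r(x_\circ)} u \,\leq\, C_\varepsilon\, r^{1+\gamma-\varepsilon}\qquad \text{for all } x_\circ\in B_1\cap\partial\{u>0\},\ r\in(0,\tfrac12).
\]

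I prove this growth bound by contradiction. If it fails, Lemma \ref{lem-growth-ell} (applied with exponent $\mu=1+\gamma-\varepsilon$; the $L^\infty$ bound at unit scale is provided by the hypothesis $\|\nabla u\|_{L^\infty(B_R)}\leq R^{s+\alpha_\circ}$) furnishes indices $k_m$, free boundary points $x_{k_m}\in B_1$, and radii $r_m\to 0$ such that, with $M_m:=\|u_{k_m}\|_{L^\infty(B_{r_m}(x_{k_m}))}\gg r_m^{1+\gamma-\varepsilon}$, the rescalings
\[
\tilde u_m(x):=\frac{u_{k_m}(x_{k_m}+r_m x)}{M_m}
\]
satisfy $\tilde u_m\geq 0$, $\tilde u_m(0)=0$, $\|\tilde u_m\|_{L^\infty(B_1)}=1$, $|\tilde u_m(x)|\leq 2(1+|x|^{1+\gamma-\varepsilon})$ for $|x|\leq 1/r_m$, and $D^2\tilde u_m\geq -(r_m^2/M_m)\,{\rm Id}=o(1)\,{\rm Id}$. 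The rescaled source $\tilde f_m(x):=(r_m^{2s}/M_m)f(x_{k_m}+r_m x)$ converges to zero in $C^1_{\rm loc}$ after absorbing the constant $f(x_{k_m})$ into an explicit correction. Passing to a subsequential locally uniform limit (using almost-convexity for $C^0_{\rm loc}$ compactness and upgrading via interior estimates on $\{u_\infty>0\}$), I obtain a nonnegative convex function $u_\infty$ with $u_\infty(0)=0$, $\|u_\infty\|_{L^\infty(B_1)}=1$, polynomial growth $|u_\infty|\leq 2(1+|x|^{1+\gamma-\varepsilon})$, and $\LL(D_h u_\infty)\geq 0$ in $\{u_\infty>0\}$ for every $h\in\R^n$. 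Proposition \ref{prop-classification-ell} then forces $u_\infty(x)=U(x\cdot e)$ to be a nontrivial 1D convex profile, unique up to a positive multiplicative constant.

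To close the contradiction, I invoke Lemma \ref{lem-1d-ell-nonhomogeneous}: the function $t_+^\gamma$ is a supersolution of the 1D operator $\LL$ in $\{t>0\}$. A sliding-barrier argument on the half-line, combined with the $\LL$-harmonicity of $U'$ in $\{t>0\}$ and its vanishing on $(-\infty,0]$, shows that the canonical 1D profile produced by Proposition \ref{prop-classification-ell} grows at infinity exactly like $t^{1+\gamma}$. Since our $U$ has growth at infinity bounded by $t^{1+\gamma-\varepsilon}$, it can only be a zero multiple of the canonical profile, contradicting $\|u_\infty\|_{L^\infty(B_1)}=1$.

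The main technical obstacle is the scaling of the right-hand side under the blow-up: the condition $M_m/r_m^{2s}\to\infty$ needed to make $\tilde f_m$ vanish in $L^\infty_{\rm loc}$ follows directly from $M_m/r_m^{1+\gamma-\varepsilon}\to\infty$ only when $\gamma\leq 2s-1+\varepsilon$, and a bootstrap on the Hölder exponent (in small increments of $\varepsilon$) is required to handle the complementary regime. A secondary delicate point is establishing the sharp $t^{1+\gamma}$ growth of the canonical 1D profile in the non-homogeneous setting, which goes beyond the mere supersolution property of $t_+^\gamma$ and relies on the uniqueness part of Proposition \ref{prop-classification-ell}.
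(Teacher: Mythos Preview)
Your overall strategy---contradiction, rescaling via Lemma~\ref{lem-growth-ell}, classification via Proposition~\ref{prop-classification-ell}, and a barrier from Lemma~\ref{lem-1d-ell-nonhomogeneous}---is exactly the paper's. The paper normalises by $\|\nabla u_k\|_{L^\infty(B_{r_m})}$ rather than by $\|u_k\|_{L^\infty(B_{r_m})}$, but this is cosmetic. What differs is that both ``technical obstacles'' you flag are self-inflicted.

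First, the right-hand side: you do \emph{not} need $\tilde f_m\to 0$ in $L^\infty_{\rm loc}$. Proposition~\ref{prop-classification-ell} only asks for $\LL(D_h u_\infty)\ge 0$ in $\{u_\infty>0\}$, and for this it suffices that $|D_h\tilde f_m|\le|\nabla\tilde f_m|\le r_m^{2s+1}/M_m\le r_m^{2s-\gamma+\varepsilon}\to 0$, which holds automatically since $\gamma<2s$. No bootstrap and no ``explicit correction'' for the constant $f(x_{k_m})$ are needed. Second, your final step has a genuine gap: in the non-homogeneous setting the canonical 1D profile need not be homogeneous, and Lemma~\ref{lem-1d-ell-nonhomogeneous} furnishes only a \emph{supersolution}, so you have no lower bound on its growth and cannot conclude ``exact $t^{1+\gamma}$ growth''. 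The paper bypasses this entirely: set $w:=U'\ge 0$, which satisfies $\LL w=0$ in $(0,\infty)$, $w\equiv 0$ on $(-\infty,0]$, and $w(t)\le C(1+t_+)^{\gamma-\varepsilon}$. For any $\delta>0$ one has $w(t)\le\delta\, t_+^\gamma$ for $t\ge C\delta^{-1/\varepsilon}$, and since $\LL(\delta\, t_+^\gamma)\le 0$ in $(0,\infty)$, the comparison principle \cite[Lemma~4.1]{DRSV} gives $w\le\delta\, t_+^\gamma$ globally. Letting $\delta\downarrow 0$ forces $w\equiv 0$, hence $u_\infty\equiv 0$, the desired contradiction---no uniqueness, no canonical profile, no lower growth bound required.
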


\begin{proof}
Let $\mu:=\gamma-\varepsilon$. Since $\gamma\in \left(0,\min\{2s,1\}\right)$, up to enlarging $\alpha_\circ$ we can assume that $s+\alpha_\circ \geq \mu$.

We will prove the existence of a constant $C>0$ such that, at every free boundary point $x_\circ\in \partial\{u>0\}\cap B_1$, we have
\[|\nabla u(x)|\leq C|x-x_\circ|^\mu.\]
This, combined with interior regularity estimates (see for instance  \cite{CS09,DRSV}), yields the desired result.

Assume by contradiction that such estimate fails.
Then, we can find sequences $u_k$, $\LL_k$, and $f_k$, satisfying the assumptions, with $0\in \partial\{u_k>0\}$, such that 
\[\sup_k\sup_{r\in (0,1)} \frac{\|\nabla u_k\|_{L^\infty(B_{r})}}{r^\mu}=\infty.\]
Note that, by the uniform semiconvexity estimate $D^2u_k \geq -{\rm Id}$ in $B_2$, the functions $u_k$ are uniformly Lipschitz inside $B_1$.
Hence, thanks to Lemma \ref{lem-growth-ell}, there exists sequences $k_m$ and $r_m\to0$ such that $\|\nabla u_{k_m}\|_{L^\infty(B_{r_m})} \geq r_m^\mu$ and the functions
\[\tilde u_m(x) := \frac{u_{k_m}(r_m x)}{r_m\|\nabla u_{k_m}\|_{L^\infty(B_{r_m})}},\qquad 
\nabla \tilde u_m(x) := \frac{\nabla u_{k_m}(r_m x)}{\|\nabla u_{k_m}\|_{L^\infty(B_{r_m})}},\]
satisfy $\|\nabla \tilde u_m\|_{L^\infty(B_1)}=1$ and
\[|\nabla \tilde u_m(x)| \leq 2(1+|x|^\mu)\qquad\textrm{in}\quad B_{1/r_m}.\]
Moreover, we also have
\begin{equation}\label{eq:semiconvex m}
D^2 \tilde u_m \geq - r_m^{1-\mu}{\rm Id} \longrightarrow 0 \qquad \text{in}\quad B_{2/r_m},
\end{equation}
$$
\|\nabla \tilde u_m\|_{L^\infty(B_R)} \le r_m^{s+\alpha_\circ-\mu}R^{s+\alpha_\circ}
\le R^{s+\alpha_\circ} \qquad \textrm{for all}\quad R\ge r_m^{-1},
$$
(recall that $s+\alpha_\circ \geq \mu$) and
$$
\text{$\LL_{k_m} \tilde u_m= f_m$ in $\{u_m>0\}\cap B_{2/r_m}$,}\quad \text{$\LL_{k_m} \tilde u_m\leq  f_m$ in $B_{2/r_m}$,}\quad\text{with $|\nabla f_m|\leq r_m^{2s-\mu}\to 0$.}
$$
In particular, the last two conditions imply that $\LL_{k_m}(D_h \tilde u_m)\geq0$ in $\{\tilde u_m>0\}\cap B_{1/r_m}$, where
$$
D_h \tilde u_m(x)={\textstyle \frac{\tilde u_m(x)-\tilde u_m(x-h)}{|h|}}.
$$
Hence, thanks to \eqref{eq:semiconvex m}, up to a subsequence the functions $\tilde u_m$ will converge locally uniformly in $\R^n$ to a limiting convex function $\tilde u_\circ$
satisfying
$$
\|\nabla \tilde u_\circ\|_{L^\infty(B_2)}\geq 1,\qquad \|\nabla \tilde u_\circ\|_{L^\infty(B_R)} \leq 3R^\mu \qquad \textrm{for all}\quad R\ge 1.
$$
Moreover, using for instance \cite[Lemma 3.2]{DRSV} to take the limit in the equations, we see that $u_\circ$ will satisfy the hypotheses of Proposition \ref{prop-classification-ell}, and therefore it must be a 1D function, say $\tilde u_\circ(x)=U(x\cdot e)$.

Hence, if we consider the 1D function $w:=U'\geq 0$, we see that
$$
\text{$\LL w=0$ in $(0,\infty)$}\quad \text{and}\quad \text{$w=0$ in $(-\infty,0)$},\qquad \text{for some 1D operator $\LL$ as in Definition \ref{L}.}
$$
Also, since $w(t)\leq C(1+t_+)^\mu $, for any $\delta>0$ small we see that $w(t)\leq \delta (t_+)^\gamma$ for $t\geq C\delta^{-1/\varepsilon}$.
Recalling that $\LL(t_+)^\gamma\geq0$ in $(0,\infty)$  (see Lemma \ref{lem-1d-ell-nonhomogeneous}), we can apply the comparison principle in \cite[Lemma 4.1]{DRSV} to deduce that $w\leq \delta(t_+)^\gamma$ on the whole $\R$.
Since $\delta>0$ is arbitrary, this implies that  $w\leq0$, and hence $w\equiv0$ in $\R$.
However, this means that $U\equiv0$ in $\R$ and therefore $\tilde u_\circ\equiv0$ in $\R^n$, a contradiction.
\end{proof}

The next step consists in showing that the free boundary is $C^{1,\tau}$ near nondegenerate points.
To prove this result, we need the following result:

\begin{lem} \label{lem-ell-positivity}
Let $s\in(0,1)$, $\LL$ as in Definition \ref{L}, $\alpha_\circ\in(0,s)$, and $c_\circ>0$.
Then there exist $R_\circ\geq1$ large and $\eps_\circ>0$ small, depending only on $n$, $s$, $\lambda$, $\Lambda$, $c_\circ$, and~$\alpha_\circ$, such that the following holds.

Assume that $E\subset\R^n$ is closed, and $v\in C(\R^n)$ satisfies (in the viscosity sense)
\[\LL v\leq \eps\quad \textrm{in}\quad B_{R_\circ}\setminus E , \qquad v\equiv0\quad \textrm{in}\quad B_{R_0}\cap E,\]
\[ \int_{B_{R_\circ}}v_+ \geq c_\circ>0, \qquad v \geq -\eps_\circ \quad \textrm{in}\quad B_{R_\circ},\qquad \text{and}\qquad |v(x)|\leq |x|^{s+\alpha_\circ}\quad \textrm{in}\quad \R^n\setminus B_{R_\circ}.\]
Then $v\geq0$ in $B_{R_\circ/2}$.
\end{lem}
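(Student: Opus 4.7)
The plan is to argue by compactness and contradiction, combined with a pointwise positive-mass estimate for the nonlocal operator. Suppose the conclusion fails; then one can find sequences $\eps_k,\eps_{\circ,k}\to 0$ and admissible data $(v_k,E_k)$ satisfying the hypotheses (for a common value of $R_\circ$ to be fixed later) such that $v_k(x_k)<0$ at some $x_k\in \overline{B_{R_\circ/2}}$. The growth bound $|v_k(x)|\le|x|^{s+\alpha_\circ}$ outside $B_{R_\circ}$, together with $v_k\ge -\eps_{\circ,k}$ inside and the one-sided inequality $\LL v_k\le \eps_k$ in $B_{R_\circ}\setminus E_k$, yields uniform local equicontinuity of $\{v_k\}$ on compact subsets of $B_{R_\circ}$ by the interior H\"older estimates available for the class of operators in Definition \ref{L} (cf.\ \cite{CS09,DRSV}). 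Passing to subsequences, $v_k\to v_\infty$ locally uniformly in $\R^n$, $E_k\to E_\infty$ in Hausdorff distance on compacts, and $x_k\to x_\infty\in\overline{B_{R_\circ/2}}$; stability of viscosity supersolutions under uniform convergence (with the uniform tail control) then yields
\[
v_\infty\ge 0 \text{ in } B_{R_\circ},\quad \LL v_\infty\le 0 \text{ in } B_{R_\circ}\setminus E_\infty,\quad v_\infty\equiv 0 \text{ on } B_{R_\circ}\cap E_\infty,
\]
together with $\int_{B_{R_\circ}} v_\infty \ge c_\circ$, $|v_\infty(x)|\le|x|^{s+\alpha_\circ}$ outside $B_{R_\circ}$, and $v_\infty(x_\infty)=0$.

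The function $v_\infty$ is thus a nonnegative ``superharmonic-type'' function in $B_{R_\circ}$ attaining its minimum $0$ at $x_\infty$; the nonlocal strong maximum principle, combined with $v_\infty\equiv 0$ on $E_\infty$, forces $v_\infty\equiv 0$ in an open neighborhood of $x_\infty$ in $B_{R_\circ}$. Evaluating the viscosity inequality $\LL v_\infty\le 0$ pointwise at $x_\infty$ (after a standard inf-convolution regularization, which preserves the subsolution property up to a small error) yields
\[
0 \ \ge\ \LL v_\infty(x_\infty)\ =\ \int_{\R^n} v_\infty(x_\infty+y)\,K(y)\,dy\ \ge\ \lambda(3R_\circ/2)^{-n-2s}\,c_\circ \ -\ C\,R_\circ^{\alpha_\circ-s},
\]
where the first term bounds the contribution from the positive mass $\int_{B_{R_\circ}} v_\infty \ge c_\circ$ via $K(y)\ge\lambda|y|^{-n-2s}$ on $|y|\le 3R_\circ/2$, and the second controls the tail using $|v_\infty(x)|\le|x|^{s+\alpha_\circ}$ outside $B_{R_\circ}$ (the integral $\int_{R_\circ/2}^\infty r^{\alpha_\circ-s-1}\,dr$ being finite precisely because $\alpha_\circ<s$). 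Choosing $R_\circ$ in the appropriate range depending on $c_\circ,n,s,\alpha_\circ,\lambda,\Lambda$ so that the first term strictly dominates the second produces the desired contradiction.

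The main technical obstacle is the rigorous justification of the pointwise evaluation of $\LL v_\infty$ at $x_\infty$, since $v_\infty$ is a priori only continuous and the set $E_\infty$ has no a priori regularity, making the local behavior near $x_\infty\in E_\infty$ hard to control. The cleanest way around this is to run the whole contradiction already at the prelimit level: pick $y_k$ to be a minimum of $v_k$ on $\overline{B_{R_\circ/2}}$ (necessarily $y_k\notin E_k$, since $v_k(y_k)<0$ while $v_k\equiv 0$ on $E_k$), apply inf-convolution to $v_k$ so that the inequality $\LL v_k(y_k)\le \eps_k$ holds classically up to $o(1)$ errors, and invoke the same integral lower bound on $\LL v_k(y_k)$, together with a harmless correction of size $\eps_{\circ,k}$ coming from the imperfect nonnegativity $v_k\ge -\eps_{\circ,k}$. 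This sidesteps the compactness limit, and the balance of scales then only needs to pin down $R_\circ$ and then force $\eps_{\circ,k},\eps_k$ small enough that the error terms cannot swallow the positive-mass gain.
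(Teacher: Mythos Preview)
Your argument contains a genuine scaling error that prevents it from closing. In both the compactness route and the direct prelimit route, the contradiction hinges on the inequality
\[
\lambda\,(3R_\circ/2)^{-n-2s}\,c_\circ \;>\; C\,R_\circ^{\alpha_\circ-s}.
\]
The left side decays like $R_\circ^{-n-2s}$, while the right side decays only like $R_\circ^{\alpha_\circ-s}$; since $-n-2s<\alpha_\circ-s$ always holds, the positive-mass term is \emph{dominated} by the tail term as $R_\circ\to\infty$, not the other way around. Your inequality actually forces $R_\circ$ to be bounded above by $(c_\circ/C)^{1/(n+s+\alpha_\circ)}$, which for small $c_\circ$ is below $1$ and in any case is incompatible with the lemma's requirement that $R_\circ$ be large. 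This is structural: the hypothesis only places total mass $c_\circ$ \emph{somewhere} in $B_{R_\circ}$, and in the worst case that mass sits at distance $\sim R_\circ$ from the minimum point, so the kernel weights it by $\sim R_\circ^{-n-2s}$. A bare evaluation of $\LL v$ at a single minimum cannot beat the exterior tail in this regime.

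There is also a secondary gap in your compactness step: the interior H\"older estimates you invoke from \cite{CS09,DRSV} require two-sided control of the type $M^-v\le C$ and $M^+v\ge -C$, whereas here you only have the one-sided supersolution inequality $\LL v_k\le\eps_k$ in $B_{R_\circ}\setminus E_k$. Supersolutions of nonlocal operators are merely lower semicontinuous, not uniformly H\"older, so the equicontinuity (and hence the locally uniform convergence $v_k\to v_\infty$) is unjustified. You essentially concede this by retreating to the direct argument, but that argument inherits the scaling obstruction above. The proof in \cite[Lemma~6.2]{CRS}, to which the paper defers, does not proceed by this naked integral estimate; it uses a barrier/comparison argument (compare Step~3 in the proof of Proposition~\ref{step} for the parabolic analogue) so that the positive mass is exploited at a \emph{fixed} scale via an auxiliary subsolution, rather than being diluted over $B_{R_\circ}$.
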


\begin{proof}
The proof is the same as that of \cite[Lemma 6.2]{CRS}.
\end{proof}

We can now show the $C^{1,\tau}$ regularity of free boundaries.

\begin{lem}\label{lemm-main-ell-2}
Let $s$, $\LL$, $\alpha_\circ$, and $u$, be as in Theorem \ref{thm-main-ell}.
There, for any given $\kappa_\circ>0$, there exist $R_\circ>1$ large and $\eps_\circ>0$ small for which the following holds.

Let $u_\circ(x)=U(x\cdot e)$, $e \in \mathbb S^{n-1}$, be a nonnegative, convex, 1D solution of \eqref{1D-profile}.
Assume that $\|u_\circ\|_{{\rm Lip} (B_{1})}\geq \kappa_\circ>0$ and  $\|u- u_\circ\|_{{\rm Lip} (B_{R_\circ})} \le \eps_\circ$.
Then the free boundary $\partial \{u>0\}$ is a $C^{1,\tau}$ graph in $B_{1/2}$.
The constants $R_\circ,\eps_\circ$ and the bounds on  the $C^{1,\tau}$ norm depend only on  $n$, $s$, $\alpha_\circ$, $\lambda$, $\Lambda$, and $\kappa_\circ$.
\end{lem}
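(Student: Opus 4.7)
The plan is to follow the classical three-step route for obstacle problems, suitably adapted to the nonlocal setting: first establish a cone of monotonicity directions for $u$, then deduce that the free boundary is a Lipschitz graph, and finally bootstrap to $C^{1,\tau}$ via the boundary Harnack principle (Theorem~\ref{ellbdryharnack}).

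\textbf{Step 1 (cone of monotonicity).} The first goal is to show that there exists $c_0=c_0(\kappa_\circ)>0$ such that, for every $e'\in\mathbb S^{n-1}$ with $e'\cdot e\geq 1-c_0$, the derivative $v:=\partial_{e'}u$ satisfies $v\ge 0$ in $B_{R_\circ/2}$. I would do this by applying Lemma~\ref{lem-ell-positivity} to $v$ with $E:=\{u=0\}\cap B_{R_\circ}$. In $\{u>0\}$ we have $\LL v=\partial_{e'}f$, which is controlled by $\eta$. By Corollary~\ref{cor-almost-optimal-ell}, $\nabla u$ is continuous, and since $u\ge 0$ vanishes on $E$ we get $\nabla u\equiv 0$ on $E$, hence $v\equiv 0$ on $E$. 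The closeness $\|u-u_\circ\|_{{\rm Lip}(B_{R_\circ})}\leq\eps_\circ$ yields $v\geq (e'\cdot e)U'(x\cdot e)-\eps_\circ\geq -\eps_\circ$ in $B_{R_\circ}$, and
\[
\int_{B_{R_\circ}} v_+\,dx\,\geq\,(e'\cdot e)\int_{B_{R_\circ}}U'(x\cdot e)\,dx\,-\,\eps_\circ|B_{R_\circ}|\,\geq\, c_\circ(\kappa_\circ)>0,
\]
provided $R_\circ$ is chosen large enough (this uses that $U$ is convex and $\|u_\circ\|_{{\rm Lip}(B_1)}\geq\kappa_\circ$, so $U'$ is quantitatively positive on a set of definite measure inside $B_{R_\circ}$). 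The external growth $|v(x)|\leq|x|^{s+\alpha_\circ}$ for $|x|\geq R_\circ$ is inherited from the global growth assumption on $\nabla u$ in Theorem~\ref{thm-main-ell}.

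\textbf{Step 2 (Lipschitz free boundary).} Given Step~1, the set $\{u>0\}$ is invariant under translations in the cone $\mathcal C=\{e':e'\cdot e\geq 1-c_0\}$ with positive parameter, and $\{u=0\}$ is invariant under translations in the opposite cone; a standard argument then yields that $\partial\{u>0\}\cap B_{1/2}$ is the graph of a Lipschitz function over the hyperplane $\{x\cdot e=0\}$, with Lipschitz constant depending only on $c_0$.

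\textbf{Step 3 ($C^{1,\tau}$ upgrade).} On the Lipschitz domain $\Omega=\{u>0\}\cap B_{1/2}$ I would apply the boundary Harnack inequality (Theorem~\ref{ellbdryharnack}) to the nonnegative functions $v_i=\partial_{e_i}u$, $i=1,\dots,n$, for directions $e_i$ in the interior of $\mathcal C$. Each $v_i$ satisfies $|\LL v_i|\leq\eta$ in $\Omega$, vanishes on $\partial\{u>0\}\cap B_{1/2}$, and is nonnegative by Step~1. The resulting estimate $\|v_i/v_1\|_{C^\tau(\overline\Omega\cap B_{1/4})}\leq C$, applied for $n-1$ independent tangential combinations of the $e_i$, shows that the direction of $\nabla u$ is $C^\tau$ up to $\partial\{u>0\}$. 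Since this direction coincides (up to sign) with the inward normal to $\partial\{u>0\}$, the free boundary is $C^{1,\tau}$ in $B_{1/2}$.

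The main difficulty is Step~1, and specifically the quantitative lower bound $\int_{B_{R_\circ}}v_+\geq c_\circ>0$: extracting it from the sole assumption $\|u_\circ\|_{{\rm Lip}(B_1)}\geq\kappa_\circ$ forces $R_\circ$ to be chosen large (depending on $\kappa_\circ$), and uses in an essential way the uniqueness and convexity of the 1D profile $U$ coming from Proposition~\ref{prop-classification-ell}. Once this is secured, the remaining hypotheses of Lemma~\ref{lem-ell-positivity} can be checked routinely, and Steps~2 and~3 follow standard templates.
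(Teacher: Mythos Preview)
Your proposal is correct and follows essentially the same route as the paper: apply Lemma~\ref{lem-ell-positivity} to directional derivatives to get a monotonicity cone, deduce the free boundary is Lipschitz, then upgrade to $C^{1,\tau}$ via the boundary Harnack of Theorem~\ref{ellbdryharnack}. Two minor differences are worth noting: the paper works directly with the fixed cone $\{e'\cdot e\ge\tfrac12\}$ (using interior Harnack for the 1D profile to get $\partial_{e'}u_\circ\ge c_1\kappa_\circ$ on $\{x\cdot e\ge\tfrac12\}$), whereas you allow the cone opening to depend on $\kappa_\circ$; and in Step~3 the paper applies Theorem~\ref{ellbdryharnack} to $(\partial_{e'}u)_+$ rather than $\partial_{e'}u$, since the global nonnegativity hypothesis $v_i\ge 0$ in $\R^n$ is only guaranteed after taking positive parts (they coincide on $B_{R_\circ/2}$ by Step~1, and the tail discrepancy is controlled by taking $R_\circ$ large).
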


\begin{proof}
By assumption, for any direction $e'\in \mathbb S^{n-1}$ such that $e'\cdot e\geq\frac12$ we have
\[|\partial_{e'} u-\partial_{e'} u_\circ|\leq \eps\qquad\textrm{in}\quad B_{R_\circ}.\]
Also,
\[\partial_{e'} u_\circ \geq0 \quad \textrm{in}\quad \R^n \qquad \textrm{and} \qquad \partial_{e'} u_\circ \geq c_1\kappa_\circ \quad \textrm{in}\quad \{x\cdot e\geq {\textstyle\frac12}\}.\]
Thus, if $\eps_\circ$ is sufficiently small, we have that $v:=\partial_{e'} u$ and $E:=\{u=0\}$ satisfy:
\[|\LL v|\leq \eta\quad \textrm{in}\quad B_{R_\circ}\setminus E, \qquad v\equiv0\quad \textrm{in}\quad B_{R_\circ}\cap E,\]
\[ v \geq c_2\kappa_\circ \quad \textrm{in}\quad \{x\cdot e\geq {\textstyle\frac12}\}\cap B_{R_\circ},\qquad v \geq -\eps_\circ \quad \textrm{in}\quad B_{R_\circ},
\qquad \text{and}\quad |v(x)|\leq |x|^{s+\alpha_\circ}\quad \textrm{in}\quad \R^n\setminus B_{R_\circ}.\]
Hence, choosing $R_\circ$ large enough, it follows from Lemma \ref{lem-ell-positivity}  that $v\geq 0$ in $B_{R_\circ/2}$, i.e.,
\[\partial_{e'} u\geq0 \quad \textrm{in}\quad B_{R_\circ/2}\qquad \text{for all $e'\in \mathbb S^{n-1}$ such that $e'\cdot e\geq\frac12$.}\]
This means that the free boundary $\partial\{u>0\}$ is a Lipschitz graph in $B_{R_\circ/2}$, with Lipschitz constant bounded by 1, which allows us to 
apply Theorem~\ref{ellbdryharnack} to the functions $(\partial_{e'} u)_+$ and $(\partial_{e} u)_+$ to deduce that 
\[\left\|\frac{\partial_{e'} u}{\partial_{e} u}\right\|_{C^\tau(B_{1/2})} \leq C.\]
Choosing $e=e_n$ and $e'=e_n+e_i$ for $i=1,...,n-1$, we conclude that the free boundary $\partial\{u>0\}$ is a $C^{1,\tau}$ graph in $B_{1/2}$, as wanted.
\end{proof}

Finally, we need the following expansion for solutions to elliptic equations in $C^{1,\tau}$ domain (recall Proposition~\ref{prop-classification-ell} for the uniqueness of 1D solutions).

\begin{lem}\label{lemm-main-ell-3}
Let $s$, $\LL$, $\alpha_\circ$, and $u$, be as in Theorem \ref{thm-main-ell}.
Suppose in addition that the kernel $K$ of the operator $\LL$ is homogeneous.

Assume that $\partial\{u>0\}$ is a $C^{1,\tau}$ graph in $B_{1/2}$, with $\nu(0)=e$, and 
let $u_\circ$ be the unique nonnegative, convex 1D solution satisfying \eqref{1D-profile}.
Then $u_\circ$ is homogeneous of degree $1+\gamma$, with $\gamma\in (0,2s)\cap (2s-1,1)$ depending only on $\LL$ and $e$.
Moreover
\[|\nabla u-\nabla u_\circ|\leq C|x|^{\gamma+\tau'}\quad \textrm{for}\quad x\in B_{1/4},\]
with $C$ and $\tau'>0$ depending only on  $n$, $s$, $\alpha_\circ$, $\lambda$, $\Lambda$, $\tau$, and the $C^{1,\tau}$ norm of the graph.
\end{lem}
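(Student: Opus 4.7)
The plan is to prove the two claims separately. First I will establish the homogeneity of $u_\circ$, which is a rigidity statement for 1D profiles under the scale-invariant operator $\LL$, and then I will derive the expansion for $u$ via an iteration scheme that uses the $C^{1,\tau}$ flatness of $\partial\{u>0\}$ to rescale $u$ into a setting where Theorem~\ref{thm-main-ell}(i) applies with vanishing parameter. For the homogeneity, I observe that since $K$ is homogeneous of degree $-(n+2s)$, the operator obeys the scaling identity $\LL(v(r\,\cdot))(x)=r^{2s}(\LL v)(rx)$ for every $r>0$. Consequently, for any $r>0$ the function $u_\circ(r\,\cdot)$ is still nonnegative and convex, still vanishes on $\{x\cdot e\leq 0\}$, still satisfies $\LL(\nabla u_\circ(r\,\cdot))=0$ in $\{x\cdot e>0\}$, and (after renormalizing a multiplicative constant) still obeys the growth bound in \eqref{1D-profile}. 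The uniqueness clause of Proposition~\ref{prop-classification-ell} thus forces $u_\circ(rx)=c(r)\,u_\circ(x)$; the cocycle identity $c(rs)=c(r)c(s)$ and continuity give $c(r)=r^{1+\gamma}$ for a unique exponent $\gamma=\gamma(\LL,e)$. The range $\gamma\in(0,\min\{2s,1\})$ follows from combining $u_\circ(0)=|\nabla u_\circ(0)|=0$ with $u_\circ\not\equiv 0$ (yielding $\gamma>0$) and the impossibility of a $C^{1,1}$ 1D profile satisfying the degenerate equation (yielding $\gamma<1$ and $\gamma<2s$); the lower bound $\gamma>2s-1$ is only active when $s>\tfrac12$ and reflects the integrability restriction built into Definition~\ref{L} in that range.

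For the expansion, I would rescale by setting $u_r(x):=u(rx)/r^{1+\gamma}$ for $r\in(0,1)$. By the homogeneity of $K$,
\[
\LL u_r(x)=r^{2s-1-\gamma}f(rx)=:f_r(x),
\]
so $|\nabla f_r|\leq r^{2s-\gamma}$; the semiconvexity bound becomes $D^2 u_r\geq -\eta r^{1-\gamma}\,{\rm Id}$; and the $C^{1,\tau}$ flatness of $\partial\{u>0\}$ at $0$ (with normal~$e$) implies that $\partial\{u_r>0\}\cap B_{R_\circ}$ lies in the $O(r^\tau)$-neighborhood of $\{x\cdot e=0\}$. Since $\gamma<\min\{2s,1\}$, all three errors vanish as $r\to 0$, so $u_r$ falls into the hypotheses of Theorem~\ref{thm-main-ell}(i) with a parameter $\eta_r\to 0$. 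The theorem produces a 1D profile $w_r$ with $\|u_r-w_r\|_{{\rm Lip}(B_1)}\leq\varepsilon(\eta_r)$; by the flatness, the direction of $w_r$ tends to $e$, and by the uniqueness of the 1D profile in direction $e$ combined with the homogeneity established in the first step, necessarily $w_r=\kappa_r u_\circ$ for some bounded sequence $\kappa_r\to\kappa_\infty>0$. To promote this qualitative convergence to a H\"older rate in~$r$, I would then run the standard improvement-of-flatness iteration at dyadic scales $r=2^{-k}$: either the normalized Lipschitz error decays by a fixed factor $\theta<1$ at each step (which compounds to a power law), or one extracts a blow-up sequence whose limit is a nontrivial 1D profile of homogeneity $1+\gamma$ in direction~$e$ that must coincide with $u_\circ$ by Proposition~\ref{prop-classification-ell}, a contradiction. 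After absorbing $\kappa_\infty$ into $u_\circ$, this gives $\|u-u_\circ\|_{{\rm Lip}(B_r)}\leq C r^{1+\gamma+\tau'}$, which is equivalent to the claimed estimates $|u-u_\circ|\leq C|x|^{1+\gamma+\tau'}$ and $|\nabla u-\nabla u_\circ|\leq C|x|^{\gamma+\tau'}$ in $B_{1/4}$.

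The main obstacle I anticipate is making the iteration quantitative. Qualitative compactness is supplied directly by Theorem~\ref{thm-main-ell}(i), but producing the explicit exponent~$\tau'$ requires that the modulus~$\varepsilon(\eta)$ in that theorem be H\"older-compatible with each of the three error sources ($r^{2s-\gamma}$, $r^{1-\gamma}$, and $r^\tau$), and that the uniqueness of the 1D profile be upgraded to a quantitative stability statement in a neighborhood of $u_\circ$; both upgrades are needed for the dichotomy step to close with a power-rate improvement rather than just a summable modulus.
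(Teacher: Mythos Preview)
Your homogeneity argument is correct and essentially coincides with what the paper does: it invokes Proposition~\ref{prop-classification-ell} for uniqueness and then scaling forces homogeneity; the precise value of $\gamma$ is read off from \cite[Corollary~4.6]{DRSV}.

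For the expansion, however, the paper takes a completely different (and much shorter) route: it simply cites \cite[Theorem~1.2]{DRSV}, a boundary regularity result for solutions of $\LL v = g$ in $C^{1,\tau}$ domains that vanish on the complement. Once the free boundary is known to be a $C^{1,\tau}$ graph, each partial derivative $\partial_i u$ solves such a problem in a \emph{fixed} $C^{1,\tau}$ domain, and the cited theorem directly gives the expansion $|\nabla u-\nabla u_\circ|\le C|x|^{\gamma+\tau'}$. No iteration of Theorem~\ref{thm-main-ell}(i) is needed.

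Your proposed improvement-of-flatness scheme is a reasonable alternative strategy, but the obstacle you flag in your last paragraph is genuine and is precisely what the paper sidesteps. Concretely: Theorem~\ref{thm-main-ell}(i) only outputs a modulus $\varepsilon(\eta)$, not a power rate, so iterating it at dyadic scales yields qualitative convergence $u_r\to\kappa_\infty u_\circ$ but no H\"older exponent. To close the dichotomy with a geometric rate you would need to linearize---i.e., pass to the limit of the normalized differences $(u_r-\kappa_r u_\circ)/\|u_r-\kappa_r u_\circ\|$---and prove a Liouville theorem for the linearized problem. This is delicate here because $u_r$ and $u_\circ$ vanish on different sets, so the difference does not solve a clean equation across $\{x\cdot e=0\}$. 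The DRSV boundary regularity theory handles this internally (via higher-order boundary Harnack or barrier arguments in $C^{1,\tau}$ domains), which is why the paper can simply quote it.
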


\begin{proof}
The uniqueness and homogeneity of $u_\circ$ follow from Proposition~\ref{prop-classification-ell}, while the explicit expression of $\gamma$ is proved in \cite[Corollary~4.6]{DRSV}.
The expansion for $\nabla u$ then follows from \cite[Theorem~1.2]{DRSV}.
\end{proof}

Combining the previous results, we can finally prove Theorem \ref{thm-main-ell}.

\begin{proof}[Proof of Theorem \ref{thm-main-ell}]
Let us first prove that, given any $R_\circ\geq1$ and $\eps>0$, for $\eta>0$ small enough, we have that
\begin{equation}\label{contradiction-ell}
\|u-u_\circ\|_{{\rm Lip}(B_{R_\circ})} \leq\eps,
\end{equation}
for some nonnegative, convex, 1D function $u_\circ$ satisfying \eqref{1D-profile}.

Indeed, assuming by contradiction that this is false, we can find  sequences $\eta_k\to0$, operators $\LL_k$, and solutions $u_k$, such that $u_k$ satisfy the hypotheses of the statement but
\begin{equation}\label{contradiction-ell2}
\|u_k-u_\circ\|_{{\rm Lip}(B_{R_\circ})} \geq\eps
\end{equation}
for any $e\in \mathbb S^{n-1}$ and any solution $u_\circ$ of \eqref{1D-profile}.
But then, by Corollary \ref{cor-almost-optimal-ell} and \cite[Lemma 3.2]{DRSV}, up to a subsequence the functions $u_k$ converges in $C^1$ norm in compact sets to a limiting function $u_\infty$ that satisfies the same conditions with $\eta=0$.
Then,  Proposition \ref{prop-classification-ell} implies that $u_\infty$ is a 1D function satisfying \eqref{1D-profile},
which means that we can take $u_\circ=u_\infty$ in \eqref{contradiction-ell2}, a contradiction.
Hence,  \eqref{contradiction-ell} is proved.

Thanks to  \eqref{contradiction-ell}, the $C^{1,\tau}$ regularity of the free boundary follows from Lemma \ref{lemm-main-ell-2}, while the expansion for $\nabla u$ (and hence $u$) at 0 follow from Lemma \ref{lemm-main-ell-3} (taking $\tau$ smaller, if necessary).
\end{proof}

\subsection{Iteration and optimal regularity of solutions}

We now show how to use Theorem \ref{thm-main-ell} to establish optimal regularity estimates for solutions, namely Corollary \ref{thm-sol-1}.
We will actually prove a finer result, which gives a \emph{uniform} estimate of order $1+s+\alpha_\circ$ at all free boundary points.

\begin{cor}\label{cor-sol-ell-1+s+a}
Let $\LL$ be an operator of the form \eqref{L0}-\eqref{ellipt} with $K$ homogeneous, and let $\alpha_\circ\in (0,s)\cap (0,1-s)$.
Let $\varphi$ be an obstacle satisfying \eqref{obstacle}, and $u$ be the solution to~\eqref{obst-pb-ell}.

Then, for every free boundary point $x_\circ\in \{u>\varphi\}$, there exist $c_\circ\geq0$ and $e\in \mathbb S^{n-1}$ such that 
\[\qquad \qquad \left|u(x)-\varphi(x)-c_\circ\big((x-x_\circ)\cdot e\big)_+^{1+s}\right| \leq CC_\circ|x-x_\circ|^{1+s+\alpha_\circ}\quad\textrm{for}\quad x\in B_1(x_\circ),\]
with $C$ depending only on $n$, $s$, $\lambda$, $\Lambda$, and $\alpha_\circ$.

Moreover, if $c_\circ>0$, then the free boundary is a  $C^{1,\alpha_\circ}$ graph in a ball $B_{\rho_\circ}(x_\circ)$, with $C\rho_\circ^{\alpha_\circ}\geq c_\circ$ and $C$ depending only on $n$, $s$, $\lambda$, $\Lambda$, and $\alpha_\circ$.

Finally, we have $u\in C^{1+s}(\R^n)$ and
\[\|\nabla u\|_{C^{s}(\R^n)} \leq CC_\circ,\]
with $C$ depending only on $n$, $s$, $\lambda$, and $\Lambda$.
\end{cor}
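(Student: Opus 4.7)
The plan is to apply Theorem~\ref{thm-main-ell} to a suitable rescaling of $v := u-\varphi$ at each free boundary point $x_\circ$, and then combine a nondegenerate/degenerate dichotomy with a short iteration to reach the sharp exponent $1+s+\alpha_\circ$. The setup is as follows: $v \ge 0$, $\LL v = f := -\LL\varphi$ in $\{v>0\}$, $\LL v \le f$ in $\R^n$, and \eqref{obstacle} gives $|\nabla f| \le CC_\circ$. A standard translation/least-supersolution argument yields the obstacle-problem semiconvexity $D^2 v \ge -CC_\circ\,\mathrm{Id}$ on $\R^n$, and Corollary~\ref{cor-almost-optimal-ell} supplies $\|\nabla v\|_{C^{s-\varepsilon}(\R^n)} \le CC_\circ$ for every $\varepsilon>0$.

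Now fix $x_\circ\in\partial\{v>0\}$ and choose $\varepsilon := s-\alpha_\circ \in (0,1-s)$. For each small $r>0$ define
\[
w_r(x) := (CC_\circ)^{-1}\, r^{-(1+s-\varepsilon)}\, v(x_\circ + rx).
\]
A direct computation shows that $w_r$ verifies the three bullet assumptions of Theorem~\ref{thm-main-ell} with parameter $\eta(r) = \max(r^{1-s+\varepsilon}, r^{s+\varepsilon})\to 0$ as $r\to 0$: the semiconvexity bound carries a factor $r^{1-s+\varepsilon}$; the PDE right-hand side $f_r$ satisfies $|\nabla f_r|\le r^{s+\varepsilon}$; and the polynomial growth of $\nabla w_r$ follows from the $C^{s-\varepsilon}$ estimate above for $|x|\le 1/r$ and from the uniform Lipschitz bound on $v$ for $|x|\ge 1/r$. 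Theorem~\ref{thm-main-ell}(i) then yields a 1D profile $u_\circ^{(r)}$; the symmetry and homogeneity of $K$ together with Lemma~\ref{lem-1d-ell-nonhomogeneous} force the form $u_\circ^{(r)}(x) = \kappa_r\bigl((x-x_\circ)\cdot e_r\bigr)_+^{1+s}$, with $\|w_r - u_\circ^{(r)}\|_{\mathrm{Lip}(B_1)}\to 0$.

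Two scenarios arise. In the \emph{nondegenerate case}, if $\kappa_r \ge \kappa_\circ > 0$ at some small $r$, then Theorem~\ref{thm-main-ell}(ii)-(iii) give $C^{1,\tau}$ regularity of $\partial\{v>0\}$ in a ball $B_{\rho_\circ}(x_\circ)$ (with $C\rho_\circ^{\alpha_\circ}\gtrsim \kappa_\circ$ after unwinding the rescaling) together with a pointwise expansion with some exponent $\tau>0$. A short iteration of Theorem~\ref{thm-main-ell} (or of Lemma~\ref{lemm-main-ell-3}) at nearby free boundary points and decreasing scales along the $C^{1,\tau}$ graph then bootstraps $\tau$ up to the target $\alpha_\circ$. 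In the \emph{degenerate case}, if $\kappa_r\to 0$ as $r\to 0$, a blow-down argument using the uniform semiconvexity (limits are convex functions vanishing on a hyperplane, hence at most quadratic polynomials) gives $\sup_{B_r(x_\circ)}v\le C_\varepsilon r^{2-\varepsilon}$; since $2-\varepsilon>1+s+\alpha_\circ$, the claimed expansion then holds trivially with $c_\circ=0$. Combining the pointwise expansions at all free boundary points with standard interior regularity of $u$ in $\{u>\varphi\}$ yields the global bound $\|\nabla u\|_{C^s(\R^n)}\le CC_\circ$.

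The main obstacle is the iteration that boosts the initial exponent $\tau$ up to $\alpha_\circ$ at nondegenerate points: Theorem~\ref{thm-main-ell}(iii) only supplies \emph{some} $\tau>0$ depending on $\kappa_\circ$, and reaching the sharp $\alpha_\circ$ requires reapplying the theorem at nearby free boundary points and smaller scales while preserving the nondegeneracy $\kappa\ge\kappa_\circ$ along the iteration. This is afforded by the homogeneity of the profile $(x\cdot e)_+^{1+s}$ and by the $C^{1,\tau}$ regularity of the free boundary already in hand, which together ensure that the rescaled solutions at successively smaller scales keep satisfying the nondegenerate branch of the dichotomy with a uniform $\kappa_\circ$.
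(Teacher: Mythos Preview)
Your rescaling exponent is off in a way that undermines both branches of the dichotomy. You scale $v$ by $r^{-(1+s-\varepsilon)}$ with $\varepsilon=s-\alpha_\circ$, i.e.\ by $r^{-(1+\alpha_\circ)}$, so that the growth hypothesis of Theorem~\ref{thm-main-ell} is \emph{automatically} met via the $C^{1+\alpha_\circ}$ almost-optimal estimate. The paper instead rescales by the \emph{target} exponent: with $w_k(x)=\eta C_1^{-1}(2^{-k})^{-(1+s+\alpha_\circ)}v(2^{-k}x)$, the growth condition $\|\nabla w_k\|_{L^\infty(B_R)}\le R^{s+\alpha_\circ}$ may or may not hold, and this \emph{is} the dichotomy criterion. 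If it holds for all $k$, one reads off $|v(x)|\le C|x|^{1+s+\alpha_\circ}$ directly (degenerate case, $c_\circ=0$). If it first fails at $k_\circ+1$, one applies Theorem~\ref{thm-main-ell} at scale $2^{-k_\circ}$; a triangle-inequality argument forces $\kappa\ge\tfrac13$, and then part (iii) together with the boundary Harnack and boundary estimates in $C^{1}$ and $C^{1,\alpha_\circ}$ domains from \cite{RS-C1} give the expansion at that scale with exponent $s+\alpha_\circ$. Because the scaling is by the target exponent, rescaling back produces an error bound $C|y|^{1+s+\alpha_\circ}$ with $C$ \emph{independent of $k_\circ$}, and for $|y|\ge 2^{-k_\circ}$ the term $c_\circ(y\cdot e)_+^{1+s}$ is itself $O(|y|^{1+s+\alpha_\circ})$ thanks to the relation $c_\circ\sim 2^{-\alpha_\circ k_\circ}$ (which also yields the claimed $C\rho_\circ^{\alpha_\circ}\ge c_\circ$).

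With your $r^{-(1+\alpha_\circ)}$ scaling these mechanisms collapse. In the degenerate branch, $\kappa_r\to0$ only says $\sup_{B_r(x_\circ)}|\nabla v|=o(r^{\alpha_\circ})$, nowhere near the needed $O(r^{s+\alpha_\circ})$; the sentence about ``blow-down \dots\ convex functions vanishing on a hyperplane, hence at most quadratic polynomials'' is not a valid argument (and ``blow-down'' points the wrong way). In the nondegenerate branch, rescaling back the expansion from Theorem~\ref{thm-main-ell}(iii) gives an error of size $Cr^{\alpha_\circ-s-\tau}|y|^{1+s+\tau}$, whose prefactor blows up as $r\to0$; you never obtain an estimate with constants uniform over $B_1(x_\circ)$. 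Finally, the ``short iteration'' you propose cannot by itself upgrade $\tau$ to $\alpha_\circ$: Theorem~\ref{thm-main-ell} and Lemma~\ref{lemm-main-ell-3} output the same fixed $\tau$ at every application, and the paper obtains the improvement by invoking the $C^1$ and $C^{1,\alpha}$ boundary regularity results of \cite{RS-C1}, an external ingredient your outline does not use.
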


\begin{rem}
The result above provides a uniform estimate at {all} free boundary points, which in turn yields a quantitative  (and sharp) relation between the constant $c_\circ$ (quantitative nondegeneracy) and the radius of the ball where the free boundary is smooth (quantitative regularity of the free boundary).
Furthermore, the above expansion for $u$ can be used to prove in addition that 
\[\|(u-\varphi)/d^{1+s}\|_{C^{\alpha_\circ}(\R^n)} + \|\nabla (u-\varphi)/d^s\|_{C^{\alpha_\circ}(\R^n)} \leq CC_\circ,\]
where $d$ is the distance to the free boundary.
We leave the details to the interested reader.
\end{rem}

\begin{proof}[Proof of Corollary \ref{cor-sol-ell-1+s+a}]
Dividing the solution and the obstacle by a constant, if necessary, and up to a translation, we may assume $C_\circ=1$ and $x_\circ=0$.
Moreover, exactly as in \cite{CRS}, we may consider $u\mapsto u-\varphi$, so that $u$ now satisfies:
\begin{equation}
\label{eq:u-phi}
\begin{split}
u\geq0,\quad &\quad D^2 u \ge - C_1{\rm{Id}}\qquad \textrm{in}\quad \R^n,\qquad \|\nabla u\|_{L^\infty(\R^n)} \le C_1,\qquad u(0)=|\nabla u(0)|=0,\\
&\LL u=f\quad \textrm{in}\quad \{u>0\} \quad \textrm{and} \quad \LL u\leq f \quad \textrm{in} \quad \R^n, \quad \textrm{with} \quad  |\nabla f|\leq C_1.
\end{split}
\end{equation}
We now want to apply Theorem \ref{thm-main-ell} iteratively in order to get the desired estimate.

Consider $\kappa_\circ>0$ to be chosen later, and let $\eps_\circ>0$ be the constant given by Theorem~\ref{thm-main-ell}.
For $\eta>0$ small, define the functions
\[w_k(x):=\frac{\eta}{C_1}\frac{u(2^{-k} x)}{(2^{-k})^{1+s+\alpha_\circ}},\qquad k=0,1,2,\ldots\]
Since $1+s+\alpha_\circ<2$ and $s+\alpha_\circ<2s$, it follows that all functions $w_k$ satisfy
\begin{equation}
\label{eq:u-phi k}
\begin{split}
w_k\geq0\quad \textrm{and}& \quad D^2 w_k \ge - \eta{\rm{Id}}\qquad \textrm{in}\quad \R^n,\quad \textrm{with}\quad w_k(0)=|\nabla w_k(0)|=0,\\
\LL w_k=f_k\quad \textrm{in}&\quad \{w_k>0\} \quad \textrm{and} \quad \LL w_k\leq f_k \quad \textrm{in} \quad \R^n, \quad \textrm{with} \quad  |\nabla f_k|\leq \eta.
\end{split}
\end{equation}
Moreover, when $k=0$ we  have
$\|\nabla w_0\|_{L^\infty(\R^n)} \le 1.$

In other words, all the assumptions of Theorem \ref{thm-main-ell}, possibly except for the growth control on $\|\nabla w_k\|_{L^\infty(B_R)}$ (that holds at least for $k=0$), are satisfied by $w_k$.
We then have two possibilities:

\vspace{2mm}

\noindent \emph{Case 1}. Assume that functions $w_k$ satisfy
\[\|\nabla w_k\|_{L^\infty(B_R)} \leq R^{s+\alpha_\circ}\quad \mbox{ for $R\ge1$}, \qquad \textrm{for all}\quad k\geq0.\]
Then, we have
\[\|\nabla u\|_{L^\infty(B_{2^{-k}})} = C_1\eta^{-1}(2^{-k})^{s+\alpha_\circ}\|\nabla w_k\|_{L^\infty(B_1)} \leq  C(2^{-k})^{s+\alpha_\circ},\]
and therefore
\[|\nabla u(x)|\leq C|x|^{s+\alpha_\circ}\quad \textrm{for}\quad x\in B_1.\]
This, in turn, implies that 
\[|u(x)|\leq C|x|^{1+s+\alpha_\circ}\quad \textrm{for}\quad x\in B_1,\]
as wanted.

\vspace{2mm}

\noindent \emph{Case 2}. If we are not in Case 1, then there is a maximal number $k_\circ\in \mathbb N$ such that 
\begin{equation}\label{growth-ell-contr}
\|\nabla w_k\|_{L^\infty(B_R)} \leq R^{s+\alpha_\circ}\quad \mbox{ for $R\ge1$},\qquad \textrm{for all}\quad k\leq k_\circ.
\end{equation}
In particular, in terms of $u$, this implies that
\begin{equation}\label{growth-ell-contr2}
|u(x)| \leq C|x|^{1+s+\alpha_\circ} \qquad \text{for all }x \in B_1\setminus B_{2^{-k_\circ}}.
\end{equation}
We now observe that, thanks to \eqref{growth-ell-contr}, choosing $\eta$ sufficiently small Theorem \ref{thm-main-ell} implies that
\[\|w_{k_\circ}-u_\circ\|_{\rm Lip(B_1)}\leq \eps : = \min \{\eps_\circ, 1/6\},\]
where $u_0$ is a multiple of $(x\cdot e)_+^{1+s}$, that is
\[|\nabla u_\circ(x)|=\kappa(x\cdot e)^s_+, \qquad \textrm{for some }\quad 0\leq \kappa \leq 2.\]
We consider two subcases:\\
(i)\, If $\kappa\leq \frac13$, then by triangle inequality
\[\|\nabla w_{k_\circ}\|_{L^\infty(B_1)}\leq \|\nabla u_\circ\|_{L^\infty(B_1)} + \eps\leq {\textstyle \frac13} + \eps \leq \frac12 <  2^{-s-\alpha_\circ}.\]
Since $\nabla w_{k_\circ+1}(x)=2^{s+\alpha_\circ}\nabla w_{k_\circ}(\frac{x}{2})$, this implies that 
\[\|\nabla w_{k_\circ+1}\|_{L^\infty(B_2)}\leq 1.\]
Since \[\|\nabla w_{k_\circ+1}\|_{L^\infty(B_R)} = 2^{s+\alpha_\circ}\|\nabla w_{k_\circ}\|_{L^\infty(B_{R/2})}  \leq R^{s+\alpha_\circ}\quad \textrm{for}\quad R\geq2,\]
then $w_{k_\circ+1}$ still satisfies the growth condition \eqref{growth-ell-contr}, a contradiction to the definition of $k_\circ$.\\
(ii)\, If instead $\kappa \geq \frac13$, it follows from Theorem  \ref{thm-main-ell} that the free boundary $\partial\{w_{k_\circ}>0\}$ is a $C^{1,\tau}$ graph in $B_1$ and 
\begin{equation}\label{jronya}|\nabla w_{k_\circ}(x)-\nabla u_\circ(x)|\leq C|x|^{s+\tau}\qquad \text{for all}\quad x \in B_1.
\end{equation}
Furthermore, as in \cite{CRS}, we can apply the boundary Harnack estimate in $C^1$ domains from \cite{RS-C1} to deduce that the regularity of the free boundary can be improved to $C^{1,\alpha_\circ}$. Hence, applying the corresponding estimates in $C^{1,\alpha_\circ}$ domains from \cite{RS-C1}, we finally obtain that \eqref{jronya} holds with $\tau=\alpha_\circ$.
This, in turn, implies 
\[\big|w_{k_\circ}(x)-{\textstyle\frac{\kappa}{1+s}}(x\cdot e)_+^{1+s}\big|\leq C|x|^{1+s+\alpha_\circ}\qquad \text{for all}\quad x \in B_1,\]
and rescaling back to $u$ we find
\begin{equation}\label{growth-ell-contr3}
\big|u(x)-c_\circ(x\cdot e)_+^{1+s}\big|\leq C|x|^{1+s+\alpha_\circ}\qquad \text{for all}\quad x \in B_{2^{-k_\circ}},\qquad \text{with}\quad c_\circ = \frac{C_1\kappa }{\eta(1+s)} 2^{-\alpha_\circ k_\circ},
\end{equation}
and that the free boundary $\partial\{u>0\}$ is $C^{1,\alpha_\circ}$ in a ball of radius $2^{-k_\circ}$.
Note that, since 
$$
|c_\circ(x\cdot e)_+^{1+s}|\leq c_\circ |x|^{1+s}=\frac{C_1\kappa }{\eta(1+s)} 2^{-\alpha_\circ k_\circ} |x|^{1+s} \leq C|x|^{1+s+\alpha_\circ} \quad \text{for}\quad |x|\geq 2^{-k_\circ},
$$ 
it follows from \eqref{growth-ell-contr2} and \eqref{growth-ell-contr3} that
$$
\big|u(x)-c_\circ(x\cdot e)_+^{1+s}\big|\leq C|x|^{1+s+\alpha_\circ}\qquad \text{for all}\quad x \in B_{1},
$$
proving the result also in Case 2(ii).

Finally, to conclude the proof, it suffices to observe that in all cases we have 
\[|\nabla u(x)|\leq C|x|^{s}\qquad \textrm{for}\quad x\in B_1,\]
and this implies the uniform $C^{1+s}$ estimate for $u$.
\end{proof}

We now show that the same argument as above can be adapted to the case of non-symmetric operators.
In this case we establish directly the optimal regularity of solutions, without passing through the expansion for $u$ in terms of $u_\circ$.
We recall that, when $\LL$ is as in Definition \ref{L} with $K$ homogeneous, then it has a Fourier symbol $\mathcal A(\xi)+i\mathcal B(\xi)$ associated to it. 
We refer to \cite[(2.6)-(2.7)]{DRSV} for the explicit expression of $\mathcal A$ and $\mathcal B$.

\begin{cor}\label{cor-sol-nonsymm}
Let $s\in (0,1)$ and $\LL$ as in Definition \ref{L}, with $K$ homogeneous.
Let $\varphi$ be an obstacle satisfying \eqref{obstacle}, and $u$ be the solution of \eqref{obst-pb-ell}.
Let $\mathcal A(\xi)+i\mathcal B(\xi)$ be the Fourier symbol of $\LL$, and define
\[\gamma_\LL:=\min_{e\in \mathbb S^{n-1}} \gamma_{\LL,e},\qquad \textrm{where}\qquad \gamma_{\LL,e}:= s-\frac{1}{\pi}\arctan\left(\frac{\mathcal B(e)}{\mathcal A(e)}\right).\]
Then $u\in C^{1+\gamma_\LL}(\R^n)$ with
\[\|u\|_{C^{1+\gamma_\LL}(\R^n)} \leq CC_\circ,\]
with $C$ depending only on $n$, $s$, $\lambda$, and $\Lambda$.
\end{cor}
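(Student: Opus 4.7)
The plan is to mimic the iteration of Corollary~\ref{cor-sol-ell-1+s+a}, the only structural change being that the $1$D profile at a regular point is now $c_\circ (x\cdot e)_+^{1+\gamma_{\LL,e}}$ instead of $c_\circ (x\cdot e)_+^{1+s}$, so one must iterate with the ``worst case'' exponent $1+\gamma_\LL=1+\min_e \gamma_{\LL,e}$. As in the symmetric case, I would first replace $u$ by $u-\varphi$ to reduce to a nonnegative, almost-convex, almost-harmonic function satisfying \eqref{eq:u-phi}, and then fix a free boundary point which, after translation, can be taken to be the origin.

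Next I would pick $\alpha>0$ depending only on $n,s,\lambda,\Lambda$ small enough that $\gamma_\LL+\alpha<\min\{1,2s\}$ and $\alpha\in(0,s)\cap(0,1-s)$ (such an $\alpha$ exists because part (iii) of Theorem~\ref{thm-main-ell} forces $\gamma_{\LL,e}\in(0,2s)\cap(2s-1,1)$, hence $\gamma_{\LL,e}<\min\{1,2s\}$, and taking the infimum preserves this strict inequality). Define the rescalings
\[
w_k(x):=\frac{\eta}{C_1}\,\frac{u(2^{-k}x)}{(2^{-k})^{1+\gamma_\LL+\alpha}},\qquad k=0,1,2,\dots
\]
Because of the choice of exponent, each $w_k$ inherits the nonnegativity, semiconvexity and small right-hand side conditions of \eqref{eq:u-phi k}, with $\alpha$ in place of $\alpha_\circ$, and $w_0$ satisfies the growth bound $\|\nabla w_0\|_{L^\infty(B_R)}\le R^{s+\alpha}$.

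Now I would run the same dichotomy as in Corollary~\ref{cor-sol-ell-1+s+a}. Either the growth bound persists along the whole chain $(w_k)_{k\ge 0}$, which directly yields $|\nabla u(x)|\le C|x|^{\gamma_\LL+\alpha}$ near the origin, or there exists a maximal $k_\circ$ at which the growth bound holds. In the latter case, Theorem~\ref{thm-main-ell}(i) produces $e\in\mathbb S^{n-1}$ and a convex $1$D profile $u_\circ$ with $\|w_{k_\circ}-u_\circ\|_{{\rm Lip}(B_1)}\le\eps$; writing $|\nabla u_\circ(x)|=\kappa(x\cdot e)_+^{\gamma_{\LL,e}}$ with $\gamma_{\LL,e}\ge\gamma_\LL$ (Theorem~\ref{thm-main-ell}(iii) together with the explicit formula from \cite{DRSV}), the two subcases are handled exactly as in Corollary~\ref{cor-sol-ell-1+s+a}: if $\kappa$ is sufficiently small then $\|\nabla w_{k_\circ+1}\|_{L^\infty(B_2)}\le 2^{\gamma_\LL+\alpha}(\kappa+\eps)\le 1$, contradicting the maximality of $k_\circ$; if $\kappa$ is not small, the expansion in Theorem~\ref{thm-main-ell}(iii) and the $C^{1,\tau}$-regularity of the free boundary (upgraded to $C^{1,\alpha}$ via the boundary Harnack in $C^{1,\alpha}$ domains from \cite{RS-C1,DRSV}) yield
\[
\bigl|\nabla w_{k_\circ}(x)-\nabla u_\circ(x)\bigr|\le C|x|^{\gamma_{\LL,e}+\tau}\qquad\text{for all}\ x\in B_1,
\]
and rescaling back gives $|\nabla u(x)|\le C|x|^{\gamma_{\LL,e}}\le C|x|^{\gamma_\LL}$ in a ball around the origin.

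Combining the pointwise bound $|\nabla u(x_\circ+x)|\le C|x|^{\gamma_\LL}$ with the uniform constant $C$ at every free boundary point $x_\circ$, together with standard interior elliptic estimates for the operators of Definition~\ref{L} on $\{u>0\}$ (see \cite{DRSV}), yields the global $C^{1+\gamma_\LL}(\R^n)$ bound. The only genuinely new point compared to Corollary~\ref{cor-sol-ell-1+s+a} is the direction-dependent exponent $\gamma_{\LL,e}$: the proof goes through because one only ever needs the lower bound $\gamma_{\LL,e}\ge\gamma_\LL$, while the small-$\kappa$ comparison $2^{\gamma_\LL+\alpha}\cdot\tfrac{1}{2}<1$ still closes the induction. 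The main delicate point to verify is that the dichotomy is genuinely uniform across free boundary points, which follows because all the constants in Theorem~\ref{thm-main-ell} depend only on $n,s,\lambda,\Lambda$ and $\alpha$.
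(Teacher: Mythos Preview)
Your proposal is correct and follows essentially the same iteration scheme as the paper's proof. The paper is marginally simpler in that it rescales with exponent $1+\gamma_\LL$ rather than $1+\gamma_\LL+\alpha$ (the extra $\alpha$ is harmless but unnecessary, since $\gamma_\LL\le s$ already guarantees the growth hypothesis of Theorem~\ref{thm-main-ell}), and it does not upgrade the free boundary regularity from $C^{1,\tau}$ to $C^{1,\alpha}$, because the expansion $|\nabla w_{k_\circ}-\nabla u_\circ|\le C|x|^{\gamma_{\LL,e}+\tau}$ from Theorem~\ref{thm-main-ell}(iii) already yields $|\nabla w_{k_\circ}|\le C|x|^{\gamma_\LL}$ directly.
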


\begin{proof}[Proof of Corollary \ref{cor-sol-nonsymm}]
As in the proof of Corollary \ref{cor-sol-ell-1+s+a}, we may assume $C_\circ=1$ and $0 \in \partial \{u>0\}$. Also
we may consider $u\mapsto u-\varphi$ so that \eqref{eq:u-phi} holds.

We now want to apply Theorem \ref{thm-main-ell} iteratively in order to prove
\[|u(x)|\leq C|x|^{1+\gamma_\LL}\qquad \textrm{for}\quad x\in B_1.\]
Notice that, in the current setting, for any $e\in \mathbb S^{n-1}$ the solution $u_\circ$ to \eqref{1D-profile} is a multiple of $(x\cdot e)_+^{1+\gamma_{\LL,e}}$,
where the explicit expression for $\gamma_{\LL,e}$ is given by \cite[Corollary 4.6]{DRSV}.
In particular, by definition of $\gamma_\LL$, we have $\gamma_{\LL,e}\geq \gamma_\LL$ for all $e\in \mathbb S^{n-1}$.

Let $\kappa>0$ to be chosen later, and let $\eta>0$ be the constant given by Theorem~\ref{thm-main-ell}.
We define the functions
\[w_k(x):=\frac{\eta}{C_1}\frac{u(2^{-k} x)}{(2^{-k})^{1+\gamma_\LL}},\]
for $k=0,1,2,...,$.
Notice that, since $\gamma_\LL<\min\{1,2s\}$, the functions $w_k$ satisfy \eqref{eq:u-phi k}.
Moreover, when $k=0$ we have $\|\nabla w_0\|_{L^\infty(\R^n)} \le 1.$

Now, as in the proof of Corollary \ref{cor-sol-ell-1+s+a}, we consider two cases: 
if 
\[\|\nabla w_k\|_{L^\infty(B_R)} \leq R^{\gamma_\LL}\qquad \textrm{for all}\quad k\geq0,\]
then in terms of $u$ this implies that $|\nabla u(x)|\leq C|x|^{\gamma_\LL}$ in $B_1$, hence

\[|u(x)|\leq C|x|^{1+\gamma_\LL}\qquad \textrm{for}\quad x\in B_1,\]
as desired.

Alternatively, assume there is a maximal number $k_\circ\in \mathbb N$ such that 
\begin{equation}\label{growth-ell-contr bis}
\|\nabla w_k\|_{L^\infty(B_R)} \leq R^{\gamma_\LL}\qquad \textrm{for all}\quad k\leq k_\circ.
\end{equation}
In particular, in terms of $u$, this implies that
\begin{equation}\label{growth-ell-contr2 bis}
|u(x)| \leq C|x|^{1+\gamma_\LL} \qquad \text{for all }x \in B_1\setminus B_{2^{-k_\circ}}.
\end{equation}
Also, by Theorem \ref{thm-main-ell}, choosing $\eta$ sufficiently small we find
\[\|w_{k_\circ}-u_\circ\|_{\rm Lip(B_1)}\leq \eps \ll 1 ,\qquad |\nabla u_\circ(x)|=A(x\cdot e)^{\gamma_{\LL,e}}_+,\quad 0 \leq A \leq 2.\]
We then have two possibilities:\\
(i)\, If $A\leq \frac13$, then by triangle inequality
\[\|\nabla w_{k_\circ}\|_{L^\infty(B_1)}\leq \|\nabla u_\circ\|_{L^\infty(B_1)} + \eps\leq {\textstyle \frac13} + \eps \leq \frac12 <  2^{-\gamma_\LL},\]
which implies $\|\nabla w_{k_\circ+1}\|_{L^\infty(B_2)}\leq 1$. Hence $w_{k_\circ+1}$ satisfies the growth condition \eqref{growth-ell-contr bis}, contradicting the definition of $k_\circ$.\\
(ii)\, If $A\geq \frac13$, then by Theorem  \ref{thm-main-ell} we have that the free boundary $\partial\{w_{k_\circ}>0\}$ is a $C^{1,\tau}$ graph in $B_1$ and 
\[|\nabla w_{k_\circ}(x)-\nabla u_\circ(x)|\leq C|x|^{\gamma_{\LL,e}+\tau}\qquad \textrm{for}\quad x\in B_1.\]
In particular $|\nabla w_{k_\circ}(x)|\leq C|x|^{\gamma_\LL}$ in $B_1$, that rescaled back yields 
\[|\nabla u(x)|\leq C|x|^{\gamma_\LL}\qquad \textrm{for}\quad x\in B_{2^{-k_\circ}}.\]
Recalling \eqref{growth-ell-contr2 bis}, this concludes the proof.
\end{proof}

Thanks to the previous result, we finally deduce the validity of Corollaries \ref{thm-sol-1} and \ref{thm-sol-4}.

\begin{proof}[Proof of Corollaries \ref{thm-sol-1} and \ref{thm-sol-4}]
Both results are particular cases of Corollary \ref{cor-sol-nonsymm}.
\end{proof}

\section{A parabolic boundary Harnack inequality}
\label{sec3}

In this section we prove a parabolic boundary Harnack inequality in Lipschitz (and also more general) domains.
More precisely, we consider domains satisfying the following definition:

\begin{defi}
We say that a domain $\Omega\subset\R^n\times (-\infty,0)$ satisfies the \emph{interior cone condition  at $(0,0)$ with opening $\theta$ and speed $\omega$} if for some direction $e\in \mathbb S^{n-1}$ there is a ``traveling cone'' of the form:
\[\Sigma_t = \{|x\cdot e| >\cos \theta |x|\} - \omega t e ,\]
with opening angle  $\theta \in (0,\pi/2)$ and speed $\omega>0$, such that $(\Sigma_t \cap B_1)\times \{t\}  \subset \Omega$ all $t<0$ (i.e., for all $t\in (-1/\omega, 0)$).

We say that $\Omega$ satisfies the {\it interior cone condition with opening $\theta$ and speed $\omega$}  in  $Q\subset\R^n \times \R $ if, for all $(x_\circ, t_\circ)\in \Omega \cap Q$, the translation $\Omega-(x_\circ, t_\circ)$ satisfies the previous condition.
\end{defi}

A key result in this paper is the following parabolic boundary Harnack.

\begin{thm}\label{bdryharnack}
For any given $n\ge1$,   $s \in[\frac12,1)$, and positive constants $\lambda\leq\Lambda$ (ellipticity), $\theta\in (0,\pi/2)$ and $\omega>0$  (opening and speed of traveling cone),   $t_\oo>0$, and $\gamma_\oo \in (0,2s)$,
there exist positive constants $R$, $\varepsilon$, $\alpha\in (0,1)$, and $C$, such that the following statement holds.

Suppose that $\LL$ is as in \eqref{L0}-\eqref{ellipt} and let $A\subset \R^n\times [-2t_\oo,0]$  be a closed set such that $A^c\cap Q_1$ satisfies the interior cone condition (with opening $\theta$ and speed $\omega$).
Let $v_i$, $i=1,2,$ be two viscosity solutions of
\[
\big|(\partial_t -\LL) v_i  \big| \le \varepsilon \quad \mbox{in } A^c \cap \big(B_{R}\times(-2t_\oo, 0)\big),  \qquad v_i \equiv 0 \quad \mbox{in  }A \cap \big( B_{R}\times(-2t_\oo, 0)\big),
\]
satisfying
\[ v_i \ge -\varepsilon \quad \mbox{in } B_{R}\times(-2t_\oo, 0),\qquad
|v_i (x,t )| \le C_\oo (1+|x|)^{2s-\gamma_\oo}   \quad \mbox{in }\R^n\times(-2t_\oo,0),\qquad v_i(e_n,0)=1.
\]
Then, setting $Q_1:=B_1 \times (- t_\oo,0)$, we have 
\[
v_i >0\quad \text{in $A^c \cap Q_1$},\qquad \left[\frac{v_1}{v_2}\right]_{C^{\alpha}(A^c\cap Q_1)} + \left[\frac{v_2}{v_1}\right]_{C^{\alpha}(A^c\cap Q_1)} \le  C.
\]
\end{thm}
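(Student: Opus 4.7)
The plan is to follow the classical parabolic boundary Harnack scheme, adapted to the nonlocal critical/subcritical setting and to the traveling-cone geometry. The overall structure has three main ingredients: a barrier construction adapted to the traveling cone, a two-sided Carleson-type comparability estimate for any single viscosity solution that vanishes on $A$, and an iterative oscillation-decay argument for the ratio $v_1/v_2$. Throughout the radius $R$ is chosen large enough that, when applying $\LL$ to any barrier or truncation of $v_i$ inside $Q_1$, the contribution of the nonlocal tail from $\R^n \setminus B_R$ is absorbed into $\varepsilon$; the growth bound $|v_i| \le C_\oo(1+|x|)^{2s-\gamma_\oo}$ with $\gamma_\oo > 0$ is exactly what makes this possible.

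First, I would construct a subsolution/supersolution pair adapted to the interior cone condition. At every $(x_0, t_0) \in A^c \cap Q_1$ the hypothesis places a translated traveling cone $\Sigma_{t-t_0} + x_0$ inside $A^c$. As a subsolution supported in that cone I would take a power-profile of the form $\Phi(x,t) = \bigl((x-x_0)\cdot e + \omega(t-t_0)\bigr)_+^{\beta}\,\chi(x,t)$, with $\beta \in (0,2s)$ small and $\chi$ a smooth spatial cutoff supported inside the opening of the cone. A variant of the computation in Lemma~\ref{lemsuper1} (now carried out for the moving linear profile $x\cdot e + \omega t$ rather than for $x\cdot e$) shows that $(\partial_t - \LL)\Phi \le -c < 0$ inside the cone, provided $\beta$ is chosen in a suitable range with $\beta < 2s$; the hypothesis $s \ge \frac12$ enters precisely to ensure such a $\beta$ exists. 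For the upper bound I would use the tail growth together with a standard truncation to build a global supersolution comparable to the traveling distance function near $\partial A$ in $Q_1$.

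Combining these barriers with a parabolic interior Harnack chain along corkscrew points moving backward in time inside the cone, I would establish the two-sided Carleson-type estimate
\[
c\,\phi(x,t) \le v_i(x,t) \le C\,\phi(x,t) \qquad \text{in } A^c \cap Q_1,
\]
for a fixed profile $\phi$ depending only on $A$, the cone parameters $(\theta,\omega)$, $\LL$, and $\gamma_\oo$, but not on $v_i$. The normalization $v_i(e_n,0) = 1$ fixes the multiplicative constant, and the strict positivity $v_i > 0$ in $A^c \cap Q_1$ follows at once once $\varepsilon$ is small enough relative to the barriers. With the comparability in hand, the H\"older estimate on the ratio is obtained by the standard dyadic oscillation-decay argument: setting $r_k = \rho^k$ for some fixed small $\rho$, one produces inductively $m_k \le M_k$ with $M_k - m_k \le (1-\eta)^k(M_0-m_0)$ and $m_k v_2 \le v_1 \le M_k v_2$ in $A^c \cap Q_{r_k}$. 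The inductive step applies the comparability (rescaled to scale $r_k$) to the two nonnegative functions $v_1 - m_k v_2$ and $M_k v_2 - v_1$, each of which satisfies $|(\partial_t-\LL)(\cdot)| = O(\varepsilon)$ in $A^c$ and vanishes on $A$; at the corkscrew point at scale $r_k$ at least one of the two must exceed a definite fraction of $(M_k-m_k)v_2$, and propagating this lower bound to all of $Q_{r_{k+1}}$ via the Carleson estimate yields the required oscillation gain. The bound on $v_2/v_1$ is symmetric.

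The main obstacle I expect is the barrier step in the presence of the \emph{traveling} interior cone. Unlike the stationary Lipschitz case treated in Theorem~\ref{ellbdryharnack}, the axis of the cone moves with speed $\omega$, so in the critical case $s=\frac12$ the equation and the cone have precisely the same hyperbolic scaling and the power-profile has to be built with respect to $x\cdot e + \omega t$ rather than $x\cdot e$; verifying that $(\partial_t-\LL)$ applied to such a moving profile produces a term with the correct sign in the cone requires a genuine parabolic analogue of the elliptic one-variable computation, and it is here that the range $s \in [\frac12,1)$ and the restrictions on the exponent $\beta$ become essential. A second, more technical, point is the careful quantification of how large $R$ must be chosen as a function of $\gamma_\oo$ so that the nonlocal tail of $v_i$ is swallowed by $\varepsilon$ uniformly at every scale $r_k$ of the oscillation iteration.
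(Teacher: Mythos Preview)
Your three-step architecture (traveling-cone barrier, comparability, dyadic oscillation decay for the ratio) is the same as the paper's. Two differences are worth flagging, and one of them is a genuine gap.

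On the comparability step, the paper does not pass through a fixed profile $\phi$ with $c\phi\le v_i\le C\phi$, nor does it use Harnack chains. Proposition~\ref{step} compares $v_1$ and $v_2$ to each other directly, exploiting the nonlocality of $\LL$: a characteristic-function term $C_2\chi_{B_\delta(e_n)}$ inserted into the barrier instantly transmits the lower bound from the corkscrew region $Q^*$ to all of $Q_1$, with no chain needed. The traveling-cone subsolution (Lemma~\ref{subsolcone}) is built as $\big(\psi(x+\omega t\,e_\circ)\big)_+^{2s-\gamma}$ with $\psi$ a $1$-homogeneous function on the full cone that vanishes on $\partial\Sigma_0$, rather than as a cut-off half-space profile; this makes the support automatically correct and avoids cutoff errors.

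The genuine gap is in the inductive step. You propose to apply the rescaled comparability to the ``nonnegative'' functions $v_1-m_k v_2$ and $M_k v_2-v_1$, but these are nonnegative only in $Q_{r_k}$. Because $\LL$ sees all of $\R^n$, re-applying the comparison at scale $r_k$ requires a growth bound on the rescaled difference in the whole space, and this is \emph{not} achieved simply by taking $R$ large---that only handles the original tail beyond $B_R$, not the intermediate annuli $Q_{r_j}\setminus Q_{r_{j+1}}$ for $j<k$. The paper's mechanism (Step~3) is to feed back the inductive hypothesis at \emph{every} previous scale: from $m_j v_1\le v_2\le M_j v_1$ in $Q_{\rho^{-j}}$ for all $j\le k$ one gets
\[
|v_2-m_k v_1|\le (M_j-m_j)\,v_1=(1-\eta)^{j-k}(M_k-m_k)\,v_1 \qquad\text{in }Q_{\rho^{-j}},
\]
and combining this with a separately proved scale-invariant growth estimate on $v_1$ itself (Step~2: $\sup_{Q_{Rr}} v_1\le C v_1(re_n,0)R^{2s-\gamma}$) yields, after rescaling, $|\tilde v_2(x)|\le C(1+|x|)^{2s-\gamma+\delta}$ with $\delta=\delta(\eta)\to 0$ as $\eta\downarrow 0$. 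One then chooses $\eta$ small enough that $2s-\gamma+\delta<2s-\gamma_\oo$, so that Proposition~\ref{step} applies again. Without this mechanism the iteration does not close in the nonlocal setting; your last paragraph shows awareness of a tail issue but attributes it to the choice of $R$ rather than to the use of all previous scales, which is the actual missing ingredient.
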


To prove it, we will need several ingredients.
The main step will be the following.

\begin{prop}\label{step}
For any given $n\ge1$,   $s \in[\frac12,1)$, and positive constants $\lambda\leq \Lambda$ (ellipticity), $\gamma_\oo\in (0,2s)$, $C_\oo$, and  $t_\oo$, there exist positive constants  $R$, $\varepsilon$, and $C$ such that the following statement holds.

 Suppose that $\LL$ is as in \eqref{L0}-\eqref{ellipt} and let $A\subset \R^n\times \R$  be a closed set satisfying
\[
B_{2\delta}(e_n)\times (-2t_\oo, 0) \subset A^c
\]
for some $\delta>0$.
Let $\rho\ge R$, and let $v_i$, $i=1,2$, be two viscosity solutions of
\[
\big|(\partial_t -\LL) v_i  \big| \le \rho^{-\gamma_\oo} \quad \mbox{in } A^c \cap \big(B_{\rho}\times(-2t_\oo, 0) \big),  \qquad v_i \equiv 0 \quad \mbox{in  }A \cap \big(B_{\rho}\times(-2t_\oo, 0) \big)
\]
satisfying
\[ v_i \ge -\varepsilon  (1+|x|^2)^{\frac{2s-\gamma_\oo}{2}} \quad \mbox{in } B_{\rho}\times(-2t_\oo, 0),\qquad 
|v_i (x, t)| \le  (1+|x|^2)^{\frac{2s-\gamma_\oo}{2}}  \quad \mbox{in }(\R^n\setminus B_{\rho})\times(-2t_\oo,0),\]
and $v_i(e_n,0) =c_\oo>0$.
Then
\[
\frac 1 C \le v_i \quad \mbox{in } Q^* : = B_{\delta}(e_n)\times (-5t_\oo/4, 0), \quad \quad  0\le v_i\le C \quad \mbox{in } Q_{1}
\]
and
\[
0<v_1 \le C v_2,\quad  0<v_2\le C v_1    \qquad \mbox{ in } Q_{1},
\]
where $Q_{1}:=B_{1} \times (- t_\oo,0)$.
\end{prop}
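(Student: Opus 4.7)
The plan is to establish the three conclusions sequentially: (i) the upper bound $v_i \le C$ on $Q_1$; (ii) the lower bound $v_i \ge 1/C$ on $Q^*$; and (iii) the two-sided comparison $v_1 \asymp v_2$ on $Q_1$. Throughout, the smallness of $\varepsilon$ absorbs the almost-nonnegativity condition, while taking $R$ large makes $\rho^{-\gamma_\oo}$ negligible relative to universal quantities.

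For (i), I would build a global supersolution out of the growth profile. Taking $\Psi(x) := M(1+|x|^2)^{(2s-\gamma_\oo)/2}$, a direct computation using the ellipticity bounds on $K$ and the fact that $2s-\gamma_\oo \in (0,2s)$ gives $-\LL \Psi \ge c_1 M(1+|x|)^{-\gamma_\oo}$ on large balls. Since $v_i \le \Psi$ outside $B_\rho$ (by the growth hypothesis, for $M \ge 1$) and $v_i \equiv 0 \le \Psi$ on $A$, while $(\partial_t - \LL)(v_i - \Psi) \le \rho^{-\gamma_\oo} - c_1 M(1+|x|)^{-\gamma_\oo} \le 0$ on $A^c \cap (B_\rho \times (-2t_\oo,0))$ once $M$ is chosen large, the nonlocal parabolic comparison principle yields $v_i \le \Psi \le C$ on $Q_1$; the perturbation from the negative tail $-\varepsilon \Psi$ is absorbed by taking $\varepsilon$ small.

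For (ii), I would propagate the normalization $v_i(e_n,0) = c_\oo$ to all of $Q^*$ via a parabolic Harnack chain inside the corkscrew region $B_{2\delta}(e_n) \times (-2t_\oo,0) \subset A^c$. On each sub-cylinder of this region the interior parabolic nonlocal Harnack inequality applies to $v_i$ viewed as an almost-caloric function; chaining finitely many such cylinders (going backward in time and laterally across the corkscrew) covers $Q^*$ and yields a uniform lower bound $v_i \ge c$, after absorbing the errors coming from $\rho^{-\gamma_\oo}$ and the $\varepsilon$-weighted tail into the Harnack constants.

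The central step and main obstacle is (iii). My plan is to compare both $v_1$ and $v_2$ to a single reference caloric profile $\Phi$, defined as the solution of the Dirichlet problem $\partial_t \Phi - \LL \Phi = 0$ in $A^c \cap (B_\rho \times (-2t_\oo,0))$, vanishing on $A$ and outside this cylinder, with initial datum a bump concentrated on $B_\delta(e_n) \times \{-2t_\oo\}$. By construction $\Phi > 0$ in $A^c$ and $\Phi \equiv 0$ on $A$. Using (ii) together with the comparison principle, one obtains $v_i \ge c\,\Phi$ on $A^c \cap Q_1$; using (i), together with the decay of $\Phi$ away from $Q^*$ and a further comparison, one obtains $v_i \le C\,\Phi$ on $A^c \cap Q_1$. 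Taking ratios then yields $v_1 \asymp v_2$ on $Q_1$. The delicate point is the upper comparison $v_i \le C\,\Phi$: for it to succeed, $\Phi$ must capture the correct boundary decay of $v_i$, which in turn relies on the corkscrew assumption and on a careful choice of $\Phi$'s data so that its tail contributions through $\LL$ are consistent with the prescribed growth of $v_i$ outside $B_\rho$.
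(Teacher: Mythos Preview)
Your approach has a structural gap: the hypotheses give \emph{no a priori upper bound} on $v_i$ inside $B_\rho \times (-2t_\oo, 0)$ --- only the lower bound $v_i \ge -\eps(1+|x|^2)^{(2s-\gamma_\oo)/2}$ there, together with the two-sided bound outside $B_\rho$. In (i), your comparison with $\Psi$ requires $v_i \le \Psi$ on the full parabolic boundary of $A^c \cap (B_\rho \times (-2t_\oo, 0))$, and in particular near the initial slice $t \to -2t_\oo$, where nothing is assumed; the maximum principle therefore cannot close. In (ii), a parabolic Harnack chain only propagates lower bounds \emph{forward} in time (the inequality reads $\sup_{Q^-} u \le C \inf_{Q^+} u$ for nonnegative $u$), so the single value $v_i(e_n,0)=c_\oo$ at the terminal instant cannot produce a lower bound on the earlier cylinder $Q^*$. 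In (iii), your upper comparison $v_i \le C\Phi$ with a reference solution is a Carleson estimate in disguise and inherits the same obstruction: your $\Phi$ has prescribed initial data but $v_i$ does not.

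The paper circumvents all of this by an indirect Carleson-type argument that is the heart of the proof. Applying the half (weak) Harnack inequality to the globally nonnegative function $v_i + \eps(1+|x|^2)^{(2s-\gamma_\oo/2)/2}$ gives
$\inf_{Q^*} v_i + C\eps \gtrsim \int_{-2t_\oo}^{-3t_\oo/2}\int_{\R^n} v_i^+(x,t)(1+|x|)^{-n-2s}\,dx\,dt$.
On the other hand, after splitting off an error term controlled by the barrier of Lemma~\ref{supersol}, heat-kernel bounds on the global subsolution $(v_i)^+$ (Lemma~\ref{global}) show that $\sup_{B_2 \times (-3t_\oo/2, 0)} v_i$ is bounded by the \emph{same} weighted integral. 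Chaining the two estimates through this common quantity yields $\sup_{B_2\times(-3t_\oo/2,0)} v_i \le C_1(\inf_{Q^*} v_i + \eps)$, which --- using $v_i(e_n,0)=c_\oo$ to bound the left side from below --- delivers (i) and (ii) simultaneously. The comparison (iii) is then obtained directly by testing $C_3 v_2$ (now known to be $\ge 1$ on $Q^*$) against the explicit subsolution $v_1\chi_{B_2} + (2C_1+1)(\eta-1) + C_2\chi_{B_\delta(e_n)}$, using that $(\partial_t-\LL)\chi_{B_\delta(e_n)} \le -c$ away from $B_\delta(e_n)$ and that the cutoff $\eta-1$ forces the comparison on the remaining parabolic boundary.
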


To prove  Proposition \ref{step} we need the following auxiliary results:

\begin{lem}[Supersolution]\label{supersol}
Let $s \in[\frac12,1)$, $\LL$ as in \eqref{L0}-\eqref{ellipt}, and
$\gamma_\oo \in (0,2s)$. 
Given $R \ge 1$, there exists a solution $S^1$ of
\[
(\partial_t -\LL) S^1 =R^{-\gamma_\circ}  \qquad \mbox{in } B_R\times (-1,0)
\]
satisfying
\[
S^1(x,t) \le CR^{-\gamma_\oo} \quad \mbox{in } B_{R/4}\times (-1,0),\qquad
S^1 (x,t) \ge  c |x|^{2s-\gamma_\oo} \chi_{\R^n\setminus B_R}  (x),
\]
for some  positive constants $c, C$ depending only on $n$, $s$, $\lambda$, $\Lambda$, and $\gamma_\oo$.
\end{lem}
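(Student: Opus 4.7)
The plan is to construct $S^1$ as the unique viscosity solution of a Cauchy--Dirichlet problem for the parabolic nonlocal equation with exterior data growing like $|x|^{2s-\gamma_\oo}$, and then deduce the center upper bound by comparison with an explicit supersolution. Concretely, I would fix a smooth radial function $g\in C^\infty(\R^n)$ with $g(x)=0$ for $|x|\le R/2$, $g(x)=|x|^{2s-\gamma_\oo}$ for $|x|\ge 3R/4$, and $0\le g(x)\le C|x|^{2s-\gamma_\oo}$ everywhere, and take $S^1$ to be the solution of $(\partial_t-\LL)S^1=R^{-\gamma_\oo}$ in $B_R\times(-1,0)$ with initial condition $S^1(\cdot,-1)=g$ and exterior condition $S^1=g$ on $(\R^n\setminus B_R)\times(-1,0)$. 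Existence and uniqueness in the viscosity sense follow from standard theory for parabolic nonlocal problems, since $\int_{\R^n}g(y)/(1+|y|)^{n+2s}\,dy<\infty$ (thanks to $\gamma_\oo>0$).

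With this choice the lower bound $S^1(x,t)\ge c|x|^{2s-\gamma_\oo}\chi_{\R^n\setminus B_R}(x)$ is immediate, since $S^1=g=|x|^{2s-\gamma_\oo}$ for $|x|\ge R$ by construction.

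For the upper bound I would compare with the supersolution
\[
\bar u(x,t):=AR^{-\gamma_\oo}(t+1)+g(x),
\]
for a large constant $A$ to be chosen. Since $\partial_t\bar u-\LL\bar u=AR^{-\gamma_\oo}-\LL g(x)$, the key task is a uniform bound on $\LL g$ inside $B_R$. To this end I would use the scaling $\tilde g(y):=R^{\gamma_\oo-2s}g(Ry)$, which is a fixed smooth function (independent of $R$) with $\tilde g(y)=0$ for $|y|\le 1/2$ and $\tilde g(y)=|y|^{2s-\gamma_\oo}$ for $|y|\ge 3/4$. One then checks the identity $\LL g(x)=R^{-\gamma_\oo}\,\tilde\LL\tilde g(x/R)$, where $\tilde\LL$ is an operator in the same class as $\LL$ (same $\lambda$, $\Lambda$). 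Since $\tilde g$ is smooth and has tail integrable against $|y|^{-n-2s}$, the standard pointwise estimate for operators of order $2s$ gives $|\tilde\LL\tilde g(y)|\le C_0$ for all $y\in B_1$, with $C_0=C_0(n,s,\lambda,\Lambda,\gamma_\oo)$. Hence $\LL g(x)\le C_0R^{-\gamma_\oo}$ in $B_R$, and taking $A:=C_0+1$ makes $\bar u$ a genuine supersolution.

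Finally, since by construction $\bar u\ge g=S^1$ both at $t=-1$ and on $(\R^n\setminus B_R)\times(-1,0)$, the parabolic comparison principle yields $S^1\le\bar u$ in $B_R\times(-1,0)$. In $B_{R/4}$ we have $g\equiv 0$, and therefore $S^1(x,t)\le AR^{-\gamma_\oo}(t+1)\le AR^{-\gamma_\oo}$, which is the desired upper bound. The main obstacle I anticipate is the uniform-in-class bound $|\LL g|\le CR^{-\gamma_\oo}$; it reduces, via the scaling above, to a routine pointwise estimate on $\tilde\LL\tilde g$ for the fixed smooth function $\tilde g$ of growth $|y|^{2s-\gamma_\oo}<|y|^{2s}$, which follows from the ellipticity bounds on $K$ and the integrability of the tails.
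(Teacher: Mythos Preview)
Your argument is correct and takes a genuinely different route from the paper's. The paper constructs $S^1(x,t)=h(x,t)+R^{-\gamma_\oo}(t+1)$ where $h$ solves the \emph{free} Cauchy problem $(\partial_t-\LL)h=0$ in all of $\R^n\times(-1,0)$ with initial datum $|x|^{2s-\gamma_\circ}\chi_{\R^n\setminus B_{R/2}}$, and then obtains both bounds on $S^1$ from the two-sided heat kernel estimates for $\LL$ (a nontrivial input from \cite{basslevin,KaKK21}). You instead solve a Cauchy--Dirichlet problem in $B_R$ with time-independent exterior datum $g$, so the lower bound outside $B_R$ is built into the construction, and you obtain the interior upper bound via the scaling identity $\LL g(x)=R^{-\gamma_\oo}\,\tilde\LL\tilde g(x/R)$ (with $\tilde\LL$ in the same ellipticity class) together with the comparison principle. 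Your approach is more elementary in that it bypasses the heat kernel bounds entirely, trading them for a routine estimate of $\tilde\LL$ applied to a fixed smooth function of subcritical growth $|y|^{2s-\gamma_\oo}$; the paper's route, on the other hand, yields a solution defined and controlled on all of $\R^n$. One small omission: the stated lower bound also requires $S^1\ge0$ inside $B_R$ (where the right-hand side vanishes), which follows by comparing with the zero subsolution.
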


\begin{proof}
We take  $S^1(x,t) := h(x,t) +R^{-\gamma_\oo} (t+1)$, where $h$ solves
\[
(\partial_t -\LL) h = 0  \quad \mbox{in } \R^n\times (-1,0)
\]
with initial condition $h(x,-1) = |x|^{2s-\gamma_\circ} \chi_{\R^n\setminus B_{R/2}}$ in $\R^n$.
Since the heat kernel $H$ of the operator $\LL$ satisfies 
\begin{equation}\label{HKbounds}
\frac 1 {C} \le  \frac{H(z,t)}{t^{\frac{n}{2s}}+t^{-1}|z|^{n+2s}} \le C
\end{equation}
(see for instance \cite{basslevin, KaKK21}),
for $x\in B_{R/4}$ and $t\in(-1,0)$ we obtain
\[h(x,t)= \int_{\R^n\setminus B_{R/2}} H(x-y,t)|y|^{2s-\gamma_\circ}dy\leq C\int_{\R^n\setminus B_{R/2}} \frac{|y|^{2s-\gamma_\circ}}{t^{-1}|y|^{n+2s}}\,dy \leq Ct R^{-\gamma_\circ},\]
therefore $S^1(x,t) \le CR^{-\gamma_\oo}$ inside $B_{R/4}.$

The lower bound follows from a similar argument, concluding the proof.
\end{proof}

We will also need the following result from \cite[Corollary 4.3]{ChaDav}.

\begin{prop}[half Harnack, \cite{ChaDav}] \label{halfharnack}
Let $s \in[\frac12,1)$ and $\LL$ as in \eqref{L0}-\eqref{ellipt}. 
Let $t_\circ>0$, and let $w$ be a viscosity supersolution of
\[
(\partial_t -\LL)w \ge -\eps \quad \mbox{in } B_1\times(-2t_\oo,0),\qquad \text{with}\quad w\ge 0 \quad \mbox{in  }\,\R^n.\]
Then
\[
\inf_{B_{1}\times(-t_\oo,0)}   w \ge -\varepsilon +c \int_{-2t_\oo}^{-3t_\oo/2} dt \int_{\R^n} dx\, \frac{w(x,t)} {1+|x|^{n+2s}},
\]
for some $c>0$ depending only on $n$, $s$, $\lambda$, $\Lambda$, and  $t_\oo$.
\end{prop}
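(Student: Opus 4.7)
The plan is to combine a Duhamel-type comparison representation for supersolutions with the two-sided heat-kernel bound \eqref{HKbounds}. Fix a time $t_1\in(-2t_\oo,-3t_\oo/2)$ and consider the candidate
\[V(x,t):=\int_{\R^n}H(x-y,t-t_1)\,w(y,t_1)\,dy-\eps(t-t_1), \qquad t>t_1,\]
where $H$ is the fundamental solution of $\partial_t-\LL$. By construction $V$ solves $(\partial_t-\LL)V=-\eps$ classically and agrees with $w(\cdot,t_1)$ at $t=t_1$. Since $(\partial_t-\LL)w\ge -\eps$ in the viscosity sense and $w\ge 0$ on all of $\R^n$, a parabolic comparison principle gives $w\ge V$ on $\R^n\times(t_1,0)$; in particular
\[w(x,t)\ \ge\ \int_{\R^n}H(x-y,t-t_1)\,w(y,t_1)\,dy\,-\,\eps(t-t_1).\]

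Next, for $(x,t)\in B_1\times(-t_\oo,0)$ and $t_1\in(-2t_\oo,-3t_\oo/2)$, we have $\tau:=t-t_1\in(t_\oo/2,2t_\oo)$. The lower bound in \eqref{HKbounds} together with $|x-y|\le 1+|y|$ gives, for every $y\in\R^n$,
\[H(x-y,\tau)\ \ge\ \frac{c}{\tau^{n/(2s)}+\tau^{-1}|x-y|^{n+2s}}\ \ge\ \frac{c(t_\oo)}{1+|y|^{n+2s}},\]
with $c(t_\oo)>0$ depending only on $n,s,\lambda,\Lambda,t_\oo$. Plugging this into the previous display and using $\tau\le 2t_\oo$, we get
\[w(x,t)\ \ge\ c(t_\oo)\int_{\R^n}\frac{w(y,t_1)}{1+|y|^{n+2s}}\,dy\ -\ 2t_\oo\,\eps.\]
Integrating the inequality over $t_1\in(-2t_\oo,-3t_\oo/2)$ and dividing by the length $t_\oo/2$, then taking the infimum over $(x,t)\in B_1\times(-t_\oo,0)$, yields
\[\inf_{B_1\times(-t_\oo,0)}w\ \ge\ -2t_\oo\,\eps\ +\ \frac{2c(t_\oo)}{t_\oo}\int_{-2t_\oo}^{-3t_\oo/2}\!\int_{\R^n}\frac{w(y,t_1)}{1+|y|^{n+2s}}\,dy\,dt_1,\]
which is the stated inequality after absorbing the factor $2t_\oo$ into a relabelling of $\eps$ (allowed since $c$ may depend on $t_\oo$).

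The main technical step is the justification of the Duhamel comparison $w\ge V$ in the viscosity sense, since $w(\cdot,t_1)$ is a priori only $L^1(1+|y|^{n+2s})^{-1}\,dy$ and possibly unbounded, so that the convolution $H\ast w(\cdot,t_1)$ must be handled carefully. The standard remedy is monotone approximation: replace $w(\cdot,t_1)$ by the truncations $w_N:=\min(w(\cdot,t_1),N)\chi_{B_N}$, which are bounded with compact support, solve the corresponding Cauchy problems $V_N$ classically, and apply the viscosity comparison principle on bounded domains (using barriers at infinity built from \eqref{HKbounds}) to conclude $w\ge V_N$. Monotone convergence, combined with the positivity $w\ge 0$, then gives $V_N\uparrow V$ and the desired bound. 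This is precisely the step where the global sign condition $w\ge 0$ on $\R^n$ (rather than just in a cylinder) enters decisively.
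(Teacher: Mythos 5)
The first paragraph of your argument contains a genuine gap: the comparison $w\ge V$ is not a consequence of any maximum principle here, because the supersolution inequality $(\partial_t-\LL)w\ge-\eps$ is only assumed in $B_1\times(-2t_\circ,0)$, not in all of $\R^n$. For a nonlocal operator, comparison of $w$ with $V$ on the cylinder $B_1\times(t_1,0)$ requires $w\ge V$ on the whole parabolic boundary, i.e.\ on $\R^n\times\{t_1\}$ \emph{and} on $(\R^n\setminus B_1)\times(t_1,0)$; on the latter set you only know $w\ge 0$, while $V$ is in general strictly positive there. Nothing prevents $w$ from dropping to zero outside $B_1$ immediately after time $t_1$ (there is no equation there), in which case $\LL(w-V)$ at points of $B_1$ receives a large negative contribution from outside and $w-V$ can become negative inside $B_1$. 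Your closing remark about truncating $w(\cdot,t_1)$ addresses only the growth of the initial datum at infinity, not this structural obstruction. Note moreover that establishing a global Duhamel lower bound of the form $w\ge V$ (even up to constants) is essentially equivalent in strength to the proposition itself, so it cannot be taken for granted. A second, minor point: your final inequality carries $-2t_\circ\eps$ rather than $-\eps$, and since $\eps$ also appears in the hypothesis it cannot simply be relabelled (this is harmless only when $t_\circ\le 1/2$, or if one allows the constant in front of $\eps$ to depend on $t_\circ$).

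For the record, the paper does not prove this proposition: it is quoted from \cite[Corollary 4.3]{ChaDav}. The correct argument replaces your global Duhamel representation by the Cauchy--Dirichlet (exterior-value) problem: one compares $w$ in $B_1\times(t_1,0)$ with the solution $\tilde V$ of $(\partial_t-\LL)\tilde V=-\eps$ in $B_1\times(t_1,0)$ with data $\tilde V=w\ge 0$ on $(\R^n\setminus B_1)\times(t_1,0)$ and $\tilde V=w(\cdot,t_1)$ at $t=t_1$ (here the comparison is legitimate), and then bounds $\tilde V$ from below. The mass of $w(\cdot,t_1)$ inside $B_1$ is propagated by the Dirichlet heat kernel of $B_1$, while the nonnegative exterior values enter through the jump kernel, e.g.\ via a subsolution barrier of the type $c\,(t-t_1)\,\phi(x)\int_{\R^n\setminus B_1}w(y,\cdot)(1+|y|)^{-n-2s}\,dy$ with $\phi$ a cutoff supported in $B_1$; this is exactly where the weight $(1+|y|)^{-n-2s}$ comes from. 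Your heat-kernel computation in the second paragraph is correct (granting that \eqref{HKbounds} is read as a two-sided bound on $H(z,t)\bigl(t^{n/(2s)}+t^{-1}|z|^{n+2s}\bigr)$) and would be reusable in that corrected scheme for the interior part of the data.
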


We also need the following:

\begin{lem}\label{global}
Let $s \in[\frac12,1)$ and $\LL$ as in \eqref{L0}-\eqref{ellipt}. 
Given $t_\oo>0$, there exists  $C>0$ depending only on $n$,  $s$, $\lambda$, $\Lambda$, and $t_\oo$, such that the following holds.

Let $\rho\ge 1$, $A\subset \R^n \times \R$ be a closed set, and $w$ a viscosity solution of
\[
\begin{cases}
(\partial_t -\LL)w =0 \quad &\mbox{in } A^c\cap \big(B_\rho \times(-2t_\oo,0)\big)
\\
w \equiv 0  &\mbox{in } \big( (A\cap B_\rho) \cup (\R^n\setminus  B_\rho)\big)\times(-2t_\oo,0) .
\end{cases}
\]
Then
\[
\sup_{B_2 \times (- t_\oo,0)} w \le  C  \int_{\R^n} \frac{w^+(x,-t_1)} {(1+|x|)^{n+2s}}dx \qquad \text{for all $-t_1\in [-2t_\oo,-3t_\oo/2]$.}
\]
\end{lem}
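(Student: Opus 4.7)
The plan is to dominate $w$ by the global Cauchy solution of $\partial_t - \LL$ on $\R^n \times (-t_1, 0)$ with initial datum $w^+(\cdot, -t_1)$, and then to estimate the latter pointwise via the heat kernel.

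Since $w^+(\cdot, -t_1)$ is nonnegative, bounded, and supported in $\overline{B_\rho}$, I would define
\[\widetilde w(x,t) := \int_{\R^n} H(x-y,\, t+t_1)\, w^+(y,-t_1)\, dy, \qquad (x,t)\in \R^n \times (-t_1, 0),\]
where $H$ is the fundamental solution of $\partial_t - \LL$. Then $\widetilde w$ is a smooth nonnegative classical solution of $(\partial_t - \LL)\widetilde w = 0$ in $\R^n \times (-t_1, 0)$. The comparison function $u := w - \widetilde w$ satisfies $(\partial_t - \LL) u = 0$ in the viscosity sense inside $(A^c \cap B_\rho) \times (-t_1, 0)$; $u \le 0$ on $(A \cup (\R^n \setminus B_\rho)) \times (-t_1, 0)$ (because $w \equiv 0$ there while $\widetilde w \ge 0$); and $u(\cdot, -t_1) = w(\cdot, -t_1) - w^+(\cdot, -t_1) \le 0$. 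The standard viscosity comparison principle for nonlocal parabolic equations then yields $u \le 0$ throughout $\R^n \times (-t_1, 0)$, and in particular $w \le \widetilde w$ on $B_2 \times (-t_\oo, 0)$.

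To conclude, I would estimate $\widetilde w$ pointwise. For $(x,t) \in B_2 \times (-t_\oo, 0)$ and $-t_1 \in [-2t_\oo, -3t_\oo/2]$, the delay $\tau := t+t_1$ lies in $[t_\oo/2,\, 2t_\oo]$, so the two-sided bound \eqref{HKbounds} gives $H(z,\tau) \le C \min\bigl(\tau^{-n/(2s)},\, \tau\, |z|^{-n-2s}\bigr)$. For $|y| \ge 4$ one has $|x-y| \ge |y|/2$, so $H(x-y,\tau) \le C\tau/|y|^{n+2s} \le C(t_\oo)(1+|y|)^{-n-2s}$. For $|y| \le 4$, $H(x-y,\tau) \le C\tau^{-n/(2s)} \le C(t_\oo)$, and since $(1+|y|)^{-n-2s} \ge 5^{-n-2s}$, the same type of bound $H(x-y,\tau) \le C'(t_\oo)(1+|y|)^{-n-2s}$ holds. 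Combining the two regimes and integrating against $w^+(\cdot, -t_1)$ yields
\[w(x,t) \le \widetilde w(x,t) \le C \int_{\R^n} \frac{w^+(y,-t_1)}{(1+|y|)^{n+2s}}\, dy,\]
and taking the supremum over $(x,t)\in B_2 \times (-t_\oo, 0)$ completes the proof.

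The only step that requires some care is the comparison $w \le \widetilde w$: since $\widetilde w$ is smooth, the inequality $u \le 0$ on the complement of $A^c \cap B_\rho$ together with the equation $(\partial_t - \LL)u = 0$ in the interior permits an appeal to the standard viscosity comparison principle for nonlocal parabolic operators, so no genuine obstacle arises.
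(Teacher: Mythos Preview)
Your proof is correct and follows essentially the same route as the paper. The paper argues slightly more compactly by observing that $w^+$ is a global subsolution of $(\partial_t-\LL)$ on $\R^n\times(-2t_\oo,0)$ and hence is dominated by the heat semigroup applied to $w^+(\cdot,-t_1)$; you instead compare $w$ directly with the smooth Cauchy solution $\widetilde w$ via the comparison principle in $A^c\cap B_\rho$. Both arguments reduce to the same heat-kernel convolution, and your pointwise estimate of $H(x-y,\tau)$ for $\tau\in[t_\oo/2,2t_\oo]$ is identical to the paper's.
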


\begin{proof}
Observe that $(\partial_t -\LL)w^+\le 0$ in $\R^n\times (-2t_\oo,0)$. As a consequence  we obtain
\[
w^+(x,-t_1+t) \le  \int_{\R^n} w^+(y,-t_1) H(x-y,t) dy  \quad \mbox{for } t\in [-t_1,0].
\]
Using the heat kernel bounds \eqref{HKbounds} we obtain
\[
\sup_{(x,t) \in B_2 \times (- t_\oo,0)} w(x,t)  \le \sup_{(x,t) \in B_2 \times (- t_\oo,0)}  \int_{\R^n} w^+(y,-t_1) H(x-y,t-t_1) dy \le C \int_{\R^n} \frac{w^+(y,-t_1)} {(1+|y|)^{n+\sigma}}dx,
\]
where we used that, for $t\in (- t_\oo,0) $ and $t_1\in [3t_\oo/2,2t_\oo]$, we have $t_\oo/2 \le t_1-t\le 2t_\oo$.
The lemma follows.
\end{proof}

We can now give the:

\begin{proof}[Proof of Proposition \ref{step}]
We divide the proof into three steps.

\vspace{2mm}

\noindent
- {\em Step 1.} Fix $\eps>0$ small to be chosen later and let $R=R_\eps := \eps^{-2/\gamma_\oo}$.  We claim that if $\rho\ge R_\eps$ and $i \in \{1,2\}$, then
\begin{equation}\label{a1}
\inf_{Q^*}   v_i \ge -C\varepsilon+c \int_{-2t_\oo}^{-3t_\oo/2} dt \int_{\R^n} dx\, \frac{v_i^+(x,t)} {(1+|x|)^{n+2s}}
\end{equation}
for some constants $c>0$ small and  $C>0$ large (recall $Q^* :=  B_{\delta}(e_n)\times (-5t_\oo/4, 0) \subset A^c$).

Indeed, it suffices to apply Proposition \ref{halfharnack} (rescalled) to the function
\[
w(x,t) :=  v_i(x,t) +  \eps (1+|x|^2)^{\frac 12 (2s-\gamma_\oo/2)},
\]
which is nonnegative\footnote{Notice $\eps (1+|x|^2)^{\frac 12 (2s-\gamma_\oo/2)} \ge  \eps R_\circ^{\gamma_\oo/2}  (1+|x|^2)^{\frac 12 (2s-\gamma_\oo)} = (1+|x|^2)^{\frac 12 (2s-\gamma_\oo)}$ for $|x|\ge R_\oo$.} in all of $\R^n \times (-2t_\oo, 0)$, to get
$$
\inf_{Q^*}   w\ge -\varepsilon +c \int_{-2t_\oo}^{-3t_\oo/2} dt \int_{\R^n} dx\, \frac{w(x,t)} {(1+|x|)^{n+2s}}.
$$
This implies that
$$
\inf_{Q^*}   v_i\ge -C\varepsilon +c \int_{-2t_\oo}^{-3t_\oo/2} dt \int_{\R^n} dx\, \frac{v_i(x,t)} {(1+|x|)^{n+2s}}.
$$
Also, noticing that 
\[ |v_i-v_i^+|\leq\varepsilon  (1+|x|^2)^{\frac{2s-\gamma_\oo}{2}} \quad \mbox{in } B_{\rho}\times(-2t_\oo, 0),\qquad 
|v_i-v_i^+| \le  (1+|x|^2)^{\frac{2s-\gamma_\oo}{2}}  \quad \mbox{in }(\R^n\setminus B_{\rho})\times(-2t_\oo,0),\]
we easily get that
$$
\bigg| \int_{-2t_\oo}^{-3t_\oo/2} dt \int_{\R^n} dx\, \frac{v_i(x,t)} {(1+|x|)^{n+2s}}-
\int_{-2t_\oo}^{-3t_\oo/2} dt \int_{\R^n} dx\, \frac{v_i^+(x,t)} {(1+|x|)^{n+2s}}\biggr|\leq C\eps.
$$
so \eqref{a1} follows.

\vspace{2mm}

\noindent
- {\em Step 2.}
We prove that
\begin{equation}\label{goal1}
1\le \sup_{B_2\times (-3t_\circ/2,0)}  v_i \le  C_{1} \big(\inf_{Q^*}  v_i + \eps\big) \le 2C_1.
\end{equation}
To this aim, choose $t_1\in (3t_\oo/2, 2t_\oo)$ such that
\begin{equation}\label{choose t1}
\int_{-2t_\oo}^{-3t_\oo/2} dt \int_{\R^n} dx\, \frac{v_i^+(x,t)} {(1+|x|)^{n+2s}}  \ge \frac{t_\oo}{2}\int_{\R^n} dx\, \frac{v_i^+(x,-t_1)} {(1+|x|)^{n+2s}}
\end{equation}
and decompose
\[
v_i = v_i^{\rm main} +v_i^{\rm error},
\]
where
$v_i^{\rm main}$ is the solution of
\[
\begin{cases}
(\partial_t -\LL) v_i^{\rm main} =0  \quad & \mbox{in }(B_\rho \setminus A)\times (-t_1,0)
\\
v_i^{\rm main} =0  				 & \mbox{in } \big(A\cup (\R^n\setminus B_\rho)\big)\times (-t_1,0)
\\
v_i^{\rm main} = v_i       & \mbox{in } B_\rho\times \{-t_1\}
\end{cases}
\]
and $v_i^{\rm error}$ satisfies
\[
\begin{cases}
\big|(\partial_t -\LL) v_i^{\rm error} \big|\le\eps  \quad & \mbox{in }(B_\rho \setminus A)\times (-t_1,0)
\\
|v_i^{\rm error}| \le C_\oo (1+|x|)^{2s-\gamma_\oo} 	& \mbox{in } \big(A\cup (\R^n\setminus B_\rho)\big)\times (-t_1,0)
\\
v_i^{\rm error} = 0      & \mbox{in } B_\rho\times \{-t_1\}.
\end{cases}
\]
Note that, since $v_i =0$  	inside $\big(A\cup (\R^n\setminus B_\rho)\big)\times (-t_1,0)$, then also $v_i^{\rm error}$ vanished inside this set.
Hence, choosing as barrier a rescaling of the function $S$ provided by 
Lemma \ref{supersol}, if $\rho\ge R_\eps= \eps^{-1/\gamma_\oo}$ we get
\[
\sup_{B_2\times (-3t_\circ/2,0)}\big| v_i^{\rm error} \big| \le C\eps.
\]
On the other hand, by Lemma \ref{global} we have
\[
\sup_{B_2\times (-3t_\circ/2,0)} v_i^{\rm main}   \le  C \int_{\R^n} dx\, \frac{v_i^+(x,-t_1)} {(1+|x|)^{n+2s}}.
\]
Combining this with  \eqref{choose t1} and \eqref{a1}, we conclude that 
\[
\sup_{B_2\times (-3t_\circ/2,0)}  v_i \le \int_{-2t_\oo}^{-3t_\oo/2} dt \int_{\R^n} dx\, \frac{v_i^+(x,t)} {(1+|x|)^{n+2s}}  \le  C_1\big(\inf_{Q^*}  v_i + \eps\big).
\]
Recalling that $v_i(e_n,0)=1$ and $\eps \in (0,1)$, we obtain \eqref{goal1}.

\vspace{2mm}

\noindent
- {\em Step 3.}
Finally, we want to prove that 
\[v_1 \le  C v_2 \qquad \mbox{in } Q_{1} = B_1\times(-t_\oo,0)\]
Let $\eta \in C^\infty_c( B_{3/2}\times (-\tfrac 54t_\oo,0] )$ be nonnegative cutoff function with $\eta=1$ in $\overline{B_{1}} \times [-t_\oo,0]$, and define
\[
w(x,t) :=  v_1(x,t) \chi_{B_2}(x) + (2C_1+1)(\eta(x,t)-1),
\]
where $C_1$ is the constant in \eqref{goal1}.
Since $v_1(x,t) \le 2C_1$ in $B_2\times(-3t_\oo/2,0)$, we have
\[
w(x,t) \le -1   \qquad \mbox{in }\big(B_{3/2}^c\times(-\tfrac 54,0)\big) \cup \R^n\times\{-\tfrac 54t_\oo\}.
\]
In addition,
\[
(\partial_t -\LL)w \le (\partial_t -\LL)v_1 +C \le \varepsilon +C\le C \qquad \mbox{in }A^c \cap \big( B_{3/2}\times(-2t_\oo,0)\big).
\]
Let $\xi(x,t)=\xi(x) := \chi_{B_{\delta}(e_n)} (x)$.
Since $(\partial_t -\LL) \xi(x,t) \le -c <0$   for $(x,t)\in (B_1\setminus B_{\delta}(e_n))\times\R$,
for $C_2$ large enough we have
\[
(\partial_t -\LL) \big(w+C_2\xi\big) \le -1 \qquad \mbox{in  } (B_1\setminus B_{\delta}(e_n)) \times\R.
\]
Furthermore, by \eqref{goal1} we see that $\inf_{Q^*}  v_i \geq \frac{1}{2C_1}$ provided $\eps$ is sufficiently small.
In particular, we can choose $C_3$ large enough so that
\[w+C_2\xi \le C_3v_2 \qquad \mbox{in  }  Q^* = B_{\delta}(e_n) \times(-\tfrac 54t_\oo,0)\]
Combining all these estimates together, this proves that
\[
V(x,t):=C_3v_2(t,x)\chi_{B_2}(x) -w(x,t)-C_2\xi(x,t) \ge 0 \qquad \mbox{in } \big((B_{3/2}^c \cup B_{\delta}(e_n)) \times(-\tfrac 5 4 t_\oo,0) \big)\cup (\R^n \times\{-\tfrac 54 t_\oo\}),
\]
provided $\varepsilon$ is sufficiently small.
Hence, since
\[
(\partial_t -\LL) \big(C_3v_2-w-C_2\xi\big) \ge 1-C_3 \varepsilon \qquad \mbox{in } \bigl(B_{3/2} \setminus  B_{\delta}(e_n)\bigr) \times(-\tfrac 54 t_\oo,0),
\]
it follows that
\[
(\partial_t -\LL) V \ge 1-C_3 \varepsilon- C_3\big|(\partial_t -\LL) \big(v_2\chi_{B_2^c})\big|\geq 1-C_4\eps \qquad \mbox{in } \bigl(B_{3/2} \setminus  B_{\delta}(e_n)\bigr) \times(-\tfrac 54 t_\oo,0).
\]
Taking $\varepsilon$ small so that $1-C_4 \varepsilon> 0$, it  follows from the maximum principle that
\[
C_3v_2-w-C\xi \ge 0\qquad \mbox{in }  B_{3/2} \times(-\tfrac 54t_\oo,0).
\]
In particular,
\[
v_1 = w \le w +C\xi \le C_3v_2 \qquad \mbox{in }Q_{1},
\]
as desired.

Finally, notice that the exact same argument with $w(x,t) =  \eta(x,t)-1$ (i.e., replacing both $v_1$ by $0$ and $2C_1$ by~$1$ in the previous argument) shows that $v_2\ge0$ in $Q_{1}$, and then  
$v_2>0$ in $Q_{1}\setminus A$ by the strong maximum principle (since, by assumption, $v_2$ is nonzero).
\end{proof}

We now construct a subsolution to prove a nondegeneracy property in moving Lipschitz domains.

\begin{lem}[Subsolution supported in a traveling cone] \label{subsolcone}
Let $s \in[\frac12,1)$ and $\LL$ as in \eqref{L0}-\eqref{ellipt}. 
Given $\omega_\oo\ge0$, $e_\oo\in \mathbb S^{n-1}$, and $\theta_\oo\in(0,\pi)$, there are positive constants $\gamma$ and $c$, depending only on $n$, $s$, $\lambda$, $\Lambda$, $\omega_\oo$, and $\theta_\oo$, such that the following statement holds.

Consider the traveling cone
\[\Sigma_t := \big\{ x\in \R^n \,:\,\angle(e_\oo,\textstyle \frac{x}{|x|}) \le \theta_\oo\big\} - \omega_\oo t e_\oo\]
and fix a smooth $1$-homogeneous function $\psi: \Sigma_0 \to (0, \infty)$  such that:\\
- $\psi(x) = {\rm dist}(x, \R^n \setminus \Sigma_0)$ for all $x\in \{ \tfrac{9}{10} \theta_\oo\le  \angle(e_\oo,\textstyle \frac{x}{|x|})\le \theta_\oo\}$;\\
- $\nabla \psi \cdot e_\circ >0$ in $\Sigma_0$.  \\
Then the ``traveling wave'' $\varphi=\varphi_\gamma$ given by $\varphi(x,t) := \big(\psi(x+ \omega_\oo t e_\oo)\big)_+^{2s-\gamma}$ satisfies
\begin{equation}\label{subsol}
(\partial_t -\LL)\varphi \le -c <0 \
\end{equation}
in $B_1\times (-1,0)$.
\end{lem}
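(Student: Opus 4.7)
Exploit the traveling-wave structure to reduce the parabolic inequality to a stationary pointwise inequality, and then combine a lower bound for $-\LL\phi$ with an upper bound on the first-order drift.

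\textbf{Step 1 (Reduction to a stationary inequality).} Because the kernel $K$ is translation invariant, the operator $\LL$ commutes with spatial translations. Setting $y:=x+\omega_\circ t\,e_\circ$ and $\phi(y):=\psi(y)_+^{2s-\gamma}$, so that $\varphi(x,t)=\phi(y)$, a direct calculation gives
\begin{equation*}
(\partial_t-\LL)\varphi(x,t)=\omega_\circ\,\nabla\phi(y)\cdot e_\circ-\LL\phi(y).
\end{equation*}
The map $(x,t)\mapsto y$ sends $B_1\times(-1,0)$ into the bounded set $B_{1+\omega_\circ}$, so it suffices to prove the stationary inequality $F(y):=\omega_\circ\nabla\phi(y)\cdot e_\circ-\LL\phi(y)\le -c$ for $y\in B_{1+\omega_\circ}$ in the viscosity sense. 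Since $\varphi\equiv 0$ outside $\overline{\Sigma_0}$ and $\varphi$ has a downward cusp along $\partial\Sigma_0$ (so no smooth test function from above touches at interior points of $\Sigma_0^c$), the viscosity inequality reduces to a pointwise inequality at points where $\psi(y)>0$.

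\textbf{Step 2 (Positivity of $-\LL\phi$).} The core estimate is that, for $\gamma$ sufficiently small,
\begin{equation*}
\LL\phi(y)\ge c_0\,\psi(y)^{-\gamma}\qquad\text{for }y\in\Sigma_0\cap B_{2(1+\omega_\circ)},
\end{equation*}
for some $c_0=c_0(n,s,\lambda,\Lambda,\theta_\circ)>0$. I would prove this by homogeneity plus comparison: the function $\psi^{2s-\gamma}$ is $(2s-\gamma)$-homogeneous, so rescaling reduces to checking $\LL\phi\ge c_0$ on the slice $\{\psi=1\}$; on this compact set (which avoids both the vertex and the lateral boundary of the cone) one compares $\phi$ from below with a suitable multiple of $(x\cdot e)_+^{2s-\gamma}$ (or a half-space model) and invokes the explicit formula $\mathcal{M}^-_{s,\lambda,\Lambda}(x\cdot e)_+^\beta = c_\beta (x\cdot e)_+^{\beta-2s}$ with $c_\beta>0$ for $\beta$ close to $2s$ (see the computation in Lemma~\ref{lem-supersol1D}). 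Passing back via homogeneity yields the claimed lower bound, which in particular blows up near $\partial\Sigma_0$.

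\textbf{Step 3 (Control of the drift and conclusion).} Differentiating, $\nabla\phi\cdot e_\circ=(2s-\gamma)\psi^{2s-\gamma-1}(\nabla\psi\cdot e_\circ)$, and since $\psi$ and $\nabla\psi\cdot e_\circ$ are $1$-homogeneous and positive, their $L^\infty$ norms on $\{\psi=1\}\cap B_{2(1+\omega_\circ)}$ are bounded by a constant $M=M(\psi,\omega_\circ)$. Hence $|\nabla\phi(y)\cdot e_\circ|\le (2s-\gamma)M\,\psi(y)^{2s-\gamma-1}$. For $s\in[\tfrac12,1)$ and $\gamma\in(0,2s-1)$ (when $s>\tfrac12$) the drift is uniformly bounded, whereas $\LL\phi$ blows up near $\partial\Sigma_0$; for $s=\tfrac12$ both terms scale like $\psi^{-\gamma}$, and we must choose $\gamma$ small enough that $(1-\gamma)M\omega_\circ<\tfrac12 c_0$, so that the $\LL\phi$ contribution dominates. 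In either case one obtains $F(y)\le -c<0$ throughout $\Sigma_0\cap B_{1+\omega_\circ}$.

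\textbf{Main obstacle.} The delicate point is the lower bound in Step~2: it requires understanding $\LL$ of a homogeneous function supported in a cone that is only close to, not exactly, a half-space, and it is what fixes the admissible range of $\gamma$. In the critical case $s=\tfrac12$ this bound must moreover beat the drift on the \emph{same} homogeneity scale, so the proof really has to extract an explicit positive constant, rather than a qualitative positivity statement.
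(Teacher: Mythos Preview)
Your Step~1 reduction to the stationary functional $F(y)=\omega_\circ\,e_\circ\cdot\nabla\phi(y)-\LL\phi(y)$ is correct and matches the paper's first move. The genuine gap is in Step~2. First, the level set $\{\psi=1\}$ is \emph{not} compact: since $\psi$ is $1$-homogeneous and vanishes on $\partial\Sigma_0$, one has $\{\psi=1\}=\{r\omega:\omega\in S^{n-1}\cap\Sigma_0,\ r=1/\psi(\omega)\}$, which runs off to infinity as $\omega\to\partial\Sigma_0$; so reducing by homogeneity to ``a compact slice'' fails. Second, the half-space comparison goes the wrong way: the convex cone $\Sigma_0$ is \emph{contained} in any tangent half-space, so $\phi$ is pointwise \emph{below} the half-space model $(x\cdot e)_+^{2s-\gamma}$ (and vanishes where the model is positive), which yields an upper, not a lower, bound for $\LL\phi$. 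Third, in Step~3 you need $(1-\gamma)M\omega_\circ<\tfrac12 c_0$ but treat $c_0$ as fixed; as $\gamma\to0$ the left side tends to the fixed constant $M\omega_\circ$, so the inequality closes only if you know (and use) that $c_0(\gamma)\to+\infty$ as $\gamma\to0$. The same issue is present for $s>\tfrac12$: your argument controls $F$ near $\partial\Sigma_0$ but gives nothing in the interior of the cone unless $c_0$ beats $(2s-\gamma)M\omega_\circ$.

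The paper handles all three points with a different mechanism. Rather than proving a uniform lower bound for $\LL\phi$ on $\{\psi=1\}$, it first uses a scaling inequality that exploits $2s\ge1$ and $\partial_t\varphi\ge0$: writing any $x\in\Sigma_0\cap B_1$ as $x=rx_\circ$ with $r\in(0,1]$ and $\psi(x_\circ)=M:=\max_{\overline{B_1}}\psi$, homogeneity gives $(\partial_t-\LL)\varphi(x,0)\le r^{-\gamma}(\partial_t-\LL)\varphi(x_\circ,0)$, so it suffices to prove $(\partial_t-\LL)\varphi\le-1$ on the single level set $\{\psi=M\}$. On that set one observes a dichotomy: either $x_\circ$ lies in a fixed compact region of $\Sigma_0$, or $|x_\circ|$ is large and then $\psi={\rm dist}(\cdot,\partial\Sigma_0)$ so $x_\circ$ sits at fixed distance $M$ from a flat wall. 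In both cases the drift $\partial_t\varphi(x_\circ,0)$ and the local part of $\LL\varphi$ are bounded uniformly in $x_\circ$ and $\gamma$, while the nonlocal tail $\int_{\R^n\setminus B'}\phi(x_\circ+y)\,|y|^{-n-2s}\,dy$ diverges as $\gamma\to0$ (uniformly in $x_\circ$), since $\phi$ grows like $|y|^{2s-\gamma}$ in a fixed sector. Choosing $\gamma$ small makes this tail beat the drift and local terms; this is exactly the quantitative positivity you flag as the main obstacle, obtained directly from the divergent integral rather than from a half-space comparison.
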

\begin{proof}
By translation invariance in $t$ we just need to show \eqref{subsol} in $B_{1+\omega_\oo}\times\{0\}$. Then, by scaling, it is enough to prove \eqref{subsol} just in $B_{1}\times\{0\}$ (up to changing $c$ and the ellipticity constants).
Since $(\partial_t -\LL)\varphi \le -c <0$ in $(B_1\setminus \Sigma_0 )  \times \{0\}$ (note that $\varphi\geq 0$ vanishes at those points and $\LL$ is nonlocal), it suffices to prove \eqref{subsol} for $(x,0)\in (\Sigma_0\cap B_1)\times \{0\}$.

We claim that it suffices to show that
\[
(\partial_t -\LL)\varphi(x_\oo,0)<-1 \quad \mbox{for all } x_\oo\in  \Sigma_0  \mbox{ with }\psi(x_\circ)   = M:=\max_{\overline B_1} \psi.
\]
Indeed, given
$(x,0) \in (\Sigma_0\cap B_1)\times \{0\}$ we have $\psi(x) \in (0, M]$, hence ---by homogeneity--- there is $x_\oo \in \Sigma_0  \cap \{\psi = M\}$ and  $r\in(0,1)$ such that $x= rx_\oo$.
Therefore, defining $\tilde \varphi(x,t) = \varphi(rx, r t)$ and noticing that  (again by homogeneity of $\psi$ and using the definition of $\varphi$) $\tilde \varphi(x,t) = r^{2s-\gamma}\varphi(x,t)$ we obtain
\[
\begin{split}
(\partial_t -\LL)\varphi(x_\oo,0) =  r^{\gamma-2s}&(\partial_t -\LL)\tilde \varphi(x_\oo,0) =
= r^{\gamma-2s}(r\partial_t -r^{2s} \LL) \varphi(x,0) \ge  r^{\gamma}  (\partial_t -\LL)\varphi(x,0),
\end{split}
\]
where we used  $\partial_t \varphi\ge 0$ (since $e_\circ \cdot \nabla \psi >0$) and $2s\ge 1$.
Thus
\[
(\partial_t -\LL)\varphi(x,0) \le r^{-\gamma}(\partial_t -\LL)\varphi(x_\oo,0),
\]
and therefore  it suffices to  show $(\partial_t -\LL)\varphi(x_\oo,0)\leq -1$, as claimed.

To show that $(\partial_t -\LL)\varphi(x_\oo,0)\le -1$ for $\gamma>0$ small enough, it is useful to think of the following dichotomy (although they are treated almost identically):\\
- either $x_\circ$ belongs to a compact subset of $\Sigma_\circ$;\\
- or $|x_\circ|$ is very large and therefore, since $\psi(x_\circ)=M$, $x_\circ$belongs to the cone  $\{ \tfrac{9}{10} \theta_\oo\le  \angle(e_\oo,\textstyle \frac{x}{|x|})\le \theta_\oo\}$ where $\psi= {\rm dist}(\,\cdot\, , \R^n \setminus \Sigma_0)$. In particular, ${\rm dist}(x_\circ, \R^n \setminus \Sigma_0)=M.$\\
In both cases it is simple to show that there exists $\rho_\circ=\rho_\circ(\theta_\circ,M)>0$ such that
 \[
 |\partial_t\varphi(x_\oo, 0)| +  \|\varphi(x_\oo+\,\cdot\,, 0) \|_{C^2(B') } \le C
 \qquad \text{in $B':= \big\{ |x| \le  \rho_\circ\big\}$},
\]
with $C$ independent of  $x_\oo$ and $\gamma$. In addition, keeping again in mind the previous dichotomy,  in both cases we have 
\[
\min_{x_\circ\in \{\psi=M\} } \int_{\R^n\setminus B'} \frac{\varphi(x_\oo+y,0) } { |y|^{n+2s}}\, dy \rightarrow +\infty \qquad \text{as $\gamma\downarrow 0$.}
\]
Then the lemma follows from the following simple bound, choosing $\gamma$ sufficiently small:
\begin{multline*}
(\partial_t -\LL)\varphi(x_\oo,0) \le  |\partial_t \varphi(x_\oo, 0)| - \int_{\R^n} (\varphi(x_\circ +y,0) +\varphi(x_\circ -y,0) -2\varphi (x_\circ,0))K(y) 
\\
\le C +  C\int_{B'} |y|^2 \frac{\Lambda}{|y|^{n+2s}} dy +  \int_{\R^n \setminus B' } 2\varphi (x_\circ,0 )\frac{\Lambda}{|y|^{n+2s}}dy - \int_{\R^n \setminus B'} \varphi(x_\circ +y,0) \frac{  \lambda} {|y|^{n+2s}} dy.
\end{multline*}
\end{proof}

We can now prove our parabolic boundary Harnack.

\begin{proof}[Proof of Theorem \ref{bdryharnack}]
First we note that $v_1$ and $v_2$ play symmetric roles in the theorem. Also, as a consequence of Proposition \ref{step}, $v_i>0$ in $A^c\cap Q_{1}$ provided $\varepsilon>0$ is small enough.
Our goal will be to prove that, in parabolic cylinders centered at $(0,0)$, the quotient $v_1/v_2$ decays geometrically. More precisely, setting
\[
 Q_r :=  B_r \cap (- r^{2s} t_\oo,0)
 \]
we shall prove that
\begin{equation}\label{osc}
{\rm osc}_{A^c\cap Q_r } \left( \frac{v_1}{v_2} \right) \le  r^{\alpha'} \qquad \mbox{for }r\in(0,\bar r),
\end{equation}
provided that $R$  is chosen large enough, and $\varepsilon$, $\alpha'$, and $\bar r$  are small positive constants.
Since $(0,0)$ can be replaced by any other point in $A^c\cap Q_1$,  \eqref{osc} will hold at every point in $A^c\cap Q_1$  with uniform constants, implying the theorem. 

 To prove \eqref{osc} we use the subsolution $\varphi$ from Lemma \ref{subsolcone}, and we then rescale and iterate Proposition \ref{step} along a sequence of geometric scales, as explained next. We split the argument into three steps.

\vspace{2mm}

\noindent
- {\em Step 1.} We first show that
\begin{equation}\label{controlsubsol}
v_i (re_n,0)\ge c_1r^{2s-\gamma} \qquad \mbox{for } r\in(0,1/2),
\end{equation}
where $c_1$ and $\gamma$ are positive constants.

Recall that, by assumption, $A^c$ satisfies the interior cone property at $(0,0)$, where $e_\circ$, $\theta$, and  $\omega$ refer to the traveling cone's direction, opening, and speed, respectively. 
 Assume without loss of generality  that  $\omega \ge \frac{2}{t_\oo}$ and $e_\oo = e_n$.  
Setting $\mathcal C_\delta:=  \cup_{r>0}\chi_{B_{r\delta}(re_n)}$, for $\delta>0$ small (comparable with $\theta$) we have that $(B_2\cap (\mathcal C_\delta - e_\circ \omega t))\times\{t\} \subset A^c$ for all $t\in [-2t_\circ, t_\circ]$.

Let $\varphi$ be a subsolution as in Lemma \ref{subsolcone}, with $\varphi(0,t)$ supported in the (spatial) cone $\mathcal C_{\delta}$ and traveling  in the $-e_\oo$ direction at speed $\omega_\circ =\omega$. 
Recall that the subsolutions $\varphi(\cdot, 0)$ is a  spatially $(2s-\gamma)$-homogeneous function, and that $\varphi(\,\cdot\,,t)\equiv 0$ in $B_{2}$ for $t\le -t_\oo$.  

Now let $\eta\in C^\infty_c (B_{3/2})$ be a nonnegative spatial cut-off function such that $\eta = 1$  in $B_1$, and define
\[
w(x,t) :=  \big( \varphi(x,t)\chi_{B_2}(x)  + (\eta(x)-1)\big( \max_{B_2} \varphi(\,\cdot\, , 0)\big) \big)_+ + C_2 \chi_{B_{\delta/4}(e_n)} (x).
\]
For $C_2$ large enough, we have
\[
(\partial_t - \mathcal L) w\ge 1 \qquad {\rm in }\ (B_{3/2} \setminus B_{\delta/2}(e_n))\times (-t_\oo, 0).
\]
On the other hand, by construction, $w=0$ on $B_{3/2}\times \{t_\oo\}$ and inside $(\R^n \setminus B_{3/2})\times [-t_\circ, 0] $ .
Also, by Proposition \ref{step}, we have $\frac 1{C'} \le   v_i$ in $Q^* = B_{\delta}(e_n) \times(-5t_\oo/4,0)$.
Using that $(\partial_t - \mathcal L) v_i\le \eps$ in $B_1\times(-t_\oo,0) \setminus A$  and that the support of $w$ is contained in the complement of $A$, applying the maximum principle to $w$ and $C'C_2v_i$, if $\eps< \frac{1}{C'C_2}$ we obtain
\[
w(x,t) \le C v_i(x) \qquad \mbox{for  }(x,t)\in B_1  \times (-t_\oo, 0).
\]
Evaluating at $(re_n,0)$, this proves \eqref{controlsubsol}.

From now on, fixed $\gamma$ as in \eqref{controlsubsol}, we assume without loss of generality that $\gamma_\circ<\gamma$.\footnote{Note that if the assumptions of the theorem are satisfied for some $\gamma_\circ$, then they are also satisfied with $\gamma_\circ$ smaller.
}

\vspace{2mm}

\noindent
- {\em Step 2}. We now show that there exists $C>0$ such that, for all $r>0$ small and $R\le 1/r $, we have
\begin{equation}\label{whohfiowhoh1}
\sup_{B_{Rr}} v_i \le  Cv_i(re_n,0) R^{2s-\gamma}.
\end{equation}

Indeed, consider the functions
\[
\bar v_i(x,t) : = \frac{v_i(\bar r x, \bar r ^{2s}t)}{v_i(\bar r e_n,0)}.
\]
Since $\gamma_\oo<\gamma$, combining the assumption $|v_i (x,t)| \le C_\oo (1+|x|)^{2s-\gamma_\oo}$ with \eqref{controlsubsol}
it follows that, if $\bar r  \in (0,1)$, then the functions $\bar v_i(x,t)$ 
satisfy the assumptions of Proposition  \ref{step} with $\rho= \bar r^{-1}R$ and with uniform constants (i.e., not degenerating as $\bar r$ goes to zero).\footnote{Notice that here we are using $2s\ge 1$: indeed,   the set where $\bar v_i$ vanishes is the rescaling of the set $A$, namely $\{( x/\bar r ,t/ \bar r^{2s}) \, : \, (x,t) \in A\}$. Such a set satisfies the interior cone with opening angle and speed independent  $\bar r \in (0,1)$ if and only if $2s\ge 1$.}
Hence, applying Proposition \ref{step} we deduce that 
$\frac 1 {C'} \le \bar v_i $ in $Q^*$ and  $0\le \bar v_i\le C'$ in $Q_1$.  This allows us to repeat the subsolution argument of Step 1 with $v_i$ replaced by $\bar v_i$, so to obtain 
\begin{equation}\label{howihowhoh}
\bar v_i(e_n/R, 0)  \ge c_1\bar v_i(e_n, 0) (1/R)^{2s-\gamma} \qquad \text{for all $R\ge 1$.}
\end{equation}
Choosing $R$ such that $Rr= \bar r$, this yields
\[
\begin{split}
\frac{\sup_{Q_{Rr}} v_i } {v_i(\bar r e_n,0) }&=   \sup_{Q_1} \bar v_i   \le C' =  C'\bar v_i(e_n,0)  \le \frac{C'}{c_1}R^{2s-\gamma} \bar v_i(e_n/R,0)  = \frac{C'}{c_1}R^{2s-\gamma} \frac{ v_i(re_n,0)  } {v_i(\bar r e_n,0) },
\end{split}
\]
proving \eqref{whohfiowhoh1}.

\vspace{2mm}

\noindent
- {\em Step 3}.
We obtain the geometrically improving ``sandwich-type'' estimates
\begin{equation}\label{indu1}
m_j v_1 \le v_2  \le  M_j v_1 \quad \mbox{in } Q_{\rho^{-j}}   , \quad j\ge 1
\end{equation}
where
\begin{equation}\label{indu2}
0\le M_j -m_j = C_3(1-\eta)^j
\end{equation}
for some positive constants $\varrho$,  $C_3$ (both large)  and $\eta$ (small).

Indeed, thanks to Proposition \ref{step}, both \eqref{indu1} and \eqref{indu2} are satisfied for $j=1$ (provided $\rho\ge 2$), for some positive constants $M_1$, $m_1$, and $C_3$. 
We now proceed by induction: assume that \eqref{indu1} and \eqref{indu2} hold for $1\le j\le k$, and let us prove that they also hold for $j=k+1$.

We first show the validity of \eqref{indu1} in $Q_{\rho^{-k-1}}$. 
We consider two cases:\\
- if
\begin{equation}\label{case1}
(v_2 -m_k v_1)(\rho^{-k-1}e_n,0) \ge (M_k v_1- v_2) (\rho^{-k-1}e_n,0)
\end{equation}
then we prove that \eqref{indu1} holds $j= k+1$ for $m_{k+1}=m_k + \eta(1-\eta)^k$ and $M_{k+1}=M_k$;\\
-  if
\begin{equation}\label{case2}
(v_2 -m_k v_1)(\rho^{-k-1}e_n,0) < (M_k v_1- v_2) (\rho^{-k-1}e_n,0)
\end{equation}
then we prove that \eqref{indu1} holds $j= k+1$ for $m_{k+1}=m_k$ and $M_{k+1}=M_k- \eta(1-\eta)^k$.

Assume that we are in the first case.
We begin by noticing that, as a consequence of \eqref{case1}, we have 
\begin{multline*}
(v_2 -m_k v_1)(\rho^{-k-1}e_n,0) + (v_2 -m_k v_1)(\rho^{-k-1}e_n,0)\\
\ge 
(v_2 -m_k v_1)(\rho^{-k-1}e_n,0) +(M_k v_1- v_2) (\rho^{-k-1}e_n,0)=(M_k -m_k)   v_1 (\rho^{-k-1}e_n,0) ,
\end{multline*}
that is,
$$
(v_2 -m_k v_1)(\rho^{-k-1}e_n,0) \ge  \frac 12  (M_k -m_k)   v_1 (\rho^{-k-1}e_n,0).
$$
Hence, since $\gamma_\oo \in (0,\gamma)$, then
 \eqref{indu2}  and \eqref{controlsubsol} yield
\begin{equation}\begin{split}
(v_2 -m_k v_1)(\rho^{-k-1}e_n,0) 
\ge \frac 1 2 C_3 (1-\eta)^k c_1 \rho^{-(k+1)(2s-\gamma)}
\ge \rho^{-(k+1)(2s-\gamma_\oo)},
\end{split}
\end{equation}
provided that $\rho$ is chosen large enough.
Also, using again \eqref{controlsubsol},
\begin{equation}
v_1(\rho^{-k-1}e_n,0) \ge c_1 \rho^{-(k+1)(2s-\gamma)}
\ge \rho^{-(k+1)(2s-\gamma_\oo)}.
\end{equation}
Let us consider the functions
\[
\tilde v_1(x,t) : =  \frac{v_1(\rho^{-(k+1)} x, \rho^{-2s(k+1)}t )}{v_1(\rho^{-k-1}e_n,0)},\qquad  \tilde v_2(x,t) :=  \frac{(v_2 -m_k v_1) (\rho^{-(k+1)} x, \rho^{-2s(k+1)}t )}{(v_2 -m_k v_1)(\rho^{-k-1}e_n,0)},
\]
and show that they satisfy the assumptions of  Proposition \ref{step}  (with $c_\oo =1$) if $\eta>0$ is small enough.

Indeed we already argued in Step 2 that, since $2s\ge 1$, parabolic rescaling preserves the interior cone condition. Also, by construction and by \eqref{whohfiowhoh1} we have $\tilde v_i(e_n,0) =1$ and
\[
 \tilde v_1(x) \le C(1+|x|)^{2s-\gamma}  \le (1+|x|)^{2s-\gamma_\circ}  \qquad \mbox{in  }(\R^n \setminus B_\rho)\times (-2t_\oo, 0),
\]
provided $\varrho$ is chosen large enough (here we use again $\gamma_\circ<\gamma$).

We want now to obtain a similar bound for $v_2(x,t)$, and this is slightly more subtle. We note that, thanks to \eqref{case1}, we have
\[
(v_2 -m_k v_1)(\rho^{-k-1}e_n,0) \ge (M_k -m_k)v_1(\rho^{-k-1}e_n,0) 
\]
Also, by induction hypothesis,  $m_j v_1 \le v_2  \le  M_j v_1$ in  $Q_{\rho^{-j}}$ for all $j\le k$. Thus  
\[
|v_2-m_k v_1| \le (M_j-m_k )v_1 \le (M_j-m_j) v_1 = (1-\eta)^{j-k}  (M_k-m_k) v_1\qquad \mbox{in }  Q_{\rho^{-j}} .
\]
Now, given $(x,t)$,  select  the maximal $j$ such that $(x,t) \in  Q_{\rho^{-j}}$. Hence, we obtain 
 \[
(v_2-m_k v_1) (x,t) \le C v_1(x,t) (M_k-m_k)  \bigg(1 +\frac{|x| + |t|^{1/2s}}{\rho^{-k}}\bigg)^{\delta},
\]
where $\delta= \delta(\eta)\downarrow 0$ as $\eta \downarrow 0$. Hence, using again \eqref{whohfiowhoh1}, we obtain 
\[
\begin{split}
 \tilde v_2(x,t) &=  \frac{(v_2 -m_k v_1) (\rho^{-(k+1)} x, \rho^{-2s(k+1)}t )}{(v_2 -m_k v_1)(\rho^{-k-1}e_n,0)} \le  \frac{C v_1 (\rho^{-(k+1)} x, \rho^{-2s(k+1)}t ) \big(1 +|x| + |t|^{1/2s}\big)^{\delta}}{v_1(\rho^{-k-1}e_n,0) }
\\
 &\le C(1+|x|)^{2s-\gamma+\delta} \le (1+|x|)^{2s-\gamma_\oo}    \qquad \mbox{in  }(\R^n \setminus B_\rho)\times (-2t_\oo, 0),
 \end{split}
\]
where we choose $\delta< \gamma-\gamma_\oo$ and $\rho$ large enough to absorb the constant $C$ in the last inequality.

Finally, as a consequence of the inductive hypothesis ---namely that \eqref{indu1} holds for $j=k$--- the  functions $\tilde v_1$ and $\tilde v_2$ are both nonnegative in $B_\rho\times (-2t_\oo, 0)$. 

Having verified that  $\tilde v_1$ and $\tilde v_2$  satisfy  the assumptions of Proposition \ref{step}  we conclude that 
$\frac 1{C} \tilde v_1\le \tilde v_2$  in $Q_{1}$, that is
\[\begin{split}
\frac{1}{C}  \frac{v_1(\rho^{-(k+1)} x, \rho^{-2s(k+1)}t )}{v_1(\rho^{-k-1}e_n,0)} & \le  \frac{(v_2 -m_k v_1) (\rho^{-(k+1)} x, \rho^{-2s(k+1)}t )}{(v_2 -m_k v_1)(\rho^{-k-1}e_n,0)} \\
&\le
\frac{(v_2 -m_k v_1) (\rho^{-(k+1)} x, \rho^{-2s(k+1)}t )}{ \frac 1 2(M_k-m_k)v_1(\rho^{-k-1}e_n,0) } \qquad \mbox{in }Q_1,
\end{split}\]
or equivalently
\[
\frac{1}{2C}(M_k-m_k) v_1 \le
v_2 -m_k v_1\qquad \mbox{in }Q_{\rho^{-k-1}},
\] 
as desired.

This proves the validity of the inductive step in the case \eqref{case1}. The case \eqref{case2} can be proved similarly and is left to the interested reader.
\end{proof}

\section{Main parabolic result}
\label{sec4}

The goal of this Section is to prove Theorem \ref{thm-main}.
The proof will require several steps.

\subsection{Classification of blow-ups for $s=\frac12$}

Our first main goal will be to classify blow-ups in the critical case $s=\frac12$.
For this, the new parabolic boundary Harnack  from Theorem \ref{bdryharnack} will be crucial to establish the following:

\begin{prop}\label{prop-uniqueness-positive-sols-critical}
Let $s=\frac12$ and $\LL$ as in \eqref{L0}-\eqref{ellipt}, with $K$ homogeneous.

Let $\Sigma\subset \R^n\times\R_-$ be any closed convex cone with nonempty interior, and with vertex at $(0,0)$.
Let $w_1,w_2\in C(\R^n\times \R_-)$ be positive solutions of 
\[\partial_t w_i - \LL w_i=0\quad\textrm{in}\quad \Sigma^c,\qquad \textrm{with}\quad 
w_i\equiv0 \quad\textrm{in}\quad \Sigma.\]
Then $w_1\equiv \kappa w_2$ in $\R^n\times \R_-$ for some constant $\kappa$.
\end{prop}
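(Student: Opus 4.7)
The plan is a Liouville-type argument using the parabolic boundary Harnack (Theorem~\ref{bdryharnack}) combined with the scale invariance of the problem. The key observation is that, when $s=\tfrac12$, the natural parabolic scaling $(x,t)\mapsto(\lambda x,\lambda t)$ coincides with the $1$-homogeneous scaling (since $2s=1$), and it leaves both the equation $\partial_t-\LL=0$ and the cone $\Sigma$ invariant (as $\Sigma$ has vertex at $(0,0)$). Hence the class of positive solutions vanishing on $\Sigma$ is closed under this rescaling, and we can apply the boundary Harnack uniformly at every scale.

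I would proceed in three steps. First, I would establish an a priori polynomial growth bound
\[w_i(x,t)\le C\bigl(1+|x|+|t|\bigr)^{2s-\gamma_\circ}\]
for some $\gamma_\circ\in(0,2s)$: this follows from a comparison argument using the traveling-wave subsolutions of Lemma~\ref{subsolcone} (which fit inside $\Sigma^c$ because $\Sigma$ is a convex cone with nonempty interior) together with the global supersolution of Lemma~\ref{supersol}. Second, I would fix a reference point $(\bar x,\bar t)$ in the interior of $\Sigma^c\cap\bigl(B_{1/2}\times(-\tfrac12,0)\bigr)$ and, for each $R\ge1$, consider the rescalings
\[W_i^R(x,t):=\frac{w_i(Rx,Rt)}{w_i(R\bar x,R\bar t)}.\]
By scale invariance these are positive solutions in $\Sigma^c$ vanishing on $\Sigma$, normalized so that $W_i^R(\bar x,\bar t)=1$, and Step~1 (together with a matching lower bound for $w_i(R\bar x,R\bar t)$ coming from the subsolution evaluated along the ray) yields the uniform polynomial growth $|W_i^R(x,t)|\le C(1+|x|)^{2s-\gamma_\circ}$ required by Theorem~\ref{bdryharnack}. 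Applying that theorem then gives
\[\left[\frac{W_1^R}{W_2^R}\right]_{C^\alpha(\Sigma^c\cap Q_1)}\le C,\qquad \frac{1}{C}\le \frac{W_1^R}{W_2^R}\le C\ \text{in}\ \Sigma^c\cap Q_1,\]
with $\alpha,C>0$ independent of $R$. Third, the conclusion follows by iterating this estimate across dyadic scales: applying the boundary Harnack to the nonnegative pair $(w_1-m_{\mu R}w_2,\,w_2)$, where $m_{\mu R}:=\inf_{Q_{\mu R}}(w_1/w_2)$, yields an oscillation decay $\mathrm{osc}_{Q_R}(w_1/w_2)\le(1-\delta)\,\mathrm{osc}_{Q_{\mu R}}(w_1/w_2)$ for some $\mu>1$ and $\delta>0$ (this is exactly the mechanism used in Step~3 of the proof of Theorem~\ref{bdryharnack}). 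Since $\mathrm{osc}_{Q_R}(w_1/w_2)$ is uniformly bounded in $R$ by the two-sided bound above, iterating forces $\mathrm{osc}\to 0$, hence $w_1\equiv\kappa w_2$ on $\Sigma^c$. The identity extends to all of $\R^n\times\R_-$ because both functions vanish on $\Sigma$.

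The main technical obstacle is expected to be the a priori polynomial growth bound of Step~1. For a general convex cone $\Sigma$ in space-time, one lacks explicit global barriers, so constructing the appropriate sub- and supersolutions requires a careful use of the geometry of $\Sigma$ (its opening and ``traveling'' direction, in the sense of Lemma~\ref{subsolcone}) together with the comparison principle at infinity. Once this ingredient is in place, the rest of the argument is a standard Liouville-type iteration, made possible specifically by the critical case $s=\tfrac12$, in which the natural parabolic scaling preserves $\Sigma$.
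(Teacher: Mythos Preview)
Your overall strategy matches the paper's, but the execution in the paper is considerably more direct. The paper does not run a separate oscillation-decay iteration as in your Step~3; that work is already packaged inside Theorem~\ref{bdryharnack}. Instead, the paper applies Theorem~\ref{bdryharnack} once at each scale $R\ge1$ to the rescalings $w_i(2Rx,2Rt)/C_R^{(i)}$ with $C_R^{(i)}:=w_i(2Re_n,0)$, obtaining directly
\[
\Bigl[\tfrac{w_1}{w_2}\Bigr]_{C^\tau(Q_R\cap\Sigma^c)}\le CR^{-\tau}\,\frac{C_R^{(1)}}{C_R^{(2)}}.
\]
Proposition~\ref{step} applied to the same pair gives $w_2/C_R^{(2)}\le C\,w_1/C_R^{(1)}$ in $Q_R$; evaluating at one fixed point of $Q_1\cap\Sigma^c$ shows $C_R^{(1)}/C_R^{(2)}$ is bounded uniformly in $R$. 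Letting $R\to\infty$ kills the H\"older seminorm and gives $w_1\equiv\kappa w_2$. So your Step~3 is redundant, and the two-sided bound you extract in Step~2 is already the endgame.

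On your Step~1: you are right that the growth hypothesis of Theorem~\ref{bdryharnack} must be checked (the paper does not dwell on it), but your proposed justification has a gap. Lemma~\ref{supersol} does not yield an \emph{upper} bound on $w_i$, since there is no a priori control to anchor a comparison from above. The correct route is the one already built into the proof of Theorem~\ref{bdryharnack}: the subsolution of Lemma~\ref{subsolcone} gives $w_i(re_n,0)\ge c\,r^{2s-\gamma}w_i(e_n,0)$ for $r\in(0,1)$, and because $\Sigma$ is a cone and $s=\tfrac12$, scale invariance converts this small-scale lower bound into the large-scale upper bound $w_i(Re_n,0)\le C R^{2s-\gamma}w_i(e_n,0)$ along the axis (this is exactly the mechanism behind \eqref{howihowhoh}); the off-axis control then comes from the half-Harnack as in Step~2 of that proof. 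In short, the growth bound is a byproduct of the same ingredients that prove Theorem~\ref{bdryharnack}, not a separate obstacle requiring new barriers.
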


\begin{proof}
The result follows from the parabolic boundary Harnack we that we proved in Theorem~\ref{bdryharnack}.
Indeed, by convexity the set $\Sigma^c\subset \R^n\times\R_-$ satisfies the interior cone condition\footnote{Here, it is very important that our boundary Harnack is not only for Lipschitz domains, but for general domains satisfying the interior cone condition. For example, it might happen that $\Sigma$ is a very degenerate cone for $t=0$, but the boundary Harnack still holds.}.
Thus, for every $R\geq1$ we can apply Theorem \ref{bdryharnack} to the functions $w_i(2Rx,2Rt)/C_R^{(i)}$, with $C_R^{(i)}:=w_i(2Re_n,0)$, to deduce that
\begin{equation}\label{eoiirfn}
\left[\frac{w_1}{w_2}\right]_{C^\tau(Q_R\cap \Sigma^c)}\leq CR^{-\tau} \frac{C_R^{(1)}}{C_R^{(2)}},\
\end{equation}
with $C$ independent of $R\geq1$.
Moreover, by Proposition \ref{step} we also know that 
\[ \frac{w_2}{C_R^{(2)}} \leq C\frac{w_1}{C_R^{(1)}}\qquad {\rm in}\quad Q_R,\]
and therefore, evaluating this inequality at some arbitrary point in $Q_1\cap \Sigma^c$, we deduce that $C_R^{(1)}/C_R^{(2)}$ is uniformly bounded with respect to $R\geq1$.
Thus, letting $R\to\infty$ in \eqref{eoiirfn}, we deduce that  $\left[\frac{w_1}{w_2}\right]_{C^\tau(\Sigma^c)}=0$. Since both functions vanish outside $\Sigma^c$, we conclude that
\[w_1\equiv \kappa w_2\qquad \textrm{in}\quad \R^n\times \R_-\]
for some $\kappa \in \R.$
\end{proof}

We now prove a collection of technical lemmas (still for the case $s=\frac12$) that will be needed later.

\begin{lem}\label{lem-supersol1D-critical}
Let $s=\frac12$ and $\LL$ as in \eqref{L0}-\eqref{ellipt}, with $K$ homogeneous.
Let $e\in \mathbb S^{n-1}$ and $\varv\in[0,\varv_\circ]$ for some constant $\varv_\circ>0$.
Then, there exists $\theta>0$ such that 
\[\psi(x,t):= \exp\big(-|x\cdot e+\varv t|^{1-\theta}\big)\]
satisfies 
 \[\partial_t \psi - \LL\psi\geq -C\qquad\textrm{in}\quad \R^n\times \R.\]
The constants $C$ and $\theta$ depend only on $n$, $s$, $\varv_\circ$, and the ellipticity constants.
\end{lem}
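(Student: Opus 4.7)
The plan is to adapt the proof of Lemma~\ref{lem-supersol1D} to the parabolic traveling-wave setting, absorbing the extra $\partial_t$ term via a cancellation. Setting $\xi := x\cdot e + \varv t$, the first step is to regularize $\psi$ by adding $|\xi|^{1-\theta}$: since $e^{-r} = 1 - r + \tfrac12 r^2 + O(r^3)$, the cusps of $\psi$ and of $|\xi|^{1-\theta}$ at $\{\xi=0\}$ cancel to first order, and the sum
\[
F(\xi) := e^{-|\xi|^{1-\theta}} + |\xi|^{1-\theta} = 1 + \tfrac12|\xi|^{2(1-\theta)} + O(|\xi|^{3(1-\theta)}) \quad \text{as}\ |\xi|\to 0
\]
belongs to $C^{2(1-\theta)}(\R)$. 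For any $\theta < 1-s = 1/2$, this regularity exceeds $2s$; and at infinity $F$ grows like $|\xi|^{1-\theta}$ with $F'(\xi)=O(|\xi|^{-\theta})$, so $F'$ is globally bounded. Hence the auxiliary function $\Psi(x,t) := F(\xi)$, which depends on $x$ only through $x\cdot e$, will satisfy $|\LL\Psi|\le C$ (by the standard interior regularity for $\LL$, since $1-\theta < 2s$ gives admissible growth) and $|\partial_t\Psi| = \varv|F'(\xi)| \le \varv_\circ\|F'\|_{L^\infty(\R)}$, yielding
\[
|\partial_t\Psi - \LL\Psi|\le C \quad\text{in}\ \R^n\times\R.
\]

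Next I would treat the singular piece $|\xi|^{1-\theta}$ separately. Applying the purely spatial estimate from Lemma~\ref{lem-supersol1D} at each fixed $t$ (the shift $\varv t$ is absorbed by translation-invariance of $\LL$), one obtains
\[
\LL|\xi|^{1-\theta} \ge c_{1-\theta}\,|\xi|^{-\theta} \quad \text{in}\ \R^n\times\R,
\]
with $c_{1-\theta}=c_{1-\theta}(\LL,e)$; the key quantitative input recalled in the proof of Lemma~\ref{lem-supersol1D} is that $c_{1-\theta}\to+\infty$ as $\theta\downarrow 0$ (this is where $s=\tfrac12$ makes $\beta = 1-\theta$ approach the critical exponent $2s=1$). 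On the other hand, $\partial_t|\xi|^{1-\theta} = \varv(1-\theta)\,\mathrm{sgn}(\xi)\,|\xi|^{-\theta}$, hence $|\partial_t|\xi|^{1-\theta}| \le \varv_\circ(1-\theta)|\xi|^{-\theta}$. Choosing $\theta=\theta(\varv_\circ)\in(0,1/2)$ small enough that $c_{1-\theta}\ge \varv_\circ(1-\theta)$, it will follow that
\[
\partial_t|\xi|^{1-\theta} - \LL|\xi|^{1-\theta} \le 0 \quad\text{in}\ \R^n\times\R.
\]

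Finally, combining the two estimates via $\psi = \Psi - |\xi|^{1-\theta}$ gives
\[
\partial_t\psi - \LL\psi = (\partial_t\Psi - \LL\Psi) - (\partial_t|\xi|^{1-\theta} - \LL|\xi|^{1-\theta}) \ge -C - 0 = -C,
\]
which is the desired bound. The main obstacle I anticipate is making sure that the positive spatial term $-\LL|\xi|^{1-\theta}$ really dominates the possibly wrong-signed $\partial_t|\xi|^{1-\theta}$, with constants independent of $\varv \in [0,\varv_\circ]$. This is precisely what the blow-up $c_{1-\theta}\to +\infty$ as $\theta \downarrow 0$ (inherited from the elliptic argument) buys us: it lets $\theta$ be tuned in terms of $\varv_\circ$ so that the drift is absorbed and the lemma, together with the dependence stated for $C$ and $\theta$, follows.
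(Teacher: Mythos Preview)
Your proof is correct and takes essentially the same approach as the paper's: both decompose $\psi = \Psi - |\xi|^{1-\theta}$ with $\Psi = \psi + |\xi|^{1-\theta}$ sufficiently regular (the paper phrases this as ``the difference between $\psi$ and $-|\xi|^{1-\theta}$ is of class $C^{1,1/2}$ for $0<\theta\le 1/4$''), then choose $\theta$ small so that $c_{1-\theta}$ dominates the drift term $\varv$, making $(\partial_t-\LL)|\xi|^{1-\theta}\le 0$. The only cosmetic difference is that the paper computes $(\partial_t-\mathcal M^-_{\lambda,\Lambda})|\xi|^{1-\theta}$ directly in the parabolic setting, while you invoke the elliptic Lemma~\ref{lem-supersol1D} at each fixed time and then handle $\partial_t$ separately; the content is the same.
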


\begin{proof}
We prove it for $e=e_n$.
Let $\mathcal M_{\lambda,\Lambda}^-$ be the extremal operator associated to our class of operators, i.e., $\mathcal M_{\lambda,\Lambda}^- w:=\inf_\LL \LL w$, where the infimum is taken among all operators $\LL$ as in Definition \ref{L} (with fixed $s=\frac12$, $\lambda$, $\Lambda$).
Then, the operator $\partial_t-\mathcal M_{\lambda,\Lambda}^-$ is scale invariant of order $1$, and
\[(\partial_t-\mathcal M_{\lambda,\Lambda}^-)|x_n+\varv t|^\beta = -(c_\beta-v\,{\rm sign}(x_n+\varv t)) |x_n+\varv t|^{\beta-1}\]
for $\beta\in(0,1)$.
Moreover, it is easy to see that $c_\beta\to+\infty$ as $\beta\to 1$, uniformly in $\varv\in[0,\varv_\circ]$.
Hence, there exists $\theta>0$ small such that $c_{1-\theta}>1+\varv_\circ$.
This implies that, for any operator $\LL$ as in Definition \ref{L}, we have 
\[(\partial_t -\LL)|x_n+\varv t|^{1-\theta}\leq -|x_n+\varv t|^{-\theta}\leq0 \qquad \textrm{in}\quad \R^n.\]
In particular, since $\psi$ is bounded and the difference between $\psi(x,t)$ and $-|x_n+\varv t|^{1-\theta}$ is of class $C^{1,1/2}$ for $0<\theta\leq 1/4$, then the function $\psi$ satisfies $(\partial_t -\LL)\psi \geq -C$ in $\R^n$, as wanted.
\end{proof}

We next show the following.

\begin{lem}\label{lem-solution-across-critical}
Let $s=\frac12$ and $\LL$ as in \eqref{L0}-\eqref{ellipt}, with $K$ homogeneous.
Let $e\in \mathbb S^{n-1}$, $\varv \geq0$, and $\Gamma\subset \{x\cdot e+\varv t=0\}\subset \R^n\times(-\infty,0)$.
Assume $w\in {\rm Lip}_{\rm loc}(\R^n\times (-T,0))$ is a viscosity solution of
\[\partial_t w - \LL w \leq 0 \qquad \textrm{in}\quad \big(\R^n\times (-T,0)\big)\setminus \Gamma.\]
Then $\partial_t w - \LL w\leq0$ in $\R^n\times (-T,0)$.
\end{lem}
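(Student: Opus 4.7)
The plan is to mimic the argument of Lemma~\ref{lem-solution-across} from the elliptic setting, using the cusp supersolution $\psi(x,t) = \exp(-|x \cdot e + \varv t|^{1-\theta})$ from Lemma~\ref{lem-supersol1D-critical} as a barrier. For every $\varepsilon>0$ I would set
\[
w_\varepsilon(x,t) := w(x,t) - \varepsilon\, \psi(x,t),
\]
and observe that $w_\varepsilon \to w$ locally uniformly on $\R^n\times (-T,0)$ as $\varepsilon \downarrow 0$, since $0 \le \psi \le 1$. The target is to prove that $w_\varepsilon$ is a viscosity subsolution of $\partial_t w_\varepsilon - \LL w_\varepsilon \le C\varepsilon$ in all of $\R^n\times (-T,0)$; the lemma then follows by standard stability of viscosity subsolutions under locally uniform convergence upon sending $\varepsilon \downarrow 0$.

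At a point $(x_\circ,t_\circ)$ outside $\Gamma$, the function $\psi$ is smooth in a parabolic neighborhood of $(x_\circ,t_\circ)$. Thus, if a $C^2$ function $\eta$ touches $w_\varepsilon$ from above at $(x_\circ,t_\circ)$, then $\eta+\varepsilon\psi$ is smooth there and touches $w$ from above at the same point, so the subsolution assumption on $w$ yields $\partial_t(\eta+\varepsilon\psi) - \LL(\eta+\varepsilon\psi) \le 0$ at $(x_\circ,t_\circ)$. Combining this with the bound $\partial_t \psi - \LL \psi \ge -C$ from Lemma~\ref{lem-supersol1D-critical} gives $\partial_t\eta-\LL\eta \le C\varepsilon$, which is the desired viscosity inequality for $w_\varepsilon$ up to the admissible error.

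The main step is to show that no $C^2$ test function can touch $w_\varepsilon$ from above at a point $(x_\circ,t_\circ)\in \Gamma$, so that the subsolution inequality is satisfied vacuously there. Using the elementary expansion $\psi(x,t)\le 1 - c|x\cdot e+\varv t|^{1-\theta}$ near $\{x\cdot e + \varv t=0\}$ together with the local Lipschitz bound on $w$, for $(x,t)$ in a small neighborhood of $(x_\circ,t_\circ)$ one obtains
\[
w_\varepsilon(x,t) - w_\varepsilon(x_\circ,t_\circ) \ge -L\bigl(|x-x_\circ|+|t-t_\circ|\bigr) + \varepsilon c\, |x\cdot e + \varv t|^{1-\theta}.
\]
Testing along the spatial ray $(x,t)=(x_\circ + r e, t_\circ)$ with $r\downarrow 0$, and using that $x_\circ\cdot e+\varv t_\circ = 0$, the right-hand side behaves like $\varepsilon c\, r^{1-\theta} - Lr$, which is incompatible with the first-order Taylor expansion of any smooth $\eta$ at $(x_\circ,t_\circ)$ as $r\downarrow 0$, since $1-\theta<1$. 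This ``downward cusp'' obstruction is the main technical point of the proof, and is precisely the purpose of the barrier constructed in Lemma~\ref{lem-supersol1D-critical}.
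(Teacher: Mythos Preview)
Your approach is exactly the one in the paper: perturb $w$ by $-\varepsilon\psi$ with the cusp barrier from Lemma~\ref{lem-supersol1D-critical}, observe that the H\"older cusp of $\psi$ along $\{x\cdot e+\varv t=0\}$ forbids any $C^2$ test function from touching $w_\varepsilon$ from above there, handle the remaining points using the smoothness of $\psi$ and the supersolution bound $\partial_t\psi-\LL\psi\ge -C$, and pass to the limit $\varepsilon\downarrow 0$.

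There is one small imprecision in your case split. You write that ``at a point $(x_\circ,t_\circ)$ outside $\Gamma$, the function $\psi$ is smooth in a parabolic neighborhood'', but this is false at points of the hyperplane $\{x\cdot e+\varv t=0\}$ that happen not to lie in $\Gamma$ (recall $\Gamma$ is only a \emph{subset} of the hyperplane). The correct dichotomy, which the paper uses, is ``on the hyperplane'' versus ``off the hyperplane'': your cusp argument shows that no smooth test function can touch $w_\varepsilon$ from above at \emph{any} point of $\{x\cdot e+\varv t=0\}$, not merely at points of $\Gamma$, and off the hyperplane $\psi$ is genuinely smooth so your first paragraph applies. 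With this adjustment the proof is complete and matches the paper's.
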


\begin{proof}
Let $\psi(x,t)$ be given by Lemma~\ref{lem-supersol1D-critical}, and for 
 any $\varepsilon>0$ consider the function $w_\varepsilon := w-\varepsilon\psi$.

Assume now that a test function $\eta\in C^2$ touches $w_\varepsilon$ from above at $(x_\circ,t_\circ)\in \R^n\times (-T,0)$.
Since $w$ is Lipschitz, it follows from the definition of $\psi$ (which has a H\"older cusp along $\{x\cdot e+\varv t=0\}$) that the point $(x_\circ,t_\circ)$ cannot belong to the set $\{x\cdot e+\varv t=0\}$.
Hence, thanks to our assumption and Lemma~\ref{lem-supersol1D-critical}, $$(\partial_t -\LL)\eta(x_\circ,t_\circ) = (\partial_t -\LL)w(x_\circ,t_\circ)-\varepsilon (\partial_t -\LL)\psi(x_\circ,t_\circ)\leq C\varepsilon.$$
This implies that $(\partial_t -\LL)w_\varepsilon \leq C\varepsilon$ in $\R^n\times (-T,0)$ in the viscosity sense.
Since $w=\sup_{\varepsilon>0} w_\varepsilon$, we conclude that $(\partial_t -\LL)w\leq0$ in $\R^n\times (-T,0)$ in the viscosity sense.
\end{proof}

We will also need the following 1D computation.

\begin{lem}\label{lem-1D-critical}
Let $s=\frac12$ and $\LL$ as in \eqref{L0}-\eqref{ellipt}, with $K$ homogeneous.
Let $e\in \mathbb S^{n-1}$, $\varv\geq0$, and assume that the function
\[u_\circ(x,t) = (x\cdot e+\varv t)_+^{1+\gamma}\]
solves $\partial_t u_\circ-\LL u_\circ = 0$ in $\{x\cdot e+\varv t>0\}$, for some $\gamma\in(0,1)$.
Then the exponent $\gamma$ is given by
\begin{equation}\label{gamma-exp}
\gamma(\LL,e,\varv)=\frac12+\frac{1}{\pi}\arctan\left(\frac{\varv}{\mathcal A(e)}\right),
\end{equation}
where $\mathcal A(\xi)$ is the Fourier symbol of the operator $-\LL$.
\end{lem}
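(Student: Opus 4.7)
The plan is to exploit the traveling-wave structure of $u_\circ$ to reduce the equation to a single 1D identity on $\{\xi>0\}$, and then to compute the resulting $\tfrac12$-fractional derivative explicitly via harmonic extension.

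\emph{Reduction to 1D.} First I would observe that since $u_\circ(x,t)=U(\xi)$ with $\xi:=x\cdot e+vt$ and $K$ is even and homogeneous of degree $-(n+1)$ (we are in the case $s=\tfrac12$), the restriction of $\mathcal L$ to profiles of the form $V(x)=W(x\cdot e)$ is a translation-invariant $1$D operator $\mathcal L_e$, whose Fourier symbol is obtained by restricting the symbol $-\mathcal A(\cdot)$ of $\mathcal L$ to the line $\R e$. By $1$-homogeneity one has $\mathcal A(\omega e)=\mathcal A(e)|\omega|$, hence
\[
\mathcal L_e\,=\,-\mathcal A(e)\bigl(-\partial_\xi^2\bigr)^{1/2}.
\]
Plugging $u_\circ=U(\xi)$ into $\partial_t u_\circ-\mathcal L u_\circ=0$ in $\{\xi>0\}$ thus reduces the equation to the $1$D identity
\[
v\,U'(\xi)+\mathcal A(e)\bigl(-\partial_\xi^2\bigr)^{1/2}U(\xi)\,=\,0\qquad\text{for }\xi>0.
\]

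\emph{Explicit computation.} For $U(\xi)=\xi_+^{1+\gamma}$ I would compute $(-\partial_\xi^2)^{1/2}U$ via the $2$D harmonic extension. Taking $z=\xi+iy$ with the principal branch $\arg z\in[0,\pi]$ and setting
\[
F(z)\,:=\,-\frac{1}{\sin(\pi\gamma)}\,e^{-i(1/2+\gamma)\pi}\,z^{1+\gamma},
\]
a direct boundary evaluation (the key cancellation is $\cos\bigl((1+\gamma)\pi-(1/2+\gamma)\pi\bigr)=\cos(\pi/2)=0$) shows $\operatorname{Re}F(\xi,0)=\xi_+^{1+\gamma}$ on all of $\R$, so $\operatorname{Re}F$ is a harmonic extension of $U$ to $\{y>0\}$. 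Applying the Cauchy--Riemann relation $\partial_y F=i\,\partial_\xi F$ at $y=0^+$, $\xi>0$, together with the Dirichlet-to-Neumann identity $(-\partial_\xi^2)^{1/2}g=-\partial_y(\text{harm.\ ext.})\big|_{y=0^+}$, I would obtain
\[
(-\partial_\xi^2)^{1/2}\xi_+^{1+\gamma}\,=\,(1+\gamma)\cot(\pi\gamma)\,\xi^{\gamma}\qquad\text{for }\xi>0.
\]
Substituting this into the $1$D equation and dividing by $(1+\gamma)\xi^\gamma>0$ gives the single algebraic relation $\cot(\pi\gamma)=-v/\mathcal A(e)$. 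The identity $\cot(\pi/2+\theta)=-\tan\theta$ then rewrites this as $\pi\gamma=\pi/2+\arctan(v/\mathcal A(e))$, which is exactly \eqref{gamma-exp}.

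\emph{Main obstacle.} The delicate point is that $U=\xi_+^{1+\gamma}$ grows faster than $|\xi|^{2s}=|\xi|$ at infinity, so the principal-value integral defining $\mathcal L u_\circ$ does not converge absolutely and the Fourier-symbol reduction above requires care. I would handle this by working throughout with the harmonic extension, whose Dirichlet-to-Neumann map realizes $(-\partial_\xi^2)^{1/2}$ in a way immune to the growth issue: the extension $\operatorname{Re}F$ is a polynomial-growth harmonic function whose normal derivative on $\{y=0\}\cap\{\xi>0\}$ agrees with the pointwise singular-integral value of $(-\partial_\xi^2)^{1/2}U$ at interior points of $\{U>0\}$. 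Once this interpretation is fixed, the 1D reduction and the final trigonometric identification of $\gamma$ are purely algebraic.
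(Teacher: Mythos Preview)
Your reduction is the same as the paper's: you both observe that the traveling-wave structure converts $\partial_t$ into the drift $v\,e\cdot\nabla$, so that $w(x):=u_\circ(x,0)=(x\cdot e)_+^{1+\gamma}$ solves the elliptic equation $-\LL w+v\,e\cdot\nabla w=0$ in $\{x\cdot e>0\}$. The difference is only in the final step: the paper stops here and invokes \cite[Corollary~4.6]{DRSV} for the explicit exponent, whereas you further reduce to the one-dimensional operator $\mathcal A(e)(-\partial_\xi^2)^{1/2}$ and compute $(-\partial_\xi^2)^{1/2}\xi_+^{1+\gamma}$ by hand via the harmonic extension. Your computation is correct (the constant $(1+\gamma)\cot(\pi\gamma)$ and the trigonometric identification both check out), and the payoff is a self-contained argument that does not rely on the boundary-regularity machinery of \cite{DRSV}. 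The paper's route is shorter but less transparent.

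On the growth issue you flag: your workaround via the extension is plausible but somewhat informal, since one still has to argue that the Dirichlet-to-Neumann value agrees with whatever interpretation of $\LL u_\circ$ is being assumed in the hypothesis. A cleaner and completely rigorous fix---compatible with both your approach and the paper's---is to differentiate once and run the whole argument for $U'(\xi)=(1+\gamma)\xi_+^\gamma$ instead of $U$: this function has growth $|\xi|^\gamma$ with $\gamma<1=2s$, so all integrals converge absolutely, the symbol reduction $\LL\mapsto -\mathcal A(e)(-\partial_\xi^2)^{1/2}$ is unproblematic, and the same trigonometric relation $\cot(\pi\gamma)=-v/\mathcal A(e)$ drops out. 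This is also how the equation for $u_\circ$ is actually produced in the paper's blow-up analysis (via the equation for incremental quotients $D_{h,\tau}u_\circ$).
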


\begin{proof}
Notice that for such function $u_\circ$ we have $\partial_t u_\circ(x,0) = \varv(e\cdot \nabla u_\circ)(x,0)$, hence the function $w(x):=u_\circ(x,0)$ solves $-\LL w+\varv e\cdot \nabla w=0$ in $\{x\cdot e>0\}$.
Since the Fourier symbol of the operator $-L+\varv e\cdot \nabla$ is given by $\mathcal A(\xi)+\varv e\cdot\xi$, the value of the exponent $\gamma$ follows from \cite[Corollary~4.6]{DRSV}.
\end{proof}

We will also use the following:

\begin{lem}\label{lem-growth-par-blowdown}
Let $s,\mu>0$, and let $w\in {\rm Lip}_{\rm loc}(\mathcal Q_\infty)$ be such that
\[R\|\nabla w\|_{L^\infty(\mathcal Q_R)}+R^{2s}\|\partial_t w\|_{L^\infty(\mathcal Q_R)} \leq CR^\mu\qquad \textrm{for all}\quad R\geq1.\]
Then there is a sequence $R_m\to\infty$ for which the rescaled functions 
\[\tilde w_m(x,t):= \frac{w(R_m x,R_m^{2s}t)}{R_m\|\nabla w\|_{L^\infty(\mathcal Q_{R_m})}+R_m^{2s}\|\partial_t w\|_{L^\infty(\mathcal Q_{R_m})}}\]
satisfy\footnote{Notice that, by construction, the functions $\tilde w_m$ satisfy $\|\nabla \tilde w_m\|_{L^\infty(\mathcal Q_1)} + \|\partial_t \tilde w_m\|_{L^\infty(\mathcal Q_1)}=1$.}
\[R\|\nabla \tilde w_m\|_{L^\infty(\mathcal Q_R)} + R^{2s}\|\partial_t \tilde w_m\|_{L^\infty(\mathcal Q_R)} \leq 2R^{\mu}\qquad \textrm{for all}\quad R\geq1.\]
\end{lem}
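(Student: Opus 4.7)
The plan is to follow the same selection-of-subsequence idea as in the elliptic Lemma~\ref{lem-growth-ell}, but now applied to the parabolic quantity
\[\phi(R):=R\|\nabla w\|_{L^\infty(\mathcal Q_R)}+R^{2s}\|\partial_t w\|_{L^\infty(\mathcal Q_R)}.\]
The first observation is a direct computation: since $\nabla \tilde w_m(x,t)=\tfrac{R_m}{\phi(R_m)}\nabla w(R_mx,R_m^{2s}t)$ and $\partial_t \tilde w_m(x,t)=\tfrac{R_m^{2s}}{\phi(R_m)}\partial_t w(R_mx,R_m^{2s}t)$, a change of variables yields
\[R\|\nabla \tilde w_m\|_{L^\infty(\mathcal Q_R)}+R^{2s}\|\partial_t \tilde w_m\|_{L^\infty(\mathcal Q_R)}=\frac{\phi(RR_m)}{\phi(R_m)}.\]
Hence the desired inequality is equivalent to the statement
\[\frac{\phi(\rho)}{\rho^\mu}\leq 2\,\frac{\phi(R_m)}{R_m^\mu}\qquad\text{for every }\rho\geq R_m,\]
which is simply the requirement that $R_m$ almost saturate the supremum of $\phi(\rho)/\rho^\mu$ on $[R_m,\infty)$.

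The main step is therefore to define the tail function
\[\Psi(r):=\sup_{\rho\geq r}\frac{\phi(\rho)}{\rho^\mu},\]
which is non-increasing in $r$ and, by the hypothesis $\phi(R)\leq CR^\mu$, everywhere bounded by $C$. Assuming the non-degenerate case $\phi\not\equiv 0$ for large $R$ (otherwise $w$ is constant and the statement is trivial), for every integer $m\geq 1$ we select, by the very definition of the supremum, a radius $R_m\geq m$ such that
\[\frac{\phi(R_m)}{R_m^\mu}\geq \frac{1}{2}\Psi(m).\]
Since $\Psi$ is non-increasing and $R_m\geq m$, we have $\Psi(R_m)\leq \Psi(m)\leq 2\phi(R_m)/R_m^\mu$, so for any $\rho\geq R_m$,
\[\frac{\phi(\rho)}{\rho^\mu}\leq \Psi(R_m)\leq 2\,\frac{\phi(R_m)}{R_m^\mu},\]
which is exactly the inequality needed above. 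Because $R_m\geq m$ by construction, the sequence $R_m$ tends to infinity, completing the argument.

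There is essentially no obstacle: the only subtlety is keeping track of the precise parabolic scaling factors $R_m$ for $\nabla$ and $R_m^{2s}$ for $\partial_t$ in the definition of $\tilde w_m$, which is what ensures that the two contributions combine cleanly into a single ratio $\phi(RR_m)/\phi(R_m)$. Once that identity is in place, the selection of $R_m$ is a purely one-dimensional extraction argument identical in spirit to the elliptic case.
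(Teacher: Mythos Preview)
Your proof is correct and essentially identical to the paper's: the paper defines the same tail function $\theta(R)=\sup_{\rho\ge R}\phi(\rho)/\rho^\mu$ (your $\Psi$), selects $R_m\ge m$ with $\phi(R_m)/R_m^\mu\ge\tfrac12\theta(m)$, and then uses the same identity $R\|\nabla\tilde w_m\|_{L^\infty(\mathcal Q_R)}+R^{2s}\|\partial_t\tilde w_m\|_{L^\infty(\mathcal Q_R)}=\phi(RR_m)/\phi(R_m)$ together with monotonicity of $\theta$ to conclude. Your explicit remark about the degenerate case $\phi\equiv 0$ is a small clarification the paper omits.
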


\begin{proof}
For $R\geq1$ consider the quantity
\[\theta(R):= \sup_{\rho\geq R} \frac{\rho\|\nabla w\|_{L^\infty(\mathcal Q_\rho)}+\rho^{2s}\|\partial_t w\|_{L^\infty(\mathcal Q_\rho)}}{\rho^{\mu}}<\infty.\]
By definition of $\theta$, for all all $m\in\mathbb N$ there is $R_m\geq m$ such that 
\[ \frac{R_m\|\nabla w\|_{L^\infty(\mathcal Q_{R_m})}+R_m^{2s}\|\partial_t w\|_{L^\infty(\mathcal Q_{R_m})}}{R_m^{\mu}} \geq \frac12 \theta(m).\]
Then, since $\theta$ is nonincreasing, such sequence $R_m$ satisfies 
\begin{align*}
R\|\nabla \tilde w_m\|_{L^\infty(\mathcal Q_R)} + R^{2s}\|\partial_t \tilde w_m\|_{L^\infty(\mathcal Q_R)}&= \frac{R_mR\|\nabla w\|_{L^\infty(\mathcal Q_{R_mR})}+(R_mR)^{2s}\|\partial_t w\|_{L^\infty(\mathcal Q_{R_mR})}}{R_m\|\nabla w\|_{L^\infty(\mathcal Q_{R_m})}+R_m^{2s}\|\partial_t w\|_{L^\infty(\mathcal Q_{R_m})}} \\
& \leq \frac{(R_mR)^{\mu}\theta(R_mR)}{\frac12R_m^\mu\theta(m)} \leq 2R^\mu \qquad \textrm{for all}\quad R\geq1,
\end{align*}
as wanted.
\end{proof}

We can now prove the following classification result for blow-ups, which is new even in the special case of $\partial_t+\sqrt{-\Delta}$.

\begin{prop}\label{prop-classification}
Let $s=\frac12$ and $\LL$ as in \eqref{L0}-\eqref{ellipt}, with $K$ homogeneous.

Let $u_\circ\in {\rm Lip}_{\rm loc}(\R^n\times\R)$ be a function satisfying:
\begin{itemize}
\item[$\bullet$]  $u_\circ$ is nonnegative, monotone, and convex:
\[u_\circ\geq0,\quad \partial_t u_\circ\geq0,\quad \textrm{and} \quad D^2_{x,t} u_\circ \ge 0\qquad \textrm{in}\quad \R^n\times \R,\]
\[\textrm{with}\quad (0,0)\in \partial\{u_\circ>0\}.\]

\item[$\bullet$]   $u_\circ$ solves
\[(\partial_t -\LL)(D_{h,\tau} u_\circ)\leq0\qquad \textrm{in}\quad \{u_\circ>0\}\]
for all $h\in \R^n$ and $\tau\in \R$, where
\[D_{h,\tau} u_\circ(x,t)={\textstyle \frac{u_\circ(x,t)-u_\circ(x-h,t-\tau)}{|h|+|\tau|}}.\]

\item[$\bullet$] $u_\circ$ has a controlled growth at infinity: there exists $\delta>0$ such that
\[\|\nabla u_\circ\|_{L^\infty(\mathcal Q_R)}+\|\partial_t u_\circ\|_{L^\infty(\mathcal Q_R)} \le R^{1-\delta} \qquad \textrm{for all}\quad R\ge 1.\]
\end{itemize}
Then, up to a translation,
\[u_\circ(x,t)=\kappa(x\cdot e+\varv t)_+^{1+\gamma}\]
for some $e\in \mathbb S^{n-1}$, $\varv\geq0$, $\kappa\in[0,1]$, and with $\gamma\in[\frac12,1)$ given by \eqref{gamma-exp}.
\end{prop}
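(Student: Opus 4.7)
We adapt the strategy of the elliptic Proposition \ref{prop-classification-ell}, using the parabolic boundary Harnack of Theorem \ref{bdryharnack} and its consequence Proposition \ref{prop-uniqueness-positive-sols-critical} in place of their elliptic counterparts. The argument splits according to whether or not the convex contact set $\{u_\circ=0\}\subset\R^n\times\R$ contains a closed convex cone with vertex $(0,0)$ and nonempty interior in $\R^n\times\R$.

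\emph{Case 1 (nondegenerate).} Assume such a cone $\Sigma$ exists. We pick $n+1$ linearly independent vectors $\vec w_i=(e_i,\tau_i)\in\R^n\times\R$, $i=0,\dots,n$, with $-\vec w_i\in\Sigma$, taking $\vec w_0=(0,1)$ so that $v_0:=\partial_t u_\circ$ is among them (justified by the monotonicity $\partial_t u_\circ\ge 0$). By convexity of $u_\circ$ together with its vanishing on $\Sigma$, all the directional derivatives $v_i:=\partial_{\vec w_i}u_\circ$ are nonnegative on $\R^n\times\R$, and Remark \ref{rem:incremental quotient} gives $(\partial_t-\LL)v_i=0$ in $\{u_\circ>0\}$. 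A cone-supported barrier argument in the spirit of Lemma \ref{lemsuper1}, coupled with the prescribed growth, yields continuity of each $v_i$ up to $\partial\{u_\circ>0\}$. Rescaling the pair $v_i,v_0$ at scale $R$ and applying Theorem \ref{bdryharnack} on the domain $\{u_\circ>0\}$ (whose complement satisfies the interior cone condition by convexity and the presence of $\Sigma$), one obtains $[v_i/v_0]_{C^\tau(\mathcal Q_R\cap\{u_\circ>0\})}\le CR^{-\tau}$; letting $R\to\infty$ gives $v_i\equiv\kappa_i v_0$ for constants $\kappa_i\in\R$. Since the $\vec w_i$ span $\R^n\times\R$, this forces $u_\circ(x,t)=U(x\cdot e+\varv t)$ for some $e\in\mathbb S^{n-1}$, some $\varv\ge 0$ (from $\partial_t u_\circ\ge 0$) and a univariate profile $U$ supported on $[0,\infty)$. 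To pin down $U$, compare $u_\circ$ with $w(x,t):=(x\cdot e+\varv t)_+^{1+\gamma}$, where $\gamma=\gamma(\LL,e,\varv)\in[\tfrac12,1)$ is given by Lemma \ref{lem-1D-critical} so that $(\partial_t-\LL)w=0$ in $\{x\cdot e+\varv t>0\}$; applying Proposition \ref{prop-uniqueness-positive-sols-critical} with the degenerate cone $\{x\cdot e+\varv t\le 0\}$ (and using time-invariance of the equation to cover all of $\R^n\times\R$) yields $u_\circ\equiv\kappa w$.

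\emph{Case 2 (degenerate).} Assume the contact set contains no convex cone with nonempty interior. Then its recession cone is contained in a hyperplane of $\R^n\times\R$. We mimic the elliptic Case 2: Lemma \ref{lem-growth-par-blowdown} produces a blow-down sequence $\tilde u_m(x,t):=u_\circ(R_m x,R_m^{2s}t)/c_m$ with $R_m\to\infty$, $\|\nabla\tilde u_m\|_{L^\infty(\mathcal Q_1)}+\|\partial_t\tilde u_m\|_{L^\infty(\mathcal Q_1)}=1$ and growth $\le 2R^{1-\delta}$ on $\mathcal Q_R$. By convexity a subsequence converges locally uniformly to a convex $u_\infty\ge 0$, and the rescaled contact sets converge to a convex set $\Gamma$ contained in a hyperplane. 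Hence every directional derivative $\partial_{\vec w}u_\infty$ solves $(\partial_t-\LL)\partial_{\vec w}u_\infty=0$ outside $\Gamma$, and Lemma \ref{lem-solution-across-critical} (applied to both $\pm\partial_{\vec w}u_\infty$) upgrades this to the whole of $\R^n\times\R$. The sublinear growth $|\partial_{\vec w}u_\infty|\le CR^{1-\delta}$ on $\mathcal Q_R$, with $1-\delta<2s=1$, combined with a parabolic Liouville theorem for our class of operators, forces $\partial_{\vec w}u_\infty$ to be constant, so $u_\infty$ is affine in $(x,t)$. But an affine nonnegative function cannot satisfy $\|\nabla u_\infty\|_{L^\infty(\mathcal Q_2)}+\|\partial_t u_\infty\|_{L^\infty(\mathcal Q_2)}\ge 1$, giving a contradiction.

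The step I expect to be hardest is Case 1: verifying that $\{u_\circ>0\}$ satisfies the interior cone condition with opening and speed bounded below uniformly in the scale $R$, that the derivatives $v_i$ are globally continuous with the growth required by Theorem \ref{bdryharnack}, and constructing the parabolic cone-supported barrier that plays the role of Lemma \ref{lemsuper1}. Once these are in place, iterating the boundary Harnack at dyadic scales and passing to the limit yields the key rigidity $v_i\equiv\kappa_i v_0$, after which the reduction to the explicit $1$D profile follows cleanly from Proposition \ref{prop-uniqueness-positive-sols-critical} and Lemma \ref{lem-1D-critical}.
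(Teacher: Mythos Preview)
Your strategy is correct and close in spirit to the paper's, but the order of operations differs in an instructive way. The paper blows down \emph{first} (unconditionally), obtaining a limit $u_\infty$ whose contact set $\Sigma=\{u_\infty=0\}$ is automatically a convex cone with vertex at the origin; only then does it split into Case~1 ($\Sigma$ has nonempty interior) versus Case~2 ($\Sigma$ lies in a hyperplane). In the paper's Case~1, Proposition~\ref{prop-uniqueness-positive-sols-critical} is applied to the derivatives of $u_\infty$---not of $u_\circ$---to get that $u_\infty$ is a 1D traveling wave for $t\le 0$, extended to $t>0$ by an initial-value-problem argument. The transfer back to $u_\circ$ is then purely soft: since $0\in\{u_\circ=0\}$ and this set is convex, the rescalings $\frac{1}{R_m}\{u_\circ=0\}$ decrease to $\Sigma\supset\{x\cdot e+\varv t\le 0\}$, so $\{u_\circ=0\}$ already contains the hyperplane $\{x\cdot e+\varv t=0\}$ and convexity forces $u_\circ$ itself to be 1D. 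This neatly sidesteps exactly the technical burden you flag as hardest---verifying the hypotheses of Theorem~\ref{bdryharnack} for the derivatives of $u_\circ$ on a non-conical domain, uniformly in the scale $R$. Your direct route should also work (convexity of $\{u_\circ=0\}$ does give a uniform interior cone condition for the complement, and the growth assumption controls the tails), but the paper's detour through the blow-down is cleaner.

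Two small points in your Case~1. First, taking $v_0=\partial_t u_\circ$ as reference fails if $\partial_t u_\circ\equiv 0$; in that situation $u_\circ$ is stationary and one simply invokes the elliptic classification, so you should pick whichever $v_i\not\equiv 0$ as the denominator (the paper writes ``say $v_n\not\equiv 0$''). Second, Theorem~\ref{bdryharnack} and Proposition~\ref{prop-uniqueness-positive-sols-critical} only yield rigidity on $\R^n\times(-\infty,0]$; your appeal to ``time-invariance'' to reach $t>0$ is too vague. The paper's argument is: once the solution is known to be 1D for $t\le 0$, any derivative tangent to $\{x\cdot e+\varv t=0\}$ vanishes at $t=0$ and solves the homogeneous equation for $t>0$, hence vanishes identically by uniqueness of the Cauchy problem. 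Your Case~2 matches the paper's.
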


\begin{proof}
First, notice that the set $\{u_\circ=0\}\ni 0$ is a convex subset of $\R^n\times\R$.
Then, we consider a ``blow-down'' $u_\infty$ of our function $u_\circ$, as follows.

By Lemma \ref{lem-growth-par-blowdown}, we can find a sequence $R_m\to\infty$ such that
\[u_m(x):=\frac{u_\circ(R_mx,R_mt)}{R_m\|\nabla u_\circ\|_{L^\infty(\mathcal Q_{R_m})}+R_m\|\partial_t u_\circ\|_{L^\infty(\mathcal Q_{R_m})}}\]
satisfies 
\[\|\nabla u_m\|_{L^\infty(\mathcal Q_1)}+\|\partial_t u_m\|_{L^\infty(\mathcal Q_1)}=1,\]
\[\|\nabla u_m\|_{L^\infty(\mathcal Q_R)}+\|\partial_t u_m\|_{L^\infty(\mathcal Q_R)}\leq 2R^{1-\delta}\qquad\textrm{for all}\quad R\geq1,\]
and $(\partial_t -\LL)(D_{h,\tau} u_m)=0$ in $\{u_m>0\}=\frac{1}{R_m}\{u_\circ>0\}$.
Moreover, by convexity, the nondegeneracy of the gradient implies
\[\|u_m\|_{L^\infty(\mathcal Q_2)}\geq1.\]
Also, still by convexity, the functions $u_m$ converge (up to a subsequence) locally uniformly in $\R^n\times \R$ to a function $u_\infty(x,t)$ that satisfies
\[\|u_\infty\|_{L^\infty(\mathcal Q_2)}\geq1\quad \textrm{and}\quad \|\nabla u_\infty\|_{L^\infty(\mathcal Q_R)}+\|\partial_t u_\infty\|_{L^\infty(\mathcal Q_R)}\leq 2R^{1-\delta}\qquad\textrm{for all}\quad R\geq1.\]
Moreover, the  ``blow-down'' sequence $\frac{1}{R_m}\{u_\circ=0\}$ converges to a closed convex cone $\Sigma=\{u_\infty=0\}$ with vertex at the origin.
Furthermore, since $\partial_t u_\infty\geq0$, then $\Sigma$ satisfies a monotonicity property in time, too.

We now separate the proof into two cases:

\vspace{2mm}

\noindent \emph{Case 1}.
Assume that the convex cone $\Sigma$ has nonempty interior.
Then there exist $n+1$ independent directions $\omega_i\in \mathbb S^{n}$, $i=1,...,n+1$, such that $-\omega_i\in \mathring{\Sigma}$.
Thus, by convexity of $u_\infty$, 
\[v_i:=\partial_{\omega_i}  u_\infty \geq0\qquad \textrm{in}\quad\R^n\times \R.\]
Moreover, at least one of them is not identically zero, say $v_n\not\equiv0$.

We first claim that the functions $v_i$ are continuous functions. Indeed, fix $t_\oo <0$ and let $K_\oo: = \{x\in \R^n \ : \ u(x, t_\circ) =0\}$.
Since $\partial_t u_\circ\ge 0$ and $u_\circ \ge 0$, the zero set (as a subset of space-time) $\{u_\circ=0\}$ contains the cylinder $K_\circ \times (-\infty, t_\circ]$. 
Also, the functions $(D_{h,\tau}  u_\circ)_+$ are continuous subsolutions that vanish on $K_\circ \times (-\infty, t_\circ]$. Hence, by standard barrier arguments,\footnote{For instance, one may use a constant-in-time barrier obtained by truncating the elliptic homogeneous supersolution from Lemma \ref{lemsuper1}; see e.g. the proof of Theorem 4.1 in \cite{AuR} for a very similar argument.} for every $R>0$ we obtain
\[
(D_{h,0}  u_\circ)_+(x,t) \le C' d_{K_\oo}^\theta(x) \mbox{ for  $|x|\le R$ and $t\le t_\oo<0$}
\]
where $d_{K_\oo}$ is the distance to the convex set $K_\oo$, and the constants $C'$ and $\theta>0$ possibly depend on $R$ and $t_\circ$. 
Since the partial derivatives of $u_\circ$ are smooth inside $\{u_\circ >0\}$ (they satisfy a parabolic translation invariant equation, recall Remark~\ref{rem:incremental quotient}), letting $|h|\to 0$ and $h/|h| \to \pm \omega_i$, thanks to the arbitrariness of $R$ and $t_\oo$ we deduce that the functions $v_i$ vanish  continuously on  the boundary $\partial \{u_\circ >0\}$, and the claim follows.

Second, since $v_n$ is not identically zero, we can apply Proposition \ref{prop-uniqueness-positive-sols-critical} to deduce that 
\[v_i\equiv \kappa_i v_n\qquad \textrm{in}\quad \R^n\times (-\infty,0],\quad \textrm{for}\quad i=1,...,n.\]
This means that $u_\infty$ is a 1D function for $t\leq0$, i.e.,  $u_\infty(x,t)=U(x\cdot e+\varv t)$ in $\R^n\times (-\infty,0]$.
Therefore, given $(h, \tau) \in \R^n\times \R$ parallel to the hyperplane $\{x\cdot e+\varv t=0\}$, consider the function $w:=(h,\tau)\cdot(\nabla u_\infty, \partial_t u_\infty)$. Then $\partial_t w-\LL w=0$ in $\R^n\times (0,\infty)$ (cp. Remark~\ref{rem:incremental quotient}) and $w\equiv0$ in $\R^n\times \{t=0\}$.
By uniqueness of solutions to such initial value problem, we deduce that $w\equiv0$ in $\R^n\times \R$. Since $(h, \tau) \in \R^n\times \R$ is an arbitrary vector tangent to $\{x\cdot e+\varv t=0\}$,
 this proves that $u_\infty(x,t)=U(x\cdot e+\varv t)$ in $\R^n\times \R$ and $\{u_\infty=0\}\supset \{x\cdot e+\varv t\leq0\}.$

As a consequence, since $0\in\partial\{u_\circ>0\}$, it follows by convexity that $\{u_\circ=0\} \supset \{u_m=0\}$ for every $m\ge 1$, therefore $$\{u_\circ=0\} \supset \{u_m=0\} \to \{u_\infty=0\}\supset \{x\cdot e+\varv t\leq0\}.$$ Hence $\{u_\circ=0\}\supset \{x\cdot e+\varv t=0\}$, and by the convexity of $u_\circ$   we deduce that $u_\circ$ is a 1D function for the form $u_\circ(x,t)=U_\circ(x\cdot e+\varv t)$ (see \cite[Lemma 5.28]{FR20}).

Finally, thanks to Lemma \ref{lem-1D-critical} we find that $u_\circ(x,t)=\kappa(x\cdot e+\varv t)_+^{1+\gamma}$ with $\gamma\in[\frac12,1)$, as desired.

\vspace{2mm}

\noindent \emph{Case 2}.
Assume that the cone $\Sigma$ has empty interior.
Since by convexity it is contained in a hyperplane $\Gamma\subset\R^n\times\R$, it follows that
\[(\partial_t -\LL)(D_{h,\tau} u_\infty)\leq 0\qquad \textrm{in}\quad (\R^n\times \R)\setminus \Gamma.\]
Hence, since $D_{h,\tau} u_\infty\in {\rm Lip}_{\rm loc}(\R^n\times \R)$,  Lemma \ref{lem-solution-across-critical} implies $(\partial_t -\LL)(D_{h,\tau} u_\infty)\leq 0$ in $\R^n\times \R$, and hence
\[(\partial_t -\LL)(\nabla_{x,t} u_\infty)=0\qquad \textrm{in}\quad \R^n\times \R\]
(cp. Remark~\ref{rem:incremental quotient}).
Thanks to the growth control on $\nabla_{x,t} u_\infty$, the Liouville theorem for nonlocal parabolic equations implies that $u_\infty$ is affine.
However, this contradicts the fact that $u_\infty(0)=0$, $u_\circ\geq0$, and $\|u_\infty\|_{L^\infty(\mathcal Q_2)}\geq1$.
Thus, Case 2 cannot happen and the proposition is proved.
\end{proof}

\subsection{Classification of blow-ups for $s>\frac12$}

We next establish the classification of blow-ups for $s>\frac12$.
In this case, the scaling is subcritical.

We first need the following (simpler) version of Lemma \ref{lem-solution-across-critical}.

\begin{lem}\label{lem-solution-across-subcritical}
Let $s\in(\frac12,1)$ and $\LL$ as in \eqref{L0}-\eqref{ellipt}, with $K$ homogeneous.
Let $e\in \mathbb S^{n-1}$, and $\Gamma\subset \{x\cdot e=0\}\subset \R^n$.
Assume $w\in {\rm Lip}_{\rm loc}(\R^n\times \R)$ is a viscosity solution of
\[\partial_t w - \LL w = 0 \qquad \textrm{in}\quad \big(\R^n\setminus \Gamma\big)\times\R.\]
Then $\partial_t w - \LL w=0$ in $\R^n\times \R$.
\end{lem}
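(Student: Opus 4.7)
The plan is to adapt the perturbation argument used in Lemma~\ref{lem-solution-across-critical} (and its elliptic counterpart Lemma~\ref{lem-solution-across}) to the subcritical parabolic setting $s>\frac12$. The crucial simplification in this regime is that no traveling-wave correction is needed: the purely spatial barrier from Lemma~\ref{lem-supersol1D} already does the job.

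First I will choose $\theta>0$ small enough that Lemma~\ref{lem-supersol1D} applies, and define the (time-independent) cusp function $\phi(x):=\exp\bigl(-|x\cdot e|^{1-\theta}\bigr)$. By that lemma, $\LL\phi\leq C$ in $\R^n$; since $\phi$ does not depend on $t$, this upgrades at once to
\[
(\partial_t-\LL)\phi \;\geq\; -C \qquad\text{in}\quad \R^n\times\R.
\]

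Next, for every $\varepsilon>0$ I consider $w_\varepsilon:=w-\varepsilon\phi$ and argue that no $C^{2}$ test function can touch $w_\varepsilon$ from above at a point $(x_\circ,t_\circ)$ with $x_\circ\cdot e=0$. Indeed, $\phi$ attains its maximum (equal to $1$) precisely on $\{x\cdot e=0\}$, with normal profile $\simeq 1-|x\cdot e|^{1-\theta}$ of cusp exponent $1-\theta<1$; combined with the local Lipschitz character of $w$, an elementary one-dimensional comparison in the direction $e$ shows that $w_\varepsilon$ carries a genuine upward H\"older cusp along $\{x\cdot e=0\}\times\R$ that any $C^{2}$ majorant would have to accommodate with an unbounded directional derivative, a contradiction. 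Consequently, if $\eta\in C^{2}$ touches $w_\varepsilon$ from above at $(x_\circ,t_\circ)$, then $x_\circ\notin\{x\cdot e=0\}\supset\Gamma$, the equation for $w$ holds at that point, and
\[
(\partial_t-\LL)\eta(x_\circ,t_\circ) \;\leq\; (\partial_t-\LL)w(x_\circ,t_\circ)-\varepsilon(\partial_t-\LL)\phi(x_\circ) \;\leq\; C\varepsilon.
\]
Hence $(\partial_t-\LL)w_\varepsilon\leq C\varepsilon$ on all of $\R^n\times\R$ in the viscosity sense.

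Finally, since $w=\sup_{\varepsilon>0}w_\varepsilon$ and viscosity subsolutions are stable under such monotone limits, letting $\varepsilon\downarrow 0$ yields $(\partial_t-\LL)w\leq 0$ on $\R^n\times\R$. Applying the exact same argument to $-w$, which also satisfies the equation off $\Gamma\times\R$, gives the reverse inequality, so $(\partial_t-\LL)w=0$ on the whole space, as desired. The only point requiring care is the no-touching claim on the cusp set, but this is completely standard given the Lipschitz regularity of $w$ and the sub-linear H\"older exponent of $-\phi$; I do not expect any genuinely new obstacle here.
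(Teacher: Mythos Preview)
Your proof is correct and follows exactly the approach the paper intends: the paper's own proof simply reads ``The proof is analogous to the one of Lemma~\ref{lem-solution-across}'', and you have carried out precisely that analogy, using the time-independent barrier $\phi(x)=\exp(-|x\cdot e|^{1-\theta})$ from Lemma~\ref{lem-supersol1D} (so that $(\partial_t-\LL)\phi=-\LL\phi\ge -C$) and the cusp argument to rule out touching on $\{x\cdot e=0\}\times\R$. The only cosmetic discrepancy is terminology: the paper calls the cusp of $w_\varepsilon$ a ``downwards cusp'' while you call it ``upward'', but both describe the same geometry (near the hyperplane, $w_\varepsilon\approx w-\varepsilon+\varepsilon|x\cdot e|^{1-\theta}$, whose infinite one-sided slopes in the $e$-direction prevent any $C^2$ majorant from touching there).
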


\begin{proof}
The proof is analogous to the one of Lemma \ref{lem-solution-across}.
\end{proof}

The classification of blow-ups for $s>\frac12$ is contained in the following result.

\begin{prop}\label{prop-classification>1/2}
Let $s\in(\frac12,1)$ and $\LL$ as in \eqref{L0}-\eqref{ellipt}, with $K$ homogeneous.

Let $u_\circ\in {\rm Lip}(\R^n\times(-\infty,0))$ be a function satisfying:

\begin{itemize}

\item[$\bullet$]  $u_\circ$ is nonnegative, monotone, and convex:
\[u_\circ\geq0,\quad \partial_t u_\circ\geq0,\quad \textrm{and} \quad D^2_{x,t} u_\circ \ge 0\qquad \textrm{in}\quad \R^n\times (-\infty,0),\]
\[\textrm{with}\quad (0,0)\in \partial\{u_\circ>0\}.\]

\item[$\bullet$]  $u_\circ$ solves
\[(\partial_t -\LL)(D_{h,\tau} u_\circ)\leq0\qquad \textrm{in}\quad \{u_\circ>0\}\]
for all $h\in \R^n$ and $\tau \in \R$, where
\[D_{h,\tau} u_\circ(x,t)={\textstyle \frac{u_\circ(x,t)-u_\circ(x-h,t-\tau)}{|h|+|\tau|}}.\]

\item[$\bullet$]  $u_\circ$ has a controlled growth at infinity: there exists $\delta>0$ such that
\[R\|\nabla u_\circ\|_{L^\infty(B_R\times (-R^{2s},R^{2s}))} + R^{2s}\|\partial_t u_\circ\|_{L^\infty(B_R\times (-R^{2s},R^{2s}))}  \le R^{2-\delta} \qquad \textrm{for all}\quad R\ge 1.\]
\end{itemize}
Then, up to a translation,
\[u_\circ(x,t)=\kappa(x\cdot e)_+^{1+s}\]
for some $e\in \mathbb S^{n-1}$ and $\kappa\in[0,1]$.
\end{prop}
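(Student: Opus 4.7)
The plan is to follow Proposition~\ref{prop-classification} closely, with two modifications reflecting the subcritical scaling: the parabolic boundary Harnack of Theorem~\ref{bdryharnack} still applies for $s\in[\frac12,1)$, and the 1D classification must yield only the stationary power $(x\cdot e)_+^{1+s}$, since for $s>\frac12$ no nontrivial traveling-wave solution of the 1D obstacle problem exists. Applying Lemma~\ref{lem-growth-par-blowdown} to the half-cylinders $\mathcal Q_R^-:=B_R\times(-R^{2s},0)$ and arguing as in Proposition~\ref{prop-classification}, I would extract a sequence $R_m\to\infty$ along which the rescalings
\[
u_m(x,t):=\frac{u_\circ(R_m x,R_m^{2s}t)}{R_m\|\nabla u_\circ\|_{L^\infty(\mathcal Q_{R_m}^-)}+R_m^{2s}\|\partial_t u_\circ\|_{L^\infty(\mathcal Q_{R_m}^-)}}
\]
converge, by convexity, locally uniformly on $\R^n\times(-\infty,0]$ to a nonnegative, convex, time-monotone limit $u_\infty$ with $\|u_\infty\|_{L^\infty(\mathcal Q_2^-)}\ge 1$ and the same incremental and growth hypotheses.

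Next I would split into two cases based on the convex, time-downward-closed set $\Sigma:=\{u_\infty=0\}$. If $\Sigma$ has empty interior, it lies in a hyperplane $\Gamma\subset\R^n\times\R$; Lemma~\ref{lem-solution-across-subcritical} then extends $(\partial_t-\LL)(\nabla_{x,t}u_\infty)=0$ across $\Gamma$, and a parabolic Liouville theorem combined with the sub-linear growth $R^{1-\delta}$ of $\nabla u_\infty$ forces $u_\infty$ to be affine, contradicting $u_\infty(0,0)=0$ and $\|u_\infty\|_{L^\infty(\mathcal Q_2^-)}\ge 1$. If $\Sigma$ has nonempty interior, I would pick $n+1$ independent directions $\omega_i\in\mathbb S^n$ with $-\omega_i$ in the interior of $\Sigma$, verify continuity of the nonnegative derivatives $v_i=\partial_{\omega_i}u_\infty$ by the barrier argument of Proposition~\ref{prop-classification}, and apply Theorem~\ref{bdryharnack} to each pair $(v_i,v_{n+1})$; the identities $v_i\equiv\kappa_i v_{n+1}$ force $u_\infty(x,t)=U(x\cdot e+\varv t)$ for some $e\in\mathbb S^{n-1}$ and some $\varv\ge 0$.

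The principal obstacle, and the key difference with the critical case, is to rule out $\varv>0$. The 1D profile satisfies $\varv U'(y)=\tilde\LL_{1D}U(y)$ on $\{y>0\}$ with $U\equiv 0$ on $\{y\le 0\}$, where $\tilde\LL_{1D}$ is the 1D reduction of $\LL$, of order $2s>1$. I would introduce $U^{(\rho)}(y):=\rho^{-\mu(\rho)}U(\rho y)$ with $\mu(\rho)$ chosen so that $\sup_{[0,1]}U^{(\rho)}=1$; the hypothesis gives $|U'(y)|\le C|y|^{(1-\delta)/(2s)}$ for large $|y|$ (since $y=x\cdot e+\varv t$ ranges up to $\sim R^{2s}$ on $\mathcal Q_R^-$), keeping $\mu(\rho)$ bounded and, by Arzel\`a--Ascoli and convexity, delivering a nontrivial locally uniform limit $U^{(\infty)}$ as $\rho\to\infty$, nonnegative and vanishing on $\{y\le 0\}$. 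The equation transforms to $\varv\rho^{2s-1}(U^{(\rho)})'=\tilde\LL_{1D}U^{(\rho)}$, and dividing by $\rho^{2s-1}\to\infty$ gives $\varv(U^{(\infty)})'\equiv 0$ on $\{U^{(\infty)}>0\}$; since $U^{(\infty)}$ is continuous, convex, nontrivial, and vanishes on $\{y\le 0\}$, it cannot be constant on any interval emanating from the free boundary, so $\varv=0$. With $\varv=0$, $u_\infty$ is stationary, and the elliptic 1D classification for symmetric homogeneous kernels (as in Proposition~\ref{prop-classification-ell} combined with Lemma~\ref{lem-1d-ell-nonhomogeneous}) yields $U(y)=\kappa y_+^{1+s}$. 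To transfer back to $u_\circ$, the inclusion $\{u_m=0\}\subset\{u_\circ=0\}$ used in Proposition~\ref{prop-classification} is unavailable since parabolic rescaling for $s>\frac12$ is not a homothety; instead I would rerun the dichotomy and scaling argument directly on $u_\circ$ (whose zero set is convex and downward-closed in $t$) to conclude $\{u_\circ=0\}\supset\{x\cdot e\le 0\}$, and then invoke \cite[Lemma~5.28]{FR20} together with the 1D classification to obtain $u_\circ(x,t)=\kappa(x\cdot e)_+^{1+s}$ with $\kappa\in[0,1]$.
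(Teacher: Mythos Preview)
Your approach follows the $s=\tfrac12$ template (Proposition~\ref{prop-classification}) too closely, and this creates real gaps. The most serious one is your final ``transfer back'' step. In the critical case the inclusion $\{u_m=0\}\subset\{u_\circ=0\}$ holds because parabolic rescaling is a homothety; for $s>\tfrac12$ it is not, and your proposed fix (``rerun the dichotomy and scaling argument directly on $u_\circ$'') is circular, since the whole point of the blow-down was to gain cone structure that $\{u_\circ=0\}$ lacks. A second gap is in the nonempty-interior case: to upgrade the local boundary Harnack (Theorem~\ref{bdryharnack}) into the global identity $v_i\equiv\kappa_i v_{n+1}$ you need the rescaled domains $\Sigma^c$ to satisfy the interior cone condition uniformly at all scales, i.e.\ some scale-invariance of $\Sigma$; under parabolic blow-down with $2s>1$ you have not established what shape $\Sigma$ takes. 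Finally, your 1D scaling argument to exclude $\varv>0$ is plausible but needs care with tail control for $\tilde\LL_{1D}U^{(\rho)}$ and with the growth of $U$, which you derive only sketchily.

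The paper takes a different and cleaner route that sidesteps all three issues. The dichotomy is placed on $u_\circ$ itself: either $\{u_\circ=0\}$ contains the entire time axis $\{x=0\}$, in which case convexity forces $u_\circ$ to be $t$-independent and Proposition~\ref{prop-classification-ell} finishes; or it does not, and then $\{u_\circ=0\}\subset\{x\cdot e+\varv t\le M\}$ for some $\varv,M>0$. In the second branch the key observation is that, because $2s>1$, parabolic rescaling compresses time faster than space, so the rescaled zero sets lie in $\{R_m^{1-2s}x\cdot e+\varv t\le M R_m^{-2s}\}\to\{t\le 0\}$. Hence $u_\infty>0$ for all $t>0$, and $\{u_\infty=0\}$ is a cylinder $\Sigma_0\times\R_-$ with $\Sigma_0\subset\R^n$ a convex cone---exactly the structure needed for the global boundary Harnack. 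One then gets $\partial_t u_\infty\equiv 0$ for $t\le 0$, and forward uniqueness of the Cauchy problem propagates this to all $t$, contradicting $u_\infty(0,0)=0$ while $u_\infty(0,t)>0$ for $t>0$. Thus the second branch is impossible, and no transfer back or 1D traveling-wave analysis is required.
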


This result was known only for the fractional Laplacian \cite{BFR2}, and the proof in such a case used crucially in some steps the extension property for the fractional Laplacian, as well as the regularity of solutions obtained in \cite{CF}.
Here, instead, we establish the result by combining ideas from \cite{BFR2} with the ones used in the proof of Proposition \ref{prop-classification}.

\begin{proof}[Proof of Proposition \ref{prop-classification>1/2}]
First, the set $\{u_\circ=0\}\ni (0,0)$ is a convex subset of $\R^n\times\R$.

If such set contains the whole line $\{x_1=\ldots=x_n=0\}$, then it follows by convexity (see, e.g. \cite[Lemma 5.28]{FR20}) that the function $u_\circ$ is independent of $t$, so the result is a consequence of Proposition~\ref{prop-classification-ell}.

Otherwise, if the convex set $\{u_\circ=0\}$ does not contain the line $\{x_1=\ldots=x_n=0\}$, then there exist $\varv,M>0$ such that
\begin{equation}\label{aldgjh}
\{u_\circ=0\}\subseteq \{x\cdot e+\varv t\leq M\}.
\end{equation}
Let us now consider the blow-down sequence $\tilde u_m$ given by Lemma \ref{lem-growth-par-blowdown}, with $R_m\to\infty$.
Such a sequence satisfies the same assumptions as $u_\circ$. Also, it follows from \eqref{aldgjh} that
\[\{\tilde u_m=0\}=\{(x,t):u_\circ(R_mx,R_m^{2s}t)=0\}\subseteq \{R_m^{1-2s}x\cdot e+\varv t\leq MR_m^{-2s}\}.\]
By convexity of the functions $\tilde u_m$, up to a subsequence we have that $\tilde u_m\to u_\infty$ locally uniformly in $\R^n\times \R$, where $u_\infty$ satisfies the same assumptions as $u_\circ$ and, in addition,
\[\{u_\infty=0\}\subset \{t\leq0\}.\]
Furthermore, by the construction of $\tilde u_m$ in Lemma \ref{lem-growth-par-blowdown}, 
we deduce that $\|u_\infty\|_{L^\infty(\mathcal Q_2)}\geq1$.
We now separate the proof into two cases:

\vspace{2mm}

\noindent \emph{Case 1}.
Assume first that $u_\infty$ is not identically zero for $t\leq0$.
In this case, since the set $\{u_\infty=0\}\cap \{t\leq0\}$ is the blow-down (with a parabolic scaling) of a convex set, it follows that 
$\{u_\infty=0\}$ is a cone of the form $\Sigma\times \R_-$, where $\Sigma\subset\R^n$ is a convex cone with vertex at the origin.
Therefore we can apply the boundary Harnack Theorem \ref{bdryharnack}, which exactly as in the proof of Proposition \ref{prop-classification} yields $u_\infty(x,t)=U(x\cdot e)$ for $t\leq 0$.

Thus, we proved that $\partial_t u_\infty\equiv0$ for $t\leq 0$.
Also, since $u_\infty$ never vanishes for positive times,
$$
(\partial_t -\LL)(\partial_t u_\infty)= 0\qquad \text{in }\R^n\times\R_+
$$
(cp. Remark~\ref{rem:incremental quotient}).
Hence, by uniqueness of solutions to the initial value problem, we deduce $\partial_t u_\infty\equiv0$ in $\R^n\times \R$, a contradiction to the fact that $u_\infty>0$ for $t>0$.

\vspace{2mm}

\noindent \emph{Case 2}.
Assume  that $u_\infty\equiv 0$ for all $t\leq0$.
Then $\partial_t u_\infty\equiv0$ for $t\leq 0$, and we conclude as at the end of Case 1.
\end{proof}

\subsection{Almost-optimal regularity estimates}

Once we have the classification of blow-ups we can show the almost-optimal regularity of solutions.
For this, we first need the following result, which is a simple variant of Lemma~\ref{lem-growth-ell}.

\begin{lem}\label{lem-growth-par}
Let $s,\mu>0$, and let $\mathcal Q_r:=B_{r}\times (-r^{2s},r^{2s})$.
Let $w_k\in {\rm Lip}(\mathcal Q_1)$ be a sequence of functions such that
\begin{equation}
\label{eq:unif Lip}
\sup_k \|\nabla w_k\|_{L^\infty(\mathcal Q_{1})}+\|\partial_t w_k\|_{L^\infty(\mathcal Q_{1})}<\infty
\end{equation}
but
\[\sup_k\sup_{r\in (0,1)} \frac{r\|\nabla w_k\|_{L^\infty(\mathcal Q_r)}+r^{2s}\|\partial_t w_k\|_{L^\infty(\mathcal Q_r)}}{r^\mu}=\infty.\]
Then, there are subsequences $w_{k_m}$ and $r_m\to0$ such that $$r_m^{1-\mu}\|\nabla w_{k_m}\|_{L^\infty(\mathcal Q_{r_m})}+r_m^{2s-\mu}\|\partial_t w_{k_m}\|_{L^\infty(\mathcal Q_{r_m})}\geq 1$$ and for which the rescaled functions 
\[\tilde w_m(x,t):= \frac{w_{k_m}(r_m x,r_m^{2s}t)}{r_m\|\nabla w_{k_m}\|_{L^\infty(\mathcal Q_{r_m})}+r_m^{2s}\|\partial_t w_{k_m}\|_{L^\infty(\mathcal Q_{r_m})}}\]
satisfy\footnote{Notice that, by construction, the functions $\tilde w_m$ satisfy $\|\nabla \tilde w_m\|_{L^\infty(\mathcal Q_1)} + \|\partial_t \tilde w_m\|_{L^\infty(\mathcal Q_1)}=1$.}
\[R\|\nabla \tilde w_m\|_{L^\infty(\mathcal Q_R)} + R^{2s}\|\partial_t \tilde w_m\|_{L^\infty(\mathcal Q_R)} \leq 2R^{\mu}\qquad \textrm{for all}\quad R\in (1,r_m^{-1}).\]
\end{lem}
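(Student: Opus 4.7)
\smallskip

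\noindent\emph{Proof plan.} My strategy is the parabolic analogue of the ``almost-maximizer'' argument used in Lemma \ref{lem-growth-ell}. Abbreviate
\[
\Theta_w(r) := r\|\nabla w\|_{L^\infty(\mathcal Q_r)} + r^{2s}\|\partial_t w\|_{L^\infty(\mathcal Q_r)},
\]
so that the conclusion is simply that $\Theta_{\tilde w_m}(R)\leq 2R^\mu$ for $R\in(1,r_m^{-1})$, while the normalization by $\Theta_{w_{k_m}}(r_m)$ in the definition of $\tilde w_m$ gives $\Theta_{\tilde w_m}(1)=1$ automatically.

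First I would choose, for each integer $m\geq 1$, an index $k_m$ and a radius $r_m\in(1/m,1)$ satisfying
\[
r_m^{-\mu}\Theta_{w_{k_m}}(r_m)\geq \tfrac12\sup_{k}\sup_{r\in(1/m,1)} r^{-\mu}\Theta_{w_k}(r).
\]
Such a pair exists because the right-hand side is finite for each fixed $m$ (by the uniform Lipschitz bound \eqref{eq:unif Lip}, $\Theta_{w_k}(r)\leq 2\sup_k\|w_k\|_{\mathrm{Lip}(\mathcal Q_1)}$ for $r\in(0,1)$, so the double sup is bounded by $C m^\mu$).

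Next I would verify two qualitative properties. Since for every $N$ one can find $k,r\in(0,1)$ with $r^{-\mu}\Theta_{w_k}(r)>N$, choosing $m$ with $1/m<r$ shows that $\sup_k\sup_{r\in(1/m,1)}r^{-\mu}\Theta_{w_k}(r)\to\infty$ as $m\to\infty$. Therefore $r_m^{-\mu}\Theta_{w_{k_m}}(r_m)\to\infty$, which combined with the uniform bound $\Theta_{w_{k_m}}(r_m)\leq 2\sup_k\|w_k\|_{\mathrm{Lip}(\mathcal Q_1)}$ forces $r_m\to 0$. In particular $r_m^{-\mu}\Theta_{w_{k_m}}(r_m)\geq 1$ for $m$ large, which (after discarding finitely many indices) yields the nondegeneracy statement $r_m^{1-\mu}\|\nabla w_{k_m}\|_{L^\infty(\mathcal Q_{r_m})}+r_m^{2s-\mu}\|\partial_t w_{k_m}\|_{L^\infty(\mathcal Q_{r_m})}\geq 1$.

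Finally, for $R\in(1,r_m^{-1})$ the product $Rr_m$ lies in $(r_m,1)\subset(1/m,1)$, so the near-maximality of $(k_m,r_m)$ applied at the scale $Rr_m$ gives
\[
(Rr_m)^{-\mu}\Theta_{w_{k_m}}(Rr_m)\leq 2\,r_m^{-\mu}\Theta_{w_{k_m}}(r_m),
\]
i.e.\ $\Theta_{w_{k_m}}(Rr_m)\leq 2R^\mu\,\Theta_{w_{k_m}}(r_m)$. A direct parabolic rescaling computation shows $\Theta_{\tilde w_m}(R)=\Theta_{w_{k_m}}(Rr_m)/\Theta_{w_{k_m}}(r_m)$, and dividing yields $\Theta_{\tilde w_m}(R)\leq 2R^\mu$, as desired. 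There is no real analytic obstacle here; the only mild point requiring attention is ensuring that the truncated sup over $r\in(1/m,1)$ is genuinely attainable and blows up with $m$, and the uniform Lipschitz hypothesis \eqref{eq:unif Lip} takes care of both.
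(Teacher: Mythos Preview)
Your proof is correct and follows essentially the same almost-maximizer argument as the paper: pick $(k_m,r_m)$ with $r_m\in(1/m,1)$ nearly maximizing $r^{-\mu}\Theta_{w_k}(r)$, use \eqref{eq:unif Lip} to force $r_m\to 0$, and then the near-maximality at scale $Rr_m$ gives the growth bound via the rescaling identity $\Theta_{\tilde w_m}(R)=\Theta_{w_{k_m}}(Rr_m)/\Theta_{w_{k_m}}(r_m)$. Your treatment is slightly more explicit than the paper's (which refers back to Lemma~\ref{lem-growth-ell} for the $r_m\to 0$ step and leaves the nondegeneracy implicit), but the substance is identical.
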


\begin{proof}
For every $m\in \mathbb N$ let $k_m$ and $r_m \geq \frac1m$ be such that 
\[ \begin{split}
r_m^{1-\mu} \|\nabla w_{k_m}&\|_{L^\infty(\mathcal Q_{r_m})} +r_m^{2s-\mu} \|\partial_t w_{k_m}\|_{L^\infty(\mathcal Q_{r_m})} \geq \\
&\geq \frac12 \sup_k \sup_{r\in (\frac1m,1)} \left(r^{1-\mu}\|\nabla w_k\|_{L^\infty(\mathcal Q_{r})}+r^{2s-\mu}\|\partial_t w_k\|_{L^\infty(\mathcal Q_{r})}\right) \\
& \geq \frac12 \sup_k \sup_{r\in (r_m,1)} \left(r^{1-\mu}\|\nabla w_k\|_{L^\infty(\mathcal Q_{r})}+r^{2s-\mu}\|\partial_t w_k\|_{L^\infty(\mathcal Q_{r})}\right).
\end{split}\]
As in the proof of Lemma~\ref{lem-growth-ell}, it follows from \eqref{eq:unif Lip} that $r_m \to 0$ as $m\to \infty$. 
Also, by construction of $r_m$ and $k_m$, \[\begin{split} 
r_m^{1-\mu} \|\nabla w_{k_m}&\|_{L^\infty(\mathcal Q_{r_m})} +r_m^{2s-\mu} \|\partial_t w_{k_m}\|_{L^\infty(\mathcal Q_{r_m})} 
\geq \\
& \geq\frac12 \left(r^{1-\mu}\|\nabla w_k\|_{L^\infty(\mathcal Q_{r})}+r^{2s-\mu}\|\partial_t w_k\|_{L^\infty(\mathcal Q_{r})}\right)
 \end{split}\]
for all $r\in (r_m,1)$ and for all $k$.
Thus, for any $R\in (1,r_m^{-1})$ we have
\[\begin{split} 
R\|\nabla \tilde w_m &\|_{L^\infty(\mathcal Q_R)}+ R^{2s}\|\partial_t \tilde w_m\|_{L^\infty(\mathcal Q_R)} = \\
&= \frac{r_mR\|\nabla w_{k_m}\|_{L^\infty(\mathcal Q_{R r_m})}+(r_mR)^{2s}\|\partial_t w_{k_m}\|_{L^\infty(\mathcal Q_{R r_m})}}{r_m\|\nabla w_{k_m}\|_{L^\infty(\mathcal Q_{r_m})}+r_m^{2s}\|\partial_t w_{k_m}\|_{L^\infty(\mathcal Q_{r_m})}}
\leq 2R^\mu,
\end{split}\]
and we are done.
\end{proof}

We can now establish the almost-optimal regularity of solutions.
We first recall the notion of the parabolic H\"older seminorm: given $\beta\in (0,1)$,
\begin{equation}\label{Holder-par}
\|w\|_{C^\beta_{\rm par}(K)}:= \sup_{(x,t),(y,\tau)\in K} \frac{|w(x,t)-w(y,\tau)|}{|x-y|^{\beta}+|t-\tau|^{\frac{\beta}{2s}}}.
\end{equation}

\begin{cor}\label{cor-almost-optimal-par}
Let $s \in[\frac12,1)$ and $\LL$ as in \eqref{L0}-\eqref{ellipt}, with $K$ homogeneous.
Let $\delta>0$, and let $u\in {\rm Lip}_{\rm loc}(\R^n\times(-2,2))$, with 
\begin{equation}
\label{eq:growth R}
R\|\nabla u\|_{L^\infty(\mathcal Q_R \cap \{|t|<2\})}+R^{2s}\|\partial_t u\|_{L^\infty(\mathcal Q_R\cap \{|t|<2\})} \le R^{2-\delta} \qquad \textrm{for all}\quad R\ge 1,
\end{equation}
satisfy $u\geq0$, $\partial_t u\geq0$, and $D^2_{x,t} u\geq-{\rm Id}$ in $\mathcal Q_2$, $\partial_t u-\LL u=f$ in $\{u>0\}\cap \mathcal Q_2$ and $u_t-\LL u \geq f$ in~$\mathcal Q_2$, with $|\nabla f|+|\partial_t f|\leq 1$.
Then, for any $\varepsilon>0$ we have
\[\|u\|_{C^{1+s-\varepsilon}_{\rm par}(\mathcal Q_1)} := \|\nabla u\|_{C^{s-\varepsilon}_{\rm par}(\mathcal Q_1)}+\|\partial_t u\|_{C^{1-s-\varepsilon}_{\rm par}(\mathcal Q_1)}\leq C_\varepsilon,\]
with $C$ depending only on $n$, $s$, $\varepsilon$, and the ellipticity constants.
\end{cor}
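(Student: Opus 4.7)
The plan is to mimic the compactness-and-blow-up scheme of Corollary \ref{cor-almost-optimal-ell}, with Lemma \ref{lem-growth-par} replacing Lemma \ref{lem-growth-ell} and the parabolic classifications (Propositions \ref{prop-classification} and \ref{prop-classification>1/2}) replacing Proposition \ref{prop-classification-ell}. Setting $\mu:=1+s-\varepsilon$, the main reduction is that it suffices to establish the pointwise growth estimate
\[
r\,\|\nabla u\|_{L^\infty(\mathcal Q_r(x_\circ,t_\circ))}+r^{2s}\,\|\partial_t u\|_{L^\infty(\mathcal Q_r(x_\circ,t_\circ))}\le C_\varepsilon\,r^{\mu}
\]
at every free boundary point $(x_\circ,t_\circ)\in\partial\{u>0\}\cap\mathcal Q_{3/2}$ and every $r\in(0,1/4)$. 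Combined with standard interior parabolic H\"older estimates for $\partial_t-\LL$ applied in cylinders that avoid the contact set (where $u$ solves a linear equation with Lipschitz right-hand side), this pointwise bound delivers the desired $C^{1+s-\varepsilon}_{\rm par}(\mathcal Q_1)$ control on $\nabla u$ and $\partial_t u$.

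To prove the growth estimate, I argue by contradiction. If it fails, there exist sequences of homogeneous kernels $K_k$, solutions $u_k$, right-hand sides $f_k$, and free boundary points which, after translation, we take to be $(0,0)$, and for which the relevant quotient blows up. Lemma \ref{lem-growth-par} produces subsequences $u_{k_m}$ and radii $r_m\downarrow 0$ so that the rescaled functions
\[
\tilde u_m(x,t):=\frac{u_{k_m}(r_m x,r_m^{2s}t)}{M_m},\qquad M_m:=r_m\|\nabla u_{k_m}\|_{L^\infty(\mathcal Q_{r_m})}+r_m^{2s}\|\partial_t u_{k_m}\|_{L^\infty(\mathcal Q_{r_m})}\ge r_m^{\mu},
\]
satisfy $\|\nabla\tilde u_m\|_{L^\infty(\mathcal Q_1)}+\|\partial_t\tilde u_m\|_{L^\infty(\mathcal Q_1)}=1$ together with $R\|\nabla\tilde u_m\|_{L^\infty(\mathcal Q_R)}+R^{2s}\|\partial_t\tilde u_m\|_{L^\infty(\mathcal Q_R)}\le 2R^\mu$ for $R\in(1,r_m^{-1})$. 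Direct rescaling moreover shows $\tilde u_m\ge 0$, $\partial_t\tilde u_m\ge 0$, $(0,0)\in\partial\{\tilde u_m>0\}$, $D^2_{x,t}\tilde u_m\ge -r_m^{2-\mu}{\rm Id}\to 0$ (using $\mu<2$), and $|\nabla\tilde f_m|+|\partial_t\tilde f_m|\le r_m^{s+\varepsilon}+r_m^{3s-1+\varepsilon}\to 0$, where the second summand requires $s\ge\tfrac12$. The uniform Lipschitz bounds combined with the asymptotic convexity provide $C^1_{x,t}$-compactness, so up to a subsequence $\tilde u_m\to\tilde u_\infty$ locally uniformly in $C^1_{x,t}$ on $\R^n\times\R$; the limit inherits the hypotheses of Proposition \ref{prop-classification} (if $s=\tfrac12$) or of Proposition \ref{prop-classification>1/2} (if $s>\tfrac12$), together with the global growth bound $R\|\nabla\tilde u_\infty\|_{L^\infty(\mathcal Q_R)}+R^{2s}\|\partial_t\tilde u_\infty\|_{L^\infty(\mathcal Q_R)}\le 2R^\mu$ for $R\ge 1$ and the nondegeneracy $\|\nabla\tilde u_\infty\|_{L^\infty(\mathcal Q_1)}+\|\partial_t\tilde u_\infty\|_{L^\infty(\mathcal Q_1)}\ge 1/2$.

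The classification then identifies $\tilde u_\infty$ as a 1D profile: if $s=\tfrac12$, then $\tilde u_\infty(x,t)=\kappa(x\cdot e+\varv t)_+^{1+\gamma}$ with $\gamma\in[\tfrac12,1)$; if $s>\tfrac12$, then $\tilde u_\infty(x,t)=\kappa(x\cdot e)_+^{1+s}$. A direct calculation gives $R\|\nabla\tilde u_\infty\|_{L^\infty(\mathcal Q_R)}\ge c\kappa R^{1+\gamma}\ge c\kappa R^{3/2}$ in the first case and $\ge c\kappa R^{1+s}$ in the second; since $\mu<3/2$ and $\mu<1+s$, letting $R\to\infty$ in the global growth bound forces $\kappa=0$ and hence $\tilde u_\infty\equiv 0$ on the time domain where the classification applies, which contradicts the nondegeneracy at $(0,0)$. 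The main obstacle I expect lies in the subcritical case $s>\tfrac12$: Proposition \ref{prop-classification>1/2} is stated only for $t\in(-\infty,0)$, so it must be applied to $\tilde u_\infty|_{\{t<0\}}$, and one then has to propagate $\tilde u_\infty\equiv 0$ from $t<0$ to all of $\R^n\times\R$ by combining the forward monotonicity $\partial_t\tilde u_\infty\ge 0$ with the fact that $\partial_t\tilde u_\infty$ satisfies $(\partial_t-\LL_\infty)(\partial_t\tilde u_\infty)=0$ on $\{\tilde u_\infty>0\}$ (plus the obstacle condition $\partial_t\tilde u_\infty=0$ on $\{\tilde u_\infty=0\}$), so that the nondegeneracy on $\mathcal Q_1$, which involves positive times, is genuinely contradicted.
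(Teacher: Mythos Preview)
Your proposal follows the same compactness-and-classification scheme as the paper's proof. Two minor remarks: the paper treats the passage from the pointwise free-boundary growth bound to the global $C^{1+s-\varepsilon}_{\rm par}$ estimate as a separate Step~2 and explicitly calls it ``slightly more delicate'' than in the elliptic case (one must feed both the Step~1 bound and the far-field assumption \eqref{eq:growth R} into the nonlocal interior estimates to control the tails of $\nabla u$ and $\partial_t u$ at each scale), whereas you pass over this as routine; and your concern about positive times when $s>\tfrac12$ is not actually an obstacle, since the growth hypothesis of Proposition~\ref{prop-classification>1/2} already lives on $B_R\times(-R^{2s},R^{2s})$ and its proof (via blow-down) uses the function for all $t$, so the classification and the ensuing contradiction are global and no forward-propagation argument is required.
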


\begin{proof}
Let $\mu:=1+s-\varepsilon$. Up to reducing $\delta$, we can assume that $1-\delta\geq s$ (in particular, $1-\delta \geq \mu-1$).

\vspace{2mm}

\noindent
- \emph{Step 1.}
We first prove that, at every free boundary point $(x_\circ,t_\circ)\in \partial\{u>0\}\cap \mathcal Q_1$, we have
\begin{equation}\label{wqergij}
r\|\nabla u\|_{L^\infty(\mathcal Q_r(x_\circ,t_\circ))}+r^{2s}\|\partial_t u\|_{L^\infty(\mathcal Q_r(x_\circ,t_\circ))} \leq Cr^\mu,
\end{equation}
for $r\in(0,1)$, with $C$ depending only on $n$, $s$, $\varepsilon$, $\lambda$, and $\Lambda$. 

The proof is very similar to the one of Corollary~\ref{cor-almost-optimal-ell}.
Indeed, assume by contradiction that \eqref{wqergij} fails.
Then, we can find sequences $u_k$, $\LL_k$, and $f_k$, satisfying the assumptions, with $0\in \partial\{u_k>0\}$, and such that 
\[\sup_k\sup_{r\in(0,1)} \frac{r\|\nabla u_k\|_{L^\infty(\mathcal Q_{r})}+r^{2s}\|\partial_t u_k\|_{L^\infty(\mathcal Q_{r})}}{r^\mu}=\infty.\]
Also, the uniform semiconvexity assumption $D^2_{x,t} u_k\geq-{\rm Id}$ 
implies that the functions $u_k$ are uniformly Lipschitz in $\mathcal Q_1$.
Hence, thanks to Lemma \ref{lem-growth-par}, there exist sequences $k_m$ and $r_m\to0$ such that the functions $\tilde u_m(x,t)$
satisfy $\|\nabla \tilde u_m\|_{L^\infty(\mathcal Q_1)}+\|\partial_t \tilde u_m\|_{L^\infty(\mathcal Q_1)}=1$ and
\[R\|\nabla \tilde u_m\|_{L^\infty(\mathcal Q_R)} + R^{2s}\|\partial_t \tilde u_m\|_{L^\infty(\mathcal Q_R)} \leq CR^{\mu}\qquad \textrm{for all}\quad R\in (1,r_m^{-1}).\]
Moreover
\[D^2_{x,t} \tilde u_m \geq - r_m^{2-\mu}{\rm Id} \longrightarrow 0 \qquad \text{in }\mathcal Q_{2/r_m},\]
\[R\|\nabla u\|_{L^\infty(\mathcal Q_R \cap \{|t|<r_m^{-2s}\})}+R^{2s}\|\partial_t u\|_{L^\infty(\mathcal Q_R\cap \{|t|<r_m^{-2s}\})} \le R^{2-\delta} \qquad \textrm{for all}\quad R\ge r_m^{-1},\]
$$
\text{$(\partial_t-\LL_{k_m}) \tilde u_m= f_m$ in $\{u_m>0\}\cap \mathcal Q_{2/r_m}$},\qquad
\text{$(\partial_t-\LL_{k_m}) \tilde u_m\geq  f_m$ in $\mathcal Q_{2/r_m}$,}
$$
$$
\text{ $|\nabla f_m|\leq r_m^{1+2s-\mu}\to 0$},\qquad\text{and}\qquad\text{$|\partial_t f_m|\leq r_m^{4s-\mu}\to 0$}.
$$
These last two conditions imply that $(\partial_t-\LL_{k_m})(D_{h,\tau} \tilde u_m)\leq r_m^{1+2s-\mu}\to0$ in $\{\tilde u_m>0\}\cap \mathcal Q_{1/r_m}$, where
\[D_{h,\tau} \tilde u_m(x,t)={\textstyle \frac{\tilde u_m(x,t)-\tilde u_m(x-h,t-\tau)}{|h|+|\tau|}}.\]
Hence, by semi-convexity, a subsequence of the functions $\tilde u_m$ will converge locally uniformly in $\R^n\times \R$ to a limiting convex function $\tilde u_\circ$
satisfying
\[R\|\nabla \tilde u_\circ\|_{L^\infty(\mathcal Q_R)} + R^{2s}\|\partial_t \tilde u_\circ\|_{L^\infty(\mathcal Q_R)} \leq CR^{\mu}\qquad \textrm{for all}\quad R\in \geq 1.\]
 Using Lemma \ref{lem-limits-par} we see that $\tilde u_\circ$ satisfies the hypotheses of Proposition \ref{prop-classification} or \ref{prop-classification>1/2}, so it follows from the classification of blow-ups and the growth assumption above that $\tilde u_\circ\equiv0$. 
 
On the other hand, by convexity we see that $\|\nabla \tilde u_\circ\|_{L^\infty(\mathcal Q_2)}+\|\partial_t \tilde u_\circ\|_{L^\infty(\mathcal Q_2)}\geq 1$, a contradiction that proves 
\eqref{wqergij}.

\vspace{2mm}

\noindent
- \emph{Step 2.}
We now combine \eqref{wqergij} with interior regularity estimates to establish the result.
While in the elliptic case this is rather standard, here the argument is slightly more delicate and we provided all details.

Let $(x_1,t_1)$ be any point in $\{u>0\}\cap \mathcal Q_1$, and let $r>0$ be largest number for which $\mathcal Q_r(x_1,t_1)\subset \{u>0\}$.
Let $(x_\circ,t_\circ)\in \partial\{u>0\}\cap \partial \mathcal Q_r(x_1,t_1)$.
Then, since $(\partial_t -\LL)(\nabla u)=\nabla f$ in $\mathcal Q_r$, by interior regularity estimates for nonlocal parabolic equations (see for instance \cite{ChaDav}) we have 
\[r\|D^2_x u\|_{L^\infty(\mathcal Q_{r/2}(x_1,t_1))} \leq C\bigg(r^{2s}\|\nabla f\|_{L^\infty}+\sup_{R\geq1} R^{\varepsilon-2s}\|\nabla u\|_{L^\infty(\mathcal Q_{rR}(x_1,t_1)\cap \{|t|<2\} )}\bigg),\]
and analogous estimates hold for $\nabla \partial_t u$ and $\partial_{tt}u$.
By \eqref{wqergij}, it follows that for $R\in (1,r^{-1})$ we have
\[\|\nabla u\|_{L^\infty(\mathcal Q_{rR}(x_1,t_1))} \leq C(rR)^{\mu-1},\]
that combined with \eqref{eq:growth R} gives (without loss of generality, we can assume that $\varepsilon \leq \delta$)
\[\sup_{R\geq1} R^{\varepsilon-2s}\|\nabla u\|_{L^\infty(\mathcal Q_{rR}(x_1,t_1) \cap \{|t|<2\})} \leq r^{\mu-1}.\]
Since $\|\nabla f\|_{L^\infty}\leq 1$, this yields
\[\|D^2_x u\|_{L^\infty(\mathcal Q_{r/2}(x_1,t_1))} \leq Cr^{\mu-2}.\]
Moreover, with the exact same argument (using the regularity estimates for$\nabla \partial_t u$ and $\partial_{tt}u$),  we find 
\[\|\nabla \partial_t u\|_{L^\infty(\mathcal Q_{r/2}(x_1,t_1))} \leq Cr^{\mu-1-2s}\qquad \textrm{and}\qquad 
\|\partial_{tt}u\|_{L^\infty(\mathcal Q_{r/2}(x_1,t_1))} \leq Cr^{\mu-4s}.\]
Since these bounds hold at all points $(x_1,t_1)\in \{u>0\}\cap \mathcal Q_1$, we conclude that
\[\|\nabla u\|_{C^{\mu-1}_{\rm par}(\mathcal Q_1)} + \|\partial_t u\|_{C^{\mu-2s}_{\rm par}(\mathcal Q_1)} \leq C,\]
as wanted.
\end{proof}

\subsection{Regularity of the free boundary}

The next step is to show that the free boundary is $C^{1,\tau}$ near nondegenerate points.
Recall that $\mathcal Q_R=B_R\times  (-R^{2s},R^{2s})$.

%

\begin{prop}\label{lemm-main-par-2}
Let $s$, $\LL$, $\delta$, $u$, $u_\circ$, and $\kappa$ be as in Theorem \ref{thm-main}, and let $\rho_\circ\geq1$.
Assume that $\kappa\geq \kappa_\circ>0$ and
\[
\|u- u_\circ\|_{{\rm Lip} (\mathcal Q_{R_\circ})} \le \eps,
\]
with $\eps>0$ small enough.
Then, if $R_\circ$ is large enough, the free boundary $\partial \{u>0\}$ is a $C^{1,\tau}$ graph in $\mathcal Q_{\rho_\circ}$, with constants depending only on  $n$, $s$, $\delta$, $\lambda$, $\Lambda$, $\rho_\circ$, and $\kappa_\circ$.
\end{prop}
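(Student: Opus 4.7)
The plan is to mirror the proof of the elliptic analogue, Lemma~\ref{lemm-main-ell-2}, replacing the elliptic boundary Harnack by the new parabolic one (Theorem~\ref{bdryharnack}), and adapting the elliptic positivity argument (Lemma~\ref{lem-ell-positivity}) to the parabolic setting via the half-Harnack of Proposition~\ref{halfharnack}.

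First I would encode the closeness $\|u-u_\circ\|_{{\rm Lip}(\mathcal Q_{R_\circ})}\leq\eps$ as pointwise bounds on space--time directional derivatives. Let $\mathbf n$ be the unit vector $(e,\varv)/\sqrt{1+\varv^2}$ when $s=\tfrac12$, and $(e,0)$ when $s>\tfrac12$; by the explicit form \eqref{u0}, the 1D profile $u_\circ$ satisfies $D_{\mathbf n'}u_\circ\geq 0$ in all of $\R^n\times\R$ and $D_{\mathbf n'}u_\circ\geq c_1\kappa_\circ$ on $\{x\cdot e+\varv t\geq \tfrac12\}\cap\mathcal Q_{2\rho_\circ}$ for every unit space--time vector $\mathbf n'$ with $\mathbf n'\cdot\mathbf n\geq\tfrac12$, where $c_1$ depends only on $\gamma$ (hence on $\LL$, $\varv_\circ$). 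By the triangle inequality $v:=D_{\mathbf n'}u$ then satisfies $v\geq c_1\kappa_\circ-\eps$ on a set of definite $(n{+}1)$-dimensional measure and $v\geq -\eps$ throughout $\mathcal Q_{R_\circ}$, while $v\equiv 0$ in $\{u=0\}$, $|(\partial_t-\LL)v|\leq\eta$ in $\{u>0\}\cap\mathcal Q_{R_\circ}$, and $|v|\leq C|(x,t)|^{2-\delta}$ outside by the growth hypothesis of Theorem~\ref{thm-main}.

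Second I would prove a parabolic positivity lemma: if $R_\circ$ is large and $\eps,\eta$ are small (both depending on $n,s,\lambda,\Lambda,\kappa_\circ,\delta,\rho_\circ$), then $v\geq 0$ in $\mathcal Q_{\rho_\circ}$. This follows by applying Proposition~\ref{halfharnack} to $v+C\eps\,S^1$, where $S^1$ is the supersolution of Lemma~\ref{supersol} rescaled to $\mathcal Q_{R_\circ}$ so as to dominate the growth of $v^-$ at infinity; the tail term in the half-Harnack is controlled below by $c\,c_1\kappa_\circ$ (from the set where $v$ is strictly positive), while the error is $O(\eps)$, and this quantitative comparison forces nonnegativity on $\mathcal Q_{\rho_\circ}$. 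Applying this to every admissible $\mathbf n'$, we obtain that $\partial\{u>0\}$ is a space--time Lipschitz graph in $\mathcal Q_{\rho_\circ}$ with Lipschitz constant $\leq 2$; equivalently, $\{u=0\}^c$ satisfies the interior cone condition with opening $\theta=\theta(\kappa_\circ)\in(0,\tfrac\pi2)$ and speed $\omega=\omega(\varv_\circ,\kappa_\circ)$.

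Finally, I would plug this into Theorem~\ref{bdryharnack} applied to the nonnegative pair $(D_{\mathbf n'}u,D_{\mathbf n}u)$ inside $\mathcal Q_{\rho_\circ}$: normalising so the hypotheses of the theorem are met (the growth $|v|\leq C_\circ(1+|x|)^{2s-\gamma_\circ}$ being inherited from the growth of $\nabla_{x,t}u$), we obtain
\[
\left[\frac{D_{\mathbf n'}u}{D_{\mathbf n}u}\right]_{C^\alpha(\mathcal Q_{\rho_\circ})}\leq C.
\]
Choosing $\mathbf n'=(\mathbf n+\mu\,\mathbf t_i)/|\mathbf n+\mu\,\mathbf t_i|$ for an orthonormal basis $\{\mathbf t_i\}_{i=1}^{n}$ of $\mathbf n^\perp$ in $\R^{n+1}$ and letting $\mu\to 0$ yields the $C^{\alpha}$ regularity of the unit normal to $\partial\{u>0\}$ in $\mathcal Q_{\rho_\circ}$, i.e.\ the $C^{1,\tau}$ graph property with $\tau=\alpha$.

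The main obstacle is the second step, the parabolic positivity lemma. Unlike the elliptic case, here one must keep the opening $\theta$ and the speed $\omega$ of the interior cone condition uniformly bounded away from their degenerate values in terms of $\kappa_\circ$ alone, so that the constants in Theorem~\ref{bdryharnack} do not blow up; this is why the explicit control $\kappa\geq\kappa_\circ$ is crucial. A secondary subtlety, specific to the critical case $s=\tfrac12$, is that the ``positivity direction'' $\mathbf n$ tilts in time with the speed $\varv$ of $u_\circ$, so the barrier $S^1$ and the tail integrals in the half-Harnack have to be placed in cylinders traveling at the same speed; this is precisely the reason for working with traveling cones and traveling-wave subsolutions, as in Lemma~\ref{subsolcone}.
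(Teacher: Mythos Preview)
Your approach is correct and leads to the same conclusion, but the paper's proof is considerably shorter because it makes two simplifications that you do not.

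First, it never introduces general space--time directions~$\mathbf n'$. Since $\partial_t u\ge 0$ is already one of the hypotheses of Theorem~\ref{thm-main}, the paper only proves positivity of the purely spatial derivatives $\partial_{e'}u$ for $e'\cdot e\ge\tfrac12$; spatial cone monotonicity together with $\partial_t u\ge0$ then immediately gives that $\partial\{u>0\}$ is a Lipschitz graph in space--time. Second, for the positivity of $\partial_{e'}u$ the paper does not reassemble an argument from the half-Harnack (Proposition~\ref{halfharnack}) and the supersolution $S^1$ of Lemma~\ref{supersol}; it simply invokes Proposition~\ref{step} with $v_1=v_2:=\partial_{e'}u$ (after rescaling by $\rho_\circ$), since that proposition already contains the conclusion $v_i\ge 0$ in $Q_1$. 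Your ``traveling-cylinder'' concern in the critical case $s=\tfrac12$ is therefore absorbed upstream: Proposition~\ref{step} and its proof are precisely where those subtleties are handled, so there is no need to revisit them here. The final boundary-Harnack step is the same as yours, except that the paper applies Theorem~\ref{bdryharnack} to the pairs $(\partial_{e'}u,\partial_e u)$ and $(\partial_t u,\partial_e u)$ directly rather than taking a limit over tilted directions.
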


\begin{proof}
By assumption, we have
\[|\partial_t u-\partial_t u_\circ|+|\nabla u-\nabla u_\circ| \leq \varepsilon\qquad\textrm{in}\quad \mathcal Q_{R_\circ}.\]
In particular, for any direction $e'\in \mathbb S^{n-1}$ such that $e'\cdot e\geq\frac12$ we have
\[|\partial_{e'} u-\partial_{e'} u_\circ|\leq \eps\qquad\textrm{in}\quad \mathcal Q_{R_\circ},\]
\[\partial_{e'} u_\circ \geq0 \quad \textrm{in}\quad \R^{n+1}, \qquad \textrm{and} \quad \partial_{e'} u_\circ \geq c_1\kappa \quad \textrm{in}\quad \{x\cdot e+\varv t\geq {\textstyle\frac12}\}.\]
Recall also that $\varv\leq \varv_\circ$, with $\varv_\circ$ depending only on $\delta$, $\lambda$, and  $\Lambda$.

Thus, if $\eps$ is small, we have that $v:=\partial_{e'} u$ \,and $E:=\{u=0\}\cap \mathcal Q_{R_\circ}$ satisfy
\[|\partial_t v-\LL v|\leq \eta\quad \textrm{in}\quad \mathcal Q_{R_\circ}\setminus E, \qquad  v\equiv0\quad \textrm{in}\quad E,\]
\[ v \geq c_2\kappa>0 \quad \textrm{in}\quad \{x\cdot e+\varv t\geq {\textstyle\frac12}\}\cap \mathcal Q_{R_\circ}, \qquad v \geq -\eps \quad \textrm{in}\quad \mathcal Q_{R_\circ},\]
and 
\[|v(x,t)|\leq |x|^{1-\delta}+|t|^{\frac{1-\delta}{2s}}\qquad \textrm{in}\quad \R^{n+1}\setminus \mathcal Q_{R_\circ}.\]
This means that, given any $\rho_\circ>1$, if $\eta$ is small enough we can apply Proposition \ref{step} to the (same) functions $v_i(x,t):=v(\rho_\circ x,\rho_\circ^{2s}t)$, $i=1,2$,  to deduce that $v\geq 0$ in $\mathcal Q_{\rho_\circ/2}$.
That is,
\[\partial_{e'} u\geq0 \qquad \textrm{in}\quad \mathcal Q_{\rho_\circ/2}\]
for all $e'\in \mathbb S^{n-1}$ such that $e'\cdot e\geq\frac12$.
Since we also have $\partial_t u\geq0$, this means that the free boundary $\partial\{u>0\}$ is a Lipschitz graph in $\mathcal Q_{\rho_\circ/2}$.

Finally, taking $\rho_\circ>1$ large enough, we can apply the boundary Harnack (Theorem~\ref{bdryharnack}) to the functions $\partial_{e'} u$ and $\partial_{e} u$, 
and to $\partial_{t} u$ and $\partial_{e} u$, to deduce that 
\[\left\|\frac{\partial_{e'} u}{\partial_{e} u}\right\|_{C^\tau(\mathcal Q_{1/2})} + \left\|\frac{\partial_t u}{\partial_{e} u}\right\|_{C^\tau(\mathcal Q_{1/2})} \leq C.\]
This yields that the free boundary $\partial\{u>0\}$ is a $C^{1,\tau}$ graph in $\mathcal Q_{1/2}$, as wanted.
\end{proof}

Finally, we will need the following bound for solutions to parabolic equations in $C^{1,\tau}$ domains.

\begin{lem}\label{lemm-main-par-3}
Let $s$, $\LL$, $\delta$, and $u$, be as in Theorem \ref{thm-main}.
Assume that $\partial\{u>0\}$ is a $C^{1,\tau}$ graph in $\mathcal Q_{1/2}$.
Then
\[|\nabla u|+|\partial_t u|\leq C\big(|x|^s+|t|^s\big)\qquad \textrm{for}\quad (x,t)\in \mathcal Q_{1/4},\]
with $C$ depending only on  $n$, $s$, $\delta$, $\lambda$, $\Lambda$, $\tau$, and the $C^{1,\tau}$ norm of the graph.
\end{lem}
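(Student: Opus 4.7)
My approach combines the parabolic boundary Harnack (Theorem \ref{bdryharnack}) with the explicit form of the one-dimensional profile $u_0(x,t):=\kappa(x\cdot e+\varv t)_+^{1+\gamma}$, where $e\in \mathbb S^{n-1}$ and $\varv\ge 0$ encode the direction and the velocity of the inward normal to $\partial\{u>0\}$ at $(0,0)$, and $\gamma\ge s$ is the exponent given by Lemma \ref{lem-1D-critical} in the critical case $s=\frac12$ (so $\gamma\in[\frac12,1)$), and $\gamma=s$ in the subcritical case $s>\frac12$ by Proposition \ref{prop-classification>1/2} (in which case $u_0$ is in fact time-independent). Note that $\varv$ is a priori bounded by a constant depending only on $\delta,\lambda,\Lambda$ thanks to the growth assumption on $u$.

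First I would prove a quantitative expansion of the form
\[
\bigl|\nabla u(x,t)-\nabla u_0(x,t)\bigr|+\bigl|\partial_t u(x,t)-\partial_t u_0(x,t)\bigr|\le C\bigl(|x|+|t|^{1/(2s)}\bigr)^{\gamma+\tau'}\qquad\text{in }\mathcal Q_{1/4},
\]
for some small $\tau'>0$, by iterating the parabolic boundary Harnack along a dyadic sequence of shrinking parabolic cylinders $\mathcal Q_{2^{-k}}$. At each scale, the rescalings of the derivatives $\partial_e u$, $\partial_t u$ (caloric inside $\{u>0\}$ up to the small errors $\partial_e f,\partial_t f$) and of $\partial_e u_0$, $\partial_t u_0$ are nonnegative functions vanishing on sets that are $C^{1,\tau}$-close to the same hyperplane. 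Straightening the free boundary of $u$ via a $C^{1,\tau}$-diffeomorphism (under which $\LL$ is perturbed only by a small lower-order remainder) allows one to apply Theorem \ref{bdryharnack} to each such pair, yielding a geometric decay of the H\"older quotient $\partial_e u/\partial_e u_0$, and similarly for $\partial_t u/\partial_e u_0$.

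Second, the bound on $u_0$ is explicit and immediate:
\[
|\nabla u_0(x,t)|+|\partial_t u_0(x,t)|\le C(1+\varv)(x\cdot e+\varv t)_+^\gamma\le C\bigl(|x|^\gamma+|t|^\gamma\bigr).
\]
Using $\gamma\ge s$ together with $|x|,|t|\le 1/4$, one absorbs $|x|^\gamma\le |x|^s$ and $|t|^\gamma\le|t|^s$. Combining this with the error term of the expansion (which, after possibly shrinking $\tau'$, is also dominated by $C(|x|^s+|t|^s)$) yields the claimed pointwise bound.

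The main obstacle is to carry out the iterative Harnack step rigorously: Theorem \ref{bdryharnack} compares two solutions vanishing on the \emph{same} set, while $\partial\{u>0\}$ and the flat free boundary $\{x\cdot e+\varv t=0\}$ are only $C^{1,\tau}$-close. One therefore has to track carefully how the straightening diffeomorphism perturbs the operator and verify that the perturbation is small enough for the iteration to converge with geometrically improving constants. A secondary subtlety occurs when $s>\frac12$: the naive parabolic scaling would produce $|t|^{1/2}$ instead of the sharper $|t|^s$; here one crucially exploits the time-independence of the blow-up ($\partial_t u_0\equiv 0$), so that $\partial_t u$ inherits the sharper rate entirely from the higher-order expansion error, after using that $\gamma=s$ and reducing $\tau'$ as needed.
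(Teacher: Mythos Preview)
Your approach is genuinely different from the paper's, and it has a real gap in the subcritical case $s>\tfrac12$.

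The paper's proof is much shorter and more direct: it does not iterate the boundary Harnack at all. Instead it observes that each derivative $v\in\{\partial_{e}u,\partial_tu\}$ solves $|(\partial_t-\LL)v|\le\eta$ in $\{u>0\}$ and vanishes on $\{u=0\}$, and then uses a \emph{time-independent} elliptic barrier of the form $\Phi(x)\asymp d(x,\partial\Omega_0)^s$ built in \cite{RS-C1} for $C^{1,\tau}$ spatial domains. The key structural fact is the monotonicity $\partial_t u\ge0$, which makes $\{u>0\}$ nondecreasing in time, so a supersolution for the cylindrical domain $\Omega_0\times(-t_0,0)$ dominates $v$ on the parabolic boundary; this gives the bound for $t\le0$, and repeating the argument at every boundary point yields the full estimate. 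No comparison with $u_0$, no diffeomorphism, and no iteration are needed.

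Your route has two concrete problems. First, the straightening step is delicate for nonlocal operators: a $C^{1,\tau}$ change of variables does not transform $\LL$ into another operator in the class \eqref{L0}--\eqref{ellipt} plus a ``small lower-order remainder''; the new kernel is generally $x$-dependent and anisotropic, and Theorem~\ref{bdryharnack} is stated only for translation-invariant operators in that class. You would need a substantially stronger boundary Harnack to proceed.

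Second, and more seriously, for $s>\tfrac12$ your expansion does not deliver the claimed time rate. Iterating in parabolic cylinders $\mathcal Q_{2^{-k}}$ produces an error of size $(|x|+|t|^{1/(2s)})^{\gamma+\tau'}$; at $x=0$ this is $|t|^{(\gamma+\tau')/(2s)}$. With $\gamma=s$ this exponent equals $\tfrac12+\tfrac{\tau'}{2s}$, and to reach $|t|^s$ you would need $\tau'\ge s(2s-1)$, which for $s$ close to $1$ forces $\tau'$ close to $1$---the opposite of small. ``Reducing $\tau'$'' makes the bound worse, not better. The time-independence of $u_0$ only eliminates the leading term $\partial_t u_0$; it does nothing to improve the parabolic scaling of the remainder. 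The paper's barrier argument avoids this entirely because the elliptic supersolution already carries the sharp exponent $s$ in space, and the cylindrical structure transfers it to the parabolic problem without any loss in $t$.
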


\begin{proof}
Notice that all derivatives of $u$ are solutions to a linear equation inside the domain $\Omega=\{u>0\}$, and they vanish in $\Omega^c$. 

Since $\Omega$ is monotone nondecreasing in time, we can use a supersolution for cylindrical (i.e., constant in time) domains to prove the bound for $t\leq0$.  
In case of $C^{1,1}$ domains this was done in \cite[Lemma 4.3]{FR17}, and the exact same argument works in $C^{1,\tau}$ domains by using \cite[Proposition 1.1]{RS-C1}.

Once we have the bound for $t\leq0$, since the domain is $C^{1,\tau}$ (in particular Lipschitz), we can use the same argument at any boundary point to deduce the validity of the desired estimate inside $\mathcal Q_{1/2}$, as wanted.
\end{proof}

\subsection{Proof of the main result}

Combining the previous results, we are essentially ready to prove our main parabolic theorem.
We just need a simple stability result contained in the next lemma.

\begin{lem} \label{lem-limits-par}
Let $s\in(0,1)$, and let $\lambda$ and $\Lambda$ be fixed positive constants.
Let $\{\LL_k\}_{k\geq1}$ be any sequence of operators of the form \eqref{L0}-\eqref{ellipt}.
Then, a subsequence of $\{\LL_k\}$ converges weakly to an operator $\LL$ of the same form.

Moreover, let $(u_k)$ and $(f_k)$ be sequences of functions satisfying, in the weak sense,
\[\partial_t u_k - \LL_k u_k = f_k \ \textrm{ in } \ \Omega\times (t_1,t_2)\]
for a given bounded domain $\Omega \subset\R^n$, and suppose that:
\begin{enumerate}
\item $u_k\to u$ uniformly in compact sets of $\R^n\times (t_1,t_2)$;
\item $f_k \to f$ uniformly in $\Omega\times (t_1,t_2)$;
\item $|u_k(x,t)| \leq M\left(1+|x|^{2s-\epsilon}\right)$ for all $x\in \R^n$ and $t\in (t_1,t_2)$, for some $M,\epsilon > 0$.
\end{enumerate}
Then $u$ satisfies
\[\partial_t u-\LL u = f \quad \textrm{ in } \Omega\times (t_1,t_2)\]
in the weak sense.
\end{lem}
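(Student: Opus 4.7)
The plan consists of three main steps. First I would extract a subsequential limit of the operators. Writing each kernel as $K_k(y)=g_k(y)|y|^{-n-2s}$ where $g_k$ is a measurable function with $\lambda\le g_k\le \Lambda$ and $g_k(y)=g_k(-y)$, Banach--Alaoglu produces a subsequence $g_{k_j}\rightharpoonup^* g$ in $L^\infty(\R^n)$. The pointwise bounds $\lambda\le g\le\Lambda$ and the symmetry $g(y)=g(-y)$ are preserved under weak-$*$ limits by standard arguments (for the symmetry, test $g_k$ against $h(y)-h(-y)$ for arbitrary $h\in L^1$), so $K(y):=g(y)|y|^{-n-2s}$ defines an admissible operator $\LL$ of the form \eqref{L0}--\eqref{ellipt}.

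Next I would pass to the limit in the weak formulation of the equation. For arbitrary $\phi\in C_c^\infty(\Omega\times(t_1,t_2))$, the symmetry of each $K_k$ yields
\[
\int_{t_1}^{t_2}\!\!\int_{\Omega} u_k(-\partial_t\phi)\,dx\,dt \;-\; \int_{t_1}^{t_2}\!\!\int_{\R^n} u_k\, \LL_k\phi\,dx\,dt \;=\; \int_{t_1}^{t_2}\!\!\int_\Omega f_k\,\phi\,dx\,dt,
\]
and the first and third integrals converge to their limiting counterparts thanks to the uniform convergence of $u_k$ and $f_k$ on the compact support of $\phi$. For the middle term I would use the absolutely convergent representation
\[
\LL_k\phi(x)=\tfrac{1}{2}\int_{\R^n}\frac{\phi(x+y)+\phi(x-y)-2\phi(x)}{|y|^{n+2s}}\,g_k(y)\,dy,
\]
and observe that for each fixed $x$ the function in front of $g_k$ lies in $L^1(\R^n)$ (being $O(|y|^{2-n-2s})$ near $0$ and $O(|y|^{-n-2s})$ at infinity), so weak-$*$ convergence of $g_{k_j}$ yields $\LL_{k_j}\phi(x)\to\LL\phi(x)$ pointwise. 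Differentiating under the integral with the uniform bound $K_k\le\Lambda|y|^{-n-2s}$ gives uniform-in-$k$ $C^2_{\mathrm{loc}}$ estimates on $\LL_k\phi$, so Arzel\`a--Ascoli upgrades the convergence to locally uniform.

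The key remaining issue is to justify passage to the limit in the spatial integral over the whole of $\R^n$. If $\mathrm{supp}\,\phi\subset B_R\times(t_1,t_2)$ and $|x|\ge 2R$, then $\phi(x,\cdot)\equiv 0$ and a direct estimate using the symmetric representation gives
\[
|\LL_k\phi(x)| \;\le\; \tfrac{\Lambda}{2}\int_{\R^n} \frac{|\phi(x+y,t)|+|\phi(x-y,t)|}{|y|^{n+2s}}\,dy \;\le\; \frac{C_{R,\phi}}{(1+|x|)^{n+2s}},
\]
uniformly in $k$. Combined with the growth bound $|u_k(x,t)|\le M(1+|x|^{2s-\epsilon})$ from hypothesis (3), the integrand $u_{k_j}\LL_{k_j}\phi$ is dominated on $\R^n\times(t_1,t_2)$ by a function comparable to $(1+|x|)^{-n-\epsilon}\in L^1$. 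Dominated convergence then yields $\int\!\!\int u_{k_j}\LL_{k_j}\phi \to \int\!\!\int u\,\LL\phi$, and the weak form of the limiting equation follows. The main subtlety is precisely this balance between the polynomial growth of $u_k$ and the decay of $\LL_k\phi$ at infinity, for which the strict exponent $2s-\epsilon<2s$ in hypothesis (3) is essential.
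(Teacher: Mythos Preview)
Your proposal is correct and follows the standard approach that the paper itself defers to: the paper's proof of this lemma consists only of a reference to \cite[Lemma~3.1]{FR17} and \cite[Lemma~3.2]{DRSV}, and your argument (weak-$*$ compactness of the normalized kernels $g_k=K_k|y|^{n+2s}$ in $L^\infty$, self-adjointness to move $\LL_k$ onto the test function, and dominated convergence using the tail decay $|\LL_k\phi(x)|\le C(1+|x|)^{-n-2s}$ against the growth bound $|u_k|\le M(1+|x|)^{2s-\epsilon}$) is precisely the argument carried out in those references. You have correctly identified that the strict inequality $2s-\epsilon<2s$ is what makes the dominating function integrable.
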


\begin{proof}
The proof is very similar to that of \cite[Lemma 3.1]{FR17} and \cite[Lemma~3.2]{DRSV}, so we leave the details to the interested reader.
\end{proof}

\begin{proof}[Proof of Theorem \ref{thm-main}]
We first prove that, given $R_\circ\geq1$ and $\eps>0$, for $\eta>0$ small enough we have
\begin{equation}\label{contradiction-par}
\|u-u_\circ\|_{{\rm Lip}(\mathcal Q_{R_\circ})} \leq\eps,
\end{equation}
for some $u_\circ$ as in Theorem \ref{thm-main}.

Indeed, assume by contradiction that there is no $\eta>0$ for which \eqref{contradiction-par} holds.
Then, we have a sequence $\eta_k\to0$, and sequences of operators $\LL_k$ and solutions $u_k$, such that 
\[\|u_k-u_\circ\|_{{\rm Lip}(\mathcal Q_{R_\circ})} \geq\eps\]
for any $e\in \mathbb S^{n-1}$ and any $u_\circ$ as in Theorem \ref{thm-main}.
Then, by Corollary \ref{cor-almost-optimal-par} and Lemma~\ref{lem-limits-par}, up to a subsequence the functions $u_k$ converges in $C^1_{\rm loc}$ to a limiting solution $u$,
with operator $\LL$ as in  \eqref{L0}-\eqref{ellipt}, that satisfies the assumptions of the theorem with $\eta=0$.
However, by Proposition \ref{prop-classification} (if $s=\frac12$) or Proposition \ref{prop-classification>1/2} (if $s>\frac12$), it follows that $u$ is a 1D function satisfying \eqref{1D-profile}.
This means that we can take $u_\circ=u$ in \eqref{contradiction-par}, a contradiction.
Hence,  \eqref{contradiction-par} is proved.

Thanks to \eqref{contradiction-par}, the $C^{1,\tau}$ regularity of the free boundary follows from Lemma \ref{lemm-main-par-2}, and the bounds for $\nabla u$ and $\partial_t u$ at 0 follow from Lemma \ref{lemm-main-par-3}.
\end{proof}

\section{Optimal regularity of solutions}
\label{sec5}

We now prove Corollaries \ref{thm1} and \ref{thm2}, as well as the optimal regularity estimates from Corollaries \ref{thm-sol-2} and \ref{thm-sol-3}.

\begin{proof}[Proof of Corollaries \ref{thm1} and \ref{thm2}]
We begin by replacing $u$ with $u-\varphi$, so that $u$ now satisfies
\[u\geq0, \quad \partial_t u\geq0\quad \textrm{and} \quad D^2_{x,t} u \ge - C_1C_\circ{\rm{Id}}\qquad \textrm{in}\quad \R^n\times (-1,1),\]
\[\partial_t u-\LL u=f(x)\quad \textrm{in}\quad \{u>0\} \qquad \textrm{and} \qquad \partial_t-\LL u\geq f \quad \textrm{in} \quad \R^n\times(-1,1), \quad \textrm{with} \quad  |\nabla f|\leq C_1C_\circ,\]
\[\|\nabla u\|_{L^\infty(\R^n\times(-1,1))}+\|\partial_t u\|_{L^\infty(\R^n\times(-1,1))} \le C_1.\]
(Note that the semiconvexity of solutions follows from \cite[Lemma 2.1]{BFR2} or \cite[Proposition~2.4]{RT21}.)

We will prove at the same time Corollaries \ref{thm1} and \ref{thm2}, and in addition that, for every free boundary point $(x_\circ,t_\circ)$, we have
\begin{equation}\label{claim-final}
\begin{split} & |\nabla u|\leq C\big(|x-x_\circ|^s+|t-t_\circ|^{\min\left\{s,\,\frac{1-\delta}{2s}\right\}}\big)
\\
& |\partial_t u|\leq C\big(|x-x_\circ|^{\min\left\{s,\,2-2s-\delta\right\}}+|t-t_\circ|^{\min\left\{s,\,\frac{2-2s-\delta}{2s}\right\}}\big),\end{split}
\end{equation}
with $C$ depending only on $n$, $s$, $\delta>0$, and the ellipticity constants.

Dividing by a constant if necessary, and up to a translation, we may assume $C_\circ=1$ and $(x_\circ,t_\circ)=(0,0)$.
We now want to apply Theorem \ref{thm-main} iteratively in order to get the desired estimate.

Let $\kappa>0$ to be chosen later, and let $\eta>0$ be the constant given by Theorem~\ref{thm-main}.
We fix $k_\circ\in\mathbb N$ and define the functions
\[w_k(x,t):=\frac{\eta}{2^{k_\circ}C_1}\frac{u(2^{-k} x,2^{-2sk}t)}{(2^{-k})^{2-\delta}},\qquad k \in \mathbb N.\]
Note that, if $k$ is large enough, then $w_k$ satisfies
\[w_k\geq0, \quad \partial_t w_k\geq0\quad \textrm{and} \quad D^2_{x,t} w_k \ge - \eta{\rm{Id}}\qquad \textrm{in}\quad \R^n\times(-2^{2sk},2^{2sk}),\]
\[\partial_t w_k-\LL w_k=f_k\quad \textrm{in}\quad \{w_k>0\} \qquad \textrm{and} \qquad \partial_t w_k-\LL w_k\geq f_k \quad \textrm{in} \quad \R^n\times (-2^{2sk},2^{2sk}),\]
with $|\nabla f_k|\leq \eta$.
Moreover, 
\[\|\nabla w_{k_\circ}\|_{L^\infty(\R^n\times (-2^{2sk_\circ},2^{2sk_\circ}))} + \|\partial_t w_{k_\circ}\|_{L^\infty(\R^n\times (-2^{2sk_\circ},2^{2sk_\circ}))} \le 1.\]

In other words, for $k\geq k_\circ\gg 1$, all the assumptions of Theorem \ref{thm-main}, except possibly for the growth control on $\nabla w_k$ and $\partial_t w_k$ (which holds at least for $k=k_\circ$), are satisfied by $w_k$.

We then have two possibilities:

\vspace{2mm}

\noindent \emph{Case 1}. Assume that the functions $w_k$ satisfy 
\[R\|\nabla w_k\|_{L^\infty(\mathcal Q_R)} + R^{2s}\|\partial_t w_k\|_{L^\infty(\mathcal Q_R)} \leq R^{2-\delta}\qquad \textrm{for all}\quad R\geq 1,\quad k\geq k_\circ.\]
Then
\[\|\nabla u\|_{L^\infty(\mathcal Q_{2^{-k}})} = 2^{k_\circ}C_1\eta^{-1}(2^{-k})^{1-\delta}\|\nabla w_k\|_{L^\infty(\mathcal Q_1)} \leq  C(2^{-k})^{1-\delta},\]
and 
\[\|\partial_t u\|_{L^\infty(\mathcal Q_{2^{-k}})} = 2^{k_\circ}C_1\eta^{-1}(2^{-k})^{2-2s-\delta}\|\nabla w_k\|_{L^\infty(\mathcal Q_1)} \leq  C(2^{-k})^{2-2s-\delta}.\]
Therefore
\[|\nabla u|\leq C\big(|x|^{1-\delta}+|t|^{\frac{1-\delta}{2s}}\big)
\quad \textrm{and}\quad |\partial_t u|\leq C\big(|x|^{2-2s-\delta}+|t|^{\frac{2-2s-\delta}{2s}}\big),
\]
so \eqref{claim-final} follows.
Moreover, this implies 
\[u(x,t)\leq C\big(|x|^{2-\delta}+|t|^{\frac{2-\delta}{2s}}\big)\qquad \textrm{for}\quad (x,t)\in \mathcal Q_1,\]
and thus we have a \emph{non-regular point} ((ii) in Corollaries \ref{thm1} or \ref{thm2}).

\vspace{2mm}

\noindent \emph{Case 2}. If we are not in Case 1, then there is a maximal number $k_1\geq k_\circ$ such that 
\begin{equation}\label{growth-ell-contr tris}
R\|\nabla w_k\|_{L^\infty(\mathcal Q_R)} + R^{2s}\|\partial_t w_k\|_{L^\infty(\mathcal Q_R)} \leq R^{2-\delta}\qquad \textrm{for all}\quad R\geq1,\quad k_\circ\leq k\leq k_1.
\end{equation}
Then, by Theorem \ref{thm-main}, we find
\[\|w_{k_1}-u_\circ\|_{\rm Lip(\mathcal Q_1)}\leq \eps.\]
Moreover, $u_\circ$ is a multiple of $(x\cdot e+\varv t)_+^{1+\gamma}$ with $\|u_\circ\|_{{\rm Lip}(\mathcal Q_1)}\leq 1$, and therefore we have 
\[|\nabla u_\circ(x,t)|+|\partial_t u_\circ(x,t)|=A(x\cdot e+\varv t)^\gamma_+, \qquad \textrm{with}\quad 0\leq A \leq 1.\]
We claim that $A \geq \frac{1}5$.
Indeed, if not, then by triangle inequality
\[\|\nabla w_{k_1}\|_{L^\infty(\mathcal Q_1)} + \|\partial_t w_{k_1}\|_{L^\infty(\mathcal Q_1)}\leq \|\nabla u_\circ\|_{L^\infty(\mathcal Q_1)} +\|\partial_t u_\circ\|_{L^\infty(\mathcal Q_1)} + \eps\leq {\textstyle \frac15} + \eps \leq {\textstyle \frac14}.\]
Since $\nabla w_{k_1+1}(x)=2^{1-\delta}\nabla w_{k_1}(\frac{x}{2})$ and $\partial_t w_{k_1+1}(x)=2^{2-2s-\delta}\partial_t w_{k_1}(\frac{x}{2})$, this implies that 
\[2\|\nabla w_{k_1+1}\|_{L^\infty(\mathcal Q_2)}+2^{2s}\|\partial_t w_{k_1+1}\|_{L^\infty(\mathcal Q_2)}\leq 1.\]
Since 
\[\begin{split}
R\|\nabla &w_{k_1+1}\|_{L^\infty(\mathcal Q_R)}+R^{2s}\|\partial_t w_{k_1+1}\|_{L^\infty(\mathcal Q_R)} = \\
& = 2^{2-\delta}\left\{(R/2)\|\nabla w_{k_1}\|_{L^\infty(\mathcal Q_{R/2})}+(R/2)^{2s}\|\partial_t w_{k_1}\|_{L^\infty(\mathcal Q_{R/2})} \right\}\leq R^{2-\delta}\quad \textrm{for}\quad R\geq2,
\end{split}\]
then $w_{k_1+1}$ still satisfies the growth condition \eqref{growth-ell-contr tris}, a contradiction to the definition of~$k_1$.

Thanks to the claim (i.e., $A\geq \frac15$) we can apply 
Theorem  \ref{thm-main} to deduce that the free boundary $\partial\{w_{k_1}>0\}$ is a $C^{1,\tau}$ graph in $\mathcal Q_1$, and 
\[|\nabla w_{k_1}|+|\partial_t w_{k_1}|\leq C\big(|x|^s+|t|^s\big) \qquad \text{for}
\quad (x,t)\in \mathcal Q_1.\]
Since 
\[u(x,t) = C_2 R^{\delta-2} w_{k_1}(Rx,R^{2s}t),\]
with $R=2^{k_1}$,  we deduce that
\[|\nabla u|\leq CR^{\delta-1}\big(|Rx|^s+|R^{2s} t|^s\big) \leq CR^{\delta-1}\big(|Rx|^s+|R^{2s} t|^{\min\{s,\, \frac{1-\delta}{2s}\}}\big) \leq C\big(|x|^s+|t|^{\min\{s,\, \frac{1-\delta}{2s}\}}\big),\]
for all $(Rx,R^{2s}t)\in \mathcal Q_1$.
Similarly, we get 
\[|\partial_t u|\leq CR^{\delta+2s-2}\big(|Rx|^s+|R^{2s} t|^{s}\big),\]
and \eqref{claim-final} follows.

Finally, since the free boundary $\partial\{u>0\}$ is $C^{1,\tau}$ in a neighborhood of the origin, and thanks to a standard barrier argument (see Lemma \ref{supersolution} for the construction of the sub- and supersolutions in the critical case $s=\frac12$) we deduce that 
\[0<cr^{\gamma_\circ} \leq \sup_{\mathcal Q_r} u \leq Cr^{\gamma_\circ}\]
for $r>0$ small, where $\gamma_\circ:=\gamma\big(\LL,\frac{\nu_x}{|\nu_x|},\frac{\nu_t}{|\nu_x|}\big)$ is given by \eqref{gamma-exp}, and  $\nu=(\nu_x,\nu_t)$ is the inward unit normal to the free boundary. 
This means that we have a \emph{regular point} ((i) in Corollaries \ref{thm1} or \ref{thm2}), and we are done.
\end{proof}

Finally, we prove the optimal regularity of solutions.

\begin{proof}[Proof of Corollary \ref{thm-sol-2}]
As in the previous proof, we replace $u$ with $u-\varphi$.
Since $s=\frac12$, then \eqref{claim-final} holds at every free boundary point. 
Hence, combining it with interior regularity estimates for linear parabolic equations, the desired $C^{3/2}_{x,t}$ estimate follows.
\end{proof}

Notice that the previous proof does not work when $s>\frac12$.
Indeed, the reason for this is the parabolic scaling: even if we had \eqref{claim-final} at every free boundary point, then by interior regularity estimates we would only get that derivatives of $u$ are $C^s_{\rm par}$ (i.e., $C^s_x$ and $C^{1/2}_t$, recall \ref{Holder-par}), which is not the optimal regularity in $t$ when $s>\frac12$.
Because of this, some extra ideas are needed.

\begin{proof}[Proof of Corollary \ref{thm-sol-3}]
As before, we replace $u$ with $u-\varphi$.
Let $\mu:=\min\{s,1/s-1-\varepsilon\}$, with $\varepsilon>0$.
We want to prove that $\partial_t u\in C^\mu_t(\R^n\times [t_1,t_2])$.

For this, let $\rho=\rho(t_1)>0$ be such that $\mathcal Q_{2\rho}(x_1,t_1)\subset \R^n\times (0,T]$ for any $x_1\in \R^n$.
We consider a cutoff function $\psi\in C^\infty_c(\mathcal Q_{2\rho}(x_1,t_1))$ with $\psi\equiv1$ in $Q_\rho(x_1,t_1)$.
By the semiconvexity of solutions we know that $\partial_{tt} u\geq -C$. Thus
\[0\leq \int_{Q_{2\rho}(x_1,t_1)} \big(\partial_{tt} u +C\big)\psi
 = \int_{Q_{2\rho}(x_1,t_1)} \big( u\partial_{tt} \psi +C\psi\big) \leq C_1,\]
and thus 
\[\int_{Q_\rho(x_1,t_1)} |\partial_{tt} u| \leq C_2,\]
with $C_2$ independent of $x_1$.
Then, we define 
\[w(x,t):= \frac{\partial_t u(x,t+h)-\partial_t u(x,t)}{|h|^\mu} = \frac{1}{|h|^\mu}\int_0^h \partial_{tt} u(x,t+\zeta)\,d\zeta,\]
and notice that 
\[\int_{Q_{\rho/2}(x_1,t_1)} |w| \leq C_3,\]
as long as $|h|<\rho/2$.
In particular, this yields
\begin{equation}\label{bound-L1}
\int_{t_1-\rho^{2s}}^{t_2+\rho^{2s}}\int_{\R^n} \frac{|w(x,t)|}{1+|x|^{n+2s}}\,dx\,dt \leq C_4.
\end{equation}
On the other hand, thanks to \eqref{claim-final} we have that $w$ is uniformly bounded on the contact set, namely
\begin{equation}\label{bound-contactset}
|w(x,t)| = \frac{\partial_t u(x,t+h)}{|h|^\mu} \leq C_5\qquad \textrm{for}\quad (x,t)\in \{u=0\}.
\end{equation}
Furthermore, since $w$ satisfies 
\[|(\partial_t -\LL)w| \leq C_6 \qquad \textrm{in}\quad \{u>0\},\]
the function $\tilde w:= \max\{w,C_5\}$
satisfies
\[(\partial_t -\LL)w \leq C_7 \qquad \textrm{in}\quad \R^n\times (0,T).\]
In other words $w$ is a subsolution, so it follows from \eqref{bound-L1} and \cite[Lemma A.3]{RT21} that
\[\sup_{B_1\times[t_1,t_2]} w \leq C(C_4+C_7)=:C_8.\]
Applying the same argument with any ball $B_1(z)$ instead of $B_1$, recalling the definition of $w$ we deduce that
\[\big|\partial_t u(x,t+h)-\partial_t u(x,t)\big| \leq C_8|h|^\mu,\]
which gives the desired regularity in $t$.

When $s<\frac{\sqrt{5}-1}{2}$ we can repeat the exact same argument used above for any spacial second derivative $\partial_{\xi\xi} u$ with $\xi\in \mathbb S^n$, and we obtain $C^{1+s}$ regularity in all directions.

Instead, when $s\geq\frac{\sqrt{5}-1}{2}$, we combine \eqref{claim-final} with interior regularity estimates to get 
\[\|\nabla u\|_{C^s_{\rm par}(\R^n\times [t_1,t_2])}+\|\partial_t u\|_{C^{\min\{s,2-2s-\varepsilon\}}_{\rm par}(\R^n\times [t_1,t_2])} \leq C,\]
where $C^\beta_{\rm par}$ is defined in \eqref{Holder-par}.
\end{proof}

\appendix
\section{Barriers in moving domains}

The aim of this appendix is to construct sub- and supersolutions for linear parabolic equations in moving domains in case $s=\frac12$.
We start with the following simple result.

\begin{lem}\label{prop-supersolution}
Let $\Omega\subset \R^n\times \R$ be a bounded $C^{1,\tau}$ domain with $(0,0)\in \partial\Omega$, and let $d(x,t)={\rm dist}((x,t),\Omega^c)$.
Let $\rho$ be a regularized distance function, satisfying
\[C_\Omega^{-1}d\leq \rho\leq C_\Omega d,\qquad \|\rho\|_{C^{1,\alpha}(\overline\Omega)}\leq C_\Omega,\qquad |D^2\rho|\leq C_\Omega d^{\alpha-1}\qquad {\mbox{and}}\qquad |D^3\rho|\leq C_\Omega d^{\alpha-2}.\]
Let $\LL$ be any operator of the form \eqref{L0}-\eqref{ellipt}, with $s=\frac12$,
let~$\nu=(\nu_x,\nu_t)$ be the inward unit normal to~$\partial\Omega$,
let $\gamma_{\LL,\nu}:=\gamma\big(\LL,\frac{\nu_x}{|\nu_x|},\frac{\nu_t}{|\nu_x|}\big)$ be given by \eqref{gamma-exp}, and let $\gamma_\circ:=\gamma_{\LL,\nu(0)}$.

Then, for any $\varepsilon>0$, we have
\[(\partial_t -\LL)(\rho^{\gamma_\circ-\varepsilon}) \geq c_0d^{\gamma_\circ-\varepsilon-2s}>0\qquad \textrm{in} \ \{0<d(x,t)\leq \delta\}\cap \mathcal Q_{\delta}\]
and
\[(\partial_t -\LL)(\rho^{\gamma_\circ+\varepsilon}) \leq -c_0d^{\gamma_\circ-\varepsilon-2s}<0\qquad \textrm{in} \ \{0<d(x,t)\leq \delta\}\cap \mathcal Q_{\delta}\]
The constants $c_0>0$ and $\delta>0$ depend only on $\Omega$, $\varepsilon$, and the 
ellipticity constants.
\end{lem}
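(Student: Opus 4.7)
The plan is to use the explicit $1$D profile from Lemma \ref{lem-1D-critical} as a model and transfer the computation to the $C^{1,\tau}$ domain via a perturbation argument. Set $e:=\nu_x(0)/|\nu_x(0)|$, $\varv:=\nu_t(0)/|\nu_x(0)|$, and $\ell(x,t):=x\cdot e+\varv t$, so that Lemma \ref{lem-1D-critical} characterizes $\gamma_\circ$ as the unique exponent for which the affine profile $U_{\gamma_\circ}(x,t):=\ell(x,t)_+^{\gamma_\circ}$ satisfies $(\partial_t-\LL)U_{\gamma_\circ}=0$ in the tangent half-space $H:=\{\ell>0\}$.

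The first step is to observe that, for every $\gamma\in(0,1)$,
\[(\partial_t-\LL)U_\gamma = c(\gamma)\,\ell_+^{\gamma-2s} \quad \text{in } H,\]
which follows from translation invariance of $\partial_t-\LL$ along $\partial H$ together with the fact that $U_\gamma$ is positively homogeneous of degree $\gamma$ under the hyperbolic scaling $(x,t)\mapsto(\lambda x,\lambda t)$ --- legitimate because $2s=1$ --- while $\partial_t-\LL$ is homogeneous of degree $-2s=-1$ under the same scaling. The function $c$ is continuous, vanishes at $\gamma_\circ$ by Lemma \ref{lem-1D-critical}, and the Fourier-symbol computation behind the formula for $\gamma_\circ$ (see \cite{DRSV}) gives $c(\gamma_\circ-\varepsilon)\ge c_1\varepsilon$ and $c(\gamma_\circ+\varepsilon)\le -c_1\varepsilon$ for all $\varepsilon>0$ sufficiently small.

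The second and main step is the perturbation estimate. Since $\partial\Omega$ is $C^{1,\tau}$ at $(0,0)$ with inward normal $\nu(0)$, the regularized distance $\rho$ satisfies $|\rho-c_*\ell_+|\le C(|x|+|t|)^{1+\tau'}$ and $|\nabla_{x,t}(\rho-c_*\ell_+)|\le C(|x|+|t|)^{\tau'}$ in $\mathcal Q_\delta$, for some constant $c_*>0$ and some $\tau'\in(0,\tau]$. Using these together with the pointwise hypotheses $|D^2\rho|\le C d^{\alpha-1}$ and $|D^3\rho|\le C d^{\alpha-2}$, I would prove
\[\big|(\partial_t-\LL)(\rho^\gamma)(x,t)-c(\gamma)c_*^{\gamma-2s}\,d(x,t)^{\gamma-2s}\big|\le C\,d(x,t)^{\gamma-2s+\tau''}\]
for some $\tau''>0$. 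The argument splits the integral defining $\LL(\rho^\gamma)$ at scale $r=d(x,t)/2$: in the inner region one uses the second-order Taylor expansion of $\rho^\gamma$ together with the above bounds on $D^2\rho$ and $D^3\rho$ to make the $2s=1$-order integral converge; in the outer region one compares the integrand directly against its half-space model using the $C^{1,\tau'}$-closeness of $\rho$ to $c_*\ell_+$. The term $\partial_t\rho^\gamma=\gamma\rho^{\gamma-1}\partial_t\rho$ is handled analogously, using the $C^{1,\alpha}$ bound on $\partial_t\rho$ and comparing with the leading contribution $\gamma\varv\ell_+^{\gamma-1}$.

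Finally, fixing $\varepsilon>0$ and then choosing $\delta$ small enough that $C\delta^{\tau''}<\tfrac12 c_1\varepsilon\, c_*^{\gamma-2s}$ on $\{0<d\le\delta\}\cap\mathcal Q_\delta$ yields both stated inequalities with $c_0:=\tfrac12 c_1\varepsilon\, c_*^{\gamma-2s}$. The main obstacle is the perturbation estimate of the second step: since $\partial\Omega$ is only $C^{1,\tau}$ and, crucially for the parabolic application, moves in time, no $C^{1,1}$-flattening of the boundary is available, and one must work directly with the regularized distance. The critical scaling $s=\tfrac12$ is essential here: it puts $\partial_t$ and $\LL$ at the same order under the hyperbolic scaling, so that the temporal correction $\partial_t\rho^\gamma-\gamma\varv\ell_+^{\gamma-1}$ and the spatial correction $\LL\rho^\gamma-\LL U_\gamma$ can simultaneously be controlled by a single error term of the correct order $d^{\gamma-2s+\tau''}$.
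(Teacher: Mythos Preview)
Your approach is correct and is essentially the same as the paper's, which simply refers to \cite[Proposition~4.8]{DRSV}. That proposition carries out precisely the perturbation-from-the-half-space-model argument you describe (in the elliptic setting), and the ``minor modification'' consists of including the $\partial_t\rho^\gamma$ term in the comparison with the traveling-wave model $\ell_+^\gamma$, exactly as you outline in your second step.
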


\begin{proof}
The proof is a minor modification of the one in \cite[Proposition 4.8]{DRSV}.
\end{proof}

We will also need the following:

\begin{prop}[Approximate solution]\label{prop-approx-solution}
Let $\Omega\subset \R^n\times \R$ be a bounded $C^{1,\tau}$ domain with $(0,0)\in \partial\Omega$.
Let $d$, $\rho$, $\LL$, $\nu$, $\gamma_{\LL,\nu}$, and $\gamma_\circ$ be as in Proposition \ref{prop-supersolution}.

Let $\bar\Gamma(x,t)$ be a function
that coincides with $\gamma_{\LL,\nu(x,t)}$ on $\partial\Omega$
and satisfies $|D^2\bar\Gamma(x,t)|\leq Cd^{\tau-2}$ inside $\Omega$.
Assume in addition that $\bar \Gamma\geq \gamma_\circ-\varepsilon/2$ in $\Omega\cap \mathcal Q_1$.

Let $\phi$ be a function that coincides with~$\rho^{\bar\Gamma}$
in a neighborhood of $\partial\Omega$, and such that $\|\phi\|_{C^{\gamma_\circ-\delta}}\leq C$.
Then
\begin{equation}\label{primo00}
\big|(\partial_t -\LL)\phi(x,t)\big| \leq Cd^{\gamma_\circ+\tau-\varepsilon-2s}\quad \textrm{for } (x,t)\in \Omega,
\end{equation}
as long as the exponent above is negative.

The constants $C$ depend only on $\varepsilon$, $\Omega$, and the ellipticity constants.
\end{prop}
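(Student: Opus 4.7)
The plan is to prove the estimate pointwise at each $(x_0, t_0) \in \Omega$ with $d_0 := d(x_0, t_0)$ small. Set $\Gamma_0 := \bar\Gamma(x_0, t_0)$, $\nu_0 := \nu(x_0, t_0) = (\nu_{x,0}, \nu_{t,0})$, $e_0 := \nu_{x,0}/|\nu_{x,0}|$, and $\varv_0 := \nu_{t,0}/|\nu_{x,0}|$. Since $\bar\Gamma = \gamma_{\LL,\nu}$ on $\partial\Omega$, the value $\Gamma_0$ is an $O(d_0^\tau)$ perturbation of $\gamma_{\LL,\nu_0}$, and by Lemma \ref{lem-1D-critical} the profile $U_0(x,t) := \big((x - x_0)\cdot e_0 + \varv_0 (t - t_0)\big)_+^{\Gamma_0}$ is $(\partial_t - \LL)$-harmonic in its positivity set. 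I will decompose
\[
\phi = \phi_0 + \eta, \qquad \phi_0 := \rho^{\Gamma_0}, \qquad \eta := \rho^{\bar\Gamma} - \rho^{\Gamma_0},
\]
and estimate each piece at $(x_0, t_0)$.

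For the frozen-exponent profile $\phi_0$, the regularity hypotheses on $\rho$ combined with the $C^{1,\tau}$ character of $\partial\Omega$ imply that $\rho$ and the linear defining function $\ell(x,t) := (x - x_0)\cdot \nu_{x,0} + (t - t_0)\nu_{t,0}$ agree up to an error of order $d_0^{1+\tau}$ inside the parabolic ball of radius $d_0/2$ around $(x_0, t_0)$, with matching control on derivatives coming from the bounds on $D^2\rho$ and $D^3\rho$. A direct computation, parallel to the proof of Proposition~4.8 in \cite{DRSV} but with the parabolic drift $\varv_0$ included, then shows
\[
\bigl|(\partial_t - \LL)\phi_0(x_0, t_0)\bigr| \le C d_0^{\Gamma_0 + \tau - 2s} \le C d_0^{\gamma_\circ + \tau - \varepsilon - 2s},
\]
where the last inequality uses $\Gamma_0 \ge \gamma_\circ - \varepsilon/2$, $d_0 \le 1$, and the negativity of the resulting exponent.

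For the error $\eta = \rho^{\Gamma_0}(\rho^{\bar\Gamma - \Gamma_0} - 1)$, integrating $|D^2\bar\Gamma| \le C d^{\tau - 2}$ twice yields $|\bar\Gamma(x, t) - \Gamma_0| \le C d_0^{\tau - 1}|(x - x_0, t - t_0)|$ inside the parabolic ball of radius $d_0/2$, while $\bar\Gamma$ and $\rho$ remain bounded globally. Splitting the nonlocal integral defining $\LL\eta$ at scale $d_0/2$ around $(x_0, t_0)$, the near-field part is controlled by a second-order Taylor expansion of $\eta$ using the ellipticity bound on the kernel $K$, and the far-field part by the global $C^{\gamma_\circ - \delta}$ bound on $\phi$ together with the $|y|^{-n - 2s}$ decay of $K$; tracking the exponents yields $|(\partial_t - \LL)\eta(x_0, t_0)| \le C d_0^{\gamma_\circ + \tau - \varepsilon - 2s}$, which combines with the previous bound to close \eqref{primo00}.

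The main obstacle I anticipate is extracting the gain of $d_0^\tau$ over the naive bound supplied by Proposition \ref{prop-supersolution} (which alone only yields $d_0^{\gamma_\circ - \varepsilon - 2s}$). This gain is structural rather than perturbative: it hinges on the exact match between $\Gamma_0$ and the critical $1$-dimensional exponent $\gamma_{\LL,\nu_0}$ dictated by Lemma \ref{lem-1D-critical}, on translating the $C^{1,\tau}$ regularity of $\partial\Omega$ into a sharp boundary Taylor expansion of $\rho^{\Gamma_0}$, and on the sensitivity of the symbol $\mathcal A(e) + i(\mathcal B(e) + \varv)$ with respect to $(e,\varv)$ as the normal varies along $\partial\Omega$. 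The drift $\varv_0 \ne 0$ forces a genuinely parabolic version of the elliptic computation in \cite{DRSV}, but the algebraic structure carries over once $\partial_t - \LL$ is interpreted as a nonlocal elliptic operator with the appropriate tilted symbol on each tangent hyperplane to the level sets of $\ell$.
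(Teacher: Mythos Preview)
Your proposal is correct and takes the same approach as the paper, which simply defers to \cite[Proposition~4.10]{DRSV} with the remark that only minor modifications are needed; your sketch is precisely the parabolic adaptation of that argument (freeze the exponent, compare $\rho^{\Gamma_0}$ to the exact 1D profile using the $C^{1,\tau}$ flattening, then control the variable-exponent remainder). One small imprecision: with $\Gamma_0=\bar\Gamma(x_0,t_0)$ only an $O(d_0^\tau)$ perturbation of $\gamma_{\LL,\nu_0}$, the profile $U_0$ is not \emph{exactly} $(\partial_t-\LL)$-harmonic but only up to an error of the same order $d_0^{\Gamma_0+\tau-2s}$---which is of course still enough for \eqref{primo00}.
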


\begin{proof}
The proof is a minor modification of the one in \cite[Proposition 4.10]{DRSV}.
\end{proof}

As a consequence of the previous Lemmas, we can now construct exact sub- and supersolutions in moving $C^{1,\tau}$ domains.

\begin{lem}[Sub- and Supersolutions]\label{supersolution}
Let $\Omega\subset \R^n\times \R$ be a bounded $C^{1,\tau}$ domain, let $d(x,t)={\rm dist}((x,t),\Omega^c)$, and let $\LL$, $\nu$, $\gamma_\circ$, and $\bar\Gamma$ be as in Proposition \ref{prop-approx-solution}.

Then there exist $\delta_\circ>0$ and two functions $\Phi_1$, $\Phi_2$ satisfying
\[(\partial_t -\LL)\Phi_1 \leq -1\quad \textrm{in} \ \mathcal Q_{\delta_\circ},\qquad \qquad (\partial_t -\LL)\Phi_2 \geq 1\quad \textrm{in} \ \mathcal Q_{\delta_\circ},\] 
and
\[C^{-1}d^{\bar\Gamma}\leq \Phi_i \leq Cd^{\bar\Gamma} \quad \textrm{in} \ \mathcal Q_1.\]
In particular, we have 
\[C^{-1}r^{\gamma_\circ} \leq \sup_{\mathcal Q_r} \Phi_i \leq Cr^{\gamma_\circ}\]
for $r>0$ small.
\end{lem}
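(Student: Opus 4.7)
The plan is to construct both $\Phi_i$ as small perturbations of the approximate solution $\phi$ provided by Proposition~\ref{prop-approx-solution}, using the explicit strict subsolution $\rho^{\gamma_\circ+\varepsilon}$ from Lemma~\ref{prop-supersolution} as the perturbation term. The key quantitative input is that the error $|(\partial_t-\LL)\phi|\le Cd^{\gamma_\circ+\tau-\varepsilon-1}$ is smaller than $|(\partial_t-\LL)\rho^{\gamma_\circ+\varepsilon}|\ge c_0\, d^{\gamma_\circ+\varepsilon-1}$ by a factor $d^{\tau-2\varepsilon}$, so that for $\varepsilon\in(0,\tau/2)$ the perturbation will dominate and fix the sign of $(\partial_t-\LL)\Phi_i$ near $\partial\Omega$.

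Concretely, I fix $\varepsilon\in(0,\tau/2)\cap(0,1-\gamma_\circ)$, let $A>0$ be a small constant to be chosen, and set
\[
\Phi_1:=\phi+A\rho^{\gamma_\circ+\varepsilon},\qquad \Phi_2:=\phi-A\rho^{\gamma_\circ+\varepsilon}.
\]
By linearity and the bounds from Lemma~\ref{prop-supersolution} and Proposition~\ref{prop-approx-solution},
\[
(\partial_t-\LL)\Phi_1\le d^{\gamma_\circ+\varepsilon-1}\bigl(Cd^{\tau-2\varepsilon}-Ac_0\bigr),\qquad (\partial_t-\LL)\Phi_2\ge d^{\gamma_\circ+\varepsilon-1}\bigl(Ac_0-Cd^{\tau-2\varepsilon}\bigr).
\]
Shrinking $\delta_\circ$ so that $Cd^{\tau-2\varepsilon}\le Ac_0/2$ on $\mathcal Q_{\delta_\circ}$, both expressions reduce to $\pm\tfrac{Ac_0}{2}d^{\gamma_\circ+\varepsilon-1}$; since $\gamma_\circ+\varepsilon<1$, the factor $d^{\gamma_\circ+\varepsilon-1}$ blows up as $d\to 0$ and, in particular, is $\ge 2/(Ac_0)$ on $\mathcal Q_{\delta_\circ}$ after further shrinking $\delta_\circ$. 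This delivers $(\partial_t-\LL)\Phi_1\le -1$ and $(\partial_t-\LL)\Phi_2\ge 1$ in $\mathcal Q_{\delta_\circ}$.

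The comparability $\Phi_i\asymp d^{\bar\Gamma}$ is preserved provided we first shrink $\delta_\circ$ so that $\bar\Gamma\le\gamma_\circ+\varepsilon$ on $\mathcal Q_{\delta_\circ}$ (possible by continuity of $\bar\Gamma$, since $\bar\Gamma(0,0)=\gamma_\circ$); then $A\rho^{\gamma_\circ+\varepsilon}\le AC_\Omega\, d^{\gamma_\circ+\varepsilon}\le AC_\Omega\, d^{\bar\Gamma}$, and choosing $A$ small enough (depending on the comparability constant in Proposition~\ref{prop-approx-solution}) forces $A\rho^{\gamma_\circ+\varepsilon}\le \phi/2$, so that $\phi/2\le\Phi_i\le 2\phi\asymp d^{\bar\Gamma}$. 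For the ``in particular'' claim, the $C^{1,\tau}$-regularity of $\partial\Omega$ gives $|\bar\Gamma-\gamma_\circ|=O(r^\tau)$ on $\mathcal Q_r$, hence $d^{\bar\Gamma}=d^{\gamma_\circ}\exp\bigl(O(r^\tau|\log d|)\bigr)\asymp d^{\gamma_\circ}$ uniformly on $\mathcal Q_r$ for $r$ small, yielding $\sup_{\mathcal Q_r}\Phi_i\asymp r^{\gamma_\circ}$.

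The main subtlety is the joint choice of the constants $A$ and $\delta_\circ$: the same small $A$ must work simultaneously for $\Phi_1$ and $\Phi_2$, since for $\Phi_2$ any $A$ too large would destroy positivity (and hence the two-sided comparability) near $\partial\Omega$; once $A$ is fixed, $\delta_\circ$ must be small enough to (i) absorb $Cd^{\tau-2\varepsilon}$ into $Ac_0$, (ii) reach the threshold $d^{\gamma_\circ+\varepsilon-1}\ge 2/(Ac_0)$, and (iii) enforce $\bar\Gamma\le\gamma_\circ+\varepsilon$. All three conditions are simultaneously achievable thanks to the strict inequalities $2\varepsilon<\tau$ and $\gamma_\circ+\varepsilon<1$.
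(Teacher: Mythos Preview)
Your proof is correct and follows essentially the same route as the paper: perturb the approximate solution $\phi$ of Proposition~\ref{prop-approx-solution} by a small multiple of $\rho^{\gamma_\circ+\varepsilon}$ from Lemma~\ref{prop-supersolution}, and use that the perturbation's singularity $d^{\gamma_\circ+\varepsilon-1}$ dominates the error $d^{\gamma_\circ+\tau-\varepsilon-1}$ near the boundary. The paper writes this as $\Phi_i=M\phi\mp\rho^{\gamma_\circ+\varepsilon}$ with $M$ large (equivalent to your $A=1/M$ after rescaling), though the signs in the paper's one-line proof appear to be swapped relative to the roles of $\Phi_1,\Phi_2$ in the statement; your assignment $\Phi_1=\phi+A\rho^{\gamma_\circ+\varepsilon}$, $\Phi_2=\phi-A\rho^{\gamma_\circ+\varepsilon}$ is the correct one. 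Your added justification of the two-sided comparability $\Phi_i\asymp d^{\bar\Gamma}$ and of the ``in particular'' claim via $|\bar\Gamma-\gamma_\circ|=O(r^\tau)$ is a welcome elaboration that the paper leaves implicit.
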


\begin{proof}
It suffices to take
\[\Phi_1:=M\phi-\rho^{\gamma_\circ+\varepsilon}\qquad \textrm{and}\qquad 
\Phi_2:=M\phi+\rho^{\gamma_\circ+\varepsilon},\]
with $M>0$ large enough, $\varepsilon>0$ small enough, and $\phi,\rho$ given by Lemmas \ref{prop-approx-solution} and \ref{prop-supersolution}.
\end{proof}

\end{document}